\theoremstyle{plain}
\newtheorem{theorem}{Theorem}[section]
\newtheorem{lemma}[theorem]{Lemma}
\newtheorem{corollary}[theorem]{Corollary}
\newtheorem{proposition}[theorem]{Proposition}
\theoremstyle{definition}
\begin{document}

\newcommand{\E}{\mathbb{E}\!}
\newcommand{\ES}{\mathbb{E}}
\renewcommand{\P}{\mathbb{P}}
\newcommand{\R}{\mathbb{R}}
\newcommand{\N}{\mathbb{N}}
\newcommand{\smallsum}{\textstyle\sum}
\newcommand{\tr}{\operatorname{trace}}
\newcommand{\citationand}{\&}
\newcommand{\dt}[1][t]{\, \mathrm{d} #1}
\newcommand{\grid}{\,\mathcal{P}}
\newcommand{\euler}{Z^{\,N}}
\newcommand{\leuler}{\tilde{Z}^{\,N}}
\newcommand{\exteuler}{\bar{Z}^{\,N}}

\newcommand{\floor}[2]{\lfloor #1\rfloor_{#2}}
\newcommand{\ceil}[2]{\lceil #1\rceil_{#2}}
\newcommand{\mesh}[2]{\|#1\|^{#2}_{\grid_T}}
\newcommand{\bigbrack}[2]{\big( #1\big)^{#2}}
\newcommand{\bignorm}[3]{\bnl #1\bnr^{#2}_{#3}}
\newcommand{\bigsharp}[2]{\big[ #1\big]^{#2}}
\newcommand{\lpn}[3]{\mathcal{L}^{#1}(#2;#3)}
\newcommand{\lpnb}[3]{L^{#1}(#2;#3)}
\newcommand{\eulerm}[1]{Z^{\,\Theta_{#1}}}
\newcommand{\eulerpart}[1]{Z^{#1}}

\newcommand{\expeuler}[1]{Z^{\,\text{exp},#1}}
\newcommand{\impeuler}[1]{Z^{\,\text{imp},#1}}

\newcommand{\embed}[2]{\kappa^I_{#1,\,#2}}

\newcommand{\groupC}{ C }
\newcommand{\diagC}{ C }
\newcommand{\psiC}{ \Xi }
\newcommand{\driftC}{{\bf y}}
\newcommand{\diffusionC}{{\bf z}}
\newcommand{\power}{ q }

\newcommand{\set}{{\mathbb{V}}}

\newcommand{\resolvent}[2]{R_{#2}( #1 )}
\providecommand{\Cb}[1]{{C_b^{#1}}}
\newcommand{\BDG}[2]{\Upsilon_{#1}}
\newcommand{\SGchi}[1]{\chi_{#1}}
\newcommand{\SGkappa}[1]{\chi_{#1}}

\title{Weak convergence rates for numerical approximations of stochastic partial differential equations with nonlinear diffusion coefficients in UMD Banach spaces}

\author{Mario Hefter, Arnulf Jentzen, and Ryan Kurniawan}

\maketitle

\begin{abstract}
Strong convergence rates for numerical approximations of semilinear stochastic partial differential equations (SPDEs) with smooth and regular nonlinearities are well understood in the literature. Weak convergence rates for numerical approximations of such SPDEs have been investigated for about two decades and are still not yet fully understood. In particular, no essentially sharp weak convergence rates are known for temporal or spatial numerical approximations of space-time white noise driven SPDEs with nonlinear multiplication operators in the diffusion coefficients. In this article we overcome this problem by establishing essentially sharp weak convergence rates for exponential Euler approximations of semilinear SPDEs with nonlinear multiplication operators in the diffusion coefficients. Key ingredients of our approach are applications of the mild It\^{o} type formula in UMD Banach spaces with type 2. 
\end{abstract}

\section{Introduction}
\label{sec:intro}

This article investigates weak convergence rates for time-discrete numerical approximations of semilinear stochastic partial differential equations (SPDEs). 
In the case of finite dimensional stochastic ordinary differential equations (SODEs) with smooth 
and regular nonlinearities both strong and numerically weak convergence rates of numerical approximations are well understood in the literature; see, e.g., the monographs 
Kloeden \& Platen~\cite{kp92} and Milstein~\cite{m95}.
The situation is different in the case 
of SPDEs.
While strong convergence rates for numerical approximations 
of semilinear SPDEs with smooth and regular nonlinearities are well understood in 
the literature, 
weak convergence rates for numerical approximations of 
such SPDEs have been investigated for about two decades and are still not yet fully understood.
More specifically, to the best of our knowledge, there exist no result in the scientific literature which establishes essentially sharp weak convergence rates for temporal or spatial numerical approximations in the case of space-time white noise driven SPDEs with nonlinear multiplication operators in the diffusion coefficients.
In this paper we overcome this problem in the case of time-discrete exponential Euler approximations for SPDEs (cf., e.g., Lord \& Rougemont~\cite[Section~3]{lr04}, Cohen \& Gauckler~\cite[Section~2.2]{CohenGauckler2012}, and Wang~\cite[Section~1]{Wang2014b}), which is illustrated in the following theorem.  
\begin{theorem}
	\label{thm:intro.weak.rates.stoch.heat}
	Let $T\in(0,\infty)$, $p\in[2,\infty)$, 
	let $f,b\colon\R\to\R$ 
	and 
	$\varphi\colon\lpnb{p}{(0,1)}{\R}\to\R$ be four times continuously differentiable functions with globally Lipschitz continuous and globally bounded derivatives, 
	let $\xi\colon(0,1)\to\R$ be a $\mathcal{B}((0,1))$/$\mathcal{B}(\R)$-measurable and globally bounded function, 
	 let $ ( \Omega, \mathcal{F}, \P ) $ be a probability space with a normal filtration $( \mathcal{F}_t )_{ t \in [0,T] }$, 
	 let $ ( W_t )_{ t \in [0,T] } $ be an $\operatorname{Id}_{\lpnb{2}{(0,1)}{\R}}$-cylindrical $ ( \Omega, \mathcal{F}, \P, ( \mathcal{F}_t )_{ t \in [0,T] } ) $-Wiener process, 
	 let $X\colon[0,T]\times\Omega\to\lpnb{p}{(0,1)}{\R}$ be a continuous $( \mathcal{F}_t )_{ t \in [0,T] }$-adapted mild solution process of the SPDE
	 \begin{equation}
	 \label{eq:intro.stoch.heat.eq}
	   dX_t(x)=
	   \big[ \tfrac{\partial^2}{\partial x^2} X_t(x) + f(X_t(x)) \big] \, dt
	   +
	   b(X_t(x)) \, dW_t(x)
	 \end{equation}
	 with $X_t(0)=X_t(1)=0$ and $X_0(x)=\xi(x)$ for $t\in[0,T]$, $x\in(0,1)$, and for every $N\in\N$ let
	 $Y^N\colon\{0,1,\ldots,N\}\times\Omega\to\lpnb{p}{(0,1)}{\R}$ be a time-discrete exponential Euler approximation for the SPDE~\eqref{eq:intro.stoch.heat.eq} with time step size $\nicefrac{T}{N}$ (see, e.g., item~\eqref{item:exist.exp.Euler} of Theorem~\ref{thm:stochastic.heat.eq} in Section~\ref{sec:temporal.rate.stoch.heat} below).
	 Then for every $\varepsilon\in(0,\infty)$ there exists a real number $C\in\R$ such that for all $N\in\N$ it holds that 
	 \begin{equation}
	     \left|
	     \ES\big[ 
	     \varphi( X_T )
	     \big]
	     -
	     \ES\big[ 
	     \varphi( Y^N_N )
	     \big]
	     \right|
	     \leq
	     C \cdot
	     N^{
	     	( \varepsilon-\nicefrac{1}{2} )
	     }
	     .
	 \end{equation}
\end{theorem}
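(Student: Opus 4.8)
The plan is to prove the essentially sharp weak rate $\nicefrac12-$ by combining a regularity analysis of the Kolmogorov backward equation associated to the SPDE~\eqref{eq:intro.stoch.heat.eq} with the mild It\^o formula in the UMD type 2 Banach space $\lpnb{p}{(0,1)}{\R}$. First I would set up the semigroup framework: let $A = \tfrac{\partial^2}{\partial x^2}$ with Dirichlet boundary conditions on $H = \lpnb{2}{(0,1)}{\R}$, extended to act on $\lpnb{p}{(0,1)}{\R}$, write $F$ for the Nemytskii operator induced by $f$ and $B$ for the multiplication operator $v \mapsto (u \mapsto b(v)\cdot u)$, and record the standard smoothing estimates $\|(-A)^{\gamma} e^{tA}\|_{L(\lpnb{p}{(0,1)}{\R})} \lesssim t^{-\gamma}$ together with the $\gamma$-radonifying bound $\|(-A)^{-\delta/2}\|_{\gamma(H,\lpnb{p}{(0,1)}{\R})} < \infty$ for $\delta > \nicefrac12$, which encodes exactly the spatial regularity of the stochastic convolution for space-time white noise. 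The exponential Euler scheme $Y^N$ is the mild scheme that freezes $F$ and $B$ on each subinterval $[\floor{t}{N/T}, t]$ while keeping the semigroup exact; I would view both $X$ and $Y^N$ as mild solutions and introduce the time-continuous interpolation $\bar Y^N_t = e^{(t - \floor{t}{N/T})A} Y^N_{\lfloor \cdot \rfloor} + \int \ldots$ so that $\bar Y^N$ solves an SPDE of the same type with coefficients evaluated at the grid points.

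\medskip

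The core of the argument is the analysis of $u(t,v) := \ES[\varphi(X^{t,v}_T)]$, where $X^{t,v}$ is the mild solution started at time $t$ in state $v$. Using the four times differentiability of $f$, $b$, and $\varphi$ with globally Lipschitz and bounded derivatives, I would establish that $u(t,\cdot)$ is three times Fr\'echet differentiable and derive the key smoothing-in-time bounds: for $x \in [0,T)$ and suitable $\rho_1,\rho_2,\rho_3 \geq 0$ with $\rho_1+\rho_2 < \nicefrac12+$ (resp.\ $\rho_1+\rho_2+\rho_3 < \nicefrac12+$ for the third derivative, where one of the directions comes with a full $(-A)^{-\delta/2}$-type gain from the Hilbert--Schmidt structure),
\begin{equation}
  \big\| (-A)^{\rho_1} D^2_v u(x, v)\big( (-A)^{-\rho_2} \cdot, (-A)^{-\rho_3}\cdot \big)\big\|
  \lesssim
  (T - x)^{-(\rho_1 + \rho_2 + \rho_3)} .
\end{equation}
These are obtained by differentiating the mild equation for $X^{t,v}$, using Gronwall-type arguments in the spirit of the strong-error literature, and crucially exploiting that each differentiation of the It\^o term against a Hilbert--Schmidt direction buys an extra half-power of spatial regularity (this is where the UMD type 2 / $\gamma$-radonifying machinery enters). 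The weak error is then written via the standard telescoping identity
\begin{equation}
  \ES[\varphi(X_T)] - \ES[\varphi(Y^N_N)]
  =
  \ES[u(0, X_0)] - \ES[u(T, \bar Y^N_T)]
  =
  - \int_0^T \ES\!\big[ (\partial_t u)(t, \bar Y^N_t) + (\text{generator terms}) \big] \dt ,
\end{equation}
and applying the \emph{mild} It\^o formula (rather than the standard one) to $u(t, \bar Y^N_t)$, so that the unbounded operator $A$ never has to be applied to $\bar Y^N_t$ directly; instead the semigroup sits inside the integrands and the $(-A)^\rho$ factors from the derivative bounds above absorb the singularities.

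\medskip

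After the mild It\^o expansion, the drift part of the generator matched against $u$ cancels a corresponding term, and what remains is a sum of integral remainder terms each of which is a difference of the form $\ES[G(t, X_t \text{ or } \bar Y^N_t) - G(t, \bar Y^N_{\lfloor t \rfloor})]$ for various functions $G$ built from $Du$, $D^2u$, $F$, $B$, and the semigroup, plus a "trace" term involving $D^2 u$ tested against $B(\cdot)B(\cdot)^*$ which is genuinely of order $\nicefrac12-$ because the Hilbert--Schmidt norm of $(-A)^{-\rho}B(v)$ is finite only for $\rho > \nicefrac14$. Each such remainder is estimated by: (i) inserting the $(-A)^{\pm\rho}$ factors so the singularity $(T-t)^{-(\rho_1+\rho_2+\rho_3)}$ from the derivative bound is integrable since the total exponent stays below $1$; (ii) controlling the increment $\bar Y^N_t - \bar Y^N_{\lfloor t \rfloor}$ or $X_t - \bar Y^N_{\lfloor t\rfloor}$ in an $L^q$-based negative-order Sobolev norm, which costs a factor $N^{-(\nicefrac12-\varepsilon)}$ via the known strong-type temporal regularity of mild solutions plus the a priori strong error estimate for the exponential Euler scheme; (iii) collecting powers of $N$. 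The main obstacle I anticipate is (ii) coupled with the bookkeeping of which negative powers of $-A$ are available: one must verify that the \emph{specific} combination of regularity indices appearing in each of the finitely many remainder terms simultaneously satisfies the integrability constraint $\rho_1+\rho_2+\rho_3 < 1$ at the singular endpoint $t = T$ \emph{and} leaves at least a half-power of smoothing to be converted into the rate $N^{-(\nicefrac12-\varepsilon)}$ — this is exactly the delicate "matching of exponents" that makes the rate essentially sharp rather than lossy, and handling the nonlinear (non-affine) $B$ makes several of these terms, especially those from $D^2u$ acting on two copies of $B$, considerably more involved than in the additive-noise case treated previously in the literature.
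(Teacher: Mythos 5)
Your plan captures the paper's central ideas---Kolmogorov backward equation $u(t,v)=\ES[\varphi(X^{t,v}_T)]$, spatial derivative bounds with smoothing-in-time singularities $(T-t)^{-\rho}$, telescoping $\ES[\varphi(X_T)]-\ES[\varphi(Y^N_N)]=\ES[u(0,X_0)-u(T,\bar Y^N_T)]$, It\^o expansion, and exponent bookkeeping---but there is a structural gap that the paper's actual proof resolves with a \emph{mollification} step which you do not mention, and which I do not see how to avoid.

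The obstacle is the Kolmogorov identity itself. For the drift cancellation to take place you need $(\partial_t u)(t,x)+(\partial_x u)(t,x)(Ax+F(x))+\tfrac12\sum_b(\partial_x^2 u)(t,x)(B(x)b,B(x)b)=0$, and the term $(\partial_x u)(t,x)Ax$ only makes sense for $x\in D(A)=V_1$. For the actual coefficients $F\in\operatorname{Lip}^4(V,V_{-\theta})$, $B\in\operatorname{Lip}^4(V,\gamma(U,V_{-\theta/2}))$ driving~\eqref{eq:intro.stoch.heat.eq}, the mild solution and the interpolated scheme $\bar Y^N$ do \emph{not} take values in $V_1$, so this identity does not hold along the relevant trajectories and the standard It\^o expansion of $u(t,\bar Y^N_t)$ is not available. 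Your proposal acknowledges this by switching to the mild It\^o formula ``so that the unbounded operator $A$ never has to be applied to $\bar Y^N_t$ directly,'' but that does not fix the problem: the mild It\^o formula in \cite{CoxJentzenKurniawanPusnik2016} expands a \emph{fixed} functional $\phi(Y_t)$, not a time-dependent one $u(t,Y_t)$, and it does not produce a $\partial_t u$ term to cancel against. The unbounded operator is not the issue only on the $\bar Y$ side; it enters through the PDE for $u$, and there is no ``mild Kolmogorov equation'' in the paper or, as far as I know, in the literature. The paper instead replaces $(F,B)$ by $(e^{\kappa A}F,e^{\kappa A}B)$ for small $\kappa>0$ (see Section~\ref{sec:weak_convergence_irregular}), which makes the coefficients $V_2$-valued, forces the mollified solution $\hat X^{\kappa,\delta}$ and interpolant $\hat Y^{\kappa,\delta}$ into $V_1$ (items~\eqref{item:solution.V1}--\eqref{item:V1.moment} of Lemma~\ref{lem:Kolmogorov}), and thereby makes the classical Kolmogorov equation~\eqref{item:kolmogorov.eq} and the standard It\^o formula applicable in Lemma~\ref{lem:mollified_weak_solution-integrated_num}. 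The mild It\^o formula is used in the paper in Section~\ref{sec:weak_temporal_regularity}, but only to estimate the \emph{remainder} increments $\E[\psi(\bar Y_t,Y_s)-\psi(\bar Y_s,Y_s)]$ and $\E[\psi(\bar Y_t,Y_t)-\psi(\bar Y_t,\bar Y_t)]$, not to carry the Kolmogorov cancellation.

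A second, related omission: after mollifying one has to trade off the mollification error $O(\kappa^{\rho/2})$ (Proposition~\ref{prop:strong_convergence_numerics} and Corollary~\ref{prop:general_perturb}) against the blow-up $\max\{1,\kappa^{-3(\theta-\vartheta)}\}$ of the constants in the mollified weak estimate, and the essentially sharp rate $\nicefrac12-\varepsilon$ is obtained only after optimizing over $\kappa$ in~\eqref{eq:optimal_rate}. That optimization, and the whole two-parameter $(\kappa,h)$ bookkeeping of Proposition~\ref{prop:weak_convergence_irregular}, is the mechanism that converts the half-power of smoothing you correctly identify into the final rate; without it the argument as you describe it produces a rate strictly worse than $\nicefrac12$. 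A smaller discrepancy: the paper needs and proves bounds up to the fourth derivative of $u$ (the constants $c_{\delta_1,\ldots,\delta_4}$ and the Lipschitz constant $\tilde c_{\delta_1,\ldots,\delta_4}$ in Lemmas~\ref{lem:mollified.c.delta.finite} and~\ref{lem:c.delta.finite}), not merely three, because the test functions $\psi$ that feed into the mild It\^o weak regularity estimates involve $u_{0,1}$ and $u_{0,2}$ post-composed with $F$ and $B$ and then differentiated twice more.
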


Theorem~\ref{thm:intro.weak.rates.stoch.heat} is an immediate consequence of Theorem~\ref{thm:stochastic.heat.eq} below. 
Theorem~\ref{thm:intro.weak.rates.stoch.heat} establishes for every arbitrarily small $\varepsilon\in(0,\infty)$ the weak convergence rate $\nicefrac{1}{2}-\varepsilon$ for the exponential Euler approximations 
$Y^N$, $N\in\N$, (see, e.g., Lord \& Rougemont~\cite[Section~3]{lr04}, Celledoni et al.~\cite[Section~2]{CelledoniCohenOwren2008}, Cohen \& Gauckler~\cite[Section~2.2]{CohenGauckler2012}, Lord \& Tambue~\cite[Section~2.3]{LordTambue2013},  Wang~\cite[Section~1]{Wang2014b}, and item~\eqref{item:exist.exp.Euler} of Theorem~\ref{thm:stochastic.heat.eq} below) in the case of the SPDE~\eqref{eq:intro.stoch.heat.eq}. 
We would like to point out that the rate $\nicefrac{1}{2}-\varepsilon$ can, in general, not be essentially improved.
More specifically, Corollary~9.8 in~\cite{JentzenKurniawan2015arXiv} and Theorem~\ref{thm:intro.weak.rates.stoch.heat} above prove in the case 
$ \forall \, x \in \R \colon f(x)=0 $, 
$ \forall \, x \in \R \colon b(x)=1 $, 
and 
$ \forall \, y \in (0,1) \colon \xi(y)=0 $ 
that there exist a four times continuously differentiable function
$\varphi\colon\lpnb{p}{(0,1)}{\R}\to\R$ 
such that for all $\varepsilon\in(0,\infty)$ there exist real numbers 
$c,C \in (0,\infty)$ 
such that for all $N\in\N$ it holds that 
\begin{equation}
  c \cdot N^{-\nicefrac{1}{2}}
  \leq
	     \left|
	     \ES\big[ 
	     \varphi( X_T )
	     \big]
	     -
	     \ES\big[ 
	     \varphi( Y^N_N )
	     \big]
	     \right|
  \leq
  C \cdot N^{(\varepsilon-\nicefrac{1}{2})}  
\end{equation}
(cf., e.g., also the references mentioned in the overview article in M{\"u}ller-Gronbach \& Ritter~\cite{mr08} and in Section~9 in~\cite{JentzenKurniawan2015arXiv} for further lower bound results for numerical approximations of the SPDE~\eqref{eq:intro.stoch.heat.eq}).
The literature also contains a series of other results which establish essentially sharp weak convergence rates for temporal or spatial numerical approximations of the SPDE~\eqref{eq:intro.stoch.heat.eq} in the case where the diffusion coefficient function 
$b\colon\R\to\R$ 
is affine linear, that is, in the case where it holds that 
\begin{equation}
\label{eq:affine.linear}
  \exists \, \alpha, \beta \in \R \colon
  \forall \, x \in \R \colon
  b(x)=\alpha x+\beta
\end{equation}
(cf., e.g., 
\cite{s03, 
	h03b, 
	dd06, 
	dp09, 
	GeissertKovacsLarsson2009, 
	DorsekTeichmann2010, 
	h10c, 
	Debussche2011, 
	kll11, 
	Doersek2012, 
	Brehier2012b, 
	DorsekTeichmannVeluscek2013, 
	WangGan2013WeakHeatAdditiveNoise,
	LindnerSchilling2013, 
	KovacsLarssonLindgren2013BIT,
	AnderssonLarsson2013,
	BrehierKopec2013, 
	AnderssonKruseLarsson2013, 
	Kruse2014_PhD_Thesis, 
	Brehier2014,
	KovacsPrintems2014, 
	Kopec2014_PhD_Thesis, 
	Wang2014Weak, 
	ConusJentzenKurniawan2014arXiv, 
	Wang2014b, 
	AnderssonKovacsLarsson2014,
	JentzenKurniawan2015arXiv}).
To the best of our knowledge, Theorem~\ref{thm:intro.weak.rates.stoch.heat} is the first result in the scientific literature which establishes an essentially sharp weak convergence rate for the SPDE~\eqref{eq:intro.stoch.heat.eq} in the case where~\eqref{eq:affine.linear} is not fulfilled.
Note that Theorem~\ref{thm:intro.weak.rates.stoch.heat} above and Theorem~\ref{thm:stochastic.heat.eq} below prove weak convergence rates only for exponential Euler approximations (see, e.g., item~\eqref{item:cor.convergence.rates} of Theorem~\ref{thm:stochastic.heat.eq} below).
However, their methods of proof extend to other kinds of numerical approximations for SPDEs such as linear-implicit Euler approximations (see, e.g., Da Prato et al.~\cite[Section~3.3.1]{DaPratoJentzenRoeckner2012}).
Our proof of Theorem~\ref{thm:intro.weak.rates.stoch.heat} above and Theorem~\ref{thm:stochastic.heat.eq} below, respectively, is based on the proof in~\cite{JentzenKurniawan2015arXiv} by extending the proof of Theorem~1.1 in~\cite{JentzenKurniawan2015arXiv} from Hilbert spaces to UMD Banach spaces (cf., e.g., Brze{\'z}niak~\cite[(2.5)]{b97b} and Van Neerven, Veraar, \& Weis~\cite[(2.3)]{vvw08}) with type 2 (cf.\ Sections~2--4 and~6--8 in~\cite{JentzenKurniawan2015arXiv} with Sections~2--7 below).

\subsection{Notation}
\label{sec:notation}

Throughout this article the following notation is frequently used. 
For every set $ A $ 
we denote by 
$ \mathcal{P}(A) $ the power set of $ A $.
For every set $ A $ we denote by 
$ \#_A \in \{\infty,0,1,2,\ldots\} $ 
the number of elements of $ A $.
For all sets $ A $ and $ B $ we denote by 
$ \mathbb{M}(A,B) $ 
the set of all functions from $A$ to $B$.
For all
measurable spaces
$
( A, \mathcal{A} )
$
and
$
( B, \mathcal{B} )
$
we denote by
$
\mathcal{M}( \mathcal{A}, \mathcal{B} )
$
the set of
$ \mathcal{A} $/$ \mathcal{B} $-measurable
functions.
For every Borel measurable set $ A \in \mathcal{B}( \R ) $
we denote by $ \lambda_A \colon \mathcal{B}( A ) \to [0,\infty] $
the Lebesgue-Borel measure on $ A $.
We denote by 
$ \lfloor \cdot \rfloor_h \colon \R \to \R $,
$ h \in (0,\infty) $,
the functions which satisfy for all
$ h \in (0,\infty) $, $ t \in \R $
that 
$
  \lfloor t \rfloor_h =
  \max(
    (-\infty,t]
    \cap
    \{ 0, h , - h , 
$
$
    2 h , - 2 h , \dots \}
  )
$. 
We denote by 
$
  \mathcal{E}_r
  \colon
  [0,\infty)
  \rightarrow
  [0,\infty)
$,
$
  r \in (0,\infty)
$,
the functions which satisfy 
for all $ x \in [0,\infty) $, $ r \in (0,\infty) $
that
$
  \mathcal{E}_r(x)
  =
  \big[
    \sum^{ \infty }_{ n = 0 }
    \frac{ 
        x^{ 2 n }
        \,
        \Gamma(r)^n
    }{
      \Gamma( n r + 1 ) 
    }
  \big]^{
    \nicefrac{ 1 }{ 2 }
  }
$
(cf.\ \cite[Chapter~7]{h81} and \cite[Section~1.2]{ConusJentzenKurniawan2014arXiv}). 
For all $ \R $-Banach spaces
$ ( V , \left\| \cdot \right\|_V ) $
and
$ ( W , \left\| \cdot \right\|_W ) $
with 
$ \#_V > 1 $
and every natural number $ n \in \N=\{1,2,\ldots\} $
we denote by
$
\left| \cdot \right|_{ \Cb{n}( V, W ) }
\colon
C^n( V, W ) \to [0,\infty]
$
and
$
\left\| \cdot \right\|_{ \Cb{n}( V, W ) }
\colon
C^n( V, W ) \to [0,\infty]
$
the functions which satisfy 
for all $ f \in C^n( V, W ) $
that
\begin{equation}
\label{eq:Cb.def}
\begin{split}
\left| f \right|_{
	\Cb{n}( V, W )
}
& =
\sup_{
	x \in V
}
\left\|
f^{ (n) }( x )
\right\|_{
	L^{ (n) }( V, W )
}
,
\qquad
\left\| f \right\|_{
	\Cb{n}( V, W )
}
=
\|f(0)\|_W
+
\sum_{ k = 1 }^n
\left| f \right|_{ \Cb{k}(V,W) }
\end{split}
\end{equation}
and we denote by
$
\Cb{n}( V, W )
$
the set given by
$
\Cb{n}( V, W ) =
\{ f \in C^n( V, W ) \colon \left\| f \right\|_{ \Cb{n}( V, W ) } < \infty \}
$.
For all $ \R $-Banach spaces
$ ( V , \left\| \cdot \right\|_V ) $
and
$ ( W , \left\| \cdot \right\|_W ) $
with
$ \#_V > 1 $
and every nonnegative integer $ n \in \N_0=\N\cup\{0\}=\{0,1,2,\ldots\} $
we denote by
$
\left| \cdot \right|_{
	\operatorname{Lip}^n( V, W )
}
\colon 
C^n( V, W )
\to [0,\infty]
$
and 
$
\left\| \cdot \right\|_{
	\operatorname{Lip}^n( V, W )
}
\colon 
C^n( V, W )
\to [0,\infty]
$
the functions which satisfy
for all $ f \in C^n( V, W ) $
that
\begin{equation}
\label{eq:Lip.def}
\begin{split}
\left| f \right|_{ 
	\operatorname{Lip}^n( V, W )
}
&=
\begin{cases}
\sup_{ 
	\substack{
		x, y \in V ,\,
		x \neq y
	}
}
\left(
\frac{
	\left\| f( x ) - f( y ) \right\|_W
}{
\left\| x - y \right\|_V
}
\right)
&
\colon
n = 0
\\
\sup_{ 
	\substack{
		x, y \in V ,\,
		x \neq y
	}
}
\left(
\frac{
	\| f^{ (n) }( x ) - f^{ (n) }( y ) \|_{ L^{ (n) }( V, W ) }
}{
\left\| x - y \right\|_V
}
\right)
&
\colon
n \in \N
\end{cases}
,
\\
\left\| f \right\|_{
	\operatorname{Lip}^n( V, W )
}
&
=
\|f(0)\|_W
+
\sum_{ k = 0 }^n
\left| f \right|_{ \operatorname{Lip}^k(V,W) }
\end{split}
\end{equation}
and we denote by
$
\operatorname{Lip}^n( V, W )
$
the set given by
$
\operatorname{Lip}^n( V, W ) =
\{ f \in C^n( V, W ) \colon \left\| f \right\|_{ \operatorname{Lip}^n( V, W ) } < \infty \}
$.
For every separable $\R$-Hilbert space 
$ 
( U, 
\left< \cdot, \cdot \right>_U, 
\left\| \cdot \right\|_U ) 
$ 
and every $\R$-Banach space 
$ 
( V, \left\| \cdot \right\|_V ) 
$ 
we denote by $ \gamma(U,V) $ the $\R$-Banach space of $\gamma$-radonifying operators from $U$ to $V$ (see, e.g., \cite[Section~2]{vvw08}).
For every measure space $ ( \Omega , \mathcal{F}, \mu ) $,
every measurable space $ ( S , \mathcal{S} ) $,
every set $ R $, 
and every function
$ f \colon \Omega \to R $
we denote by
$
   \left[ f \right]_{
     \mu, \mathcal{S}
   }
$
the set given by
\begin{equation}
   \left[ f \right]_{
     \mu, \mathcal{S}
   }
   =
   \left\{
     g \in \mathcal{M}( \mathcal{F}, \mathcal{S} )
     \colon
     (
     \exists \, A \in \mathcal{F} \colon
     \mu(A) = 0 
     \text{ and }
     \{ \omega \in \Omega \colon f(\omega) \neq g(\omega) \}
     \subseteq A
     )
   \right\}
   .
\end{equation}
For every measure space $ ( \Omega , \mathcal{F}, \mu ) $, 
every measurable space $ ( S , \mathcal{S} ) $, 
and every set $ R $ 
we do as usual often not distinguish between
a function 
$ f \colon \Omega \to R $ 
and its equivalence class $ \left[ f \right]_{ \mu, \mathcal{S} } $.

\subsection{General setting}
\label{sec:global_setting}
Throughout this article the following
setting is frequently used.
Consider the notation in Section~\ref{sec:notation}, 
let 
$ 
  ( V, \left\| \cdot \right\|_V ) 
$ 
and 
$ 
( \mathcal{V}, \left\| \cdot \right\|_{\mathcal{V}} ) 
$ 
be separable UMD $\R$-Banach spaces with type 2, 
let
$ 
  ( U, 
    \left< \cdot, \cdot \right>_U, 
$
$
  \left\| \cdot \right\|_U ) 
$ 
be a separable $ \R $-Hilbert space, 
let 
$ T \in (0,\infty) $, 
$ \eta \in \R $, 
$ 
\angle 
=
\left\{
( t_1, t_2 ) \in [0,T]^2 \colon t_1 < t_2
\right\}
$, 
let $ ( \Omega, \mathcal{F}, \P ) $
be a probability space with a normal filtration 
$( \mathcal{F}_t )_{ t \in [0,T] }$,
let
$
( W_t )_{ t \in [0,T] }
$
be an $ \operatorname{Id}_U $-cylindrical 
$ ( \Omega , \mathcal{F}, \P, ( \mathcal{F}_t )_{ t \in [0,T] } ) $-Wiener process, 
and for every $p\in[2,\infty)$
let 
$
  \BDG{p}{} \in [0,\infty)
$
be the real number given by
\begin{equation}
  \BDG{p}{}
  =
  \sup\left(\left\{
    \frac{
    	\|\int^T_0 X_t \,dW_t\|_{\lpnb{p}{\P}{V}}
    	}{
    	(\int^T_0 \|X_t\|^2_{\lpn{p}{\P}{\gamma(U,V)}} \,dt)^{\nicefrac{1}{2}}
    	}
    \colon
    \substack{
    	(\mathcal{F}_t)_{t\in[0,T]}/\mathcal{B}(\gamma(U,V))\text{-predictable}
    	\\
    	X\colon[0,T]\times\Omega\to\gamma(U,V)\text{ with}
    	\\
    	\int^T_0 \|X_t\|^2_{\lpn{p}{\P}{\gamma(U,V)}} \,dt
    	\in (0,\infty)
    	}
  \right\}\right)
\end{equation}
(cf., e.g., \cite[Corollary~3.10]{vvw07}).

\subsection{An auxiliary lemma}

Throughout this article we frequently use the following elementary lemma (see, e.g., \cite[Lemma~2.3]{bvvw08} and~\cite[Lemma~2.2]{CoxJentzenKurniawanPusnik2016}).

\begin{lemma}
\label{lem:gamma.estimate}
 	Consider the notation in Section~\ref{sec:notation}, 
 	let $ ( U, \langle \cdot, \cdot \rangle_U, \left \| \cdot \right \|_U ) $
 	be a separable $ \R $-Hilbert space, 
 	let $ ( V, \left \| \cdot \right \|_V ) $ and $ ( {\mathcal{V}}, \left \| \cdot \right \|_{\mathcal{V}} ) $
 	be $ \R $-Banach spaces,
 	and let
 	$ \beta \in L^{(2)}(V, {\mathcal{V}}) $.
 	Then
 	\begin{enumerate}[(i)]
 		\item it holds for all 	$ A_1, A_2 \in \gamma (U, V ) $
 		and all 
 		orthonormal sets
 		$ \mathbb{U} \subseteq U $ of $ U $
 		that
 		there exists a unique $ v \in {\mathcal{V}} $
 		such that  
 		\begin{equation}
 		\inf_{ \substack{ I \subseteq \mathbb{U}, \\ \#_I < \infty } }
 		\sup_{ \substack{ I \subseteq J \subseteq \mathbb{U}, \\ \#_J < \infty } }
 		\bigg\| v - \smallsum\limits_{ u \in J } \beta( A_1 u, A_2 u ) \bigg\|_{\mathcal{V}} = 0
 		,
 		\end{equation}
 		\item it holds for all orthonormal bases
 		$ \mathbb{U}_1, \mathbb{U}_2 \subseteq U $
 		of $ U $ that
 		\begin{equation}
 		\sum_{ u \in \mathbb{U}_1 }
 		\beta( A_1 u, A_2 u )
 		=
 		\sum_{ u \in \mathbb{U}_2 }
 		\beta( A_1 u, A_2 u )
 		,
 		\end{equation}
 		\item 
 		it holds for all
 		$ A_1, A_2 \in \gamma (U, V ) $ 
 		and all
 		orthonormal sets $ \mathbb{U} \subseteq U $ of $ U $ that
 		\begin{equation}
 		\bigg \| \smallsum\limits_{u \in \mathbb{U} } \beta( A_1 u, A_2 u ) \bigg \|_{\mathcal{V}}
 		\leq
 		\| \beta \|_{L^{(2)}(V, {\mathcal{V}})}
 		\| A_1 \|_{\gamma(U,V)}
 		\| A_2 \|_{\gamma(U,V)}
 		,
 		\end{equation}
 		and
 		\item it holds for all 
 		orthonormal sets $ \mathbb{U} \subseteq U $ of $ U $
 		that
 		\begin{equation}
 		\begin{split}
 		\bigg( 
 		\gamma(U,V) \times \gamma(U,V) \ni (A_1, A_2)
 		\mapsto
 		\smallsum\limits_{u \in \mathbb{U} }
 		\beta ( A_1 u, A_2 u ) \in {\mathcal{V}}
 		\bigg)\in L^{(2)}( \gamma(U, V), {\mathcal{V}} ) 
 		.
 		\end{split}
 		\end{equation}
 	\end{enumerate}
\end{lemma}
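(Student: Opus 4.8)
The plan is to reduce everything to the finite-dimensional/finitely-supported case and then pass to the limit, using the key structural fact that $\gamma$-radonifying operators $A\in\gamma(U,V)$ admit a representation $Au=\sum_{k}\langle u,e_k\rangle_U\, x_k$ along any orthonormal basis $(e_k)$ of $U$, with $\sum_k \gamma_k x_k$ convergent in $L^2(\Omega_0;V)$ for an i.i.d.\ standard Gaussian sequence $(\gamma_k)$ on an auxiliary probability space $\Omega_0$, and $\|A\|_{\gamma(U,V)}^2 = \E_{\Omega_0}\|\sum_k\gamma_k x_k\|_V^2$. For part~(iii), the main quantitative estimate, I would fix a finite $I\subseteq\mathbb{U}$, extend $I$ to an orthonormal basis of the closed span of $\mathbb{U}$ if needed (or just work on the subspace spanned by $I$), and write $A_j u = \sum_{u'\in I}\langle u,u'\rangle_U (A_j u')$ for $u\in I$; then $\sum_{u\in I}\beta(A_1 u, A_2 u)$ is a bilinear expression in the finite families $(A_1 u')_{u'\in I}$, $(A_2 u')_{u'\in I}$. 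Using bilinearity of $\beta$ and the bound $\|\beta(v_1,v_2)\|_{\mathcal V}\le \|\beta\|_{L^{(2)}(V,\mathcal V)}\|v_1\|_V\|v_2\|_V$ together with the rotation-invariance of Gaussian vectors, I would get $\|\sum_{u\in I}\beta(A_1 u,A_2 u)\|_{\mathcal V}\le \|\beta\|_{L^{(2)}(V,\mathcal V)}\,\big(\E_{\Omega_0}\|\sum_{u'\in I}\gamma_{u'}A_1 u'\|_V^2\big)^{1/2}\big(\E_{\Omega_0}\|\sum_{u'\in I}\gamma_{u'}A_2 u'\|_V^2\big)^{1/2}$ by Cauchy--Schwarz, and each factor is bounded by $\|A_j\|_{\gamma(U,V)}$ by the ideal property (restriction to a subspace does not increase the $\gamma$-norm). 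This proves the estimate for finite $\mathbb{U}$; for general orthonormal $\mathbb{U}$ the same bound applies to every finite $I\subseteq\mathbb{U}$ uniformly.

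For part~(i), existence and uniqueness of the limit $v$: the displayed condition is exactly the statement that the net $\big(\sum_{u\in J}\beta(A_1 u,A_2 u)\big)_{J}$ indexed by finite subsets $J\subseteq\mathbb{U}$ is Cauchy in $\mathcal V$ (hence convergent, $\mathcal V$ being complete), with $v$ its limit; uniqueness is then immediate. To verify the Cauchy property I would again use the Gaussian representation: for finite disjoint $J'$, the partial sum $\sum_{u\in J'}\beta(A_1 u,A_2 u)$ is controlled, via the argument of part~(iii) applied on $\operatorname{span}(J')$, by $\|\beta\|_{L^{(2)}}\|A_1\mathbbm{1}_{J'}\|_{\gamma}\|A_2\mathbbm{1}_{J'}\|_{\gamma}$ where $A_j\mathbbm{1}_{J'}$ denotes the compression of $A_j$ to $\operatorname{span}(J')$; since $\sum_{u\in\mathbb{U}}\|A_j u\|$-type Gaussian tails vanish (i.e.\ $\|A_j\mathbbm{1}_{J'}\|_{\gamma}\to 0$ as the finite sets exhaust $\mathbb{U}$, by the definition of the $\gamma$-norm as a convergent Gaussian series), the tail sums go to zero and the net is Cauchy. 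Part~(ii) is the basis-independence: it follows from part~(i) once one notes that for orthonormal \emph{bases} $\mathbb U_1,\mathbb U_2$ the two limits can be computed via a common refinement argument, or more directly by the standard fact that the trace-type bilinear sum $\sum_u \beta(A_1 u, A_2 u)$ is invariant under orthogonal transformations of $U$ — one reduces to the finite-dimensional case, where this is the usual invariance of $\sum_i \beta(A_1 e_i, A_2 e_i)$ under change of orthonormal basis, and then passes to the limit using part~(i) together with the uniform tail estimate just described.

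Finally, part~(iv): having established in part~(iii) that the bilinear map $(A_1,A_2)\mapsto \sum_{u\in\mathbb U}\beta(A_1 u,A_2 u)$ is well defined $\gamma(U,V)\times\gamma(U,V)\to\mathcal V$ and satisfies $\|\cdot\|_{\mathcal V}\le \|\beta\|_{L^{(2)}(V,\mathcal V)}\|A_1\|_{\gamma(U,V)}\|A_2\|_{\gamma(U,V)}$, it only remains to check bilinearity (clear, since $\beta$ is bilinear and $A\mapsto A u$ is linear, and the limit operation in part~(i) is linear) and continuity, which is exactly the displayed norm bound: this places the map in $L^{(2)}(\gamma(U,V),\mathcal V)$. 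The main obstacle, and the only place requiring genuine care, is part~(iii)/the Cauchy estimate in part~(i): one must correctly invoke the Gaussian-series characterization of $\gamma(U,V)$ and the fact that compressing to a finite-dimensional subspace is norm-nonincreasing, so that the rotation-invariance/Cauchy--Schwarz manipulation legitimately yields the product $\|A_1\|_{\gamma}\|A_2\|_{\gamma}$ rather than something weaker; once that quantitative step is in hand, parts~(i), (ii), (iv) are routine limiting arguments.
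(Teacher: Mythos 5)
The paper does not actually prove Lemma~\ref{lem:gamma.estimate}; it merely cites it (see the remark preceding the lemma, which points to \cite[Lemma~2.3]{bvvw08} and \cite[Lemma~2.2]{CoxJentzenKurniawanPusnik2016}). Your sketch is nonetheless essentially the standard argument used in those references --- Gaussian trace formula plus Cauchy--Schwarz plus the ideal property --- so there is no ``different route'' to compare against; I will just flag two places where the sketch is imprecise.

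First, in the quantitative step of part~(iii) you ascribe the key manipulation to ``rotation-invariance of Gaussian vectors.'' What is actually being used there is the covariance identity $\ES[\gamma_u\gamma_{u'}]=\delta_{u,u'}$, which for a finite orthonormal set $I$ and an i.i.d.\ standard Gaussian family $(\gamma_u)_{u\in I}$ on $(\Omega_0,\mathcal{F}_0,\P_0)$ gives, by bilinearity of $\beta$ and Fubini, the identity
\begin{equation}
\smallsum\limits_{u\in I}\beta(A_1 u, A_2 u)
=
\ES_{\P_0}\!\Big[\beta\Big(\smallsum\limits_{u\in I}\gamma_u A_1 u,\ \smallsum\limits_{u\in I}\gamma_u A_2 u\Big)\Big];
\end{equation}
Cauchy--Schwarz in $L^2(\P_0)$, the continuity bound for $\beta$, and the ideal property / Kahane contraction then yield the asserted estimate with $\|A_1\|_{\gamma(U,V)}\|A_2\|_{\gamma(U,V)}$ on the right. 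Stating this identity explicitly is important, since it is also what makes the tail estimate in part~(i) and the limit passage in part~(ii) run; ``rotation-invariance'' is the heuristic name for basis-independence, which is not what drives the Cauchy--Schwarz step.

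Second, in part~(ii) the proposed ``common refinement'' argument does not work: two orthonormal bases of an infinite-dimensional Hilbert space generally admit no common orthonormal refinement, so you cannot reduce (ii) to comparing two finite sums over a shared index set. The correct route (which you also gesture at) is the one that actually uses rotation-invariance: the joint law of $(A_1W,A_2W)$, where $W$ is the $U$-cylindrical Gaussian built along a given orthonormal basis, is basis-independent, and by the identity above together with part~(i) and the $L^2$-convergence of the Gaussian series, $\smallsum_{u\in\mathbb{U}_j}\beta(A_1u,A_2u)=\ES_{\P_0}[\beta(A_1W,A_2W)]$ for $j\in\{1,2\}$. With these two clarifications your argument is complete and agrees with the cited proofs.
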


\section{Strong a priori estimates for SPDEs}
\label{sec:strong_a_priori}

\subsection{Setting}
\label{sec:setting_strong_apriori}

Assume the setting in Section~\ref{sec:global_setting}, 
let 
$ p \in [ 2, \infty ) $, 
$ \vartheta \in [ 0, 1 ) $, 
$ \driftC, \diffusionC \in [ 0, \infty ) $, 
let
$
  X
  \colon
  [ 0, T ] \times \Omega
  \to V
$ 
be a stochastic process 
with 
$
  \sup_{ s \in [ 0, T ] } 
  \| X_s \|_{ \lpn{ p }{ \P }{ V } }
  < \infty
$, 
and for every $ t \in ( 0, T ] $ let 
$
  Y^t \colon [ 0, t ] \times \Omega
  \to V
$
and
$
  Z^t \colon [ 0, t ] \times \Omega
  \to \gamma( U, V )
$ 
be $ ( \mathcal{F}_s )_{ s \in [ 0, t ] } $-predictable stochastic processes 
which satisfy for all 
$ s \in ( 0, t ) $ 
that 
\begin{equation}
\label{eq:apriori_assumption}
    \|
    Y^t_s
    \|_{ \lpn{p}{\P}{V} }
    \leq
  \tfrac{
    \driftC
    \sup_{ u \in [ 0, s ] }
    \| X_u \|_{ \lpn{ p }{ \P }{ V } }
  }{
    ( t - s )^{ \vartheta }
  }
  \qquad 
  \text{and}
  \qquad
    \|
    Z^t_s
    \|_{ \lpn{p}{\P}{\gamma(U,V)} }
    \leq
  \tfrac{
    \diffusionC
    \sup_{ u \in [ 0, s ] }
    \| X_u \|_{ \lpn{ p }{ \P }{ V } }
  }{
    ( t - s )^{ \nicefrac{ \vartheta }{ 2 } }
  }
  .
\end{equation}

\subsection{A strong a priori estimate}
\label{sec:strong.estimate}

\begin{proposition}[A strong a priori estimate]
\label{prop:strong_apriori_estimate}
Assume the setting in Section~\ref{sec:setting_strong_apriori}. 
Then 
\begin{enumerate}[(i)]
\item
it holds for all $ t \in [0,T] $ that
$
  \P\big(
    \int_0^t
    \| Y^t_s \|_V
    +
    \| Z^t_s \|^2_{ \gamma(U,V) }
    \, ds
    < \infty
  \big)
  = 1
$
and 
\item
it holds that
\begin{equation}
\label{eq:apriori_estimate}
\begin{split}
&
  \sup_{ t \in [ 0, T ] }
  \| X_t \|_{ \lpn{ p }{ \P }{ V } }
\leq
  \sqrt{2} \,
  \mathcal{E}_{ ( 1 - \vartheta ) }
  \!\left[
    \tfrac{ 
      \driftC 
      \sqrt{2} \, 
      T^{ ( 1 - \vartheta ) } 
    }{ 
      \sqrt{ 1 - \vartheta } 
    }
    +
    \diffusionC 
    \BDG{p}{V}
    \sqrt{ 
    	2
    	T^{ ( 1 - \vartheta ) } 
    }
  \right]
\\ & \cdot
  \sup_{ t \in [ 0, T ] }
  \left\|
    X_t
    -
    \left[
    \int^t_0
    Y^t_s
    \, ds
    +
    \int^t_0
    Z^t_s
    \, dW_s
    \right]
  \right\|_{ \lpnb{ p }{ \P }{ V } }
\leq
  \left[
    1
    +
    \tfrac{
      \driftC \, T^{ ( 1 - \vartheta ) }
    }{ 
      ( 1 - \vartheta ) 
    }
    +
    \tfrac{
      \diffusionC \BDG{p}{V}
      \sqrt{
        T^{ ( 1 - \vartheta ) }
      }
    }{ 
      \sqrt{ 
        ( 1 - \vartheta ) 
      }
    }
  \right]
\\ & \cdot
  \sqrt{2} \,
  \mathcal{E}_{ ( 1 - \vartheta ) }\!\left[
    \tfrac{ 
      \driftC 
      \sqrt{2} \, 
      T^{ ( 1 - \vartheta ) } 
    }{ 
      \sqrt{ 1 - \vartheta } 
    }
    +
    \diffusionC 
    \BDG{p}{V}
    \sqrt{ 
      2
      T^{ ( 1 - \vartheta ) } 
    }
  \right]
  \sup_{ t \in [ 0, T ] }
  \|
    X_t
  \|_{ \lpn{ p }{ \P }{ V } }
  < \infty
  .
\end{split}
\end{equation}
\end{enumerate}
\end{proposition}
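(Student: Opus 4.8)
The plan is to prove item~(i) first, then the second inequality in~\eqref{eq:apriori_estimate}, and finally the first inequality in~\eqref{eq:apriori_estimate} by a singular Gronwall argument; the concluding ``$<\infty$'' in~\eqref{eq:apriori_estimate} is immediate from the hypothesis $\psi:=\sup_{t\in[0,T]}\|X_t\|_{\lpn{p}{\P}{V}}<\infty$. For item~(i), predictability of $Y^t$ and $Z^t$ makes $(s,\omega)\mapsto\|Y^t_s(\omega)\|_V$ and $(s,\omega)\mapsto\|Z^t_s(\omega)\|^2_{\gamma(U,V)}$ jointly measurable, so Tonelli's theorem, the elementary estimates $\|\cdot\|_{\lpnb{1}{\P}{V}}\le\|\cdot\|_{\lpnb{p}{\P}{V}}$ and $\|\cdot\|^2_{\lpnb{2}{\P}{\gamma(U,V)}}\le\|\cdot\|^2_{\lpnb{p}{\P}{\gamma(U,V)}}$, the a priori bounds~\eqref{eq:apriori_assumption}, and $\vartheta<1$ yield
\[
  \ES\!\left[\int_0^t\|Y^t_s\|_V+\|Z^t_s\|^2_{\gamma(U,V)}\,ds\right]
  \le
  \tfrac{(\driftC\,\psi+\diffusionC^2\,\psi^2)\,t^{(1-\vartheta)}}{1-\vartheta}
  <\infty
  ,
\]
which is item~(i); in particular $\int_0^t Y^t_s\,ds$ is a $\P$-a.s.\ well-defined $V$-valued Bochner integral, and, after extending $Z^t$ by $0$ to $[0,T]$ (which preserves $(\mathcal{F}_s)_{s\in[0,T]}$-predictability), item~(i) and the type~2 property of $V$ render $\int_0^t Z^t_s\,dW_s$ a well-defined $\lpnb{p}{\P}{V}$-valued random variable (with the integrals read as $0$ for $t=0$). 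Abbreviate $D:=\sup_{t\in[0,T]}\|X_t-[\int_0^t Y^t_s\,ds+\int_0^t Z^t_s\,dW_s]\|_{\lpnb{p}{\P}{V}}$.

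For the second inequality, the triangle inequality for Bochner integrals gives $\|\int_0^t Y^t_s\,ds\|_{\lpnb{p}{\P}{V}}\le\int_0^t\|Y^t_s\|_{\lpn{p}{\P}{V}}\,ds$, and the definition of $\BDG{p}{}$ in Section~\ref{sec:global_setting}, applied to the zero-extension of $Z^t$ (and, in the degenerate case $\int_0^t\|Z^t_s\|^2_{\lpn{p}{\P}{\gamma(U,V)}}\,ds=0$, in which $\int_0^t Z^t_s\,dW_s=0$, used directly), gives $\|\int_0^t Z^t_s\,dW_s\|_{\lpnb{p}{\P}{V}}\le\BDG{p}{V}(\int_0^t\|Z^t_s\|^2_{\lpn{p}{\P}{\gamma(U,V)}}\,ds)^{\nicefrac{1}{2}}$. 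Plugging in~\eqref{eq:apriori_assumption} and $\int_0^t(t-s)^{-\vartheta}\,ds=\tfrac{t^{(1-\vartheta)}}{1-\vartheta}\le\tfrac{T^{(1-\vartheta)}}{1-\vartheta}$, combining with $\|X_t-[\int_0^t Y^t_s\,ds+\int_0^t Z^t_s\,dW_s]\|_{\lpnb{p}{\P}{V}}\le\|X_t\|_{\lpn{p}{\P}{V}}+\|\int_0^t Y^t_s\,ds\|_{\lpnb{p}{\P}{V}}+\|\int_0^t Z^t_s\,dW_s\|_{\lpnb{p}{\P}{V}}$, and taking the supremum over $t\in[0,T]$ yields $D\le(1+\tfrac{\driftC\,T^{(1-\vartheta)}}{1-\vartheta}+\tfrac{\diffusionC\,\BDG{p}{V}\sqrt{T^{(1-\vartheta)}}}{\sqrt{1-\vartheta}})\,\psi$; multiplying both sides by the positive number $\sqrt{2}\,\mathcal{E}_{(1-\vartheta)}[\cdots]$ (with argument as in~\eqref{eq:apriori_estimate}) gives the second inequality in~\eqref{eq:apriori_estimate}, whose right-hand side is finite since $\psi<\infty$.

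For the first inequality, set $\phi(t):=\sup_{r\in[0,t]}\|X_r\|_{\lpn{p}{\P}{V}}$; then $\phi$ is nondecreasing, Borel measurable, and bounded by $\phi(T)=\psi<\infty$. Repeating the estimates of the previous paragraph with $t$ replaced by $r\in[0,t]$, using $\|Y^r_s\|_{\lpn{p}{\P}{V}}\le\driftC(r-s)^{-\vartheta}\phi(s)$ and $\|Z^r_s\|_{\lpn{p}{\P}{\gamma(U,V)}}\le\diffusionC(r-s)^{-\nicefrac{\vartheta}{2}}\phi(s)$ from~\eqref{eq:apriori_assumption}, together with $\|X_r-[\int_0^r Y^r_s\,ds+\int_0^r Z^r_s\,dW_s]\|_{\lpnb{p}{\P}{V}}\le D$, gives
\[
  \|X_r\|_{\lpn{p}{\P}{V}}
  \le
  D
  +
  \driftC\int_0^r(r-s)^{-\vartheta}\phi(s)\,ds
  +
  \diffusionC\,\BDG{p}{V}\Big(\int_0^r(r-s)^{-\vartheta}\phi(s)^2\,ds\Big)^{\!\nicefrac{1}{2}}
  .
\]
Since $\phi$ is nondecreasing, the substitution $s\mapsto r-s$ shows the right-hand side is nondecreasing in $r$, so taking $\sup_{r\in[0,t]}$ produces the same bound with $\phi(t)$ in place of $\|X_r\|_{\lpn{p}{\P}{V}}$ and $t$ in place of $r$. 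Estimating the first integral by Cauchy--Schwarz as $\int_0^t(t-s)^{-\vartheta}\phi(s)\,ds\le(\tfrac{T^{(1-\vartheta)}}{1-\vartheta})^{\nicefrac{1}{2}}(\int_0^t(t-s)^{-\vartheta}\phi(s)^2\,ds)^{\nicefrac{1}{2}}$, and then using $(a+b)^2\le2a^2+2b^2$, gives for every $t\in[0,T]$ that
\[
  \phi(t)^2
  \le
  2D^2
  +
  2\Big(\tfrac{\driftC\sqrt{T^{(1-\vartheta)}}}{\sqrt{1-\vartheta}}+\diffusionC\,\BDG{p}{V}\Big)^{\!2}\int_0^t(t-s)^{-\vartheta}\phi(s)^2\,ds
  .
\]
This is a linear singular Gronwall inequality of order $(1-\vartheta)\in(0,1]$ for the bounded measurable function $t\mapsto\phi(t)^2$; the generalized Gronwall lemma underlying the functions $\mathcal{E}_r$ (cf.\ \cite[Chapter~7]{h81} and \cite[Section~1.2]{ConusJentzenKurniawan2014arXiv}), together with the algebraic identity $2\,T^{(1-\vartheta)}\big(\tfrac{\driftC\sqrt{T^{(1-\vartheta)}}}{\sqrt{1-\vartheta}}+\diffusionC\BDG{p}{V}\big)^{\!2}=\big(\tfrac{\driftC\sqrt{2}\,T^{(1-\vartheta)}}{\sqrt{1-\vartheta}}+\diffusionC\BDG{p}{V}\sqrt{2\,T^{(1-\vartheta)}}\big)^{\!2}$, then yields $\phi(T)^2\le2D^2\big(\mathcal{E}_{(1-\vartheta)}\big[\tfrac{\driftC\sqrt{2}\,T^{(1-\vartheta)}}{\sqrt{1-\vartheta}}+\diffusionC\BDG{p}{V}\sqrt{2\,T^{(1-\vartheta)}}\big]\big)^{\!2}$. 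Taking square roots and recalling $\phi(T)=\psi$ gives the first inequality in~\eqref{eq:apriori_estimate}.

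I expect the main work to lie in two places. The first is the bookkeeping in the Gronwall step: one must track the constant $2\big(\tfrac{\driftC\sqrt{T^{(1-\vartheta)}}}{\sqrt{1-\vartheta}}+\diffusionC\BDG{p}{V}\big)^{\!2}$ through the Mittag-Leffler-type series $\sum_{n\ge0}\tfrac{(\cdot)^n\,\Gamma(1-\vartheta)^n\,t^{n(1-\vartheta)}}{\Gamma(n(1-\vartheta)+1)}$ produced by iterating the integral inequality, bound $t\le T$, and verify via the displayed identity that the outcome is exactly $2D^2\big(\mathcal{E}_{(1-\vartheta)}[\cdot]\big)^{\!2}$ with the asserted argument. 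The second is justifying that $\int_0^t Z^t_s\,dW_s$ is a well-defined $\lpnb{p}{\P}{V}$-valued random variable obeying the $\BDG{p}{}$-bound, which rests on item~(i), predictability, the type~2 property of $(V,\|\cdot\|_V)$ (so that square-integrable-in-time $\gamma(U,V)$-valued predictable integrands are stochastically integrable on $[0,t]$), and the reduction to the non-degenerate case. The monotonicity step upgrading the pointwise bound on $\|X_r\|_{\lpn{p}{\P}{V}}$ to a bound on $\phi(t)$ is elementary but essential, since without it the Gronwall iteration does not close.
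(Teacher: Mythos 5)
Your proof is correct and follows essentially the same route as the paper: bound the deterministic and stochastic convolution terms using Cauchy--Schwarz and the $\BDG{p}{}$ inequality, use the monotonicity of $t\mapsto\sup_{r\in[0,t]}\|X_r\|_{\lpn{p}{\P}{V}}$ to close the estimate, apply $(a+b)^2\leq 2a^2+2b^2$, and invoke the singular Gronwall lemma of Henry. The only differences are cosmetic (you prove the second inequality before the first, treat item~(i) a little more directly via Tonelli, and spell out the algebraic identity relating the Gronwall constant to the argument of $\mathcal{E}_{(1-\vartheta)}$, which the paper leaves implicit).
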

\begin{proof}
We first observe that 
\eqref{eq:apriori_assumption},
H\"{o}lder's inequality,
and the assumption that
$
  \sup_{ s \in [0,T] } \| X_s \|_{ \mathcal{L}^p( \P; V ) }
$
$
  < \infty 
$
imply that for all 
$ t \in [ 0, T ] $ 
it holds that 
\begin{equation}
\label{eq:apriori_F}
\begin{split}
&
  \int^t_0
    \|
      Y^t_s
    \|_{ \lpn{ p }{ \P }{ V } }
    \,
  ds
\leq
  \driftC
  \int^t_0
  \frac{
    \sup_{ v \in [ 0, s ] }
    \|
      X_v
    \|_{ \lpn{ p }{ \P }{ V } }
  }{
    ( t - s )^\vartheta
  }
  \, ds
\\ & 
\leq
  \driftC
  \left[
  \frac{ t^{ ( 1 - \vartheta ) } }{ ( 1 - \vartheta ) }
  \int^{ t }_0
  \frac{
    \sup_{ v \in [ 0, s ] }
    \|
      X_v
    \|^2_{ \lpn{ p }{ \P }{ V } }
  }{
    ( t - s )^\vartheta
  }
  \, ds
  \right]^{ 1 / 2 }
  < \infty
\end{split}
\end{equation}
and
\begin{equation}
\label{eq:apriori_B}
\begin{split}
&
  \left[
    \int^t_0
      \|
        Z^t_s
      \|^2_{ 
        \lpn{ p }{ \P }{ \gamma(U,V) } 
      }
      \,
    ds
    \,
  \right]^{ 1 / 2 }
\leq
  \diffusionC
  \left[
    \int^{ t }_0
    \frac{
      \sup_{ v \in [ 0, s ] }
      \|
        X_v
      \|^2_{ \lpn{ p }{ \P }{ V } }
    }{
      ( t - s )^\vartheta
    }
    \, ds
  \right]^{ 1 / 2 }
  < \infty
  .
\end{split}
\end{equation}
Combining~\eqref{eq:apriori_F}--\eqref{eq:apriori_B} and the assumption that 
$ p \geq 2 $ proves that 
for all $ t \in [0,T] $ it holds that
$
  \int_0^t
  \| Y^t_s \|_{ \mathcal{L}^1( \P; V ) }
  +
  \| Z^t_s \|^2_{ \mathcal{L}^2( \P; \gamma(U,V) ) }
  \, ds
  < \infty
$.
This, in turn, shows that for all 
$ t \in [ 0, T ] $ 
it holds $ \P $-a.s.\ that 
\begin{equation}
\label{eq:finite_integral}
    \int_0^t
    \| Y^t_s \|_V
    +
    \| Z^t_s \|^2_{ \gamma(U,V) }
    \, ds
    < \infty
    .
\end{equation}
It thus remains to prove \eqref{eq:apriori_estimate} to complete
the proof of Proposition~\ref{prop:strong_apriori_estimate}.
For this observe that \eqref{eq:apriori_F}--\eqref{eq:finite_integral}
imply that for all $ t \in [0,T] $
it holds that
\begin{equation}
\label{eq:apriori_combined}
\begin{split}
&
  \left\|
    \int^t_0
    Y^t_s
    \, ds
  \right\|_{ \lpn{ p }{ \P }{ V } }
+
  \left\|
    \int^t_0
    Z^t_s
    \, dW_s
  \right\|_{ \lpnb{ p }{ \P }{ V } }
\\ & \leq
  \left[
    \frac{ \driftC \, t^{ \nicefrac{( 1 - \vartheta )}{2} } }{ \sqrt{ 1 - \vartheta } }
    +
    \diffusionC 
    \BDG{p}{V}
    \,
  \right]
  \left[
    \int^t_0
    \frac{
      \sup_{ v \in [ 0, s ] }
      \|
        X_v
      \|^2_{ \lpn{ p }{ \P }{ V } }
    }{
      ( t - s )^\vartheta
    }
    \, ds
  \right]^{ 1 / 2 }.
\end{split}
\end{equation}
Next we observe 
that for all 
$ t, u \in [ 0, T ] $ 
with $ t \leq u $ 
it holds that 
\begin{equation}
\label{eq:apriori_pre-gronwall}
\begin{split}
&
  \int^{ t }_0
  \frac{
  \sup_{ v \in [ 0, s ] }
  \big\|
    X_v
  \big\|^2_{ \lpn{ p }{ \P }{ V } }
  }{
    ( t - s )^\vartheta
  }
  \, ds
=
  \int^{ u }_{ u - t }
  \frac{
  \sup_{ v \in [ 0, s - u + t ] }
  \big\|
    X_v
  \big\|^2_{ \lpn{ p }{ \P }{ V } }
  }{
    ( u - s )^\vartheta
  }
  \, ds
\\ & \leq
  \int^{ u }_{ u - t }
  \frac{
  \sup_{ v \in [ 0, s ] }
  \big\|
    X_v
  \big\|^2_{ \lpn{ p }{ \P }{ V } }
  }{
    ( u - s )^\vartheta
  }
  \, ds
\leq
  \int^{ u }_0
  \frac{
  \sup_{ v \in [ 0, s ] }
  \big\|
    X_v
  \big\|^2_{ \lpn{ p }{ \P }{ V } }
  }{
    ( u - s )^\vartheta
  }
  \, ds
  .
\end{split}
\end{equation}
Moreover, we note that Minkowski's inequality ensures that for all 
$ t \in [ 0, T ] $ 
it holds that 
\begin{equation}
\begin{split}
\label{eq:apriori_decomposition}
&
  \| X_t \|_{ \lpn{ p }{ \P }{ V } }
\\ & \leq
  \Bigg\|
    X_t
    -
    \left[
    \int^t_0
    Y^t_s
    \, ds
    +
    \int^t_0
    Z^t_s
    \, dW_s
    \right]
  \Bigg\|_{ \lpnb{ p }{ \P }{ V } }
+
  \Bigg\|
    \int^t_0
    Y^t_s
    \, ds
  \Bigg\|_{ \lpn{ p }{ \P }{ V } }
+
  \Bigg\|
    \int^t_0
    Z^t_s
    \, dW_s
  \Bigg\|_{ \lpnb{ p }{ \P }{ V } }
  .
\end{split}
\end{equation}
Combining \eqref{eq:apriori_combined}--\eqref{eq:apriori_decomposition} with the fact that
$
  \forall \, a, b \in \R \colon \left( a + b \right)^2 \leq 2 a^2 + 2 b^2 
$
proves that for all $ u \in [ 0, T ] $ 
it holds that 
\allowdisplaybreaks
\begin{equation}
\begin{split}
  \sup_{ t \in [ 0, u ] }
  \| X_t \|^2_{ \lpn{ p }{ \P }{ V } }
&\leq
  2 \, \sup_{ t \in [ 0, T ] }
  \Bigg\|
    X_t
    -
    \left[
    \int^t_0
    Y^t_s
    \, ds
    +
    \int^t_0
    Z^t_s
    \, dW_s
    \right]
  \Bigg\|^2_{ \lpnb{ p }{ \P }{ V } }
\\ & \quad +
  2\left[
  \frac{ \driftC \, T^{ \nicefrac{( 1 - \vartheta )}{2} } }{ \sqrt{ 1 - \vartheta } }
  +
    \diffusionC 
    \BDG{p}{V}
  \right]^2
  \int^u_0
  \frac{
  \sup_{ t \in [ 0, s ] }
  \big\|
    X_t
  \big\|^2_{ \lpn{ p }{ \P }{ V } }
  }{
    ( u - s )^\vartheta
  }
  \, ds
  .
\end{split}
\end{equation}
Combining this and the assumption that
$
  \sup_{ s \in [0,T] }
  \| X_s \|_{ \mathcal{L}^p( \P; V ) } < \infty
$
with 
the generalized Gronwall lemma in Chapter 7 in Henry~\cite{h81}
(see, e.g., also Andersson et al.~\cite[Lemma~2.6]{AnderssonJentzenKurniawan2015arXiv})
proves the first inequality in \eqref{eq:apriori_estimate}.
In the next step we note that 
\eqref{eq:apriori_combined}
implies that 
\begin{equation}
\begin{split}
&
  \sup_{ t \in [0,T] }
  \left\|
    X_t
    -
    \left[
    \int^t_0
    Y^t_s
    \, ds
    +
    \int^t_0
    Z^t_s
    \, dW_s
    \right]
  \right\|_{ 
    \lpnb{ p }{ \P }{ V } 
  }
\\ & \leq
  \sup_{ t \in [0,T] }
  \left\|
    X_t
  \right\|_{
    \lpn{ p }{ \P }{ V } 
  }
  +
  \sup_{ t \in [0,T] }
  \left[
  \left\|
    \int^t_0
    Y^t_s
    \, ds
  \right\|_{
    \lpn{ p }{ \P }{ V } 
  }
  +
  \left\|
    \int^t_0
    Z^t_s
    \, dW_s
  \right\|_{
    \lpnb{ p }{ \P }{ V } 
  }
  \right]
\\ & \leq
  \left[
  1
  +
  \frac{
  \driftC \, T^{ ( 1 - \vartheta ) }
  }{ ( 1 - \vartheta ) }
  +
    \diffusionC 
    \BDG{p}{V}
  \sqrt{
  \frac{
    T^{ ( 1 - \vartheta ) }
  }{ ( 1 - \vartheta ) }
  }
  \right]
  \sup_{ t \in [ 0, T ] }
  \big\|
    X_t
  \big\|_{ \lpn{ p }{ \P }{ V } }
  .
\end{split}
\end{equation}
This proves the second inequality 
in \eqref{eq:apriori_estimate}.
The third inequality in \eqref{eq:apriori_estimate}
is an immediate consequence of the 
assumption that
\begin{equation}
  \sup_{ s \in [0,T] }
  \| X_s \|_{
    \mathcal{L}^p( \P; V )
  }
  < \infty
  .
\end{equation}
The proof of Proposition~\ref{prop:strong_apriori_estimate}
is thus completed.
\end{proof}

\section{Strong perturbations for SPDEs}
\label{sec:strong_perturbation}

\subsection{Setting}
\label{sec:setting_strong_perturbation}

Assume the setting in Section~\ref{sec:global_setting}, 
let 
$ p \in [ 2, \infty ) $, 
$ \vartheta \in [ 0, 1 ) $, 
$ \driftC, \diffusionC \in [ 0, \infty ) $, 
let
$
  X, \bar{X}
  \colon
  [ 0, T ] \times \Omega
  \to V
$ 
be 
stochastic processes 
with 
$
  \sup_{ s \in [ 0, T ] } 
  \| X_s - \bar{X}_s \|_{ \lpn{ p }{ \P }{ V } }
  < \infty
$, 
and for every $ t \in ( 0, T ] $ let 
$
  Y^t, \bar{Y}^t \colon [ 0, t ] \times \Omega
  \to V
$, 
$
  Z^t, \bar{Z}^t \colon [ 0, t ] \times \Omega
  \to \gamma(U,V)
$ 
be $ ( \mathcal{F}_s )_{ s \in [ 0, t ] } $-predictable stochastic processes which satisfy for all 
$ s \in ( 0, t ) $ that 
$
  \P\big(
  \int^t_0
  \| Y^t_r \|_V
  +
  \| \bar{Y}^t_r \|_V
  +
  \| Z^t_r \|^2_{ \gamma(U,V) }
  +
  \| \bar{Z}^t_r \|^2_{ \gamma(U,V) }
  \, dr
  < \infty
  \big)
  =1
$
and  
\begin{equation}
\label{eq:difference_assumption}
    \|
    Y^t_s - \bar{Y}^t_s
    \|_{ \lpn{p}{\P}{V} }
    \leq
  \tfrac{
    \driftC
    \sup_{ u \in [ 0, s ] }
    \| X_u - \bar{X}_u \|_{ \lpn{ p }{ \P }{ V } }
  }{
    ( t - s )^{ \vartheta }
  }
    ,
\quad
    \|
    Z^t_s - \bar{Z}^t_s
    \|_{ \lpn{p}{\P}{\gamma(U,V)} }
    \leq
  \tfrac{
    \diffusionC
    \sup_{ u \in [ 0, s ] }
    \| X_u - \bar{X}_u \|_{ \lpn{ p }{ \P }{ V } }
  }{
    ( t - s )^{ \nicefrac{ \vartheta }{ 2 } }
  }
  .
\end{equation}

\subsection{Strong perturbation estimates}
\label{sec:strong_perturbation_estimates}

The following result, Corollary~\ref{prop:general_perturb}, is an immediate consequence of Proposition~\ref{prop:strong_apriori_estimate} in Subsection~\ref{sec:strong.estimate} above.

\begin{corollary}[A strong perturbation estimate]
\label{prop:general_perturb}
Assume the setting in Section~\ref{sec:setting_strong_perturbation}. 
Then 
\begin{equation}
\label{eq:perturbation_estimate}
\begin{split}
&
  \sup_{ t \in [ 0, T ] }
  \| X_t - \bar{X}_t \|_{ \lpn{ p }{ \P }{ V } }
\leq
  \sqrt{2} \,
  \mathcal{E}_{ ( 1 - \vartheta ) }\!\left[
    \tfrac{ 
      \driftC 
      \sqrt{2} \, 
      T^{ ( 1 - \vartheta ) } 
    }{ 
      \sqrt{ 1 - \vartheta } 
    }
    +
  \diffusionC \BDG{p}{V}
    \sqrt{ 
      2
      T^{ ( 1 - \vartheta ) } 
    }
  \right]
\\ & \cdot
  \sup_{ t \in [ 0, T ] }
  \left\|
    X_t
    -
    \left[
    \int^t_0
    Y^t_s
    \, ds
    +
    \int^t_0
    Z^t_s
    \, dW_s
    \right]
+
    \left[
    \int^t_0
    \bar{Y}^t_s
    \, ds
    +
    \int^t_0
    \bar{Z}^t_s
    \, dW_s
    \right]
    -
    \bar{X}_t
  \right\|_{ \lpnb{ p }{ \P }{ V } }
\\ & \leq
  \sqrt{2} \,
  \mathcal{E}_{ ( 1 - \vartheta ) }\!\left[
    \tfrac{ 
      \driftC 
      \sqrt{2} \, 
      T^{ ( 1 - \vartheta ) } 
    }{ 
      \sqrt{ 1 - \vartheta } 
    }
    +
  \diffusionC \BDG{p}{V}
    \sqrt{ 
      2
      T^{ ( 1 - \vartheta ) } 
    }
  \right]
  \left[
  1
  +
  \tfrac{
    \driftC \, T^{ ( 1 - \vartheta ) }
  }{ 
    ( 1 - \vartheta ) 
  }
  +
  \diffusionC \BDG{p}{V}
  \sqrt{
    \tfrac{
      T^{ ( 1 - \vartheta ) }
    }{ ( 1 - \vartheta ) }
  }
  \right]
\\ & 
  \cdot
  \sup_{ t \in [ 0, T ] }
  \|
    X_t
    -
    \bar{X}_t
  \|_{ \lpn{ p }{ \P }{ V } }
  < \infty
  .
\end{split}
\end{equation}
\end{corollary}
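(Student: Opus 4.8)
The plan is to \textbf{reduce Corollary~\ref{prop:general_perturb} to Proposition~\ref{prop:strong_apriori_estimate} by a direct change of variables}, recognizing that the difference processes in Section~\ref{sec:setting_strong_perturbation} satisfy exactly the hypotheses imposed on the single process in Section~\ref{sec:setting_strong_apriori}. First I would set $\hat{X}_s := X_s - \bar{X}_s$ for $s\in[0,T]$, and for each $t\in(0,T]$ set $\hat{Y}^t_s := Y^t_s - \bar{Y}^t_s$ and $\hat{Z}^t_s := Z^t_s - \bar{Z}^t_s$ for $s\in[0,t]$. By construction $\hat{X}$ has $\sup_{s\in[0,T]}\|\hat{X}_s\|_{\lpn{p}{\P}{V}}<\infty$, each $\hat{Y}^t$ and $\hat{Z}^t$ is $(\mathcal{F}_s)_{s\in[0,t]}$-predictable (as a difference of predictable processes), and the two bounds in~\eqref{eq:difference_assumption} are precisely the bounds in~\eqref{eq:apriori_assumption} with $X$ replaced by $\hat{X}$, $Y^t$ by $\hat{Y}^t$, and $Z^t$ by $\hat{Z}^t$, and the \emph{same} constants $\vartheta,\driftC,\diffusionC$.

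Next I would verify the remaining hypothesis of Section~\ref{sec:setting_strong_apriori}, namely that for each $t$ the map $[0,t]\times\Omega\to V$ given by $\hat{Y}^t$ and the map into $\gamma(U,V)$ given by $\hat{Z}^t$ are well-defined as stochastic processes; this is immediate since differences of functions into a Banach space are functions into that Banach space. (The pathwise integrability condition $\P(\int_0^t \|Y^t_r\|_V + \|\bar Y^t_r\|_V + \|Z^t_r\|^2 + \|\bar Z^t_r\|^2\,dr < \infty)=1$ assumed in Section~\ref{sec:setting_strong_perturbation} is not even needed for the hypothesis check, but it guarantees that the stochastic and Bochner integrals $\int_0^t \bar Y^t_s\,ds$, $\int_0^t\bar Z^t_s\,dW_s$ appearing in the middle expression of~\eqref{eq:perturbation_estimate} are meaningful, and by linearity of these integrals one has $\int_0^t \hat Y^t_s\,ds = \int_0^t Y^t_s\,ds - \int_0^t\bar Y^t_s\,ds$ and likewise for the stochastic integral.) Hence the triple $(\hat{X},\hat{Y},\hat{Z})$ satisfies the setting of Section~\ref{sec:setting_strong_apriori} with the stated parameters.

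I would then simply invoke Proposition~\ref{prop:strong_apriori_estimate}(ii) applied to $(\hat{X},\hat{Y},\hat{Z})$. The first inequality of~\eqref{eq:apriori_estimate} becomes
\[
  \sup_{t\in[0,T]}\|\hat{X}_t\|_{\lpn{p}{\P}{V}}
  \leq
  \sqrt{2}\,\mathcal{E}_{(1-\vartheta)}\!\left[\tfrac{\driftC\sqrt{2}\,T^{(1-\vartheta)}}{\sqrt{1-\vartheta}}+\diffusionC\BDG{p}{V}\sqrt{2\,T^{(1-\vartheta)}}\right]
  \sup_{t\in[0,T]}\left\|\hat{X}_t - \left[\int_0^t\hat{Y}^t_s\,ds + \int_0^t\hat{Z}^t_s\,dW_s\right]\right\|_{\lpnb{p}{\P}{V}},
\]
and substituting back the definitions of $\hat{X},\hat{Y}^t,\hat{Z}^t$ together with linearity of the integrals yields exactly the first inequality of~\eqref{eq:perturbation_estimate}, since
\[
  \hat{X}_t - \left[\int_0^t\hat{Y}^t_s\,ds + \int_0^t\hat{Z}^t_s\,dW_s\right]
  = X_t - \left[\int_0^t Y^t_s\,ds + \int_0^t Z^t_s\,dW_s\right] + \left[\int_0^t \bar{Y}^t_s\,ds + \int_0^t\bar{Z}^t_s\,dW_s\right] - \bar{X}_t.
\]
Similarly, the second and third inequalities of~\eqref{eq:apriori_estimate} for $(\hat X,\hat Y,\hat Z)$ translate verbatim into the second and third inequalities of~\eqref{eq:perturbation_estimate}. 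There is essentially no obstacle here; the only point requiring a word of care is the \emph{linearity bookkeeping} for the integrals — that is, that subtracting the barred integral processes from the unbarred ones agrees with integrating the difference processes — which follows from linearity of the Bochner integral and of the stochastic integral in UMD type~$2$ Banach spaces, the latter being justified by the finiteness of all four pathwise integrals assumed in Section~\ref{sec:setting_strong_perturbation}. With this, the proof of Corollary~\ref{prop:general_perturb} is complete.
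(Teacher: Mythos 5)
Your proof is correct and follows exactly the route the paper intends: the paper declares Corollary~\ref{prop:general_perturb} an immediate consequence of Proposition~\ref{prop:strong_apriori_estimate}, and your argument makes that precise by applying the proposition to the difference processes $X-\bar X$, $Y^t-\bar Y^t$, $Z^t-\bar Z^t$ and invoking linearity of the Bochner and stochastic integrals. Nothing further is needed.
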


The next result, Corollary~\ref{cor:initial_perturbation}, follows directly from Corollary~\ref{prop:general_perturb} above.

\begin{corollary}
\label{cor:initial_perturbation}
Assume the setting in Section~\ref{sec:setting_strong_perturbation}, 
let 
$
  S \in
  \mathbbm{M}( [0,T], L(V) )
$,
and assume that for all 
$ t \in [ 0, T ] $ 
it holds $ \P $-a.s.\ that
\begin{equation}
  X_t
  =
    S_t\, X_0
  + 
    \int_0^t Y^t_s \, ds
  + 
    \int_0^t Z^t_s \, dW_s, 
    \qquad
  \bar{X}_t
  =
    S_t\, \bar{X}_0
  + 
    \int_0^t \bar{Y}^t_s \, ds
  + 
    \int_0^t \bar{Z}^t_s \, dW_s. 
\end{equation}
Then 
\begin{equation}
\begin{split}
&
  \sup_{ t \in [ 0, T ] }
  \| X_t - \bar{X}_t \|_{ \lpn{ p }{ \P }{ V } }
\\ & \leq
  \sqrt{2} \, 
  \left[
  \sup_{ t \in [ 0, T ] }
  \| S_t \|_{ L( V ) }
  \right]
  \|
    X_0
    -
    \bar{X}_0
  \|_{ \lpn{ p }{ \P }{ V } } 
  \,
  \mathcal{E}_{ ( 1 - \vartheta ) }
  \!\left[
    \tfrac{ 
      \driftC 
      \sqrt{2} 
      \, T^{ ( 1 - \vartheta ) } 
    }{ 
      \sqrt{ 1 - \vartheta } 
    }
    +
  \diffusionC \BDG{p}{V}
    \sqrt{ 
      2
      T^{ ( 1 - \vartheta ) } 
    }
  \right]
  .
\end{split}
\end{equation}
\end{corollary}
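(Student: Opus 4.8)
The plan is to obtain the claim as a direct consequence of Corollary~\ref{prop:general_perturb}, using the assumed mild-type representations of $X$ and $\bar{X}$ to rewrite the ``perturbation term'' in the middle line of~\eqref{eq:perturbation_estimate}.

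Since we are working in the setting of Section~\ref{sec:setting_strong_perturbation}, the hypotheses of Corollary~\ref{prop:general_perturb} are in force, so the first inequality in~\eqref{eq:perturbation_estimate} yields
\begin{equation*}
  \sup_{ t \in [ 0, T ] }
  \| X_t - \bar{X}_t \|_{ \lpn{ p }{ \P }{ V } }
  \leq
  \sqrt{2} \,
  \mathcal{E}_{ ( 1 - \vartheta ) }\!\left[
    \tfrac{ \driftC \sqrt{2} \, T^{ ( 1 - \vartheta ) } }{ \sqrt{ 1 - \vartheta } }
    +
    \diffusionC \BDG{p}{V} \sqrt{ 2 T^{ ( 1 - \vartheta ) } }
  \right]
  \cdot
  \mathcal{D}
  ,
\end{equation*}
where $\mathcal{D} \in [0,\infty]$ denotes the supremum over $t \in [0,T]$ of the $\lpnb{p}{\P}{V}$-norm of
$
  X_t - [\int_0^t Y^t_s \, ds + \int_0^t Z^t_s \, dW_s] + [\int_0^t \bar{Y}^t_s \, ds + \int_0^t \bar{Z}^t_s \, dW_s] - \bar{X}_t
$.

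Next I would simplify $\mathcal{D}$. By the two assumed representations it holds, for every $t \in [0,T]$, $\P$-a.s.\ that $X_t - \int_0^t Y^t_s \, ds - \int_0^t Z^t_s \, dW_s = S_t X_0$ and $\bar{X}_t - \int_0^t \bar{Y}^t_s \, ds - \int_0^t \bar{Z}^t_s \, dW_s = S_t \bar{X}_0$, where the Bochner and stochastic integrals are well defined thanks to the integrability hypothesis in Section~\ref{sec:setting_strong_perturbation}. Subtracting these two identities and using that $\P$-a.s.\ equal $V$-valued random variables agree as elements of $\lpnb{p}{\P}{V}$, one gets $\mathcal{D} = \sup_{t \in [0,T]} \| S_t( X_0 - \bar{X}_0 ) \|_{ \lpnb{ p }{ \P }{ V } }$. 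The elementary bound $\| S_t( X_0 - \bar{X}_0 ) \|_{ \lpnb{ p }{ \P }{ V } } \leq \| S_t \|_{ L(V) } \| X_0 - \bar{X}_0 \|_{ \lpn{ p }{ \P }{ V } }$ (valid for each $t \in [0,T]$), together with the standing assumption $\| X_0 - \bar{X}_0 \|_{ \lpn{ p }{ \P }{ V } } \leq \sup_{ s \in [0,T] } \| X_s - \bar{X}_s \|_{ \lpn{ p }{ \P }{ V } } < \infty$, then gives $\mathcal{D} \leq [\sup_{ t \in [0,T] } \| S_t \|_{ L(V) }]\, \| X_0 - \bar{X}_0 \|_{ \lpn{ p }{ \P }{ V } }$; note that if $\sup_{ t \in [0,T] } \| S_t \|_{ L(V) } = \infty$ the asserted inequality is vacuously true.

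Inserting this bound for $\mathcal{D}$ into the display above gives exactly the assertion, which completes the proof. I do not anticipate any genuine obstacle here: the substance is entirely carried by Corollary~\ref{prop:general_perturb}, and the only point requiring (a line of) care is the passage from the $\P$-a.s.\ identities to the corresponding identities of $\lpnb{p}{\P}{V}$-equivalence classes, which legitimizes the computation of $\mathcal{D}$.
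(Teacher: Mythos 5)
Your proof is correct and follows exactly the route the paper intends: the paper states that Corollary~\ref{cor:initial_perturbation} ``follows directly from Corollary~\ref{prop:general_perturb}'' without giving further detail, and your argument — applying the first inequality of~\eqref{eq:perturbation_estimate}, simplifying the middle term via the assumed mild representations to $S_t(X_0-\bar{X}_0)$, and bounding by $[\sup_t\|S_t\|_{L(V)}]\,\|X_0-\bar{X}_0\|_{\lpn{p}{\P}{V}}$ — is precisely that verification.
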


\section{Strong convergence of mollified solutions for SPDEs}
\label{sec:strong_convergence}

\subsection{Setting}
\label{sec:setting_strong_convergence}

Assume the setting in Section~\ref{sec:global_setting}, 
let
$
A \colon D(A)
\subseteq
V \rightarrow V
$
be a generator of a strongly continuous analytic semigroup
with 
$
\operatorname{spectrum}( A )
\subseteq
\{
z \in \mathbb{C}
\colon
\text{Re}( z ) < \eta
\}
$,
let
$
(
V_r
,
\left\| \cdot \right\|_{ V_r }
)
$,
$ r \in \R $,
be a family of interpolation spaces associated to
$
\eta - A
$ (cf., e.g., \cite[Section~3.7]{sy02}), 
let 
$ p \in [ 2, \infty ) $, $ \vartheta \in [0,1) $, 
$
  \varPi \in \mathcal{M}\big( \mathcal{B}( [ 0, T ] ), \mathcal{B}( [ 0, T ] ) \big)
$,
$ 
  ( \groupC_r )_{ 
    r \in 
    [ 0, 1 ] 
  } 
  \subseteq [ 1 , \infty ) 
$, 
$
  F \in 
  \operatorname{Lip}^0( V , V_{ - \vartheta } ) 
$, 
$
  B \in 
  \operatorname{Lip}^0( 
    V, 
    \gamma( 
      U, 
      V_{ 
        - \nicefrac{ \vartheta }{ 2 } 
      } 
    ) 
  ) 
$, 
$ 
  L \in 
  \mathcal{M}\big( 
    \mathcal{B}( \angle ) , 
    \mathcal{B}( L( V_{ - 1 } ) )
  \big)
$ 
satisfy for all $ t \in [ 0, T ] $ 
that 
$ \varPi( t ) \leq t $
and for all 
$ ( s, t ) \in \big( \angle \cap (0,T]^2 \big) $, 
$ \rho \in [ 0, 1 ) $ 
that 
$
  L_{ 0, t }( V ) \subseteq V
$, 
$
  L_{ s, t }( V_{ -\rho } ) \subseteq V
$, 
and  
$
  \|
    L_{ s, t }
  \|_{ L( V_{ - \rho }, V ) }
  \leq
  \groupC_{ \rho }
  \,
  ( t - s )^{ - \rho }
$, 
let 
$
\SGchi{r}
\in [1,\infty)
$, 
$r\in[0,1]$, 
be the real numbers 
which satisfy for all 
$r\in[0,1]$
that 
$
\SGchi{r}
=
\max\{
1
,
\sup_{ t \in (0,T] }
t^r
\,
\|
( \eta - A )^r
e^{ t A }
\|_{ L( V ) }
,
\sup_{ t \in (0,T] }
t^{-r}
\,
\|
( \eta - A )^{ - r }
( e^{ t A } - \operatorname{Id}_V )
\|_{ L( V ) }
\}
$
(cf., e.g., \cite[Lemma~11.36]{rr93}), 
and let 
$ 
  Y^\kappa \colon [0,T] \times \Omega \to V
$, 
$ \kappa \in [ 0, T ] $, 
be $ ( \mathcal{F}_t )_{ t \in [0,T] } $-predictable 
stochastic processes which satisfy 
for all $ \kappa \in [0,T] $
that
$
  \sup_{ t \in [0,T] }
  \| Y^\kappa_{\Pi(t)} \|_{ \lpn{p}{\P}{V} } 
$
$
  < \infty
$ 
and which satisfy 
that for all 
$ \kappa \in [0,T] $,
$ t \in (0,T] $ 
it holds $ \P $-a.s.\ that 
$
  Y^{ \kappa }_0 = Y^0_0
$
and 
\begin{equation}
  Y^\kappa_t
  = 
    L_{ 0, t } \, Y^\kappa_0 
  + 
    \int_0^t L_{s,t} \, e^{ \kappa A } F( Y^\kappa_{ \varPi(s) } ) \, ds
  + 
    \int_0^t L_{s,t} \, e^{ \kappa A } B( Y^\kappa_{ \varPi(s) } ) \, dW_s
  .
\end{equation}

\subsection{A priori bounds for the non-mollified process}
\label{secstrong:a_priori}

\begin{lemma}
\label{lem:strong_apriori_bound}
Assume the setting in Section~\ref{sec:setting_strong_convergence} 
and let $ \kappa \in [ 0, T ] $. 
Then 
$
  \sup_{ t \in [0,T] }
  \| Y^\kappa_t \|_{ \lpn{p}{\P}{V} } 
$
$
  < \infty
  .
$ 
\end{lemma}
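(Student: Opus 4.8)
The plan is to apply the strong a priori estimate from Proposition~\ref{prop:strong_apriori_estimate} to the stochastic process $X = Y^\kappa$ with the role of $Y^t$ and $Z^t$ played by the processes arising in the mild formulation of $Y^\kappa$. Concretely, fix $\kappa \in [0,T]$ and, for every $t \in (0,T]$, set
\begin{equation}
  Y^t_s = L_{s,t}\, e^{\kappa A} F\big( Y^\kappa_{\varPi(s)} \big),
  \qquad
  Z^t_s = L_{s,t}\, e^{\kappa A} B\big( Y^\kappa_{\varPi(s)} \big)
\end{equation}
for $s \in (0,t)$, and note that $Y^\kappa_t = L_{0,t}\, Y^\kappa_0 + \int_0^t Y^t_s \, ds + \int_0^t Z^t_s \, dW_s$ $\P$-a.s.\ by the defining equation in Section~\ref{sec:setting_strong_convergence}. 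The first task is to verify the a priori bounds in~\eqref{eq:apriori_assumption}. Since $F \in \operatorname{Lip}^0(V, V_{-\vartheta})$ and $B \in \operatorname{Lip}^0(V, \gamma(U, V_{-\vartheta/2}))$, both $F$ and $B$ grow at most linearly, so there is a constant (depending only on the $\operatorname{Lip}^0$-norms) such that $\| F(v) \|_{V_{-\vartheta}} \le \text{const}\,(1 + \|v\|_V)$ and similarly for $B$. Combining this with the smoothing estimate $\| L_{s,t} \|_{L(V_{-\rho}, V)} \le \groupC_\rho (t-s)^{-\rho}$ (applied with $\rho = \vartheta$ and $\rho = \vartheta/2$) and with $\| e^{\kappa A} \|_{L(V)} \le \SGchi{0}$ yields bounds of exactly the form required in~\eqref{eq:apriori_assumption}, with $\vartheta$ as given and with $\driftC, \diffusionC$ absorbing $\groupC_\vartheta$, $\groupC_{\vartheta/2}$, $\SGchi{0}$, the Lipschitz constants, and the constant $1$ (handled via the crude estimate $1 + \|v\|_V \le (1 + \sup_{u}\|X_u\|_{\lpn{p}{\P}{V}})$ after noting $\sup_u \|X_u\|_{\lpn{p}{\P}{V}}$ may be assumed positive, or by a routine splitting).

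There is, however, a subtlety: Proposition~\ref{prop:strong_apriori_estimate} requires $\sup_{s \in [0,T]} \| X_s \|_{\lpn{p}{\P}{V}} < \infty$ as a hypothesis, and this is precisely (a slight strengthening of) what we want to conclude, so we cannot invoke it directly. The standing assumption only gives $\sup_{t \in [0,T]} \| Y^\kappa_{\varPi(t)} \|_{\lpn{p}{\P}{V}} < \infty$. The way around this is a truncation/localization argument: for $n \in \N$ let $\tau_n = \inf\{ t \in [0,T] : \| Y^\kappa_t \|_V > n \} \wedge T$ (or work with the stopped processes / a priori replace $Y^\kappa$ by $Y^\kappa \wedge n$ in norm), apply the a priori estimate on $[0,\tau_n]$ where the relevant supremum is finite by construction, obtain a bound uniform in $n$, and pass to the limit $n \to \infty$ via monotone convergence. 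Alternatively, and perhaps more cleanly, one runs the Gronwall argument inside the proof of Proposition~\ref{prop:strong_apriori_estimate} directly here, using the fact that on $[0, t_0]$ for small $t_0$ the process is automatically in $\lpn{p}{\P}{V}$ — but invoking the already-proved proposition on stopped processes is the shortest route. I expect this finiteness/localization step to be the main obstacle, since everything else is a bookkeeping exercise in chaining the known bounds.

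Once the hypotheses of Proposition~\ref{prop:strong_apriori_estimate} are verified on $[0,\tau_n]$, its conclusion~\eqref{eq:apriori_estimate} gives
\begin{equation}
  \sup_{t \in [0,\tau_n]} \| Y^\kappa_t \|_{\lpn{p}{\P}{V}}
  \le
  \sqrt{2}\, \mathcal{E}_{(1-\vartheta)}\!\left[
    \tfrac{\driftC \sqrt{2}\, T^{(1-\vartheta)}}{\sqrt{1-\vartheta}}
    + \diffusionC\, \BDG{p}{V}\, \sqrt{2\, T^{(1-\vartheta)}}
  \right]
  \cdot
  \sup_{t \in [0,\tau_n]}
  \big\| L_{0,t}\, Y^\kappa_0 \big\|_{\lpn{p}{\P}{V}},
\end{equation}
where I have used that $Y^\kappa_t - \int_0^t Y^t_s\,ds - \int_0^t Z^t_s\,dW_s = L_{0,t}\, Y^\kappa_0$ $\P$-a.s.\ to identify the middle term of~\eqref{eq:apriori_estimate}. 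The term $\| L_{0,t}\, Y^\kappa_0 \|_{\lpn{p}{\P}{V}}$ is bounded by $\groupC_1 \cdot t^{-1} \cdot \|Y^\kappa_0\|_{\lpn{p}{\P}{V_{-1}}}$ via the smoothing hypothesis — but this blows up as $t \downarrow 0$, so instead one uses $L_{0,t}(V) \subseteq V$ together with $Y^\kappa_0 = Y^0_0 \in \lpn{p}{\P}{V}$ and the fact that the standing assumption guarantees $\sup_t \|Y^\kappa_{\varPi(t)}\|_{\lpn{p}{\P}{V}} < \infty$, which at $t$ with $\varPi(t) = 0$ forces $\|Y^\kappa_0\|_{\lpn{p}{\P}{V}} < \infty$; combined with a uniform-in-$t$ bound on $\|L_{0,t}\|_{L(V)}$ (available by the same interpolation/semigroup bounds, or from the structure of $L$ in the intended application where $L_{s,t} = e^{(t-s)A}$ and $\|e^{tA}\|_{L(V)} \le \SGchi{0}$), this is finite and independent of $n$. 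Taking $n \to \infty$ and using $\tau_n \uparrow T$ with monotone convergence then yields $\sup_{t \in [0,T]} \| Y^\kappa_t \|_{\lpn{p}{\P}{V}} < \infty$, completing the proof.
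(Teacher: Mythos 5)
Your proposal diverges from the paper and has a genuine gap. The paper's proof is a one-pass direct estimate with no Gronwall, no localization, and no appeal to Proposition~\ref{prop:strong_apriori_estimate}: one simply writes
\begin{equation*}
\| Y^\kappa_t \|_{\lpn{p}{\P}{V}}
\leq
\| L_{0,t}\, Y^\kappa_0 \|_{\lpn{p}{\P}{V}}
+ \int_0^t \| L_{s,t}\, e^{\kappa A} F(Y^\kappa_{\Pi(s)}) \|_{\lpn{p}{\P}{V}}\,ds
+ \BDG{p}{V}\Big[ \int_0^t \| L_{s,t}\, e^{\kappa A} B(Y^\kappa_{\Pi(s)}) \|^2_{\lpn{p}{\P}{\gamma(U,V)}}\,ds \Big]^{1/2},
\end{equation*}
applies the smoothing estimates for $L_{s,t}$ and the linear growth of $F$ and $B$, and observes that the resulting bound depends on $Y^\kappa$ only through $\sup_{s\in[0,T]} \|\max\{1,\|Y^\kappa_{\Pi(s)}\|_V\}\|_{\lpn{p}{\P}{\R}} \leq 1 + \sup_{s\in[0,T]}\|Y^\kappa_{\Pi(s)}\|_{\lpn{p}{\P}{V}}$, which is finite by the standing hypothesis in Section~\ref{sec:setting_strong_convergence}. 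The structural feature you are not exploiting is that the drift and diffusion integrands involve $Y^\kappa$ only through $Y^\kappa_{\Pi(\cdot)}$, and $\Pi(t)\leq t$; since $\sup_t\|Y^\kappa_{\Pi(t)}\|_{\lpn{p}{\P}{V}}$ is assumed finite, there is no self-reference and hence no circularity to ``get around.'' The Gronwall-type machinery of Proposition~\ref{prop:strong_apriori_estimate} is simply not needed here.

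Concretely, your proposed fix via stopping times $\tau_n = \inf\{t : \|Y^\kappa_t\|_V > n\}\wedge T$ does not obviously work. First, $Y^\kappa$ is only assumed $(\mathcal{F}_t)$-predictable, not c\`adl\`ag or continuous, so $\tau_n$ is not automatically a stopping time. Second, and more seriously, mild formulations do not localize cleanly: $Y^\kappa_{t\wedge\tau_n}$ does not satisfy a decomposition of the form $L_{0,t}Y^\kappa_0 + \int_0^t Y^t_s\,ds + \int_0^t Z^t_s\,dW_s$ on $[0,\tau_n]$, because the operators $L_{s,t}$ depend on the terminal time $t$ and do not commute with stopping (this is the familiar obstruction to It\^o/Gronwall arguments for mild solutions, which is precisely why the paper structures Proposition~\ref{prop:strong_apriori_estimate} with a double-indexed family $(Y^t_s, Z^t_s)$ rather than single-indexed integrands). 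Your sketch asserts the stopped process satisfies the hypotheses of Proposition~\ref{prop:strong_apriori_estimate} ``by construction,'' but this step would require a nontrivial argument that you have not supplied. The direct estimate, by contrast, avoids the problem entirely.

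One minor observation that is correct in your write-up: the naive bound $\|L_{0,t}Y^\kappa_0\|_{\lpn{p}{\P}{V}}\leq \groupC_1 t^{-1}\|Y^\kappa_0\|_{\lpn{p}{\P}{V_{-1}}}$ blows up at $t=0$ and must be replaced by a uniform bound on $\|L_{0,t}\|_{L(V)}$; the paper uses $\SGchi{0}$ for this (consistent with the applications where $L_{s,t}=e^{(t-\Pi(s))A}$). But this detail aside, the overall route you chose is substantially longer and has an unfilled hole at the localization step, where the paper's route is immediate.
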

\begin{proof}
We observe that for all 
$ t \in ( 0, T ] $ 
it holds that 
\begin{equation}
\begin{split}
&
  \| Y^\kappa_t \|_{ \lpn{p}{\P}{V} }
\leq
  \| L_{ 0, t } \, Y^\kappa_0 \|_{ \lpn{p}{\P}{V} }
  +
    \int^t_0
  \|
    L_{s,t} \, e^{ \kappa A } F( Y^\kappa_{ \varPi(s) } )
  \|_{ \lpn{p}{\P}{V} }
    \, ds
\\ & +
  \BDG{p}{V}
  \left[
    \int^t_0
  \|
    L_{s,t} \, e^{ \kappa A } B( Y^\kappa_{ \varPi(s) } )
  \|^2_{ \lpn{p}{\P}{ \gamma(U,V) } }
    \, ds
  \right]^{ 1/2 }
\\ & \leq
  \SGchi{0} \,
  \| Y^\kappa_0 \|_{ \lpn{p}{\P}{V} }
  +
  \int^t_0
  \tfrac{
    \SGchi{0} \, \groupC_\vartheta
    \,
    \| F( Y^\kappa_{ \Pi(s) } ) \|_{ \lpn{p}{\P}{ V_{ -\vartheta } } }
  }{ ( t - s )^\vartheta }
  \, ds
\\ & \quad +
  \BDG{p}{V}
  \left[
    \int^t_0
  \tfrac{
    |\SGchi{0}|^2 \, |\groupC_{ \nicefrac{\vartheta}{2} }|^2
    \,
    \| B( Y^\kappa_{ \Pi(s) } ) \|^2_{ \lpn{p}{\P}{ \gamma( U, V_{ -\nicefrac{\vartheta}{2} } ) } }
  }{ ( t - s )^\vartheta }
    \, ds
  \right]^{ 1/2 }
\\ & \leq
  \left[
    \SGchi{0}
    +
    \tfrac{
      \SGchi{0} \, \groupC_\vartheta \, T^{ ( 1 - \vartheta ) } \,
      \| F \|_{ \operatorname{Lip}^0( V, V_{ -\vartheta } ) }
    }{ ( 1 - \vartheta ) }
    +
    \tfrac{
      \SGchi{0} \, \groupC_{ \vartheta/2 } \BDG{p}{V}
      \sqrt{
        T^{ ( 1 - \vartheta ) }
      }
      \,
      \| B \|_{ \operatorname{Lip}^0( V, \gamma( U, V_{ -\vartheta/2 } ) ) }
    }{
      \sqrt{ 1 - \vartheta }
    }
  \right]
\\ & \quad \cdot
  \sup_{ s \in [0,T] }
  \|
  \max\{
    1
    ,
    \| Y^\kappa_{ \Pi(s) } \|_V
  \}
  \|_{ \lpn{p}{\P}{\R} } 
  .
\end{split}
\end{equation}
This and the fact that 
$
  \sup_{ t \in [0,T] }
  \|
  \max\{
    1
    ,
    \| Y^\kappa_{ \Pi(t) } \|_V
  \}
  \|_{ \lpn{p}{\P}{\R} } 
  \leq
  1
  +
  \sup_{ t \in [0,T] }
  \|
  Y^\kappa_{ \Pi(t) }
  \|_{ \lpn{p}{\P}{V} } 
  < \infty
$
complete the proof of Lemma~\ref{lem:strong_apriori_bound}.
\end{proof}

\begin{proposition}[An a priori bound for the non-mollified process]
\label{prop:numerics_Lp_bound}
Assume the setting in Section~\ref{sec:setting_strong_convergence}. 
Then 
\begin{equation}
\begin{split}
&
  \sup_{ t \in [0,T] }
  \| Y^0_t \|_{
   \lpn{ p }{ \P }{ V }
  }
\leq
  \sqrt{2} 
  \,
  \bigg[
    \sup_{ t \in ( 0, T ] }
    \max\!\big\{
      1 
      , 
      \| 
        L_{ 0, t } 
      \|_{ L(V) }
    \big\}
    \,
    \| 
      Y^0_0 
    \|_{ \lpn{p}{\P}{V} }
\\ & 
    +
    \tfrac{
      \groupC_\vartheta 
      \,
      T^{ ( 1 - \vartheta ) } \,
      \| F(0) \|_{ V_{ -\vartheta } }
    }{
      ( 1 - \vartheta )
    }
    +
    \tfrac{
      \groupC_{ \vartheta / 2 } \BDG{p}{V}
      \sqrt{
        T^{ ( 1 - \vartheta ) }
      }
      \,
      \| B(0) \|_{ \gamma( U, V_{ - \vartheta / 2 } ) }
    }{
      \sqrt{
        1 - \vartheta
      }
    }
  \bigg]
\\ & \cdot
  \mathcal{E}_{ ( 1 - \vartheta ) }\!\left[
  \tfrac{ \sqrt{2} \, \groupC_\vartheta \, T^{( 1 - \vartheta )} \,   | F |_{ \operatorname{Lip}^0( V, V_{ -\vartheta } ) } }{ \sqrt{ 1 - \vartheta } }
  +
  \BDG{p}{V} \,
  \groupC_{ \nicefrac{ \vartheta }{ 2 } } \,
  \sqrt{ 2 T^{( 1 - \vartheta )} } \,
  | B |_{ 
  	\operatorname{Lip}^0( V, \gamma( U, V_{ -\nicefrac{ \vartheta }{ 2 } } ) ) 
  }
  \right]
  < \infty
  .
\end{split}
\end{equation}
\end{proposition}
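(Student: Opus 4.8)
The plan is to invoke the strong a priori estimate Proposition~\ref{prop:strong_apriori_estimate} with $ X = Y^0 $ together with a carefully chosen pair of auxiliary processes. First note that, since $ e^{ \kappa A } = \operatorname{Id}_V $ when $ \kappa = 0 $, the defining equation of $ Y^0 $ reads, $ \P $-a.s.\ for every $ t \in (0,T] $, that
\[
  Y^0_t = L_{0,t}\, Y^0_0 + \int_0^t L_{s,t}\, F( Y^0_{ \varPi(s) } ) \, ds + \int_0^t L_{s,t}\, B( Y^0_{ \varPi(s) } ) \, dW_s ,
\]
and that $ \sup_{ s \in [0,T] } \| Y^0_s \|_{ \lpn{p}{\P}{V} } < \infty $ by Lemma~\ref{lem:strong_apriori_bound} (applied with $ \kappa = 0 $). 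For $ t \in (0,T] $ and $ s \in (0,t) $ I would then set $ Y^t_s := L_{s,t}[ F( Y^0_{ \varPi(s) } ) - F(0) ] \in V $ and $ Z^t_s := L_{s,t}[ B( Y^0_{ \varPi(s) } ) - B(0) ] \in \gamma(U,V) $, using that $ L_{s,t} $ maps $ V_{ -\vartheta } $ and $ V_{ -\vartheta/2 } $ into $ V $ together with the ideal property of $ \gamma $-radonifying operators; these processes are $ ( \mathcal{F}_s )_{ s \in [0,t] } $-predictable, inherited from the predictability of $ Y^0 $ and the measurability of $ \varPi $ and of $ s \mapsto L_{s,t} $.

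Next, using $ F \in \operatorname{Lip}^0( V, V_{ -\vartheta } ) $, $ B \in \operatorname{Lip}^0( V, \gamma( U, V_{ -\vartheta/2 } ) ) $, the bound $ \| L_{s,t} \|_{ L( V_{ -\rho }, V ) } \leq \groupC_\rho ( t - s )^{ -\rho } $, and $ \varPi(s) \leq s $, one checks that \eqref{eq:apriori_assumption} holds with $ \driftC = \groupC_\vartheta\, | F |_{ \operatorname{Lip}^0( V, V_{ -\vartheta } ) } $ and $ \diffusionC = \groupC_{ \vartheta/2 }\, | B |_{ \operatorname{Lip}^0( V, \gamma( U, V_{ -\vartheta/2 } ) ) } $, so that the setting of Section~\ref{sec:setting_strong_apriori} applies with $ X = Y^0 $. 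The point of subtracting $ F(0) $ and $ B(0) $ is that the remainder term appearing in Proposition~\ref{prop:strong_apriori_estimate} then collects precisely the inhomogeneous contribution: for $ t \in (0,T] $ one has
\[
  X_t - \Big[ \int_0^t Y^t_s \, ds + \int_0^t Z^t_s \, dW_s \Big] = L_{0,t}\, Y^0_0 + \int_0^t L_{s,t}\, F(0) \, ds + \int_0^t L_{s,t}\, B(0) \, dW_s ,
\]
while for $ t = 0 $ it equals $ Y^0_0 $.

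Finally I would estimate the three summands on the right-hand side above using $ \| L_{s,t} \|_{ L( V_{ -\rho }, V ) } \leq \groupC_\rho ( t - s )^{ -\rho } $, the defining property of $ \BDG{p}{V} $, and $ \int_0^t ( t - s )^{ -\vartheta } \, ds = \tfrac{ t^{ 1 - \vartheta } }{ 1 - \vartheta } \leq \tfrac{ T^{ 1 - \vartheta } }{ 1 - \vartheta } $, to obtain
\begin{align*}
& \sup_{ t \in [0,T] } \Big\| X_t - \Big[ \int_0^t Y^t_s \, ds + \int_0^t Z^t_s \, dW_s \Big] \Big\|_{ \lpnb{p}{\P}{V} } \\
& \quad \leq \sup_{ t \in (0,T] } \max\{ 1, \| L_{0,t} \|_{ L(V) } \}\, \| Y^0_0 \|_{ \lpn{p}{\P}{V} } + \tfrac{ \groupC_\vartheta\, T^{ 1 - \vartheta }\, \| F(0) \|_{ V_{ -\vartheta } } }{ 1 - \vartheta } + \tfrac{ \groupC_{ \vartheta/2 }\, \BDG{p}{V}\, \sqrt{ T^{ 1 - \vartheta } }\, \| B(0) \|_{ \gamma( U, V_{ -\vartheta/2 } ) } }{ \sqrt{ 1 - \vartheta } } .
\end{align*}
Inserting this, together with the above values of $ \driftC $ and $ \diffusionC $, into the first inequality of \eqref{eq:apriori_estimate} yields exactly the claimed bound, and finiteness follows from Lemma~\ref{lem:strong_apriori_bound}. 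The only genuinely delicate point I anticipate is the routing of the initial value and the inhomogeneous terms into the ``$ X_t - [\cdots] $'' factor of Proposition~\ref{prop:strong_apriori_estimate} rather than into $ Y^t $ and $ Z^t $ — this is forced because \eqref{eq:apriori_assumption} demands a bound that is \emph{linear} in $ \sup_u \| X_u \|_{ \lpn{p}{\P}{V} } $ — while the remaining steps amount to bookkeeping of the constants already isolated in the proof of Proposition~\ref{prop:strong_apriori_estimate}.
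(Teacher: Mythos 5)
Your proposal is correct and takes essentially the same approach as the paper: the paper applies Corollary~\ref{prop:general_perturb} with $\bar{X}=0$, $\bar{Y}^t_s=\tilde{L}_{s,t}F(0)$, $\bar{Z}^t_s=\tilde{L}_{s,t}B(0)$, which (since that corollary is just Proposition~\ref{prop:strong_apriori_estimate} applied to the difference processes) amounts to invoking Proposition~\ref{prop:strong_apriori_estimate} with $Y^t_s=L_{s,t}[F(Y^0_{\Pi(s)})-F(0)]$ and $Z^t_s=L_{s,t}[B(Y^0_{\Pi(s)})-B(0)]$, exactly your choice. The paper then finishes with the triangle inequality on the three-term remainder $L_{0,t}Y^0_0+\int_0^t L_{s,t}F(0)\,ds+\int_0^t L_{s,t}B(0)\,dW_s$, matching your final estimate term by term.
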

\begin{proof}
Throughout this proof let 
$
  \tilde{L}
  \colon
  \{ ( t_1, t_2 ) \in [ 0, T ]^2 \colon t_1 \leq t_2 \}
  \to L( V_{-1} )
$ 
be the function which satisfies for all 
$ t_0 \in [0,T] $, 
$ ( t_1, t_2 ) \in \angle $,
$ v \in V_{ - 1 } $
that 
\begin{equation}
  \tilde{L}_{ t_1, t_2 } v = L_{ t_1, t_2 } v
  \qquad\text{and}\qquad
  \tilde{L}_{t_0,t_0} = \operatorname{Id}_{ V_{ -1 } }
  .
\end{equation}
Combining Corollary~\ref{prop:general_perturb} and Lemma~\ref{lem:strong_apriori_bound}
shows\footnote{with 
$ \bar{X}_t = 0 $,
$ \bar{Y}^t_s = \tilde{L}_{ s, t } F(0) $,
$ \bar{Z}^t_s = \tilde{L}_{ s, t } B(0) $
for $ s \in (0,t) $, $ t \in (0,T] $
in the notation of Corollary~\ref{prop:general_perturb}} 
that 
\begin{equation}
\begin{split}
\label{eq:numerics_lp_bound}
&
  \sup_{ t \in [ 0, T ] }
  \left\|
    Y^0_t
  \right\|_{ \lpn{ p }{ \P }{ V } }
\leq
  \sqrt{2}
  \sup_{ t \in [ 0, T ] }
  \left\|
    \tilde{L}_{ 0, t } \, Y^0_0
    +
    \int^t_0
    \tilde{L}_{ s, t } \,
    F(0)
    \, ds
    +
    \int^t_0
    \tilde{L}_{ s, t } \,
    B(0)
    \, dW_s
  \right\|_{ \lpnb{ p }{ \P }{ V } }
\\ & \cdot
  \mathcal{E}_{ ( 1 - \vartheta ) }\!\left[
    \tfrac{ \sqrt{2} \, T^{( 1 - \vartheta )} }{ \sqrt{ 1 - \vartheta } }
    \groupC_\vartheta \,
    | F |_{ \operatorname{Lip}^0( V, V_{ -\vartheta } ) }
    +
    \sqrt{ 2 T^{( 1 - \vartheta )} } \,
    \BDG{p}{V} \,
    \groupC_{ \nicefrac{ \vartheta }{ 2 } } \,
    | B |_{ 
      \operatorname{Lip}^0( V, \gamma( U, V_{ -\nicefrac{ \vartheta }{ 2 } } ) ) 
    }
  \right]
  .
\end{split}
\end{equation}
Combining \eqref{eq:numerics_lp_bound}
with the triangle inequality
completes the proof of Proposition~\ref{prop:numerics_Lp_bound}.
\end{proof}

\subsection{A strong convergence result}
\label{secstrong:strong}

\begin{proposition}[A bound on the difference between the mollified and the non-mollified processes]
\label{prop:strong_convergence_numerics}
Assume the setting in Section~\ref{sec:setting_strong_convergence} and let 
$ \kappa \in [ 0, T ] $, 
$ \rho \in [ 0, \frac{ 1 - \vartheta }{ 2 } ) $. 
Then 
\begin{align}
&
\label{eq:strong_convergence_numerics}
  \sup_{ t \in [0,T] }
  \left\|
    Y^0_t - Y^{ \kappa }_t
  \right\|_{ 
    \lpn{ p }{ \P }{ V } 
  }
\leq
  \tfrac{ 
    2 \, \kappa^\rho 
  }{ 
    T^\rho 
  }
  \bigg[
    \sup\nolimits_{ t \in ( 0, T ] }
    \max\!\big\{
      1
      , 
      \| 
        L_{ 0, t } 
      \|_{ L(V) }
    \big\}
    \,
    \max\{ 1, \| Y^0_0 \|_{ \lpn{p}{\P}{V} } \}
\nonumber
\\ & +
    \tfrac{
      \SGchi{\rho}
      \, 
      \groupC_\vartheta 
      \, 
      \groupC_{ \rho + \vartheta }
      \,
      T^{ ( 1 - \vartheta ) }
      \,
      \| 
        F 
      \|_{ 
        \operatorname{Lip}^0( V, V_{ - \vartheta } ) 
      }
    }{
      ( 1 - \vartheta - \rho )
    }
    +
    \tfrac{
      \BDG{p}{V} \,
      \SGchi{\rho} \, 
      \groupC_{ \vartheta / 2 } \, 
      \groupC_{ \rho + \vartheta / 2 }
      \sqrt{
        T^{ ( 1 - \vartheta ) }
      }
      \,
      \| B \|_{ 
        \operatorname{Lip}^0( 
          V, \gamma( U, V_{ - \vartheta / 2 } ) 
        ) 
      }
    }{
      \sqrt{ 
        1 - \vartheta - 2 \rho
      }
    }
  \bigg]^2
\\ & \cdot
\nonumber
\bigg|
\mathcal{E}_{ ( 1 - \vartheta ) }\bigg[
\tfrac{ 
	\sqrt{2} \, 
	T^{ ( 1 - \vartheta ) } \, 
	\SGchi{0} \, 
	\groupC_\vartheta
}{ 
\sqrt{ 1 - \vartheta } 
}
	| F |_{ 
		\operatorname{Lip}^0( V, V_{ - \vartheta } ) 
	}
+
\BDG{p}{V}
\sqrt{ 
	2
	T^{ ( 1 - \vartheta ) } 
}
\,
\SGchi{0} \, 
\groupC_{ \vartheta / 2 } \,
| B |_{ 
	\operatorname{Lip}^0( 
	V, 
	\gamma( U, V_{ - \vartheta / 2 } ) 
	) 
}
\bigg]
\bigg|^2
  .
\end{align}
\end{proposition}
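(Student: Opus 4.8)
The plan is to apply Corollary~\ref{prop:general_perturb} with the choice $X_t = Y^0_t$, $\bar X_t = Y^\kappa_t$, and, for $t \in (0,T]$, $s \in (0,t)$,
\begin{equation}
  Y^t_s = L_{s,t}\,F(Y^0_{\varPi(s)}), \quad
  \bar Y^t_s = L_{s,t}\,e^{\kappa A} F(Y^\kappa_{\varPi(s)}), \quad
  Z^t_s = L_{s,t}\,B(Y^0_{\varPi(s)}), \quad
  \bar Z^t_s = L_{s,t}\,e^{\kappa A} B(Y^\kappa_{\varPi(s)}),
\end{equation}
together with $S_t = L_{0,t}$ so that the mild-solution identities of Section~\ref{sec:setting_strong_convergence} match the hypotheses of Corollary~\ref{cor:initial_perturbation}/Corollary~\ref{prop:general_perturb}. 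Since $Y^0_0 = Y^\kappa_0$, the initial-data term drops out; Lemma~\ref{lem:strong_apriori_bound} and Proposition~\ref{prop:numerics_Lp_bound} guarantee that all the relevant $\mathcal{L}^p$-norms are finite so the corollary applies. The first step is therefore to verify that the difference processes $Y^t_s - \bar Y^t_s$ and $Z^t_s - \bar Z^t_s$ satisfy the bounds in \eqref{eq:difference_assumption} with the same $\vartheta$ and with suitable constants $\driftC, \diffusionC$; this produces a Gronwall-type prefactor of the form $\sqrt2\,\mathcal{E}_{(1-\vartheta)}[\cdots]$ controlling $\sup_t \|Y^0_t - Y^\kappa_t\|_{\lpn{p}{\P}{V}}$ in terms of the "perturbed" free term.

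The core estimate is the decomposition, for $s \in (0,t)$,
\begin{equation}
  F(Y^0_{\varPi(s)}) - e^{\kappa A} F(Y^\kappa_{\varPi(s)})
  = \big( \operatorname{Id}_V - e^{\kappa A} \big) F(Y^0_{\varPi(s)})
  + e^{\kappa A}\big( F(Y^0_{\varPi(s)}) - F(Y^\kappa_{\varPi(s)}) \big),
\end{equation}
and the analogous splitting for $B$. For the first summand one measures in $V_{-\rho-\vartheta}$: using $\|(\eta-A)^{-\rho}(e^{\kappa A} - \operatorname{Id}_V)\|_{L(V)} \le \SGchi{\rho}\,\kappa^\rho$ and $\|L_{s,t}\|_{L(V_{-\rho-\vartheta},V)} \le \groupC_{\rho+\vartheta}(t-s)^{-(\rho+\vartheta)}$ one gains the factor $\kappa^\rho$ at the cost of worsening the singularity exponent from $\vartheta$ to $\vartheta+\rho$; the constraint $\rho < \tfrac{1-\vartheta}{2}$ keeps the exponent below $1$ (resp.\ below $1$ after the $L^2$-in-time integration for the $B$-term, where one needs $\vartheta + 2\rho < 1$), so the time integrals converge and yield the $T^{1-\vartheta-\rho}/(1-\vartheta-\rho)$ and $\sqrt{T^{1-\vartheta}}/\sqrt{1-\vartheta-2\rho}$ contributions. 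The norms $\|F(Y^0_{\varPi(s)})\|_{V_{-\vartheta}} \le \|F\|_{\operatorname{Lip}^0(V,V_{-\vartheta})}\max\{1,\|Y^0_{\varPi(s)}\|_V\}$ and the a priori bound of Proposition~\ref{prop:numerics_Lp_bound} turn this into the explicit bracket raised to the power $2$ in \eqref{eq:strong_convergence_numerics} (one power from the free term, one power from the Gronwall prefactor after squaring $\mathcal{E}_{(1-\vartheta)}[\cdots]$, whence the $|\mathcal{E}_{(1-\vartheta)}[\cdots]|^2$ on the last line). For the second summand, $\|e^{\kappa A}\|_{L(V_{-\vartheta})} \le \SGchi{0}$ and the Lipschitz bound $\|F(Y^0_{\varPi(s)}) - F(Y^\kappa_{\varPi(s)})\|_{V_{-\vartheta}} \le |F|_{\operatorname{Lip}^0(V,V_{-\vartheta})}\|Y^0_{\varPi(s)} - Y^\kappa_{\varPi(s)}\|_V$ show that this part has exactly the structure \eqref{eq:difference_assumption} with $\vartheta$ unchanged and $\driftC = \SGchi{0}\groupC_\vartheta|F|_{\operatorname{Lip}^0}$, $\diffusionC = \SGchi{0}\groupC_{\vartheta/2}|B|_{\operatorname{Lip}^0}$ — this is precisely what feeds the $\mathcal{E}_{(1-\vartheta)}$ prefactor.

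Assembling: the $(\operatorname{Id}_V - e^{\kappa A})$-parts, bounded as above, constitute the free term to which Corollary~\ref{prop:general_perturb} is applied, and the $e^{\kappa A}$-Lipschitz parts are absorbed into the $\mathcal{E}_{(1-\vartheta)}$ factor via \eqref{eq:difference_assumption}. Collecting the constants, invoking Proposition~\ref{prop:numerics_Lp_bound} once more for $\sup_t\|Y^0_t\|_{\lpn{p}{\P}{V}} < \infty$, and using $\mathcal{E}_{(1-\vartheta)}[\cdot] \le |\mathcal{E}_{(1-\vartheta)}[\cdot]|^2$ where convenient to match the stated form yields \eqref{eq:strong_convergence_numerics}. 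The main obstacle is purely bookkeeping: tracking which interpolation space each term is estimated in so that the three relevant singularity exponents ($\vartheta$ for the Lipschitz part, $\vartheta+\rho$ and $\vartheta+2\rho$ for the mollification part) all stay in the integrable range, and then matching the resulting mess of constants $\SGchi{\cdot}, \groupC_\cdot, \BDG{p}{V}$ to the precise bracket in the statement — there is no conceptual difficulty once the decomposition is in place.
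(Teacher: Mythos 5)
Your conceptual picture in the second and third paragraphs is the right one, and it matches the paper's proof closely. However, the explicit choice you commit to at the start is wrong, and it matters: you set
\[
  Y^t_s = L_{s,t}\,F(Y^0_{\varPi(s)}), \qquad \bar Y^t_s = L_{s,t}\,e^{\kappa A} F(Y^\kappa_{\varPi(s)}),
\]
and likewise for $Z^t_s, \bar Z^t_s$. With this choice the quantity
\[
  X_t - \Bigl[\int_0^t Y^t_s\,ds + \int_0^t Z^t_s\,dW_s\Bigr] + \Bigl[\int_0^t \bar Y^t_s\,ds + \int_0^t \bar Z^t_s\,dW_s\Bigr] - \bar X_t
  = L_{0,t}\bigl(Y^0_0 - Y^\kappa_0\bigr) = 0
\]
identically, because both $Y^0$ and $Y^\kappa$ then match their own mild representations term by term. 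Corollary~\ref{prop:general_perturb} applied literally would thus assert $Y^0 \equiv Y^\kappa$, which is false in general. Moreover the hypothesis~\eqref{eq:difference_assumption} is \emph{not} verifiable for this choice, since
\[
  Y^t_s - \bar Y^t_s = L_{s,t}\bigl[F(Y^0_{\varPi(s)}) - e^{\kappa A} F(Y^\kappa_{\varPi(s)})\bigr]
\]
does not vanish when $Y^0 = Y^\kappa$ (the $e^{\kappa A}$ factor is not a Lipschitz-type difference), so it cannot be bounded by $\driftC\,(t-s)^{-\vartheta}\sup_u\|Y^0_u - Y^\kappa_u\|_{\lpn{p}{\P}{V}}$.

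The fix, which is what your own later paragraphs actually describe and what the paper does (cf.\ its footnote to Corollary~\ref{prop:general_perturb}), is to put the mollifier into $Y^t_s$ as well: take
\[
  Y^t_s = L_{s,t}\,e^{\kappa A} F(Y^0_{\varPi(s)}), \qquad Z^t_s = L_{s,t}\,e^{\kappa A} B(Y^0_{\varPi(s)}).
\]
Then $Y^t_s - \bar Y^t_s = L_{s,t}\,e^{\kappa A}\bigl[F(Y^0_{\varPi(s)}) - F(Y^\kappa_{\varPi(s)})\bigr]$ has the required structure with $\driftC = \groupC_\vartheta\,|e^{\kappa A}F|_{\operatorname{Lip}^0(V,V_{-\vartheta})} \le \SGchi{0}\,\groupC_\vartheta\,|F|_{\operatorname{Lip}^0(V,V_{-\vartheta})}$ (and analogously for $B$), while the free term becomes precisely
\[
  \int_0^t L_{s,t}\bigl(\operatorname{Id}_V - e^{\kappa A}\bigr) F(Y^0_{\varPi(s)})\,ds
  + \int_0^t L_{s,t}\bigl(\operatorname{Id}_V - e^{\kappa A}\bigr) B(Y^0_{\varPi(s)})\,dW_s,
\]
because the $L_{s,t}\,e^{\kappa A}$ parts of $Y^0_t$'s mild formula are exactly subtracted. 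This is what you gain the factor $\kappa^\rho$ from. Also note that Corollary~\ref{cor:initial_perturbation} with $S_t = L_{0,t}$ is not the right tool here: its conclusion is proportional to $\|X_0 - \bar X_0\|_{\lpn{p}{\P}{V}} = 0$, so it gives nothing; the relevant statement is Corollary~\ref{prop:general_perturb}, which retains the full free term. With the corrected choice of $(Y^t, Z^t)$ the rest of your argument — the $V_{-(\rho+\vartheta)}$ and $V_{-(\rho+\vartheta/2)}$ estimates for $L_{s,t}(\operatorname{Id}_V - e^{\kappa A})$, the integrability constraints $\vartheta + \rho < 1$ and $\vartheta + 2\rho < 1$, the invocation of Proposition~\ref{prop:numerics_Lp_bound} for $\sup_t\|Y^0_t\|_{\lpn{p}{\P}{V}}$ contributing the second power of $\mathcal{E}_{(1-\vartheta)}$, and the bookkeeping of constants — goes through as you describe.
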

\begin{proof}
First of all, we observe that Lemma~\ref{lem:strong_apriori_bound} 
allows us to apply Corollary~\ref{prop:general_perturb} 
to obtain\footnote{with 
$ \bar{X}_t = Y^\kappa_t $, 
$ \bar{Y}^t_s = L_{ s, t } \, e^{ \kappa A } F( Y^\kappa_{\varPi(s)} ) $, 
$ \bar{Z}^t_s = L_{ s, t } \, e^{ \kappa A } B( Y^\kappa_{\varPi(s)} ) $ 
for $ s \in ( 0, t ) $, $ t \in ( 0, T ] $ 
in the notation of Corollary~\ref{prop:general_perturb}} that 
\begin{equation}\label{eq:strong_converge_num_decompose}
\begin{split}
&
  \sup_{ t \in [0,T] }
  \left\|
    Y^0_t - Y^\kappa_t
  \right\|_{ \lpn{ p }{ \P }{ V } }
\\ & \leq
  \mathcal{E}_{ ( 1 - \vartheta ) }
  \!\left[
    \groupC_\vartheta
    \,
    | 
      e^{ \kappa A } F 
    |_{ 
      \operatorname{Lip}^0( V, V_{ -\vartheta } ) 
    }
    \tfrac{ 
      \sqrt{2} \, 
      T^{
        ( 1 - \vartheta )
      } 
    }{ 
      \sqrt{ 1 - \vartheta } 
    }
    +
    \groupC_{ 
      \nicefrac{ \vartheta }{ 2 } 
    } 
    \,
    \BDG{p}{V}
    | e^{ \kappa A } B |_{ 
      \operatorname{Lip}^0( 
        V, 
        \gamma( 
          U, 
          V_{ - \vartheta / 2 } 
        ) 
      ) 
    }
    \sqrt{ 
      2
      T^{ 
        ( 1 - \vartheta )
      } 
    }
  \right]
\\ & \cdot
  \sqrt{2}
  \sup_{ t \in [ 0, T ] }
  \left\|
    \int^t_0
    L_{ s, t } \,
    \big( \operatorname{Id}_V - e^{ \kappa A } \big)
    F( Y^0_{ \varPi( s ) } )
    \, ds
    +
    \int^t_0
    L_{ s, t } \,
    \big( \operatorname{Id}_V - e^{ \kappa A } \big)
    B( Y^0_{ \varPi( s ) } )
    \, dW_s
  \right\|_{ \lpnb{ p }{ \P }{ V } }
  .
\end{split}
\end{equation}
Moreover, we observe that for all 
$ t \in ( 0, T ] $ 
it holds that 
\begin{equation}\label{eq:strong_converge_num_F}
\begin{split}
&
  \left\|
    \int^t_0
    L_{ s, t } 
    \left( 
      \operatorname{Id}_V - e^{ \kappa A } 
    \right)
    F( Y^0_{ \varPi( s ) } )
    \, ds
  \right\|_{ \lpn{ p }{ \P }{ V } }
\leq
  \int^t_0
  \frac{
    \SGchi{\rho} \, \groupC_{ \rho + \vartheta } \, \kappa^\rho
  }{ ( t - s )^{ ( \rho + \vartheta ) } } \,
  \| F( Y^0_{ \varPi( s ) } ) \|_{ \lpn{ p }{ \P }{ V_{ -\vartheta } } }
  \, ds
\\ & \leq
  \tfrac{
    \SGchi{\rho} \, \groupC_{ \rho + \vartheta } \,
    t^{ ( 1 - \vartheta - \rho ) }
  }{
    ( 1 - \vartheta - \rho )
  }
  \,
  \| F \|_{ \operatorname{Lip}^0( V, V_{ -\vartheta } ) }
    \sup_{ s \in [ 0, T ] }
  \max\!\left\{
    1,
    \| Y^0_s \|_{ \lpn{ p }{ \P }{ V } }
  \right\}
  \kappa^\rho
  .
\end{split}
\end{equation}
In addition, Lemma~\ref{lem:strong_apriori_bound} ensures that for all 
$ t \in (0,T] $ 
it holds that 
\begin{equation}\label{eq:strong_converge_num_B}
\begin{split}
&
  \Bigg\|
    \int^t_0
    L_{ s, t } \,
    \big( 
      \operatorname{Id}_V - e^{ \kappa A } 
    \big)
    B( Y^0_{ \varPi( s ) } )
    \, dW_s
  \Bigg\|_{ \lpnb{ p }{ \P }{ V } }
\\ & \leq
  \BDG{p}{V}
  \left[
  \int^t_0
  \frac{
    |
      \SGchi{\rho} \, 
      \groupC_{ \rho + \vartheta / 2 } \, 
      \kappa^\rho
    |^2
  }{ 
    ( t - s )^{ 
      ( 2 \rho + \vartheta ) 
    } 
  } \,
  \| 
    B( Y^0_{ \varPi( s ) } ) 
  \|^2_{ 
    \lpn{ p }{ \P }{ 
      \gamma( U, V_{ - \vartheta / 2 } ) 
    } 
  }
  \, ds
  \right]^{ 1 / 2 }
\\ & \leq
  \tfrac{
    \BDG{p}{V} \,
    \SGchi{\rho} \, 
    \groupC_{ \rho + \vartheta / 2 }
    \sqrt{ 
      t^{ ( 1 - \vartheta - 2 \rho ) } 
    }
  }{
    \sqrt{ 1 - \vartheta - 2 \rho }
  }
  \,
  \| B \|_{ 
    \operatorname{Lip}^0( 
      V, 
      \gamma( U, V_{ - \vartheta / 2 } ) 
    ) 
  }
  \sup_{ s \in [ 0, T ] }
  \max\!\left\{
    1,
    \| Y^0_s \|_{ \lpn{ p }{ \P }{ V } }
  \right\}
  \kappa^\rho.
\end{split}
\end{equation}
Putting~\eqref{eq:strong_converge_num_F}
and~\eqref{eq:strong_converge_num_B}
into~\eqref{eq:strong_converge_num_decompose} 
yields that
\begin{equation}\label{eq:strong_converge_num}
\begin{split}
&
  \sup_{ t \in [0,T] }
  \left\|
    Y^0_t - Y^\kappa_t
  \right\|_{ \lpn{ p }{ \P }{ V } }
\leq
  \sqrt{2} \,
  \kappa^\rho 
  \sup_{ t \in [ 0, T ] }
  \max\!\left\{
    1,
    \| Y^0_t \|_{ \lpn{ p }{ \P }{ V } }
  \right\}
\\ & \cdot
  \mathcal{E}_{ ( 1 - \vartheta ) }\!\left[
    \tfrac{ 
      \sqrt{2} \, 
      T^{ ( 1 - \vartheta ) } \, 
      \SGchi{0} \, 
      \groupC_\vartheta 
    }{ 
      \sqrt{ 1 - \vartheta } 
    }
    | F |_{ 
      \operatorname{Lip}^0( V, V_{ - \vartheta } ) 
    }
    +
    \BDG{p}{V}
    \sqrt{ 
      2
      T^{ ( 1 - \vartheta ) } 
    }
    \,
    \SGchi{0} \, 
    \groupC_{ \vartheta / 2 } \,
    | B |_{ 
      \operatorname{Lip}^0( 
        V, 
        \gamma( U, V_{ - \vartheta / 2 } ) 
      ) 
    }
  \right]
\\ & \cdot
  \left[
    \tfrac{
      \SGchi{\rho} \, 
      \groupC_{ \rho + \vartheta } \,
      T^{ ( 1 - \vartheta - \rho ) }
    }{
      ( 1 - \vartheta - \rho )
    }
    \,
    \| F \|_{ \operatorname{Lip}^0( V, V_{ -\vartheta } ) }
    +
    \tfrac{
      \BDG{p}{V} \,
      \SGchi{\rho} \, 
      \groupC_{ \rho + \vartheta / 2 } 
      \sqrt{ 
        T^{ ( 1 - \vartheta - 2 \rho ) } 
      }
    }{
      \sqrt{ 1 - \vartheta - 2 \rho }
    }
    \,
    \| B \|_{ 
      \operatorname{Lip}^0( 
        V, 
        \gamma( U, V_{ - \vartheta / 2 } ) 
      ) 
    }
  \right]
  .
\end{split}
\end{equation}
Combining 
Proposition~\ref{prop:numerics_Lp_bound} 
and \eqref{eq:strong_converge_num} 
proves that
\begin{equation}
\begin{split}
&
  \left\|
    Y^0_T - Y^{ \kappa }_T
  \right\|_{ \lpn{ p }{ \P }{ V } }
\leq
  2 
  \, 
  \kappa^\rho 
  \,
  \bigg[
    \sup_{ t \in ( 0, T ] }
    \max\!\big\{
      1
      , 
      \| 
        L_{ 0, t } 
      \|_{ L(V) }
    \big\}
    \,
    \max\{ 1, \| Y^0_0 \|_{ \lpn{p}{\P}{V} } \}
\\ & +
    \tfrac{
      \groupC_\vartheta \,
      T^{ ( 1 - \vartheta ) } \,
      \| F(0) \|_{ V_{ -\vartheta } }
    }{
      ( 1 - \vartheta )
    }
    +
    \tfrac{
      \BDG{p}{V} \,
      \groupC_{ \vartheta / 2 }
      \sqrt{
        T^{ ( 1 - \vartheta ) }
      }
      \,
      \| B(0) \|_{ \gamma( U, V_{ - \vartheta / 2 } ) }
    }{
      \sqrt{
        1 - \vartheta
      }
    }
  \bigg]
\\ & \cdot
  \bigg[
    \tfrac{
      \SGchi{\rho} \, 
      \groupC_{ \rho + \vartheta } \,
      T^{ ( 1 - \vartheta - \rho ) } \,
      \| F \|_{ \operatorname{Lip}^0( 
        V, V_{ - \vartheta } ) 
      }
    }{
      ( 1 - \vartheta - \rho )
    }
    +
    \tfrac{
      \BDG{p}{V} \,
      \SGchi{\rho}
      \, 
      \groupC_{ 
        \rho 
        + 
        \vartheta / 2 
      }
      \sqrt{ 
        T^{ ( 1 - \vartheta - 2 \rho ) } 
      }
      \,
      \| B \|_{ 
        \operatorname{Lip}^0( 
          V, \gamma( U, V_{ - \vartheta / 2 } ) 
        ) 
      }
    }{
      \sqrt{ 1 - \vartheta - 2 \rho }
    }
  \bigg]
\\ & \cdot
  \bigg|
    \mathcal{E}_{ ( 1 - \vartheta ) }\bigg[
      \tfrac{ 
        \sqrt{2} \, 
        T^{ ( 1 - \vartheta ) } \, 
        \SGchi{0} \, 
        \groupC_\vartheta \,
        | F |_{ 
          \operatorname{Lip}^0( V, V_{ - \vartheta } ) 
        }
      }{ 
        \sqrt{ 1 - \vartheta } 
      }
      +
      \BDG{p}{V}
      \sqrt{ 
        2
        T^{ ( 1 - \vartheta ) } 
      }
      \,
      \SGchi{0} \, 
      \groupC_{ \vartheta / 2 } \,
      | B |_{ 
        \operatorname{Lip}^0( 
          V, 
          \gamma( U, V_{ - \vartheta / 2 } ) 
        ) 
      }
    \bigg]
  \bigg|^2
  .
\end{split}
\end{equation}
Hence, we obtain that
\begin{equation}
\label{eq:strong_convergence_conclude}
\begin{split}
&
  \left\|
    Y^0_T - Y^{ \kappa }_T
  \right\|_{ \lpn{ p }{ \P }{ V } }
\leq
  \tfrac{
    2 
    \, 
    \kappa^\rho 
  }{
    T^{ \rho }
  }
  \,
  \bigg[
  \sup_{ t \in ( 0, T ] }
  \max\!\big\{
  1
  , 
  \| 
  L_{ 0, t } 
  \|_{ L(V) }
  \big\}
  \,
  \max\{ 1, \| Y^0_0 \|_{ \lpn{p}{\P}{V} } \}
  \\ & +
  \tfrac{
  	\groupC_\vartheta \,
  	T^{ ( 1 - \vartheta ) } \,
  	\| F(0) \|_{ V_{ -\vartheta } }
  }{
  ( 1 - \vartheta )
}
+
\tfrac{
	\BDG{p}{V} \,
	\groupC_{ \vartheta / 2 }
	\sqrt{
		T^{ ( 1 - \vartheta ) }
	}
	\,
	\| B(0) \|_{ \gamma( U, V_{ - \vartheta / 2 } ) }
}{
\sqrt{
	1 - \vartheta
}
}
\bigg]
\\ & \cdot
\bigg[
\tfrac{
	\SGchi{\rho} \, 
	\groupC_{ \rho + \vartheta } \,
	T^{ ( 1 - \vartheta ) } \,
	\| F \|_{ \operatorname{Lip}^0( 
		V, V_{ - \vartheta } ) 
	}
}{
( 1 - \vartheta - \rho )
}
+
\tfrac{
	\BDG{p}{V} \,
	\SGchi{\rho}
	\, 
	\groupC_{ 
		\rho 
		+ 
		\vartheta / 2 
	}
	\sqrt{ 
		T^{ ( 1 - \vartheta ) } 
	}
	\,
	\| B \|_{ 
		\operatorname{Lip}^0( 
		V, \gamma( U, V_{ - \vartheta / 2 } ) 
		) 
	}
}{
\sqrt{ 1 - \vartheta - 2 \rho }
}
\bigg]
\\ & \cdot
\bigg|
\mathcal{E}_{ ( 1 - \vartheta ) }\bigg[
\tfrac{ 
	\sqrt{2} \, 
	T^{ ( 1 - \vartheta ) } \, 
	\SGchi{0} \, 
	\groupC_\vartheta \,
	| F |_{ 
		\operatorname{Lip}^0( V, V_{ - \vartheta } ) 
	}
}{ 
\sqrt{ 1 - \vartheta } 
}
+
\BDG{p}{V}
\sqrt{ 
	2
	T^{ ( 1 - \vartheta ) } 
}
\,
\SGchi{0} \, 
\groupC_{ \vartheta / 2 } \,
| B |_{ 
	\operatorname{Lip}^0( 
	V, 
	\gamma( U, V_{ - \vartheta / 2 } ) 
	) 
}
\bigg]
\bigg|^2
.
\end{split}
\end{equation}
This implies 
\eqref{eq:strong_convergence_numerics}.
The proof of 
Proposition~\ref{prop:strong_convergence_numerics} 
is thus completed.
\end{proof}

\section{Weak temporal regularity and analysis of the weak distance between exponential Euler 
approximations of SPDEs and their semilinear integrated counterparts}
\label{sec:weak_temporal_regularity}

\subsection{Setting}
\label{sec:setting_weak_temporal_regularity}

Assume the setting in Section~\ref{sec:global_setting},
let $ \mathbb{U} \subseteq U $ be an orthonormal basis of $ U $, 
let
$
A \colon D(A)
\subseteq
V \rightarrow V
$
be a generator of a strongly continuous analytic semigroup
with 
$
\operatorname{spectrum}( A )
\subseteq
\{
z \in \mathbb{C}
\colon
\text{Re}( z ) < \eta
\}
$,
let
$
(
V_r
,
\left\| \cdot \right\|_{ V_r }
)
$,
$ r \in \R $,
be a family of interpolation spaces associated to
$
\eta - A
$, 
let 
$ h \in (0,\infty) $, 
$ p \in [ 2, \infty ) $, 
$ \vartheta \in [0,1) $, 
$
  F \in 
  \operatorname{Lip}^0( V , V_{ - \vartheta } ) 
$, 
$
  B \in 
  \operatorname{Lip}^0( 
    V, 
    \gamma( 
      U, 
      V_{ 
        - \vartheta / 2 
      } 
    ) 
  ) 
$, 
let 
$ 
  ( B^b )_{ b \in \mathbb{U} } \subseteq C( V, V_{ -\nicefrac{\vartheta}{2} } ) 
$ 
be the functions which satisfy for all 
$ b \in \mathbb{U} $, 
$ v \in V $ 
that 
$
     B^b( v ) 
    = 
      B( 
        v 
      )
      \,
      b
$, 
let $ \varsigma_{ F, B } \in \R $ 
be the real number given by 
$
  \varsigma_{ F, B }
  =
    \max\{
      1,
      \| F \|_{ \operatorname{Lip}^0( V, V_{ -\vartheta } ) },
      \| B \|^2_{ \operatorname{Lip}^0( V, \gamma( U, V_{ - \vartheta / 2 } ) ) } 
    \}
$, 
let 
$
\SGchi{r}
\in [1,\infty)
$, 
$r\in[-1,1]$, 
be the real numbers 
which satisfy for all 
$r\in[-1,1]$
that 
$
\SGchi{r}
=
\max\{
1
,
\sup_{ t \in (0,T] }
t^{\max\{r,0\}}
\,
\|
( \eta - A )^r
e^{ t A }
\|_{ L( V ) }
,
\sup_{ t \in (0,T] }
t^{-\max\{r,0\}}
$
$
\|
( \eta - A )^{ - \max\{r,0\} }
( e^{ t A } - \operatorname{Id}_V )
\|_{ L( V ) }
\}
$, 
let 
$ 
  Y, \bar{Y} \colon [0,T] \times \Omega \to V
$ 
be 
$
  ( \mathcal{F}_t )_{ t \in [0,T] }
$-predictable stochastic processes
which satisfy 
$
  \| Y_0 \|_{ \lpn{p}{\P}{V} } 
  < \infty
$
and
$
  \bar{Y}_0 = Y_0
$
and which satisfy that 
for all $ t \in (0,T] $ 
it holds $ \P $-a.s.\ that 
\begin{equation} 
  Y_t
  = 
    e^{tA}\, Y_0 
  + 
    \int_0^t e^{ (t-\lfloor s \rfloor_h)A }\, F( Y_{ \floor{ s }{ h } } ) \, ds
  + 
    \int_0^t e^{ (t-\lfloor s \rfloor_h)A }\, B( Y_{ \floor{ s }{ h } } ) \, dW_s, 
\end{equation} 
\begin{equation} 
  \bar{Y}_t
  = 
    e^{ t A } \, \bar{Y}_0 
  + 
    \int_0^t e^{ ( t - s )A } \, F( Y_{ \floor{ s }{ h } } ) \, ds
  + 
    \int_0^t e^{ ( t - s )A } \, B( Y_{ \floor{ s }{ h } } ) \, dW_s 
    ,
\end{equation}
and let 
$
  ( K_r )_{ r \in [ 0, \infty ) }
  \subseteq
  [ 0, \infty ]
$ 
be the extended real numbers
which satisfy 
for all $ r \in [0,\infty) $
that
$
  K_r
  =
  \sup_{ s, t \in [ 0, T ] }
  \ES\big[\!
    \max\{
    1,
    \| \bar{Y}_s \|^r_V,
    \| Y_t \|^r_V
    \}
  \big]
$.

\subsection{Weak temporal regularity of semilinear integrated exponential Euler approximations}

In Proposition~\ref{prop:weak_temporal_regularity_1st} below
we establish a weak temporal regularity result for the process $ \bar{Y} $
in Subsection~\ref{sec:setting_weak_temporal_regularity}.
The proof of Proposition~\ref{prop:weak_temporal_regularity_1st} uses the
following elementary result.

\begin{lemma}
\label{lem:Kp_estimate}
Assume the setting in Section~\ref{sec:setting_weak_temporal_regularity}. Then 
\begin{align}
\label{eq:Kp_estimate}
&
  \sup_{ r \in [ 0, p ] }
  K_r
  =
  K_p
\\ & \leq
\nonumber
  \left[
    \SGchi{0} \,
    \max\{
      1
      ,
      \| Y_0 \|_{\lpn{p}{\P}{V}}
    \}
    +
    \tfrac{
    \SGchi{\vartheta} \,
    \|F\|_{\operatorname{Lip}^0(V, V_{-\vartheta})} \,
    T^{ ( 1 - \vartheta ) }
    }{( 1 - \vartheta )}
  +
  \tfrac{
    \BDG{p}{} \,
    \SGchi{\nicefrac{\vartheta}{2}} \,
    \sqrt{T^{ ( 1 - \vartheta ) }} \,
    \|B\|_{\operatorname{Lip}^0(V, \gamma( U, V_{-\nicefrac{\vartheta}{2}} ))}
  }{\sqrt{1 - \vartheta}}
  \right]^{2p}
\\ & \cdot
\nonumber
  2^{ ( \frac{p}{2}+1 ) }
  \left|
  \mathcal{E}_{ ( 1 - \vartheta ) }\!\left[
    \tfrac{
      \sqrt{ 2 }
      \,
      \SGchi{ \vartheta }
      \,
      T^{ ( 1 - \vartheta ) }
      \,
      |
        F
      |_{
        \operatorname{Lip}^0( V, V_{ - \vartheta } )
      }
    }{
      \sqrt{1 - \vartheta}
    }
    +
    \BDG{p}{} \,
    \SGchi{ 
      \nicefrac{\vartheta}{2}
    }
    \sqrt{
      2 T^{ ( 1 - \vartheta ) }
    } \,
    |
      B
    |_{
      \operatorname{Lip}^0( V, \gamma( U, V_{ - \nicefrac{\vartheta}{2} } ) )
    }
  \right]
  \right|^p
  < \infty
  .
\end{align}
\end{lemma}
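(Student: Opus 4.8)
\textbf{Proof strategy for Lemma~\ref{lem:Kp_estimate}.}
The plan is to reduce the estimate on $K_r$ to an $\mathcal{L}^p$-bound on $\sup_{t\in[0,T]}\|Y_t\|_{\lpn{p}{\P}{V}}$ together with the analogous bound for $\bar{Y}$, and then to apply the strong a priori estimate of Proposition~\ref{prop:strong_apriori_estimate}. First I would observe that $\max\{1,\|\bar{Y}_s\|_V^r,\|Y_t\|_V^r\}\leq 1+\|\bar{Y}_s\|_V^r+\|Y_t\|_V^r$, so that by monotonicity of $\mathcal{L}^q$-norms in $q$ (since the underlying measure is a probability measure) the supremum over $r\in[0,p]$ is attained at $r=p$, giving the first equality $\sup_{r\in[0,p]}K_r=K_p$. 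Moreover $K_p\leq \bigl[\,1+\sup_{t\in[0,T]}\|Y_t\|_{\lpn{p}{\P}{V}}+\sup_{s\in[0,T]}\|\bar{Y}_s\|_{\lpn{p}{\P}{V}}\,\bigr]^p$, and since the two processes agree at time $0$ and $\bar{Y}$ differs from $Y$ only through replacing the semigroup $e^{(t-\lfloor s\rfloor_h)A}$ by $e^{(t-s)A}$ inside the integrals, one expects the bound for $\bar{Y}$ to be controlled by that for $Y$; in fact it is cleanest to bound both by the same right-hand side, which is why a factor $2^{p}$ (absorbed into the $2^{(p/2+1)}$) and the square of the bracket appear in \eqref{eq:Kp_estimate}.

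Next I would set up the application of Proposition~\ref{prop:strong_apriori_estimate} to the process $X=Y$ with $\vartheta$ as given, $V$ as given, and, for each $t\in(0,T]$, $Y^t_s=e^{(t-\lfloor s\rfloor_h)A}F(Y_{\lfloor s\rfloor_h})$ and $Z^t_s=e^{(t-\lfloor s\rfloor_h)A}B(Y_{\lfloor s\rfloor_h})$. The a priori hypothesis \eqref{eq:apriori_assumption} must be verified: using $\|(\eta-A)^\vartheta e^{(t-\lfloor s\rfloor_h)A}\|_{L(V)}\leq \SGchi{\vartheta}(t-\lfloor s\rfloor_h)^{-\vartheta}\leq \SGchi{\vartheta}(t-s)^{-\vartheta}$ (because $\lfloor s\rfloor_h\leq s$), the factorisation $F=(\eta-A)^{-\vartheta}\circ((\eta-A)^\vartheta F)$ with $F\in\operatorname{Lip}^0(V,V_{-\vartheta})$, and the elementary bound $\|F(v)\|_{V_{-\vartheta}}\leq \|F\|_{\operatorname{Lip}^0(V,V_{-\vartheta})}\max\{1,\|v\|_V\}$, one gets $\|Y^t_s\|_{\lpn{p}{\P}{V}}\leq \SGchi{\vartheta}\|F\|_{\operatorname{Lip}^0(V,V_{-\vartheta})}(t-s)^{-\vartheta}\sup_{u\in[0,s]}\max\{1,\|Y_u\|_{\lpn{p}{\P}{V}}\}$, and similarly for $Z^t_s$ with exponent $\vartheta/2$ and constant $\SGchi{\nicefrac{\vartheta}{2}}\|B\|_{\operatorname{Lip}^0(V,\gamma(U,V_{-\vartheta/2}))}$. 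Applying Proposition~\ref{prop:strong_apriori_estimate} (after first checking the finiteness hypothesis $\sup_{s}\|Y_s\|_{\lpn{p}{\P}{V}}<\infty$, which follows by the same computation bootstrapped via Lemma~\ref{lem:strong_apriori_bound}-type reasoning, or is already available from the construction of $Y$) with $\driftC=\SGchi{\vartheta}\|F\|_{\operatorname{Lip}^0(V,V_{-\vartheta})}$, $\diffusionC=\SGchi{\nicefrac{\vartheta}{2}}\|B\|_{\operatorname{Lip}^0(V,\gamma(U,V_{-\vartheta/2}))}$, applied to the process $u\mapsto\max\{1,\|Y_u\|_V\}$, yields the $\mathcal{E}_{(1-\vartheta)}$-factor and the polynomial prefactor in \eqref{eq:Kp_estimate}.

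For $\bar{Y}$ the only difference is that the semigroup inside both integrals carries the exponent $(t-s)$ rather than $(t-\lfloor s\rfloor_h)$; since $t-s\leq t-\lfloor s\rfloor_h$ this can only help, and the same bounds with the same constants $\SGchi{\vartheta}$, $\SGchi{\nicefrac{\vartheta}{2}}$ go through, so $\sup_s\|\bar{Y}_s\|_{\lpn{p}{\P}{V}}$ is dominated by exactly the same expression. Combining the $Y$- and $\bar{Y}$-bounds, using $(a+b+c)^p\leq 3^{p-1}(a^p+b^p+c^p)$ or more simply $(1+2M)^p\leq 2^p(1+M)^p\cdot\text{(const)}$ and collecting the resulting numerical constant into $2^{(p/2+1)}$, together with the observation that $\mathcal{E}_{(1-\vartheta)}[\cdot]\geq 1$ so that one may freely replace a single power of the $\mathcal{E}$-factor by its square when matching the exponent in \eqref{eq:Kp_estimate}, completes the derivation. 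The finiteness assertion $K_p<\infty$ is then immediate since every quantity on the right-hand side is finite ($\mathcal{E}_{(1-\vartheta)}$ is finite-valued on $[0,\infty)$, and $\|Y_0\|_{\lpn{p}{\P}{V}}<\infty$ by hypothesis). I expect the main nuisance — not a deep obstacle — to be the careful bookkeeping of the numerical constants so that they fit under the stated exponents $2^{(p/2+1)}$, $[\cdots]^{2p}$, and $|\cdots|^p$; the only genuinely substantive point is the replacement $\lfloor s\rfloor_h\leq s$ that lets a single pair of smoothing constants $(\SGchi{\vartheta},\SGchi{\nicefrac{\vartheta}{2}})$ bound both $Y$ and $\bar{Y}$ simultaneously.
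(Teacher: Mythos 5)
Your overall strategy matches the paper's: reduce $K_p$ to bounds on $\sup_t\|Y_t\|_{\lpn{p}{\P}{V}}$ and $\sup_t\|\bar{Y}_t\|_{\lpn{p}{\P}{V}}$, and derive these via the a priori estimate of Proposition~\ref{prop:strong_apriori_estimate} (through Proposition~\ref{prop:numerics_Lp_bound}) using $\lfloor s\rfloor_h\leq s$ to absorb the shifted semigroup into $\SGchi{\vartheta}$, $\SGchi{\nicefrac{\vartheta}{2}}$. However, there is a genuine gap where you write that the finiteness hypothesis $\sup_s\|Y_s\|_{\lpn{p}{\P}{V}}<\infty$ ``follows by the same computation bootstrapped via Lemma~\ref{lem:strong_apriori_bound}-type reasoning, or is already available from the construction of $Y$.'' Neither alternative holds as stated. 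In the setting of Section~\ref{sec:setting_weak_temporal_regularity} only $\|Y_0\|_{\lpn{p}{\P}{V}}<\infty$ is assumed; nothing is given about $Y_t$ for $t>0$. And Lemma~\ref{lem:strong_apriori_bound} is proved in the Section~\ref{sec:setting_strong_convergence} setting, which \emph{presupposes} $\sup_{t}\|Y^\kappa_{\Pi(t)}\|_{\lpn{p}{\P}{V}}<\infty$ — precisely the quantity you still need to control. Appealing to it before verifying that hypothesis is circular.

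The paper closes this gap with a finite induction on the grid index $k\in\{0,1,\ldots,\lfloor T\rfloor_h/h\}$, proving $\|Y_{kh}\|_{\lpn{p}{\P}{V}}<\infty$ one step at a time. The key structural observation that makes this work is that in the exponential Euler scheme the integrands $F(Y_{\lfloor s\rfloor_h})$, $B(Y_{\lfloor s\rfloor_h})$ for $s<(k+1)h$ only involve $Y$ at the earlier grid points $0,h,\ldots,kh$, so the estimate on $\|Y_{(k+1)h}\|_{\lpn{p}{\P}{V}}$ requires only the already-established finiteness of $\|Y_{jh}\|_{\lpn{p}{\P}{V}}$, $j\leq k$. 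This yields $\sup_t\|Y_{\lfloor t\rfloor_h}\|_{\lpn{p}{\P}{V}}<\infty$, after which Proposition~\ref{prop:numerics_Lp_bound} (and hence Proposition~\ref{prop:strong_apriori_estimate}) applies as you describe. This induction is the heart of the proof, not part of the ``careful bookkeeping of numerical constants'' you identify as your remaining task; you should make it explicit. A minor secondary point: Proposition~\ref{prop:strong_apriori_estimate} applies to a $V$-valued process, so you cannot literally apply it ``to the process $u\mapsto\max\{1,\|Y_u\|_V\}$''; the $\max\{1,\cdot\}$ enters only when estimating $\|F(Y_{\lfloor s\rfloor_h})\|_{V_{-\vartheta}}$ to produce the constants $\driftC$, $\diffusionC$, which is how Proposition~\ref{prop:numerics_Lp_bound} packages the argument.
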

\begin{proof}
We first observe that the equality in~\eqref{eq:Kp_estimate} follows from the fact that 
for all $ x \in V $, $ r, s \in [ 0, \infty ) $ with $ r \leq s $ 
it holds that 
$
  \max\{
    1
    ,
    \| x \|^r_V
  \}
  \leq
  \max\{
    1
    ,
    \| x \|^s_V
  \}
$.
Moreover, we note that the second inequality in~\eqref{eq:Kp_estimate} is an immediate consequence 
of the assumption that 
$
  \| Y_0 \|_{ \lpn{p}{\P}{V} }
  < \infty
$. 
It thus remains to prove the first inequality in~\eqref{eq:Kp_estimate}. 
For this we claim that for all 
$
  k \in \{ 0, 1, \ldots, \nicefrac{\floor{T}{h}}{h} \}
$ 
it holds that
\begin{equation}
\label{eq:step.estimate}
  \| Y_{kh} \|_{ \lpn{p}{\P}{V} }
  < \infty
  . 
\end{equation}

We now prove~\eqref{eq:step.estimate} by induction on 
$
  k \in \{ 0, 1, \ldots, \nicefrac{\floor{T}{h}}{h} \}
$.
The assumption that 
$
  \|Y_0\|_{ \lpn{p}{\P}{V} }
  < \infty
$ 
establishes~\eqref{eq:step.estimate} in the base case $k=0$. 
For the induction step 
$
  \N_0 \cap (-\infty,\nicefrac{\floor{T}{h}}{h}) \ni k 
  \to k+1 \in \N \cap [0,\nicefrac{\floor{T}{h}}{h}]
$ 
assume that there exists a nonnegative integer 
$
  k \in \N_0 \cap (-\infty,\nicefrac{\floor{T}{h}}{h})
$ 
such that~\eqref{eq:step.estimate} holds for 
$k=0$, 
$k=1,\ldots,k=k$.
This ensures that
\begin{equation}
\label{eq:numerics_bound_induction}
\begin{split}
&
  \| Y_{(k+1)h} \|_{ \lpn{p}{\P}{V} }
\\ & \leq
  \| e^{ (k+1)hA } \, Y_0 \|_{ \lpn{p}{\P}{V} }
  + 
    \left\|
    \int_0^{(k+1)h} 
    e^{ ((k+1)h-\floor{s}{h}) }\, F( Y_{ \floor{ s }{ h } } ) 
    \, ds
    \right\|_{ \lpn{p}{\P}{V} }
\\&+ 
    \left\|
    \int_0^{(k+1)h} 
    e^{ ((k+1)h-\floor{s}{h}) }\, B( Y_{ \floor{ s }{ h } } ) 
    \, dW_s
    \right\|_{ \lpnb{p}{\P}{V} } 
\\ & \leq
  \SGchi{0} \, 
  \| Y_0 \|_{ \lpn{p}{\P}{V} }
  +
  \int^{(k+1)h}_0
  \|
  e^{ ((k+1)h-\floor{s}{h}) }\, F( Y_{ \floor{ s }{ h } } )
  \|_{ \lpn{p}{\P}{V} }
  \, ds
\\ & +
  \BDG{p}{}
  \left[
  \int^{(k+1)h}_0
  \|
  e^{ ((k+1)h-\floor{s}{h}) }\, B( Y_{ \floor{ s }{ h } } )
  \|^2_{ \lpn{p}{\P}{\gamma(U,V)} }
  \, ds
  \right]^{1/2}
\\ & \leq
  \SGchi{0} \, 
  \| Y_0 \|_{ \lpn{p}{\P}{V} }
  +
  \SGchi{\vartheta} \,
  \|F\|_{\operatorname{Lip}^0(V, V_{-\vartheta})}
  \left[
  \max_{ j \in \{ 0, 1, \ldots, k \} }
  \max\{
    1
    ,
    \| Y_{ jh } \|_{ \lpn{p}{\P}{V} }
  \}
  \right]
  \int^{(k+1)h}_0
  \tfrac{1}{( (k+1)h - s )^\vartheta}
  \, ds
\\ & +
  \BDG{p}{} \,
  \SGchi{\nicefrac{\vartheta}{2}} \,
  \|B\|_{\operatorname{Lip}^0(V, \gamma( U, V_{-\nicefrac{\vartheta}{2}} ))}
  \left[
  \max_{ j \in \{ 0, 1, \ldots, k \} }
  \max\{
    1
    ,
    \| Y_{ jh } \|_{ \lpn{p}{\P}{V} }
  \}
  \right]
  \left[
  \int^{(k+1)h}_0
  \tfrac{1}{( (k+1)h - s )^\vartheta}
  \, ds
  \right]^{1/2}
\\ & \leq
  \left[
    \SGchi{0}
    +
    \tfrac{
    \SGchi{\vartheta} \,
    \|F\|_{\operatorname{Lip}^0(V, V_{-\vartheta})} \,
    |(k+1)h|^{ ( 1 - \vartheta ) }
    }{( 1 - \vartheta )}
  +
  \tfrac{
    \BDG{p}{} \,
    \SGchi{\nicefrac{\vartheta}{2}} \,
    |(k+1)h|^{ \nicefrac{( 1 - \vartheta )}{2} } \,
    \|B\|_{\operatorname{Lip}^0(V, \gamma( U, V_{-\nicefrac{\vartheta}{2}} ))}
  }{\sqrt{1 - \vartheta}}
  \right]
\\ & \cdot
  \max_{ j \in \{ 0, 1, \ldots, k \} }
  \max\{
    1
    ,
    \| Y_{ jh } \|_{ \lpn{p}{\P}{V} }
  \}
  < \infty
  .
\end{split}
\end{equation}
This proves~\eqref{eq:step.estimate} in the case $k+1$.
Induction hence proves~\eqref{eq:step.estimate}.

In the next step we observe that~\eqref{eq:step.estimate} shows that 
\begin{equation}
\label{eq:apriori.discrete.time}
  \sup_{ t \in [ 0, T ] }
  \| Y_{\floor{t}{h}} \|_{ \lpn{p}{\P}{V} }
  =
  \max_{ k \in \{ 0, 1, \ldots, \nicefrac{\floor{T}{h}}{h} \} }
  \| Y_{kh} \|_{ \lpn{p}{\P}{V} }
  < \infty
  . 
\end{equation}
Proposition~\ref{prop:numerics_Lp_bound} 
hence yields\footnote{with 
$ \kappa = 0 $, 
$ L_{0,t} = e^{tA} $, 
$ L_{s,t} = e^{(t-\floor{s}{h})A} $, 
$ \Pi(s) = \floor{s}{h} $
for $ (s,t) \in ( \angle \cap (0,T]^2 ) $ 
in the notation of Proposition~\ref{prop:numerics_Lp_bound}} 
that
\begin{equation}
\label{eq:numerics_bound}
\begin{split}
&
  \sup_{t \in [ 0, T ]}
  \| Y_t \|_{ \lpn{p}{\P}{V} }
  \leq
  \sqrt{2}
\\ & \cdot
  \left[
    \SGchi{0} \,
    \| Y_0 \|_{ \lpn{p}{\P}{V} }
    +
    \tfrac{
      \SGchi{\vartheta} \,
      T^{ ( 1 - \vartheta ) }
      \| F(0) \|_{ V_{ -\vartheta } }
    }{
      ( 1 - \vartheta )
    }
      +
      \BDG{p}{} \,
      \SGchi{ \nicefrac{ \vartheta }{ 2 } }
      \sqrt{
      \tfrac{
      T^{ ( 1 - \vartheta ) }
      }{
      ( 1 - \vartheta )
      }
      }
      \| B(0) \|_{ \gamma( U, V_{ -\nicefrac{\vartheta}{2} } ) }
  \right]
\\ & \cdot
  \mathcal{E}_{ ( 1 - \vartheta ) }\!\left[
    \tfrac{
      \sqrt{ 2 }
      \,
      \SGchi{ \vartheta }
      \,
      T^{ ( 1 - \vartheta ) }
      \,
      |
        F
      |_{
        \operatorname{Lip}^0( V, V_{ - \vartheta } )
      }
    }{
      \sqrt{1 - \vartheta}
    }
    +
    \BDG{p}{} \,
    \SGchi{ 
      \nicefrac{\vartheta}{2}
    }
    \sqrt{
      2 T^{ ( 1 - \vartheta ) }
    } \,
    |
      B
    |_{
      \operatorname{Lip}^0( V, \gamma( U, V_{ - \nicefrac{\vartheta}{2} } ) )
    }
  \right]
  .
\end{split}
\end{equation}
Furthermore, note that~\eqref{eq:apriori.discrete.time} ensures that 
\begin{equation}
\label{eq:integrated_numerics_bound}
\begin{split}
&
  \sup_{ t \in [ 0, T ] }
  \| \bar{Y}_t \|_{\lpn{p}{\P}{V}}
  \leq
  \sup_{ t \in [0,T] }
  \big\|
  \max\{
    1
    ,
    \| Y_t \|_V
  \}
  \big\|_{ \lpn{p}{\P}{\R} }
\\ & \cdot
  \left[
    \SGchi{0}
    +
    \tfrac{
    \SGchi{\vartheta} \,
    \|F\|_{\operatorname{Lip}^0(V, V_{-\vartheta})} \,
    T^{ ( 1 - \vartheta ) }
    }{( 1 - \vartheta )}
  +
  \tfrac{
    \BDG{p}{} \,
    \SGchi{\nicefrac{\vartheta}{2}} \,
    \sqrt{T^{ ( 1 - \vartheta ) }} \,
    \|B\|_{\operatorname{Lip}^0(V, \gamma( U, V_{-\nicefrac{\vartheta}{2}} ))}
  }{\sqrt{1 - \vartheta}}
  \right]
  .
\end{split}
\end{equation}
Moreover, we observe that for all $ s, t \in [ 0, T ] $ 
it holds that 
\begin{equation}
\begin{split}
  \ES\big[\!
    \max\{
    1,
    \| \bar{Y}_s \|^p_V,
    \| Y_t \|^p_V
    \}
  \big]
& \leq
  \ES\big[
  \| \bar{Y}_s \|^p_V
  \big]
  +
  \ES\big[\!
  \max\{
    1
    ,
    \| Y_t \|^p_V
  \}
  \big]
\\ & \leq 
  \sup_{ u \in [0,T] }
  \| \bar{Y}_u \|^p_{ \lpn{p}{\P}{V} }
  +
  \sup_{ u \in [0,T] }
  \big\|\!
  \max\{
    1
    ,
    \| Y_u \|_V
  \}
  \big\|^p_{ \lpn{p}{\P}{\R} }
  .
\end{split}
\end{equation}
This together with \eqref{eq:numerics_bound} and \eqref{eq:integrated_numerics_bound} 
proves the first inequality in~\eqref{eq:Kp_estimate}. 
The proof of Lemma~\ref{lem:Kp_estimate} is thus completed.
\end{proof}

\begin{proposition}
\label{prop:weak_temporal_regularity_1st}
Assume the setting in Section~\ref{sec:setting_weak_temporal_regularity} and let 
$ \psiC \in [ 0, \infty ) $, 
$ \power \in [ 0, \infty ) \cap ( -\infty, p - 3 ] $, 
$ \rho \in [ 0, 1 - \vartheta ) $, 
$ \psi = ( \psi(x,y) )_{ x, y \in V } \in C^2( V \times V, \mathcal{V} ) $ 
satisfy for all 
$ x_1, x_2, y \in V $, 
$ i, j \in \{ 0, 1, 2 \} $
with 
$ i + j \leq 2 $
that 
\begin{equation}
\begin{split}
  \big\|
    \big(
     \tfrac{ \partial^{ (i + j) } }{ \partial x^i \partial y^j  }
     \psi
    \big)
    ( x_1, y )
  -
    \big(
     \tfrac{ \partial^{ (i + j) } }{ \partial x^i \partial y^j  }
     \psi
    \big)
    ( x_2, y )
  \big\|_{ L^{ (i + j) }( V, \mathcal{V} ) }
& \leq
  \psiC
  \max\{ 1, \| x_1 \|^\power_V, \| x_2 \|^\power_V, \| y \|^\power_V \}
  \left\| 
    x_1 - x_2 
  \right\|_V
  .
\end{split}
\end{equation}
Then it holds for all 
$ ( s, t ) \in \angle $ 
that 
$
  \ES\big[
    \| 
      \psi( \bar{Y}_t, Y_s )
      -
      \psi( \bar{Y}_s, Y_s ) 
    \|_{\mathcal{V}}
  \big]
  < \infty
$ 
and 
\begin{equation}
\label{eq:prop_temporal_reg}
\begin{split}
&
  \left\|
  \E\left[
    \psi(
      \bar{Y}_{ t } ,
      Y_{ s }
    )
  -
    \psi(
      \bar{Y}_{ s } ,
      Y_{ s }
    )
  \right]
  \right\|_{\mathcal{V}}
\leq
  \psiC \,
  | \SGchi{0} |^{ ( \power + 1 ) } \, 
  | \SGchi{\rho} |^2 \,
  \varsigma_{ F, B } \,
  K_{ \power + 3 } 
  \,
  ( t - s )^\rho
\\ & \cdot
  \bigg[
    \tfrac{
      2^{ \rho }
    }{
      t^{ \rho }
    }
    +
    \tfrac{
      \left(
        2 \, 
        \SGchi{\vartheta} 
        +
        \SGchi{ \rho + \vartheta }
        +
        2 \, 
        | \SGchi{ \vartheta / 2 } |^2
        + 
        2 \,
        \SGchi{ \rho + \vartheta / 2 }
        \,
        \SGchi{ \vartheta / 2 }
      \right)
      \,
      s^{ ( 1 - \vartheta - \rho ) }
      +
      \left( 
        \SGchi{\vartheta} 
        +
        \frac{ 1 }{ 2 }
        | \SGchi{ \nicefrac{ \vartheta }{ 2 } } |^2 
      \right)
      \,
      \left| t - s \right|^{ ( 1 - \vartheta - \rho ) }
    }{ 
      ( 1 - \vartheta - \rho ) }
  \bigg]
  .
\end{split}
\end{equation}
\end{proposition}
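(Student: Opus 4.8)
The plan is to reduce the weak temporal increment to smoothing estimates for the semigroup, Burkholder--Davis--Gundy type inequalities, and the a priori moment bounds already in hand, the crucial tool being the mild It\^o type formula for SPDEs in UMD Banach spaces with type~2.

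\textbf{Integrability and the basic identity.} First I record integrability: by the $(i,j)=(0,0)$ case of the hypothesis on $\psi$, $\|\psi(\bar Y_t,Y_s)-\psi(\bar Y_s,Y_s)\|_{\mathcal V}\le\psiC\max\{1,\|\bar Y_t\|_V^{\power},\|\bar Y_s\|_V^{\power},\|Y_s\|_V^{\power}\}\,\|\bar Y_t-\bar Y_s\|_V$, so H\"older's inequality together with $\power+1\le p$ and the finiteness of $K_p$ from Lemma~\ref{lem:Kp_estimate} gives $\E[\|\psi(\bar Y_t,Y_s)-\psi(\bar Y_s,Y_s)\|_{\mathcal V}]<\infty$. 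For the estimate itself, the key observation is that for $v\in[s,t]$ one has $\bar Y_v=e^{(v-s)A}\bar Y_s+\int_s^v e^{(v-u)A}F(Y_{\floor{u}{h}})\,du+\int_s^v e^{(v-u)A}B(Y_{\floor{u}{h}})\,dW_u$, so $[s,t]\ni v\mapsto e^{(t-v)A}\bar Y_v$ solves an It\^o equation without an (unbounded) drift. Freezing $Y_s$ (which is $\mathcal F_s$-measurable, hence an admissible parameter on $[s,t]$) and applying the mild It\^o type formula to $V\ni x\mapsto\psi(x,Y_s)$ along $\bar Y$ on $[s,t]$, then taking expectations and using that the It\^o integral term is a genuine martingale increment and hence vanishes in expectation, yields
\begin{equation*}
\begin{split}
\E[\psi(\bar Y_t,Y_s)-\psi(\bar Y_s,Y_s)]
&=\E[\psi(e^{(t-s)A}\bar Y_s,Y_s)-\psi(\bar Y_s,Y_s)]
+\int_s^t\E\!\big[(\tfrac{\partial}{\partial x}\psi)(e^{(t-v)A}\bar Y_v,Y_s)(e^{(t-v)A}F(Y_{\floor{v}{h}}))\big]\,dv
\\&\quad+\tfrac12\int_s^t\E\!\Big[\smallsum_{b\in\mathbb U}(\tfrac{\partial^2}{\partial x^2}\psi)(e^{(t-v)A}\bar Y_v,Y_s)(e^{(t-v)A}B^b(Y_{\floor{v}{h}}),e^{(t-v)A}B^b(Y_{\floor{v}{h}}))\Big]\,dv,
\end{split}
\end{equation*}
where the sum over the orthonormal basis $\mathbb U$ is understood via Lemma~\ref{lem:gamma.estimate}.

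\textbf{Estimating the three contributions.} For the first term I decompose the increment $(e^{(t-s)A}-\operatorname{Id}_V)\bar Y_s$ along $\bar Y_s=e^{sA}\bar Y_0+\int_0^s e^{(s-u)A}F(Y_{\floor{u}{h}})\,du+\int_0^s e^{(s-u)A}B(Y_{\floor{u}{h}})\,dW_u$, bound $\|\E[\psi(e^{(t-s)A}\bar Y_s,Y_s)-\psi(\bar Y_s,Y_s)]\|_{\mathcal V}$ by $\psiC$ times a H\"older product of a moment factor (controlled by $K_{\power+3}$) and $\|(e^{(t-s)A}-\operatorname{Id}_V)\bar Y_s\|_{\lpnb{p}{\P}{V}}$, and estimate the latter piece by piece: for the semigroup piece a case distinction between $s\ge t/2$, where $\|(e^{(t-s)A}-\operatorname{Id}_V)(\eta-A)^{-\rho}\|_{L(V)}\|(\eta-A)^{\rho}e^{sA}\|_{L(V)}\le|\SGchi{\rho}|^2(t-s)^{\rho}s^{-\rho}\le 2^{\rho}|\SGchi{\rho}|^2(t-s)^{\rho}t^{-\rho}$, and $s<t/2$, where the crude bound $\|e^{(t-s)A}-\operatorname{Id}_V\|_{L(V)}\le2\SGchi{0}$ is already $\lesssim(t-s)^{\rho}t^{-\rho}$ since $(t-s)^{\rho}t^{-\rho}\ge2^{-\rho}$ there, which produces the $\tfrac{2^{\rho}}{t^{\rho}}$ contribution; for the drift piece I use $\|(e^{(t-s)A}-\operatorname{Id}_V)e^{(s-u)A}\|_{L(V_{-\vartheta},V)}\le\min\{2\SGchi{\vartheta}(s-u)^{-\vartheta},\SGchi{\rho}\SGchi{\rho+\vartheta}(t-s)^{\rho}(s-u)^{-(\rho+\vartheta)}\}$ and $\|F(\cdot)\|_{V_{-\vartheta}}\le\varsigma_{F,B}\max\{1,\|\cdot\|_V\}$, integrating over $[0,s]$ to get the $(2\SGchi{\vartheta}+\SGchi{\rho+\vartheta})\,s^{(1-\vartheta-\rho)}/(1-\vartheta-\rho)$-type term; for the stochastic-convolution piece I keep $(e^{(t-s)A}-\operatorname{Id}_V)$ inside the It\^o integral, apply the $\BDG{p}{}$-inequality with the analogous $\min\{2\SGchi{\vartheta/2}(s-u)^{-\vartheta/2},\SGchi{\rho}\SGchi{\rho+\vartheta/2}(t-s)^{\rho}(s-u)^{-(\rho+\vartheta/2)}\}$, and carry out the elementary two-regime integral $\int_0^s\min\{a\,w^{-\vartheta},b\,(t-s)^{2\rho}w^{-(2\rho+\vartheta)}\}\,dw$, which yields the $(2|\SGchi{\vartheta/2}|^2+2\SGchi{\rho+\vartheta/2}\SGchi{\vartheta/2})\,s^{(1-\vartheta-\rho)}/(1-\vartheta-\rho)$-type term. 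For the second (drift) integral over $[s,t]$ I bound, using the regularity of $\psi$, $\|e^{(t-v)A}F(Y_{\floor{v}{h}})\|_V\le\SGchi{\vartheta}(t-v)^{-\vartheta}\varsigma_{F,B}\max\{1,\|Y_{\floor{v}{h}}\|_V\}$ and $\int_s^t(t-v)^{-\vartheta}\,dv=\tfrac{(t-s)^{(1-\vartheta)}}{1-\vartheta}\le\tfrac{(t-s)^{\rho}|t-s|^{(1-\vartheta-\rho)}}{1-\vartheta-\rho}$, giving the $\SGchi{\vartheta}|t-s|^{(1-\vartheta-\rho)}$-type term; for the third (trace) integral I use Lemma~\ref{lem:gamma.estimate} to bound the summand by $\|(\tfrac{\partial^2}{\partial x^2}\psi)(\cdot,Y_s)\|_{L^{(2)}(V,\mathcal V)}\|e^{(t-v)A}B(Y_{\floor{v}{h}})\|_{\gamma(U,V)}^2$ with $\|e^{(t-v)A}B(\cdot)\|_{\gamma(U,V)}^2\le|\SGchi{\vartheta/2}|^2(t-v)^{-\vartheta}\varsigma_{F,B}\max\{1,\|\cdot\|_V^2\}$, and again $\int_s^t(t-v)^{-\vartheta}\,dv$, giving the $\tfrac12|\SGchi{\vartheta/2}|^2|t-s|^{(1-\vartheta-\rho)}$-type term. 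In all terms the moment factors are controlled by $K_{\power+3}$ via H\"older (this is where $\power+3\le p$ is used), and collecting the pieces produces~\eqref{eq:prop_temporal_reg}.

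\textbf{Main obstacle.} The hard part is securing the sharp exponent $(t-s)^{\rho}$ up to $\rho<1-\vartheta$ — a full factor of two beyond the temporal path-regularity threshold $(1-\vartheta)/2$ of $\bar Y$. For the $[s,t]$-contribution this is exactly why the mild It\^o formula is indispensable: expanding $\psi(\bar Y_t,Y_s)-\psi(e^{(t-s)A}\bar Y_s,Y_s)$ naively gives only $(t-s)^{(1-\vartheta)/2}$ from $\|\int_s^t e^{(t-v)A}B(Y_{\floor{v}{h}})\,dW_v\|_{\lpnb{p}{\P}{V}}$, whereas after the It\^o expansion the first-order noise term vanishes in expectation and only the second-order term survives, whose integrand decays like $(t-v)^{-\vartheta}$ rather than $(t-v)^{-\vartheta/2}$, so the worst power is the much better $(t-s)^{1-\vartheta}$. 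For the $(e^{(t-s)A}-\operatorname{Id}_V)\bar Y_s$-contribution the subtlety is that the stochastic convolution inside $\bar Y_s$ genuinely fails to lie in $V_{\rho}$ once $\rho\ge(1-\vartheta)/2$, so it cannot simply be ``$\rho$-smoothed''; one must keep the increment operator inside the It\^o integral, use the minimum of a smoothing bound and a trivial bound on its norm, and perform the two-regime integral carefully to verify that the claimed scaling survives. A secondary, purely bookkeeping, difficulty is that the hypotheses control $\psi$ and its derivatives only through their increments in the first argument, so the whole expansion must be arranged so that every surviving term carries an explicit increment and a single factor of $\psiC$, which is what fixes the precise shape of the prefactor $\psiC\,|\SGchi{0}|^{(\power+1)}\,|\SGchi{\rho}|^2\,\varsigma_{F,B}\,K_{\power+3}$.
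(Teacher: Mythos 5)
Your overall architecture — apply the mild It\^o formula along $[s,t]$ so that the first-order noise contribution drops out in expectation and the $[s,t]$ drift/trace terms scale like $(t-s)^{1-\vartheta}$ — matches the paper's first step (Eq.~\eqref{eq:mild_ito_outer_1st}, \eqref{eq:outer_summands}). But the estimate you propose for the remaining ``initial-value'' term $\big\|\E\big[\psi(e^{(t-s)A}\bar Y_s,Y_s)-\psi(\bar Y_s,Y_s)\big]\big\|_{\mathcal V}$ has a genuine gap, and the trick you invoke to close it does not work.

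You reduce this term via the Lipschitz hypothesis and H\"older to a \emph{strong} $L^p(\P;V)$ bound on $(e^{(t-s)A}-\operatorname{Id}_V)\bar Y_s$, and for the stochastic-convolution contribution you propose keeping the increment operator inside the It\^o integral and using the pointwise minimum $\min\{2\SGchi{\vartheta/2}(s-u)^{-\vartheta/2},\,\SGchi{\rho}\SGchi{\rho+\vartheta/2}(t-s)^{\rho}(s-u)^{-(\rho+\vartheta/2)}\}$, then evaluating the two-regime integral $\int_0^s\min\{a\,w^{-\vartheta},b\,(t-s)^{2\rho}w^{-(2\rho+\vartheta)}\}\,dw$. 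But the crossover point of this minimum sits at $w^*\asymp(t-s)$, and a direct computation of both regimes shows the integral is $\asymp(t-s)^{1-\vartheta}$ (for $2\rho+\vartheta>1$; for $2\rho+\vartheta\le 1$ it is $\lesssim(t-s)^{2\rho}s^{1-2\rho-\vartheta}$, which is only consistent with the target when $2\rho+\vartheta<1$). After the square root in the Burkholder–Davis–Gundy estimate this is $(t-s)^{(1-\vartheta)/2}$ — the strong pathwise rate — not $(t-s)^{\rho}$ for $\rho$ up to $1-\vartheta$. No choice of pointwise $\min$ rescues a strong $L^p$ bound from this ceiling, because $\|(e^{(t-s)A}-\operatorname{Id}_V)\int_0^s e^{(s-u)A}B(Y_{\floor{u}{h}})\,dW_u\|_{L^p(\P;V)}$ genuinely is of order $(t-s)^{(1-\vartheta)/2}$ and no better.

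The paper gets past this threshold by applying the mild It\^o formula a \emph{second} time, now on $[0,s]$ to the two-argument function $(u,v)\mapsto\psi(e^{(t-s)A}u,v)-\psi(u,v)$ along the pair $(\bar Y,Y)$: see Eqs.~\eqref{eq:tilde_F_def_1st}--\eqref{eq:mild_ito_inner_1st}. The benefit is that after taking expectations the first-order stochastic-integral increment of $\bar Y_s$ again cancels, and what survives are second-order ($\tilde B$-type) integrands whose singularity is $(s-r)^{-(\rho+\vartheta)}$ rather than the $(s-r)^{-(\rho+\vartheta/2)}$ of the raw stochastic convolution; this integrates to $s^{1-\vartheta-\rho}/(1-\vartheta-\rho)$ for every $\rho<1-\vartheta$, which is precisely the scaling appearing in~\eqref{eq:prop_temporal_reg}. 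So the missing ingredient in your proposal is the \emph{second} application of the mild It\^o formula for the inner term $\E[\psi(e^{(t-s)A}\bar Y_s,Y_s)-\psi(\bar Y_s,Y_s)]$; replacing your strong $L^p$ bound (with or without the two-regime $\min$-trick) by this second expansion is essential to reach $\rho<1-\vartheta$ rather than $\rho<(1-\vartheta)/2$.
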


\begin{proof}
Throughout this proof 
let 
$
  ( g_r )_{ r \in [ 0, \infty ) }
  \subseteq
  C( V, \R )
$ 
be the functions
which satisfy 
for all $ r \in [0,\infty) $, $ x \in V $
that
$
  g_r( x )
  =
  \max\{
  1, \| x \|^r_V
  \}
$ 
and let 
$
  \psi_{1,0} \colon V \times V \to L( V, \mathcal{V} ) 
$, 
$
  \psi_{0,1} \colon V \times V \to L( V, \mathcal{V} ) 
$, 
$
  \psi_{2,0} \colon V \times V \to L^{(2)}( V, \mathcal{V} ) 
$, 
$
  \psi_{0,2} \colon V \times V \to L^{(2)}( V, \mathcal{V} ) 
$, 
$
  \psi_{1,1} \colon V \times V \to L^{(2)}( V, \mathcal{V} ) 
$
be the functions which satisfy
for all 
$ x, y, v_1, v_2 \in V $ 
that 
\begin{equation}
\begin{split}
  \psi_{1,0}( x, y ) \, v_1
  =
  \big(\tfrac{ \partial }{ \partial x } \psi\big)( x, y ) \, v_1
  ,&
  \qquad
  \psi_{0,1}( x, y ) \, v_1
  =
  \big(\tfrac{ \partial }{ \partial y } \psi\big)( x, y ) \, v_1
  ,
\\
  \psi_{2,0}( x, y )( v_1, v_2 )
  =
  \big(
  \tfrac{ \partial^2 }{ \partial x^2 } \psi\big)( x, y )
  ( v_1, v_2 )
  ,&
  \qquad
  \psi_{0,2}( x, y )( v_1, v_2 )
  =
  \big(
  \tfrac{ \partial^2 }{ \partial y^2 } \psi\big)( x, y )
  ( v_1, v_2 )
  ,
\\
  \psi_{ 1, 1 }( x, y )( v_1, v_2 )
  =
  \big(
  \tfrac{ \partial }{ \partial y } 
  \tfrac{ \partial }{ \partial x } 
  \psi\big)( x, y )
  ( v_1, v_2 )
  .&
\end{split}
\end{equation}
Next we observe that Lemma~\ref{lem:Kp_estimate}
and the assumption that $ q \leq p - 3 $ ensure that $ K_{ q + 1 } \leq K_{ q + 3 } < \infty $.
Combining this with the fact that
\begin{equation}
  \forall \, x_1, x_2, y \in V \colon
  \| \psi( x_1, y ) - \psi( x_2, y ) \|_{\mathcal{V}}
  \leq 
  2 \, \psiC
  \max\!\big\{ 
    1, \| x_1 \|^{ q + 1 }_V , \| x_2 \|^{ q + 1 }_V , \| y \|^{ q + 1 }_V 
  \big\}
\end{equation}
shows that 
for all $ ( s, t ) \in \angle $ 
it holds that
$
  \ES\big[
    \| 
      \psi( \bar{Y}_t, Y_s )
      -
      \psi( \bar{Y}_s, Y_s ) 
    \|_{\mathcal{V}}
  \big]
  < \infty
$.
It thus remains to prove \eqref{eq:prop_temporal_reg}.
To do so, we apply the mild It\^{o} formula 
in~\cite{CoxJentzenKurniawanPusnik2016}.
More formally,
an application of 
Proposition~3.11 in~\cite{CoxJentzenKurniawanPusnik2016}
shows
that for all 
$ ( s, t ) \in \angle $ 
it holds that 
$
  \ES\big[
    \| 
      \psi( e^{ ( t - s )A } \, \bar{Y}_s, Y_s )
      -
      \psi( \bar{Y}_s, Y_s ) 
    \|_{\mathcal{V}}
  \big]
  < \infty
$ 
and 
\begin{equation}
\label{eq:mild_ito_outer_1st}
\begin{split}
&
  \left\|
  \E\left[
    \psi(
      \bar{Y}_{ t } ,
      Y_{ s }
    )
  -
    \psi(
      \bar{Y}_{ s } ,
      Y_{ s }
    )
  \right]
  \right\|_{\mathcal{V}}
  \leq
  \left\|\E\left[
    \psi(
      e^{ ( t - s )A } \, \bar{Y}_{ s } ,
      Y_{ s }
    \big)
    -
    \psi(
      \bar{Y}_{ s } ,
      Y_{ s }
    )
  \right]\right\|_{\mathcal{V}}
\\ & +
  \int^{ t }_{ s }
  \E\left[
  \left\|
    \psi_{1,0}(
      e^{ ( t - r )A } \, \bar{Y}_r,
      Y_{ s }
    )
    \,
    e^{ ( t - r )A } 
    F(
      Y_{ \floor{ r }{ h } }
    )
  \right\|_{\mathcal{V}}
  \right]
  dr
\\ & +
  \int^{ t }_{ s }
  \E\left[\left\|
   \tfrac{ 1 }{ 2 }
  {\smallsum\limits_{ b \in \mathbb{ U } }}
    \psi_{2,0}(
      e^{ ( t - r )A } \, \bar{Y}_r ,
      Y_{ s }
    )\big(
      e^{ ( t - r ) A } \,
      B^b( Y_{ \floor{ r }{ h } } )
    ,
      e^{ ( t - r ) A } \,
      B^b( Y_{ \floor{ r }{ h } } )
  \big)
  \right\|_{\mathcal{V}}\right]
  dr .
\end{split}
\end{equation}
In the following we establish suitable estimates 
for the three summands appearing on the right hand side of \eqref{eq:mild_ito_outer_1st}.
Combining these estimates with \eqref{eq:mild_ito_outer_1st} will then
allow us to establish \eqref{eq:prop_temporal_reg}.
We begin with the second and the third summands on 
the right hand side of \eqref{eq:mild_ito_outer_1st}.
We note that the assumption that
\begin{equation}
  \forall \, x_1, x_2 , y \in V \colon
  \left\|
    \psi( x_1 , y )
    -
    \psi( x_2, y )
  \right\|_{\mathcal{V}}
  \leq
  \psiC
  \max\{
    1,
    \| x_1 \|^\power_V
    ,
    \| x_2 \|^\power_V
    ,
    \| y \|^\power_V
  \}
  \,
  \| x_1 - x_2 \|_V
\end{equation}
implies that
$
  \forall \, x , y \in V \colon
  \|
    \psi_{ 1, 0 }( x, y )
  \|_{ L( V, \mathcal{V} ) }
  \leq
  \psiC
  \max\{
    1,
    \| x \|^\power_V,
    \| y \|^\power_V
  \}
$.
This, in turn, proves that for all 
$ ( r, t ) \in \angle $, 
$ u, v, w \in V $ 
it holds that 
\begin{equation}
\begin{split}
\label{eq:drift_1st_mild_ito}
&
  \left\|
    \psi_{ 1 , 0 }(
      e^{ ( t - r ) A } \, u ,
      v
    )
    \, e^{ ( t - r ) A } \,
    F( w )
  \right\|_{\mathcal{V}}
\\ & \leq
  \psiC
  \left| \SGchi{0} \right|^{ \power }
  \max\!\left\{ 
    1 ,
    \| u \|_V^{ \power }
    ,
    \| v \|_V^{ \power }
  \right\}
  \| 
    e^{ ( t - r ) A }
  \|_{
    L( V, V_{ \vartheta } )
  }
  \left\| F( w ) \right\|_{ V_{ - \vartheta } }
\\ & \leq
  \frac{
    \psiC \,
    | \SGchi{0} |^\power \, 
    \SGchi{\vartheta}
    \max\{
      1, \| u \|^\power_V, \| v \|^\power_V
    \}
    \| F \|_{ \operatorname{Lip}^0( V, V_{ -\vartheta } ) }   
    \,
    g_1( w )
  }{
    ( t - r )^\vartheta
  }
  .
\end{split}
\end{equation}
Next we observe that the assumption that
\begin{equation}
  \forall \, x_1, x_2 , y \in V \colon
  \left\|
    \psi_{ 1,0 }( x_1 , y )
    -
    \psi_{ 1,0 }( x_2 , y )
  \right\|_{ 
    L( V, \mathcal{V} ) 
  }
  \leq
  \psiC
  \max\{
    1,
    \| x_1 \|^\power_V
    ,
    \| x_2 \|^\power_V
    ,
    \| y \|^\power_V
  \}
  \| x_1 - x_2 \|_V
\end{equation}
shows that
$
  \forall \, x , y \in V \colon
  \|
    \psi_{ 2, 0 }( x, y )
  \|_{ L^{ (2) }( V, \mathcal{V} ) }
  \leq
  \psiC
  \max\{
    1,
    \| x \|^\power_V,
    \| y \|^\power_V
  \}
$.
This and Lemma~\ref{lem:gamma.estimate} prove that for all 
$ ( r, t ) \in \angle $, 
$ u, v, w \in V $ 
it holds that 
\begin{equation}
\begin{split}
\label{eq:diffusion_1st_mild_ito}
&
  \tfrac{ 1 }{ 2 }
  \Big\|
  \smallsum\limits_{ b \in \mathbb{ U } }
  \displaystyle
    \psi_{2,0}\big(
     e^{ ( t - r ) A } \, u,
    v
    \big)
  \big(
    e^{ ( t - r ) A } \,
    B^b( w )
    ,
    e^{ ( t - r ) A } \,
    B^b( w )
  \big)
  \Big\|_{\mathcal{V}}
\\ & \leq 
    \tfrac{ 1 }{ 2 } \,
  \psiC \,
  | \SGchi{0} |^{ \power }
  \,
  \max\{
    1, \| u \|^\power_V, \| v \|^\power_V
  \}
  \,
  \|
    e^{ ( t - r ) A }
    B( w )
  \|^2_{
    \gamma(U,V)
  }
\\ & \leq
  \frac{
    \psiC \,
    | \SGchi{0} |^\power
    \,
    | \SGchi{ \nicefrac{ \vartheta }{ 2 } } |^2
    \max\{
      1, \| u \|^\power_V, \| v \|^\power_V
    \}
    \,
    \| B \|^2_{ \operatorname{Lip}^0( V, \gamma( U, V_{ - \vartheta / 2 } ) ) } \,
    g_2( w )
  }{
    2( t - r )^\vartheta
  }
  .
\end{split}
\end{equation}
Furthermore, we note
that H\"{o}lder's inequality implies that 
for all 
$ r, l \in ( 0, \infty ) $, 
$ s, t \in [ 0, T ] $ 
it holds that 
\begin{equation}
\begin{split}
&
  \ES\!\left[
    \max\!\left\{
      1,
      \| \bar{Y}_s \|^r_V,
      \| Y_t \|^r_V
    \right\}
    g_l( Y_{ \floor{ s }{ h } } )
  \right]
\\ & \leq
  \left(
    \sup_{ u, v \in [ 0, T ] }
    \left\|
    \max\!\left\{
      1,
      \| \bar{Y}_u \|^r_V,
      \| Y_v \|^r_V
    \right\}
    \right\|_{ \lpn{ 1 + \nicefrac{l}{r} }{ \P }{ \R } }
  \right)
  \left(
    \sup_{ u \in [ 0, T ] }
    \left\|
    \max\!\left\{
      1,
      \| Y_u \|^l_V
    \right\}
    \right\|_{ \lpn{ 1 + \nicefrac{r}{l} }{ \P }{ \R } }
  \right)
\\ & \leq
  | K_{ r + l } |^{ \frac{ 1 }{ 1 + \nicefrac{l}{r} } }
  \,
  | K_{ r + l } |^{ \frac{ 1 }{ 1 + \nicefrac{r}{l} } }
  =
  K_{ r + l }
  .
\end{split}
\end{equation}
This and the fact that for all 
$ l \in [0,\infty) $
it holds that 
$
  \sup_{ s \in [0,T] }
  \ES\big[
    g_l( Y_{ \floor{ s }{h} } )
  \big]
  \leq
  K_l
$ 
prove that
for all 
$ r, l \in [ 0, \infty ) $, 
$ s, t \in [ 0, T ] $ 
it holds that 
\begin{equation}
\label{eq:optimal_moment_1}
\begin{split}
&
  \ES\!\left[
    \max\!\left\{
      1,
      \| \bar{Y}_s \|^r_V,
      \| Y_t \|^r_V
    \right\}
    g_l( Y_{ \floor{ s }{ h } } )
  \right]
\leq
  K_{ r + l }
  .
\end{split}
\end{equation}
Combining \eqref{eq:drift_1st_mild_ito}, \eqref{eq:diffusion_1st_mild_ito}, 
and \eqref{eq:optimal_moment_1} 
implies that for all 
$ ( s, t ) \in \angle $ 
it holds that 
\begin{equation}
\label{eq:outer_summands}
\begin{split}
&
  \int^{ t }_{ s }
  \E\left[\left\|
    \psi_{1,0}(
      e^{ ( t - r )A } \, \bar{Y}_r,
      Y_{ s }
    )
    \,
    e^{ ( t - r )A } 
    F(
      Y_{ \floor{ r }{ h } }
    )
  \right\|_{\mathcal{V}}\right]
  dr
\\ & +
  {\displaystyle
  \int^{ t }_{ s }}
  \E\left[\left\|
   \tfrac{ 1 }{ 2 }
  {\smallsum\limits_{ b \in \mathbb{ U } }}
    \psi_{2,0}(
      e^{ ( t - r )A } \, \bar{Y}_r ,
      Y_{ s }
    )\big(
      e^{ ( t - r ) A } \,
      B^b( Y_{ \floor{ r }{ h } } )
    ,
      e^{ ( t - r ) A } \,
      B^b( Y_{ \floor{ r }{ h } } )
  \big)
  \right\|_{\mathcal{V}}\right]
  dr
\\ & \leq
    \psiC \,
    | \SGchi{0} |^\power
    \left(
      \SGchi{\vartheta}
      \,
      \| F \|_{
        \operatorname{Lip}^0( V, V_{ - \vartheta } )
      }
      +
      \tfrac{ 1 }{ 2 }
      \,
      | \SGchi{ \nicefrac{ \vartheta }{ 2 } } |^2 
      \,
      \| B \|^2_{ \operatorname{Lip}^0( V, \gamma( U, V_{ - \vartheta / 2 } ) ) } 
    \right)
    K_{ \power + 2 }
    \int_s^t
    \frac{
      1
    }{
      ( t - r )^{ \vartheta }
    }
    \,
    dr
\\ & \leq
  \frac{
    \psiC \,
    | \SGchi{0} |^\power
    \left(
      \SGchi{\vartheta}
      +
      \frac{ 1 }{ 2 }
      | \SGchi{ \nicefrac{ \vartheta }{ 2 } } |^2 
    \right)
    \varsigma_{ F, B } 
    \,
    K_{ \power + 2 }
    \left( t - s \right)^{ ( 1 - \vartheta ) }
  }{
    ( 1 - \vartheta )
  }
  .
\end{split}
\end{equation}
Inequality~\eqref{eq:outer_summands} provides us an
appropriate estimate for the second and the third summand on 
the right hand side of \eqref{eq:mild_ito_outer_1st}.
It thus remains to provide a suitable estimate 
for the first summand on the right hand side of 
\eqref{eq:mild_ito_outer_1st}.
For this we will employ 
the mild It\^{o} formula 
in~\cite{CoxJentzenKurniawanPusnik2016} again
and this will allow us to obtain an appropriate upper bound for  
$
  \big\|
  \ES\big[
    \psi(
      e^{ ( t - s ) A } \, \bar{Y}_s ,
      Y_s
    )
    -
    \psi(
      \bar{Y}_{ s } ,
      Y_{ s }
    )
  \big]
  \big\|_{\mathcal{V}}
$ 
for 
$ ( s, t ) \in \angle $.
More formally, let 
$ \tilde{F}_{ r, s, t } \colon V \times V \times V \to \mathcal{V} $, 
$ r \in [ 0, s ) $, 
$ s \in ( 0, t ) $, 
$ t \in ( 0, T ] $, 
be the functions which satisfy 
for all 
$ t \in ( 0, T ] $, 
$ s \in ( 0, t ) $, 
$ r \in [ 0, s ) $, 
$ u,v, w \in V $ 
that 
\begin{equation}
\begin{split}
\label{eq:tilde_F_def_1st}
  \tilde{F}_{ r,s,t }(u,v,w)
& =
  \psi_{1,0}\big(
    e^{ ( t - r )A } \, u,
    e^{(s-r)A} \, v
  \big) \,
  e^{ ( t - r )A } \, F( w )
  -
  \psi_{1,0}\big(
    e^{ ( s - r )A } \, u,
    e^{(s-r)A} \, v
  \big) \,
  e^{ ( s - r )A } \, F( w )
\\ & 
  +
  \left[
    \psi_{0,1}\big(
      e^{ ( t - r )A } \, u,
      e^{(s-r)A} \, v
    \big)
    -
    \psi_{0,1}\big(
      e^{ ( s - r )A } \, u,
      e^{(s-r)A} \, v
    \big)
  \right]
  e^{(s-\floor{r}{h})A} \, F( w )
\end{split}
\end{equation} 
and let 
$ \tilde{B}_{ r, s, t } \colon V \times V \times V \to \mathcal{V} $, 
$ r \in [ 0, s ) $, 
$ s \in ( 0, t ) $, 
$ t \in ( 0, T ] $, 
be the functions which satisfy
for all 
$ t \in ( 0, T ] $, 
$ s \in ( 0, t ) $, 
$ r \in [ 0, s ) $, 
$ u,v, w \in V $ 
that 
\begin{equation}
\begin{split}
&
  \tilde{B}_{r,s,t}( u, v, w )
=
  \tfrac{1}{2}
  \smallsum\limits_{ b \in \mathbb{U} }
  \psi_{2,0}\big(
     e^{ ( t - r )A } \, u,
     e^{(s-r)A} \, v
  \big)
  \big(
  e^{ ( t - r )A } \, B^b( w ),
  e^{ ( t - r )A } \, B^b( w )
  \big)
\\ &
-
  \tfrac{1}{2}
  \smallsum\limits_{ b \in \mathbb{U} }
  \psi_{2,0}\big(
     e^{ ( s - r )A } \, u,
     e^{(s-r)A} \, v
  \big)
  \big(
  e^{ ( s - r )A } \, B^b( w ),
  e^{ ( s - r )A } \, B^b( w )
  \big)
\\ & 
  +
  \tfrac{1}{2}
  \smallsum\limits_{ b \in \mathbb{U} }
  \left[
    \psi_{0,2}\big(
      e^{ ( t - r )A } \, u,
      e^{(s-r)A} \, v
    \big)
    -
    \psi_{0,2}\big(
      e^{ ( s - r )A } \, u,
      e^{(s-r)A} \, v
    \big)
  \right]\!\big(
    e^{(s-\floor{r}{h})A} \, B^b( w )
    ,
    e^{(s-\floor{r}{h})A} \, B^b( w )
  \big)
\\ & +
  \smallsum\limits_{ b \in \mathbb{U} }
  \psi_{1,1}\big(
     e^{ ( t - r )A } \, u,
     e^{(s-r)A} \, v
  \big)
  \big(
  e^{ ( t - r )A } \, B^b( w ),
  e^{(s-\floor{r}{h})A} \, B^b( w )
  \big)
\\ & 
-
  \smallsum\limits_{ b \in \mathbb{U} }
  \psi_{1,1}\big(
     e^{ ( s - r )A } \, u,
     e^{(s-r)A} \, v
  \big)
  \big(
  e^{ ( s - r )A } \, B^b( w ),
  e^{(s-\floor{r}{h})A} \, B^b( w )
  \big)
  .
\end{split}
\end{equation}
An application of
Proposition~3.11
in~\cite{CoxJentzenKurniawanPusnik2016}
shows
that for all 
$ t \in (0,T] $, $ s \in (0,t) $
it holds that 
\begin{equation}
\label{eq:mild_ito_inner_1st}
\begin{split}
&
  \left\|\E\left[
    \psi(
      e^{ ( t - s )A } \, \bar{Y}_{ s } ,
      Y_{ s }
    )
    -
    \psi(
      \bar{Y}_{ s } ,
      Y_{ s }
    )
  \right]\right\|_{\mathcal{V}}
\leq
  \E\left[\left\|
    \psi\big(
      e^{ t A } \, Y_0
      ,
      e^{ s A } \, Y_0
    \big)
    -
    \psi\big(
      e^{ s A } \, Y_0
      ,
      e^{ s A } \, Y_0
    \big)
  \right\|_{\mathcal{V}}\right]
\\ & +
  \int^{ s }_0
  \ES\big[\|
    \tilde{F}_{r,s,t}\big(
      \bar{Y}_r , 
      Y_r , 
      Y_{ \floor{ r }{ h } }
    \big)
  \|_{\mathcal{V}}\big]
  \,
  dr
  +
  \int^{ s }_0
  \ES\big[\|
    \tilde{B}_{r,s,t}\big(
      \bar{Y}_r , 
      Y_r , 
      Y_{ \floor{ r }{ h } }
    \big)
  \|_{\mathcal{V}}\big]
  \,
  dr
  .
\end{split}
\end{equation}
In the next step we estimate the summands on the right hand side of~\eqref{eq:mild_ito_inner_1st}. 
We observe that for all 
$ t \in (0,T] $, $ s \in (0,t) $
it holds that 
\begin{equation}
\begin{split}
&
  \left\|
    \psi\big(
    e^{ t A } \, Y_0,
    e^{ s A } \, Y_0
    \big)
    -
    \psi\big(
    e^{ s A } \, Y_0,
    e^{ s A } \, Y_0
    \big)
  \right\|_{\mathcal{V}}
\\ & \leq 
  \psiC
  \max\!\left\{ 
    1 ,
    \| e^{ t A } \, Y_0 \|_V^{ \power }
    ,
    \| e^{ s A } \, Y_0 \|_V^{ \power }
  \right\}
  \| e^{ t A } \, Y_0 - e^{ s A } \, Y_0 \|_V
\\ & \leq
  \psiC \,
  |\SGchi{0}|^{ \power }
  \,
  g_\power( Y_0 ) \,
  \| e^{ t A } - e^{ s A } \|_{ L( V ) }
  \,
  \| Y_0 \|_V
\leq
  \psiC
  \,
  | \SGchi{0} |^{ \power }
  \,
  g_{ \power + 1 }( Y_0 ) \,
  \tfrac{
    | \SGchi{\rho} |^2 \,
    ( t - s )^\rho
  }{
    s^{ \rho }
  }
  .
\end{split}
\end{equation}
This and the fact that 
$
  \ES\big[ 
    g_{ \power + 1 }( Y_0 )
  \big]
  \leq
  K_{ \power + 1 }
$ 
imply that for all 
$ t \in (0,T] $, $ s \in (0,t) $
it holds that 
\begin{equation}
\label{eq:1st_summand_inner}
\begin{split}
&
  \E\left[
  \|
    \psi\big(
      e^{ t A } \, Y_0
      ,
      e^{ s A } \, Y_0
    \big)
    -
    \psi\big(
      e^{ s A } \, Y_0
      ,
      e^{ s A } \, Y_0
    \big)
  \|_{\mathcal{V}}
  \right]
\leq
  \psiC
  \,
  | \SGchi{0} |^{ \power }
  \,
  K_{ \power + 1 } \,
  \tfrac{
    |\SGchi{\rho}|^2
  }{
    s^{ \rho }
  }
  \left( t - s \right)^{ \rho }
  .
\end{split}
\end{equation}
Inequality \eqref{eq:1st_summand_inner} provides us an appropriate estimate for 
the first summand on the right hand side of~\eqref{eq:mild_ito_inner_1st}. 
In the next step we establish a suitable bound for the second summand on the right hand side of~\eqref{eq:mild_ito_inner_1st}. 
Note that for all 
$ t \in ( 0, T ] $, 
$ s \in ( 0, t ) $, 
$ r \in [ 0, s ) $, 
$ u,v, w \in V $ 
it holds that 
\begin{equation}
\label{eq:mild_ito_2nd_F_A}
\begin{split}
&
  \left\|
    \psi_{1,0}\big(
     e^{ ( t - r )A } \, u,
     e^{ ( s - r )A } \, v
    \big) 
    \,
    e^{ ( t - r )A } \, F( w )
    -
    \psi_{1,0}\big(
     e^{ ( s - r )A } \, u,
     e^{ ( s - r )A } \, v
    \big) 
    \,
    e^{ ( s - r )A } \, F( w )
  \right\|_{\mathcal{V}}
\\ & \leq
  \left\|
    \big[
      \psi_{1,0}\big(
         e^{ ( t - r )A } \, u,
         e^{ ( s - r )A } \, v
      \big)
      -
      \psi_{1,0}\big(
        e^{ ( s - r )A } \, u,
        e^{ ( s - r )A } \, v
      \big)
    \big]
    \,
    e^{ ( t - r )A } \, F( w )
  \right\|_{\mathcal{V}}
\\ & +
  \left\|
    \psi_{1,0}\big(
     e^{ ( s - r )A } \, u,
     e^{ ( s - r )A } \, v
    \big) \,
    e^{ ( s - r )A } 
    \left(
      e^{ ( t - s )A } - \operatorname{Id}_V 
    \right)
    F( w )
  \right\|_{\mathcal{V}}
\\ & \leq
  \psiC
  \,
  \max\!\big\{
    1
    , 
    \| e^{ ( t - r ) A } u \|^\power_V
    , 
    \| e^{ ( s - r ) A } u \|^\power_V
    , 
    \| e^{ ( s - r )A } v \|^\power_V
  \big\}
    \left\| 
      e^{ ( s - r ) A }
      \left(
        e^{ ( t - s ) A }
        -
        \operatorname{Id}_V
      \right)
      u
    \right\|_V
\\&\cdot
  \| 
    e^{ ( t - r ) A } F( w )
  \|_{
    V
  }
+
    \psiC 
    \,
    \max\!\big\{
      1, \| e^{ ( s - r ) A } u \|^\power_V, \| e^{ ( s - r )A } v \|^\power_V
    \big\}
    \,
    \left\| 
      e^{ ( s - r ) A }
      \left(
        e^{ ( t - s ) A }
        -
        \operatorname{Id}_V
      \right)
      F( w )
    \right\|_V
\\ & \leq
  \frac{
    \psiC
    \,
    | \SGchi{0} |^\power
    \max\!\big\{
      1, \| u \|^\power_V, \| v \|^\power_V
    \big\}
    \,
    | \SGchi{\rho} |^2
    \,
    ( t - s )^\rho
    \,
    \| u \|_V 
    \,
    \SGchi{\vartheta}
    \,
    \| F \|_{ \operatorname{Lip}^0( V, V_{ -\vartheta } ) }
    \, 
    g_1( w )
  }{
    ( s - r )^{ \rho }
    \,
    ( t - r )^{ \vartheta }
  }
\\ & +
  \frac{
    \psiC \,
    | \SGchi{0} |^\power
    \max\!\big\{
      1, \| u \|^\power_V, \| v \|^\power_V
    \big\}
    \,
    \SGchi{ \rho + \vartheta } 
    \,
    \SGchi{\rho} 
    \,
    ( t - s )^\rho
    \,
    \| F \|_{ \operatorname{Lip}^0( V, V_{ -\vartheta } ) } 
    \,
    g_1( w )
  }{
    ( s - r )^{ ( \rho + \vartheta ) }
  }
  ,
\end{split}
\end{equation}
\begin{equation}
\label{eq:mild_ito_2nd_F_B}
\begin{split}
&
  \left\|
    \left[
      \psi_{0,1}\big(
        e^{ ( t - r )A } \, u
        ,
        e^{ ( s - r )A } \, v
      \big)
      -
      \psi_{0,1}\big(
        e^{ ( s - r )A } \, u
        ,
        e^{ ( s - r )A } \, v
      \big)
    \right]
    e^{ ( s - \floor{r}{h} )A } \, F( w )
  \right\|_{\mathcal{V}}
\\ & \leq
    \psiC
    \,
    | \SGchi{0} |^\power
    \max\!\big\{
      1, \| u \|^\power_V, \| v \|^\power_V
    \big\}
    \,
    \left\| 
      e^{ ( s - r ) A }
      \left(
        e^{ ( t - s ) A }
        -
        \operatorname{Id}_V
      \right)
      u
    \right\|_V
    \,
    \| e^{ ( s - \floor{r}{h} )A } \|_{ 
      L(
        V_{ - \vartheta }
        ,
        V
      )
    }
    \,
    \| F( w ) \|_{
      V_{ - \vartheta }
    }
\\ & \leq
  \frac{
    \psiC
    \,
    | \SGchi{0} |^\power
    \max\!\big\{
      1, \| u \|^\power_V, \| v \|^\power_V
    \big\}
    \,
    | \SGchi{\rho} |^2
    \,
    ( t - s )^\rho
    \,
    \| u \|_V 
    \,
    \SGchi{\vartheta}
    \,
    \| F \|_{ \operatorname{Lip}^0( V, V_{ -\vartheta } ) } 
    \,
    g_1( w )
  }{
    ( s - r )^{ ( \rho + \vartheta ) }
  }
\\ & \leq
  \frac{
    \psiC
    \,
    | \SGchi{0} |^\power
    \,
    | \SGchi{\rho} |^2
    \,
    \SGchi{\vartheta}
    \max\!\big\{
      1, \| u \|^{ \power + 1 }_V, \| v \|^{ \power + 1 }_V
    \big\}
    \,
    \| F \|_{ \operatorname{Lip}^0( V, V_{ -\vartheta } ) } 
    \,
    g_1( w )
    \,
    ( t - s )^\rho
  }{
    ( s - r )^{ ( \rho + \vartheta ) }
  }
  .
\end{split}
\end{equation}
Inequalities~\eqref{eq:mild_ito_2nd_F_A} and \eqref{eq:mild_ito_2nd_F_B} prove that for all 
$ t \in ( 0, T ] $, 
$ s \in ( 0, t ) $, 
$ r \in [ 0, s ) $, 
$ u,v, w \in V $ 
it holds that 
\begin{equation}
\begin{split}
&
  \| 
    \tilde{F}_{r,s,t}( u, v, w ) 
  \|_{\mathcal{V}}
\\ & \leq
  \psiC
  \,
  | \SGchi{0} |^{ \power }
  \left[
    \frac{
      | \SGchi{\rho} |^2
      \,
      \SGchi{\vartheta}
    }{
      ( s - r )^{ \rho }
      \,
      ( t - r )^{ \vartheta }
    }
    +
    \frac{ 
      \SGchi{ \rho + \vartheta } 
      \,
      \SGchi{\rho} 
    }{
      ( s - r )^{ ( \rho + \vartheta ) }
    }
    +
    \frac{
      | \SGchi{\rho} |^2
      \,
      \SGchi{\vartheta}
    }{
      ( s - r )^{ ( \rho + \vartheta ) }
    }
  \right]
\\ & \quad \cdot
  \max\!\big\{
    1, \| u \|^{\power+1}_V, \| v \|^{\power+1}_V
  \big\}
  \,
  \| F \|_{ \operatorname{Lip}^0( V, V_{ -\vartheta } ) } 
  \,
  g_1( w )
  \,
  ( t - s )^\rho
\\ & \leq
  \psiC
  \,
  | \SGchi{0} |^{ \power }
  \left[
    \frac{
      \SGchi{\rho}
      \,
      (
        2 \, \SGchi{\rho} \, \SGchi{\vartheta}
        +
        \SGchi{ \rho + \vartheta }
      )
    }{
      ( s - r )^{ ( \rho + \vartheta ) }
    } 
  \right]
\\&\quad\cdot
  \max\!\big\{
    1, \| u \|^{\power+1}_V, \| v \|^{\power+1}_V
  \big\}
  \,
  \| F \|_{ \operatorname{Lip}^0( V, V_{ -\vartheta } ) } 
  \,
  g_1( w )
  \,
  ( t - s )^\rho
  .
\end{split}
\end{equation}
This and \eqref{eq:optimal_moment_1} 
prove that for all 
$ t \in (0,T] $, $ s \in (0,t) $
it holds that 
\begin{equation}
\label{eq:F_summand_inner}
\begin{split}
&
  \int^{ s }_0
  \ES\big[\|
    \tilde{F}_{r,s,t}\big(
      \bar{Y}_r , 
      Y_r , 
      Y_{ \floor{ r }{ h } }
    \big)
  \|_{\mathcal{V}}\big]
  \,
  dr
\\ & \leq
  \frac{
    \psiC
    \,
    | \SGchi{0} |^q
    \,
    \SGchi{\rho}
    \,
    \big(
      2 \, \SGchi{\rho} \, \SGchi{\vartheta} 
      +
      \SGchi{ \rho + \vartheta }
    \big)
  }{
    ( 1 - \vartheta - \rho )
  }
  \,
  \| F \|_{ \operatorname{Lip}^0( V, V_{ -\vartheta } ) } 
  \,
  K_{ \power + 2 }
  \,
  ( t - s )^\rho 
  \,
  s^{ ( 1 - \vartheta - \rho ) }
  .
\end{split}
\end{equation}
Next we provide an appropriate bound for the third summand on the right hand side of~\eqref{eq:mild_ito_inner_1st}. 
Observe that Lemma~\ref{lem:gamma.estimate} shows that
for all 
$ t \in ( 0, T ] $, 
$ s \in ( 0, t ) $, 
$ r \in [ 0, s ) $, 
$ u,v, w \in V $ 
it holds that 
\allowdisplaybreaks
\begin{align}
\label{eq:prop_temporal_reg_termB1}
&
\nonumber
  \Big\|
  \smallsum\limits_{ b \in \mathbb{U} }
  \psi_{2,0}\big(
     e^{ ( t - r )A } \, u,
     e^{ ( s - r )A }\, v
  \big)
  \big(
  e^{ ( t - r )A } \, B^b( w ),
  e^{ ( t - r )A } \, B^b( w )
  \big)
\\ &
\nonumber
  -
  \smallsum\limits_{ b \in \mathbb{U} }
  \psi_{2,0}\big(
     e^{ ( s - r )A } \, u,
     e^{ ( s - r )A }\, v
  \big)
  \big(
  e^{ ( s - r )A } \, B^b( w ),
  e^{ ( s - r )A } \, B^b( w )
  \big)
  \Big\|_{\mathcal{V}}
\\ & \leq
\nonumber
  \Big\|
  \smallsum\limits_{ b \in \mathbb{U} }
    \big[
      \psi_{2,0}\big(
         e^{ ( t - r )A } \, u,
         e^{ ( s - r )A }\, v
      \big)
      -
      \psi_{2,0}\big(
        e^{ ( s - r )A } \, u,
        e^{ ( s - r )A }\, v
      \big)
    \big]
    \big(
      e^{ ( t - r )A } \, B^b( w )
      ,
      e^{ ( t - r )A } \, B^b( w )
    \big)
  \Big\|_{\mathcal{V}}
\\ & +
  \Big\|
  \smallsum\limits_{ b \in \mathbb{U} }
    \psi_{2,0}\big(
      e^{ ( s - r )A } \, u
      ,
      e^{ ( s - r )A }\, v
    \big)
    \big(
      (
        e^{ ( t - r )A } + e^{ ( s - r )A }
      )
      \, 
      B^b( w )
      ,
      e^{ ( s - r ) A } 
      \,
      (
        e^{ ( t - s ) A } - \operatorname{Id}_V 
      )
      \,
      B^b( w )
    \big)
  \Big\|_{\mathcal{V}}
\\ & \leq
\nonumber
  \frac{
    \psiC
    \,
    | \SGchi{0} |^\power
    \max\{
      1, \| u \|^\power_V, \| v \|^\power_V
    \}
    \,
    | \SGchi{\rho} |^2
    \,
    ( t - s )^\rho
    \,
    \| u \|_V 
    \,
    | \SGchi{ \nicefrac{ \vartheta }{ 2 } } |^2
    \,
    \| B \|^2_{ \operatorname{Lip}^0( V, \gamma( U, V_{ - \vartheta / 2 } ) ) }
    \, 
    g_2( w )
  }{
    ( s - r )^{ \rho }
    \,
    ( t - r )^{ \vartheta }
  }
\\ & +
\nonumber
  \frac{
    \psiC 
    \,
    | \SGchi{0} |^\power
    \max\{
      1, \| u \|^\power_V, \| v \|^\power_V
    \}
    \,
    2 
    \,
    \SGchi{ \nicefrac{ \vartheta }{ 2 } } 
    \, 
    \SGchi{ \rho + \nicefrac{ \vartheta }{ 2 } } 
    \,
    \SGchi{\rho}
    \,
    ( t - s )^\rho
    \,
    \| B \|^2_{ 
      \operatorname{Lip}^0( 
        V, 
        \gamma( U, V_{ - \vartheta / 2 } ) 
      ) 
    }
    \,
    g_2( w )
  }{
    ( s - r )^{ ( \rho + \vartheta ) }
  }
  ,
\end{align}
\begin{equation}
\begin{split}
&
  \Big\|
    \smallsum\limits_{ b \in \mathbb{U} }
    \left[
      \psi_{0,2}\big(
        e^{ ( t - r )A } \, u,
        e^{ ( s - r )A }\, v
      \big)
      -
      \psi_{0,2}\big(
        e^{ ( s - r )A } \, u,
        e^{ ( s - r )A }\, v
      \big)
    \right]
    \!
    \big(
      e^{ ( s - \floor{r}{h} )A } \, B^b( w )
      ,
      e^{ ( s - \floor{r}{h} )A } \, B^b( w )
    \big)
  \Big\|_{\mathcal{V}}
\\ & \leq
  \frac{
    \psiC
    \,
    | \SGchi{0} |^\power
    \max\!\big\{
      1, \| u \|^\power_V, \| v \|^\power_V
    \big\}
    \,
    | \SGchi{\rho} |^2
    \,
    ( t - s )^\rho
    \,
    \| u \|_V 
    \,
    | \SGchi{ \nicefrac{ \vartheta }{ 2 } } |^2
    \,
    \| B \|^2_{ \operatorname{Lip}^0( V, \gamma( U, V_{ -\nicefrac{ \vartheta }{ 2 } } ) ) }\,
    g_2( w )
  }{
    ( s - r )^{ ( \rho + \vartheta ) }
  }
  ,
\end{split}
\end{equation}
\allowdisplaybreaks
\begin{align}
\label{eq:prop_temporal_reg_termB3}
&
\nonumber
  \Big\|
  \smallsum\limits_{ b \in \mathbb{U} }
  \psi_{1,1}\big(
     e^{ ( t - r )A } \, u,
     e^{ ( s - r )A }\, v
  \big)
  \big(
  e^{ ( t - r )A } \, B^b( w ),
  e^{ ( s - \floor{r}{h} )A } \, B^b( w )
  \big)
\\ & 
\nonumber
-
  \smallsum\limits_{ b \in \mathbb{U} }
  \psi_{1,1}\big(
     e^{ ( s - r )A } \, u,
     e^{ ( s - r )A }\, v
  \big)
  \big(
    e^{ ( s - r )A } \, B^b( w )
    ,
    e^{ ( s - \floor{r}{h} )A } \, B^b( w )
  \big)
  \Big\|_{\mathcal{V}}
\\ & \leq
\nonumber
  \Big\|
  \smallsum\limits_{ b \in \mathbb{U} }
  \big[
    \psi_{1,1}\big(
      e^{ ( t - r )A } \, u
      ,
      e^{ ( s - r )A }\, v
    \big)
    -
    \psi_{1,1}\big(
      e^{ ( s - r )A } \, u
      ,
      e^{ ( s - r )A }\, v
    \big)
  \big]
  \big(
    e^{ ( t - r )A } \, B^b( w ),
    e^{ ( s - \floor{r}{h} )A } \, B^b( w )
  \big)
  \Big\|_{\mathcal{V}}
\\ & \quad +
  \Big\|
  \smallsum\limits_{ b \in \mathbb{U} }
    \psi_{1,1}\big(
      e^{ ( s - r )A } \, u,
      e^{ ( s - r )A }\, v
    \big)
    \big(
      e^{ ( s - r )A } 
      \,
      (
        e^{ ( t - s ) A } - \operatorname{Id}_V
      ) 
      \,
      B^b( w )
      , 
      e^{ ( s - \floor{r}{h} )A }\, 
      B^b( w )
    \big)
  \Big\|_{\mathcal{V}}
\\ & \leq
\nonumber
  \frac{
    \psiC
    \,
    | \SGchi{0} |^\power
    \max\{
      1, \| u \|^\power_V, \| v \|^\power_V
    \}
    \,
    | \SGchi{\rho} |^2
    \,
    ( t - s )^\rho
    \,
    \| u \|_V 
    \,
    | \SGchi{ \nicefrac{ \vartheta }{ 2 } } |^2
    \,
    \| B \|^2_{ 
      \operatorname{Lip}^0( V, \gamma( U, V_{ -\nicefrac{ \vartheta }{ 2 } } ) ) 
    }
    \, 
    g_2( w )
  }{
    ( s - r )^{ 
      ( \rho + \nicefrac{ \vartheta }{ 2 } ) 
    }
    \,
    ( t - r )^{ 
      \nicefrac{ \vartheta }{ 2 } 
    }
  }
\\ & \quad +
\nonumber
  \frac{
    \psiC 
    \,
    | \SGchi{0} |^\power
    \max\{
      1, \| u \|^\power_V, \| v \|^\power_V
    \}
    \,
    \SGchi{ \rho + \nicefrac{ \vartheta }{ 2 } } 
    \,
    \SGchi{\rho}
    \,
    ( t - s )^\rho
    \,
    \SGchi{ \nicefrac{ \vartheta }{ 2 } } 
    \,
    \| B \|^2_{ 
      \operatorname{Lip}^0( V, \gamma( U, V_{ - \vartheta / 2 } ) ) 
    }
    \,
    g_2( w )
  }{
    ( s - r )^{ ( \rho + \vartheta ) }
  }
  .
\end{align}
Inequalities~\eqref{eq:prop_temporal_reg_termB1}--\eqref{eq:prop_temporal_reg_termB3} 
imply that 
for all 
$ t \in ( 0, T ] $, 
$ s \in ( 0, t ) $, 
$ r \in [ 0, s ) $, 
$ u,v, w \in V $ 
it holds that 
\begin{equation}
\begin{split}
&
  \| 
    \tilde{B}_{r,s,t}( u, v, w ) 
  \|_{\mathcal{V}}
\leq
  \psiC 
  \,
  | \SGchi{0} |^\power
  \max\!\big\{
    1, \| u \|^{ \power + 1 }_V, \| v \|^{ \power + 1 }_V
  \big\}
  \left(
    t - s 
  \right)^{ \rho }
  \| B \|^2_{ 
    \operatorname{Lip}^0( V, \gamma( U, V_{ - \vartheta / 2 } ) ) 
  }
  \,
  g_2( w )
\\ & 
  \cdot
  \left[
    \tfrac{
      \frac{ 1 }{ 2 } 
      \,
      | \SGchi{ \rho } |^2
      \,
      | \SGchi{ \nicefrac{ \vartheta }{ 2 } } |^2
    }{
      ( s - r )^{ \rho }
      \,
      ( t - r )^{ \vartheta }
    }
    +
    \tfrac{
      \SGchi{ \nicefrac{ \vartheta }{ 2 } } 
      \, 
      \SGchi{ \rho + \nicefrac{ \vartheta }{ 2 } } 
      \,
      \SGchi{\rho}
    }{
      ( s - r )^{ ( \rho + \vartheta ) }
    }
    +
    \tfrac{
      \frac{ 1 }{ 2 } 
      \,
      | \SGchi{ \rho } |^2
      \,
      | \SGchi{ \nicefrac{ \vartheta }{ 2 } } |^2
    }{
      ( s - r )^{ ( \rho + \vartheta ) }
    }
    +
    \tfrac{
      | \SGchi{ \rho } |^2
      \,
      | \SGchi{ \nicefrac{ \vartheta }{ 2 } } |^2
    }{
      ( s - r )^{ 
        ( 
          \rho + 
          \nicefrac{ \vartheta }{ 2 }
        ) 
      }
      \,
      ( t - r )^{
        \nicefrac{ \vartheta }{ 2 }
      }
    }
    +
    \tfrac{
      \SGchi{ \rho + \nicefrac{ \vartheta }{ 2 } } 
      \, 
      \SGchi{\rho}
      \,
      \SGchi{ \nicefrac{ \vartheta }{ 2 } } 
    }{
      ( s - r )^{ ( \rho + \vartheta ) }
    }
  \right]
\\ & \leq
  \psiC 
  \,
  | \SGchi{0} |^\power
  \max\!\big\{
    1, \| u \|^{ \power + 1 }_V, \| v \|^{ \power + 1 }_V
  \big\}
  \left(
    t - s 
  \right)^{ \rho }
  \| B \|^2_{ 
    \operatorname{Lip}^0( V, \gamma( U, V_{ - \vartheta / 2 } ) ) 
  }
  \,
  g_2( w )
\\ & \cdot
  \frac{
    2 
    \, 
    \SGchi{\rho} 
    \,
    \SGchi{ 
      \nicefrac{ \vartheta }{ 2 } 
    }
    \,
    \big(
      \SGchi{\rho} 
      \, 
      \SGchi{ \nicefrac{ \vartheta }{ 2 } } 
      + 
      \SGchi{ 
        \rho + \nicefrac{ \vartheta }{ 2 } 
      }
    \big)
  }{ 
    ( s - r )^{ ( \rho + \vartheta ) } 
  }
  .
\end{split}
\end{equation}
This and \eqref{eq:optimal_moment_1} prove that for all 
$ t \in (0,T] $, $ s \in (0,t) $
it holds that 
\begin{equation}\label{eq:B_summand_inner}
\begin{split}
&
  \int^{ s }_0
  \big\|
  \ES\big[
    \tilde{B}_{r,s,t}(
      \bar{Y}_r, 
      Y_r, 
      Y_{ \floor{ r }{ h } }
    )
  \big]
  \big\|_{\mathcal{V}}
  \,
  dr
\\ & \leq
    \psiC 
    \,
    | \SGchi{0} |^\power
    \,
    K_{ \power + 3 }
    \,
  ( t - s )^\rho 
  \, 
  \| B \|^2_{ 
    \operatorname{Lip}^0( V, \gamma( U, V_{ -\nicefrac{ \vartheta }{ 2 } } ) ) 
  }
  \,
  \frac{
    2 
    \, 
    \SGchi{\rho} 
    \,
    \SGchi{ \nicefrac{ \vartheta }{ 2 } }
    \,
    \big(
      \SGchi{\rho} \, \SGchi{ \nicefrac{ \vartheta }{ 2 } } 
      + 
      \SGchi{ \rho + \nicefrac{ \vartheta }{ 2 } }
    \big)
    \,
    s^{ ( 1 - \vartheta - \rho ) }
  }{ 
    ( 1 - \vartheta - \rho ) 
  }
  .
\end{split}
\end{equation}
Combining \eqref{eq:mild_ito_inner_1st}
with the estimates 
\eqref{eq:1st_summand_inner}, \eqref{eq:F_summand_inner}, 
and \eqref{eq:B_summand_inner} 
yields that 
for all $ t \in (0,T] $, $ s \in (0,t) $ it holds that 
\begin{equation}
\label{eq:prop_temporal_reg_term1A}
\begin{split}
&
  \left\|
  \E\left[
    \psi(
      e^{ ( t - s )A } \, \bar{Y}_{ s } ,
      Y_{ s }
    )
    -
    \psi(
      \bar{Y}_{ s } ,
      Y_{ s }
    )
  \right]
  \right\|_{\mathcal{V}}
\leq
  \psiC
  \,
  | \SGchi{0} |^{ \power }
  \,
  K_{ \power + 1 } \,
  \tfrac{
    |\SGchi{\rho}|^2
  }{
    s^{ \rho }
  }
  \left( t - s \right)^{ \rho }
\\ & 
  +
  \frac{
    \psiC
    \,
    | \SGchi{0} |^q
    \,
    \SGchi{\rho}
    \left(
      2 \, \SGchi{\rho} \SGchi{\vartheta} 
      +
      \SGchi{ \rho + \vartheta }
    \right)
  }{
    ( 1 - \vartheta - \rho )
  }
  \,
  \| F \|_{ \operatorname{Lip}^0( V, V_{ -\vartheta } ) } 
  \,
  K_{ \power + 2 }
  \,
  ( t - s )^\rho 
  \,
  s^{ ( 1 - \vartheta - \rho ) }
\\ & +
  \frac{
    2 
    \, 
    \psiC 
    \,
    | \SGchi{0} |^\power
    \,
    \SGchi{\rho} 
    \,
    \SGchi{ \nicefrac{ \vartheta }{ 2 } }
    \,
    \big(
      \SGchi{\rho} \, \SGchi{ \nicefrac{ \vartheta }{ 2 } } 
      + 
      \SGchi{ \rho + \nicefrac{ \vartheta }{ 2 } }
    \big)
  }{ 
    ( 1 - \vartheta - \rho ) 
  }
  \, 
  \| B \|^2_{ 
    \operatorname{Lip}^0( V, \gamma( U, V_{ -\nicefrac{ \vartheta }{ 2 } } ) ) 
  }
  \,
    K_{ \power + 3 }
    \,
  ( t - s )^\rho 
    \,
    s^{ ( 1 - \vartheta - \rho ) }
\\ & \leq
  \psiC
  \,
  | \SGchi{0} |^q
  \,
  \varsigma_{ F, B }
  \,
  K_{ q + 3 }
  \left( t - s \right)^{ \rho }
  \left[ 
    \tfrac{ 
      | \SGchi{ \rho } |^2 
    }{
      s^{ \rho }
    }
    +
    \tfrac{
      \SGchi{\rho}
      \,
      \left(
        2 \, \SGchi{\rho} \SGchi{\vartheta} 
        +
        \SGchi{ \rho + \vartheta }
        +
        2 \, 
        \SGchi{\rho} \, | \SGchi{ \vartheta / 2 } |^2
        + 
        2 \,
        \SGchi{ \rho + \vartheta / 2 }
        \,
        \SGchi{ \vartheta / 2 }
      \right)
      \,
      s^{
        ( 1 - \vartheta - \rho )
      }
    }{
      ( 1 - \vartheta - \rho )
    }
  \right]
\\ & \leq
  \psiC
  \,
  | \SGchi{0} |^q
  \,
  | \SGchi{ \rho } |^2 
  \,
  \varsigma_{ F, B }
  \,
  K_{ q + 3 }
  \left( t - s \right)^{ \rho }
  \left[ 
    \tfrac{ 
      1
    }{
      s^{ \rho }
    }
    +
    \tfrac{
      \left(
        2 \, 
        \SGchi{\vartheta} 
        +
        \SGchi{ \rho + \vartheta }
        +
        2 \, 
        | \SGchi{ \vartheta / 2 } |^2
        + 
        2 \,
        \SGchi{ \rho + \vartheta / 2 }
        \,
        \SGchi{ \vartheta / 2 }
      \right)
      \,
      s^{
        ( 1 - \vartheta - \rho )
      }
    }{
      ( 1 - \vartheta - \rho )
    }
  \right]
  .
\end{split}
\end{equation}
In addition, we note that 
for all $ (s,t) \in \angle $ 
it holds that 
\begin{equation}
\label{eq:initial_value_t=0_1st}
\begin{split}
&
  \left\|
  \E\left[
    \psi(
      e^{ (t - s) A } \, \bar{Y}_s ,
      Y_s
    )
    -
    \psi(
      \bar{Y}_s ,
      Y_s
    )
  \right]
  \right\|_{\mathcal{V}}
\\ & \leq
  \psiC
  \,
  \E\left[ 
    \max\!\left\{ 
      1 ,
      \| e^{ ( t - s ) A } \bar{Y}_s \|^{ \power }_V
      ,
      \| \bar{Y}_s \|^{ \power }_V
      ,
      \| Y_s \|^{ \power }_V
    \right\}
    \|
      e^{ ( t - s ) A } - \operatorname{Id}_V 
    \|_{ L(V) }
    \,
    \| 
      \bar{Y}_s
    \|_V
  \right]
\\ & \leq
  \psiC
  \left|
  \SGchi{0}
  \right|^{ ( \power + 1 ) }
  \E\left[ 
    \max\!\left\{ 
      1 ,
      \| \bar{Y}_s \|^{ \power }_V
      ,
      \| Y_s \|^{ \power }_V
    \right\}
    \| 
      \bar{Y}_s 
    \|_V
  \right]
\\ & \leq
  \psiC
  \left|
  \SGchi{0}
  \right|^{ ( \power + 1 ) }
  \E\left[ 
    \max\!\left\{ 
      1 ,
      \| \bar{Y}_s \|^{ \power + 1 }_V
      ,
      \| Y_s \|^{ \power + 1 }_V
    \right\}
  \right]
\leq
  \psiC \, 
  | \SGchi{0} |^{ ( \power + 1 ) }
  K_{ \power + 1 }
  .
\end{split}
\end{equation}
Combining this with 
\eqref{eq:prop_temporal_reg_term1A}
proves that for all
$ ( s, t ) \in \angle $
it holds that
\begin{equation}
\begin{split}
&
  \left\|
  \E\left[
    \psi(
      e^{ (t - s) A } \, \bar{Y}_s ,
      Y_s
    )
    -
    \psi(
      \bar{Y}_s ,
      Y_s
    )
  \right]
  \right\|_{\mathcal{V}}
\\ & \leq
  \psiC
  \,
  | \SGchi{0} |^{ ( \power + 1 ) }
  \,
  | \SGchi{ \rho } |^2 
  \,
  \varsigma_{ F, B }
  \,
  K_{ q + 3 }
  \left[
    \min\!\left\{ 
      1 ,
      \tfrac{ 
        \left( t - s \right)^{ \rho }
      }{
        s^{ \rho }
      }
    \right\}
    +
    \tfrac{
      \left( t - s \right)^{ \rho }
      \,
      \left(
        2 \, 
        \SGchi{\vartheta} 
        +
        \SGchi{ \rho + \vartheta }
        +
        2 \, 
        | \SGchi{ \vartheta / 2 } |^2
        + 
        2 \,
        \SGchi{ \rho + \vartheta / 2 }
        \,
        \SGchi{ \vartheta / 2 }
      \right)
      \,
      s^{
        ( 1 - \vartheta - \rho )
      }
    }{
      ( 1 - \vartheta - \rho )
    }
  \right]
\\ & =
  \psiC
  \,
  | \SGchi{0} |^{ ( \power + 1 ) }
  \,
  | \SGchi{ \rho } |^2 
  \,
  \varsigma_{ F, B }
  \,
  K_{ q + 3 }
\\ & \quad \cdot
  \left[
    \mathbbm{1}_{
      [ \frac{ t }{ 2 } , T ]
    }( s )
    \cdot
      \tfrac{ 
        \left( t - s \right)^{ \rho }
      }{
        s^{ \rho }
      }
    +
    \mathbbm{1}_{
      [ 0, \frac{ t }{ 2 } )
    }( s )
    \cdot
    \tfrac{
      \left( t - s \right)^{ \rho }
    }{
      \left( t - s \right)^{ \rho }
    }
    +
    \tfrac{
        \left( t - s \right)^{ \rho }
        \,
      \left(
        2 \, 
        \SGchi{\vartheta} 
        +
        \SGchi{ \rho + \vartheta }
        +
        2 \, 
        | \SGchi{ \vartheta / 2 } |^2
        + 
        2 \,
        \SGchi{ \rho + \vartheta / 2 }
        \,
        \SGchi{ \vartheta / 2 }
      \right)
      \,
      s^{
        ( 1 - \vartheta - \rho )
      }
    }{
      ( 1 - \vartheta - \rho )
    }
  \right]
\\ & \leq
  \psiC
  \,
  | \SGchi{0} |^{ ( \power + 1 ) }
  \,
  | \SGchi{ \rho } |^2 
  \,
  \varsigma_{ F, B }
  \,
  K_{ q + 3 }
  \left[
      \tfrac{ 
        \left( t - s \right)^{ \rho }
      }{
        \left( t / 2 \right)^{ \rho }
      }
    +
    \tfrac{
        \left( t - s \right)^{ \rho }
        \,
      \left(
        2 \, 
        \SGchi{\vartheta} 
        +
        \SGchi{ \rho + \vartheta }
        +
        2 \, 
        | \SGchi{ \vartheta / 2 } |^2
        + 
        2 \,
        \SGchi{ \rho + \vartheta / 2 }
        \,
        \SGchi{ \vartheta / 2 }
      \right)
      \,
      s^{
        ( 1 - \vartheta - \rho )
      }
    }{
      ( 1 - \vartheta - \rho )
    }
  \right]
  .
\end{split}
\end{equation}
Combining this, \eqref{eq:outer_summands}, 
and \eqref{eq:mild_ito_outer_1st} establishes that
for all $ ( s, t ) \in \angle $
it holds that
\begin{equation}
\begin{split}
\label{eq:mild_ito_2nd_case_1st}
&
  \left\|
  \E\left[
    \psi\big(
    \bar{Y}_{ t },
    Y_{ s }
    \big)
    -
    \psi\big(
    \bar{Y}_{ s },
    Y_{ s }
    \big)
  \right]
  \right\|_{\mathcal{V}}
\leq
  \psiC \,
  | \SGchi{0} |^{ ( \power + 1 ) } \, 
  | \SGchi{\rho} |^2
  \,
  \varsigma_{ F, B } \,
  K_{ \power + 3 } 
  \,
  ( t - s )^\rho
\\ & \cdot
  \bigg[
    \big|
      \tfrac{
        2
      }{
        t
      }
    \big|^\rho
    +
    \tfrac{
      \left(
        2 
        \, 
        \SGchi{\vartheta} 
        +
        \SGchi{ \rho + \vartheta }
        +
        2 
        \, 
        | \SGchi{ \vartheta / 2 } |^2
        + 
        2 
        \,
        \SGchi{ \rho + \vartheta / 2 }
        \,
        \SGchi{ \vartheta / 2 }
      \right)
      \,
      s^{ ( 1 - \vartheta - \rho ) }
      +
      \left( 
        \SGchi{\vartheta} 
        +
        \frac{ 1 }{ 2 }
        | \SGchi{ \nicefrac{ \vartheta }{ 2 } } |^2 
      \right)
      \,
      \left| 
        t - s 
      \right|^{ 
        ( 1 - \vartheta - \rho ) 
      }
    }{ 
      ( 1 - \vartheta - \rho ) 
    }
  \bigg]
  .
\end{split}
\end{equation}
The proof of Proposition~\ref{prop:weak_temporal_regularity_1st} is thus completed.
\end{proof}

\subsection{Analysis of the weak distance between exponential Euler approximations and their semilinear integrated counterparts}
\label{sec:weak_distance_semilinear}

\begin{lemma}[Analysis of the analytically weak but probabilistically strong distance between exponential Euler approximations and their semilinear integrated counterparts]
\label{lem:euler_integrated_strong}
Assume the setting in Section~\ref{sec:setting_weak_temporal_regularity} 
and let 
$ \rho \in [ 0, 1 ) $, 
$ 
  \varrho \in 
  \big[ 
    0, 
    1 - \max\{\frac{ 1 + \vartheta }{ 2 } - \rho,0\} 
  \big) 
$, 
$ t \in (0,T] $.
Then 
\begin{align}
&
  \| Y_t - \bar{ Y }_t \|_{ \lpn{ p }{ \P }{ V_{ -\rho } } }
\leq
  \left| 
    K_p 
  \right|^{ 
    \frac{ 1 }{ p } 
  } \,
  \SGchi{\varrho} \,
  h^\varrho
\\ & \cdot
\nonumber
  \left[
    \frac{
      \SGchi{ \varrho+\vartheta-\rho } 
      \, 
      t^{ 
        1 - \max\{\vartheta + \varrho - \rho,0\}
      }
      \|
        F
      \|_{
        \operatorname{Lip}^0( V, V_{ -\vartheta } )
      } 
    }{ 
      ( 1 - \max\{\vartheta + \varrho - \rho,0\} ) 
    }
    +
    \frac{
      \BDG{p}{} \,
      \SGchi{ \varrho+\nicefrac{\vartheta}{2}-\rho } 
      \,
      \sqrt{ 
        t^{ 
          1 - \max\{\vartheta + 2 \varrho - 2 \rho,0\}
        } 
      }
      \,
      \|
        B
      \|_{
        \operatorname{Lip}^0(
          V, \gamma( U, V_{ - \vartheta / 2 } )
        )
      } 
  }{ \sqrt{ 1 - \max\{\vartheta + 2\varrho - 2\rho,0\} } }
  \right]
  .
\end{align}
\end{lemma}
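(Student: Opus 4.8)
The plan is to subtract the two mild representations of $Y$ and $\bar{Y}$. Since $\bar{Y}_0 = Y_0$, the terms $e^{tA}Y_0$ and $e^{tA}\bar{Y}_0$ cancel, so that for every $t \in (0,T]$ it holds $\P$-a.s.\ that
\[
  Y_t - \bar{Y}_t
  =
  \int_0^t \big[ e^{ ( t - \floor{s}{h} ) A } - e^{ ( t - s ) A } \big] F( Y_{ \floor{s}{h} } ) \, ds
  +
  \int_0^t \big[ e^{ ( t - \floor{s}{h} ) A } - e^{ ( t - s ) A } \big] B( Y_{ \floor{s}{h} } ) \, dW_s .
\]
The key elementary identity is the semigroup factorization $e^{ ( t - \floor{s}{h} ) A } - e^{ ( t - s ) A } = e^{ ( t - s ) A }\big( e^{ ( s - \floor{s}{h} ) A } - \operatorname{Id}_V \big)$, which is available because $0 \leq s - \floor{s}{h} < h$; it separates the gain in time step size $h$ (through the factor $e^{ ( s - \floor{s}{h} ) A } - \operatorname{Id}_V$) from the singular smoothing factor $e^{ ( t - s ) A }$.

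For the drift term I would, for fixed $s \in (0,t)$, write $F( Y_{ \floor{s}{h} } ) = ( \eta - A )^{ \vartheta }\big[ ( \eta - A )^{ - \vartheta } F( Y_{ \floor{s}{h} } ) \big]$, commute fractional powers of $\eta - A$ with the semigroup, and invoke the definition of $\SGchi{\cdot}$ to obtain
\[
  \big\| [ e^{ ( t - \floor{s}{h} ) A } - e^{ ( t - s ) A } ] F( Y_{ \floor{s}{h} } ) \big\|_{ V_{ - \rho } }
  \leq
  \SGchi{ \varrho + \vartheta - \rho } \, ( t - s )^{ - \max\{ \varrho + \vartheta - \rho, 0 \} } \, \SGchi{\varrho} \, h^{ \varrho } \, \| F( Y_{ \floor{s}{h} } ) \|_{ V_{ - \vartheta } } .
\]
Combining this with $\| F( v ) \|_{ V_{ - \vartheta } } \leq \| F \|_{ \operatorname{Lip}^0( V, V_{ - \vartheta } ) } \max\{ 1, \| v \|_V \}$, with the triangle inequality for Bochner integrals, with the bound $\| \max\{ 1, \| Y_{ \floor{s}{h} } \|_V \} \|_{ \lpn{p}{\P}{\R} } \leq | K_p |^{ 1/p }$ (finite by Lemma~\ref{lem:Kp_estimate}), and with $\int_0^t ( t - s )^{ - \max\{ \varrho + \vartheta - \rho, 0 \} } \, ds = \tfrac{ t^{ 1 - \max\{ \varrho + \vartheta - \rho, 0 \} } }{ 1 - \max\{ \varrho + \vartheta - \rho, 0 \} }$ yields precisely the first summand of the claimed estimate.

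The diffusion term is handled analogously with $\vartheta/2$ in place of $\vartheta$ and the Burkholder--Davis--Gundy type inequality attached to $\BDG{p}{}$ in place of the Bochner triangle inequality; the $V_{ - \rho }$-valued stochastic integral is first reduced to a $V$-valued one by pulling the bounded operator $( \eta - A )^{ - \rho }$ inside the integral and using that $\| ( \eta - A )^{ - \rho } T \|_{ \gamma( U, V ) } = \| T \|_{ \gamma( U, V_{ - \rho } ) }$, and then the ideal property of $\gamma$-radonifying operators together with $\| B( v ) \|_{ \gamma( U, V_{ - \vartheta / 2 } ) } \leq \| B \|_{ \operatorname{Lip}^0( V, \gamma( U, V_{ - \vartheta / 2 } ) ) } \max\{ 1, \| v \|_V \}$ gives a pointwise-in-$s$ bound on $\| [ e^{ ( t - \floor{s}{h} ) A } - e^{ ( t - s ) A } ] B( Y_{ \floor{s}{h} } ) \|_{ \lpn{p}{\P}{ \gamma( U, V_{ - \rho } ) } }$ of the same form as above but with the power $\varrho + \vartheta/2 - \rho$. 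Squaring it, integrating $\int_0^t ( t - s )^{ - 2 \max\{ \varrho + \vartheta / 2 - \rho, 0 \} } \, ds$ (and using $2 \max\{ x, 0 \} = \max\{ 2 x, 0 \}$ together with $2( \varrho + \vartheta/2 - \rho ) = \vartheta + 2 \varrho - 2 \rho$), and taking the square root produces the second summand; adding the drift and diffusion contributions then finishes the proof.

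The non-routine bookkeeping is to verify that the indices of $\SGchi{\cdot}$ occurring above, namely $\varrho$, $\varrho + \vartheta - \rho$, and $\varrho + \vartheta/2 - \rho$, all lie in $[-1,1]$, and that the two time integrals converge, i.e.\ $\max\{ \varrho + \vartheta - \rho, 0 \} < 1$ and $\max\{ \varrho + \vartheta / 2 - \rho, 0 \} < \tfrac{1}{2}$. Both follow from the hypothesis $\varrho < 1 - \max\{ \tfrac{ 1 + \vartheta }{ 2 } - \rho, 0 \}$ by distinguishing the cases $\rho \leq \tfrac{ 1 + \vartheta }{ 2 }$ and $\rho > \tfrac{ 1 + \vartheta }{ 2 }$: in either case one finds $\varrho + \vartheta - \rho < \tfrac{ 1 + \vartheta }{ 2 } < 1$ and $\varrho + \vartheta / 2 - \rho < \tfrac{1}{2}$, while trivially $\varrho + \vartheta - \rho \geq - \rho > - 1$, $\varrho + \vartheta / 2 - \rho \geq - \rho > - 1$, and $\varrho \in [0,1]$. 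I expect the main obstacle to be precisely this interplay of the fractional-power and analytic-semigroup manipulations, the correct handling of the $\gamma$-radonifying norm and of the $V_{ - \rho }$-valued stochastic integration, and keeping the index and integrability constraints straight; the remaining steps are the triangle/BDG inequality and an elementary integral of a power function.
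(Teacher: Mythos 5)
Your proposal is correct and follows essentially the same route as the paper's proof: subtract the mild representations (the initial-value terms cancel since $Y_0 = \bar Y_0$), split by the triangle inequality, factor $e^{(t-\floor{s}{h})A} - e^{(t-s)A} = e^{(t-s)A}\big(e^{(s-\floor{s}{h})A} - \operatorname{Id}_V\big)$, distribute fractional powers of $\eta - A$ to produce the $\SGchi{\varrho}\,h^\varrho$ gain against a $(t-s)^{-\max\{\vartheta+\varrho-\rho,0\}}$ (resp.\ $(t-s)^{-\max\{\vartheta+2\varrho-2\rho,0\}/2}$) singularity, bound $F(Y_{\floor{s}{h}})$ and $B(Y_{\floor{s}{h}})$ through the $\operatorname{Lip}^0$-norms and $|K_p|^{1/p}$ from Lemma~\ref{lem:Kp_estimate}, and close via the Bochner triangle inequality and the $\BDG{p}{}$ estimate together with the elementary power-function integral. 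You are in fact slightly more careful than the paper in two spots that it treats as routine: the explicit verification that the $\SGchi{\cdot}$-indices stay in $[-1,1]$ and that the two singular integrals converge under the stated hypotheses on $\varrho$, and the reduction of the $V_{-\rho}$-valued stochastic integral to a $V$-valued one via the isometry $(\eta-A)^{-\rho}$ before invoking $\BDG{p}{}$.
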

\begin{proof}
First of all, we observe that 
\begin{equation}\label{eq:strong_integral_decomposition}
\begin{split}
  \left\|
    Y_t - \bar{Y}_t
  \right\|_{
    \lpn{p}{\P}{V_{ -\rho }}
  }
&\leq
  \left\|
    \int^t_0
    \left(
      e^{ ( t - \floor{s}{h} ) A } - e^{ ( t - s ) A }
    \right)
    F( Y_{ \floor{ s }{ h } } )
    \, ds
  \right\|_{
    \lpn{p}{\P}{V_{ -\rho }}
  }
\\ & \quad +
  \left\|
    \int^t_0
    \left(
      e^{ ( t - \floor{s}{h} ) A } 
      - 
      e^{ ( t - s ) A }
    \right)
    B( Y_{ \floor{ s }{ h } } )
    \, dW_s
  \right\|_{
    \lpnb{p}{\P}{ V_{ - \rho } }
  }
  .
\end{split}
\end{equation}
Next we note that 
\begin{equation}
\label{eq:strong_integral_decomposition_2nd}
\begin{split}
&
  \left\|
    \int^t_0
    \left(
      e^{ ( t - \floor{s}{h} ) A } - e^{ ( t - s ) A }
    \right)
    F( Y_{ \floor{ s }{ h } } )
    \, ds
  \right\|_{
    \lpn{p}{\P}{ V_{ - \rho } }
  }
 \\ & 
\leq
  \int^t_0
  \left\|
    (
      e^{ ( t - \floor{s}{h} ) A } - e^{ ( t - s ) A }
    )
    F( Y_{ \floor{ s }{ h } } )
  \right\|_{
    \lpn{p}{\P}{V_{ -\rho }}
  }
  ds
\\ & \leq
  \int^t_0
  \frac{
    \SGchi{ \varrho+\vartheta-\rho } \,
    \SGchi{\varrho}
    \, h^\varrho
    \,
    \|
      F( 
        Y_{ \floor{ s }{ h } } 
      )
    \|_{
      \lpn{p}{\P}{V_{ -\vartheta }}
    }
  }{ 
    ( t - s )^{ \max\{\vartheta + \varrho - \rho,0\} } 
  }
  \, ds
\\&\leq
  \frac{
    \SGchi{ \varrho+\vartheta-\rho } \,
    \SGchi{\varrho}
    \, 
    t^{ 1 - \max\{\vartheta + \varrho - \rho,0\} }
  }{ 
    ( 
      1 - \max\{\vartheta + \varrho - \rho,0\}
    ) 
  }
  \left\|
    F
  \right\|_{
    \operatorname{Lip}^0( V, V_{ -\vartheta } )
  }
  | K_p |^{ \frac{ 1 }{ p } } \,
  h^\varrho
  .
\end{split}
\end{equation}
Moreover, Lemma~\ref{lem:Kp_estimate} ensures that $K_p<\infty$. 
This assures that 
\begin{equation}
\label{eq:strong_integral_decomposition_3rd}
\begin{split}
  &\left\|
  \int^t_0
  (
  e^{ ( t - \floor{s}{h} ) A } - e^{ ( t - s ) A }
  )
  B\big( Y_{ \floor{ s }{ h } } \big)
  \, dW_s
  \right\|_{
  \lpnb{p}{\P}{V_{ -\rho }}
  }
\\ & \leq
  \BDG{p}{}
  \left[
    \int^t_0
    \left\|
      \left(
        e^{ ( t - \floor{s}{h} ) A } - e^{ ( t - s ) A }
      \right)
      B( Y_{ \floor{ s }{ h } } )
    \right\|_{
      \lpn{p}{\P}{
        \gamma( U, V_{ -\rho } )
      }
    }^2
    ds
  \right]^{ 1 / 2 }
\\ & \leq
  \BDG{p}{}
  \left[
    \int^t_0
    \frac{
    |\SGchi{ \varrho+\nicefrac{ \vartheta }{ 2 }-\rho }\,\SGchi{\varrho}|^2 \, h^{2\varrho} \,
    \|
      B( Y_{ \floor{ s }{ h } } )
    \|_{
      \lpn{p}{\P}{
        \gamma( U, V_{ -\nicefrac{ \vartheta }{ 2 } } )
      }
    }^2
    }{ ( t - s )^{ \max\{\vartheta + 2\varrho - 2\rho,0\} } }
    \, ds
  \right]^{ 1 / 2 }
\\ & \leq
  \frac{
  \BDG{p}{} \,
  \SGchi{ \varrho+\nicefrac{ \vartheta }{ 2 }-\rho }\,\SGchi{\varrho} \,
  \sqrt{ t^{ 1 - \max\{\vartheta + 2\varrho - 2\rho,0\} } }
  }{ \sqrt{ 1 - \max\{\vartheta + 2\varrho - 2\rho,0\} } }
  \left\|
  B
  \right\|_{
  \operatorname{Lip}^0(
  V, \gamma( U, V_{ -\nicefrac{ \vartheta }{ 2 } } )
  )
  }
  | K_p |^{ \frac{ 1 }{ p } } \, h^\varrho
  .
\end{split}
\end{equation}
Combining \eqref{eq:strong_integral_decomposition}--\eqref{eq:strong_integral_decomposition_3rd} 
completes the proof of Lemma~\ref{lem:euler_integrated_strong}.
\end{proof}

\begin{proposition}[Weak distance between exponential Euler approximations and 
their semilinear integrated counterparts]
\label{prop:weak_temporal_regularity_2nd}
Assume the setting in Section~\ref{sec:setting_weak_temporal_regularity} and let 
$ \psiC \in [ 0, \infty ) $, 
$ \power \in [ 0, \infty ) \cap ( -\infty, p - 3 ] $, 
$ \rho \in [ 0, 1 - \vartheta ) $, 
$ \psi = ( \psi(x,y) )_{ x, y \in V } \in C^2( V \times V, {\mathcal{V}} ) $ 
satisfy for all 
$ x, y_1, y_2 \in V $, 
$ i, j \in \N_0 $ 
with 
$ i + j \leq 2 $
that 
\begin{equation} 
  \big\|
    \big(
     \tfrac{ \partial^{ ( i + j ) } }{ \partial x^i \partial y^j  }
     \psi
    \big)
    ( x, y_1 )
  -
    \big(
     \tfrac{ \partial^{ ( i + j ) } }{ \partial x^i \partial y^j  }
     \psi
    \big)
    ( x, y_2 )
  \big\|_{ L^{ ( i + j ) }( V, {\mathcal{V}} ) }
\leq
  \psiC
  \,
  \max\{ 1, \| x \|^\power_V, \| y_1 \|^\power_V, \| y_2 \|^\power_V \}
  \,
  \| y_1 - y_2 \|_V
  .
\end{equation}
Then it holds for all $ t \in ( 0, T ] $ that 
$
  \ES\big[
    \| 
      \psi( \bar{Y}_t, Y_t )
      -
      \psi( \bar{Y}_t, \bar{Y}_t ) 
    \|_{\mathcal{V}}
  \big]
  < \infty
$ 
and 
\begin{equation}
\label{eq:weak_distance}
\begin{split}
&
  \left\|
  \E\left[
    \psi\big(
    \bar{Y}_{ t },
    Y_{ t }
    \big)
    -
    \psi\big(
    \bar{Y}_{ t },
    \bar{Y}_{ t }
    \big)
  \right]\right\|_{\mathcal{V}}
\leq
    \psiC
    \,
    | \SGchi{0} |^\power
    \,
    \SGchi{\rho} \,
    \varsigma_{ F, B } 
    \,
    K_{ \power + 3 } 
    \, 
    h^\rho
\\&\cdot
  \frac{
    t^{ ( 1 - \vartheta - \rho ) }
  }{ 
    ( 1 - \vartheta - \rho ) 
  }
  \Bigg[
    \SGchi{ \rho+\vartheta } + 2 \, \SGchi{ \nicefrac{ \vartheta }{ 2 } } \, \SGchi{ \rho+\nicefrac{ \vartheta }{ 2 } }
    +
    2 \, \SGchi{\rho} \,
    ( | \SGchi{ \nicefrac{ \vartheta }{ 2 } } |^2 + \SGchi{\vartheta} )
\\ & 
    \cdot
    \bigg(
      \frac{
        \SGchi{ \vartheta } \, t^{ ( 1 - \vartheta ) } \,
        \| F \|_{ \operatorname{Lip}^0( V, V_{ -\vartheta } ) }
      }{ 
        ( 1 - \vartheta ) 
      }
      +
      \frac{
        \BDG{\power+3}{} \,
        \SGchi{ \nicefrac{\vartheta}{2} } 
        \, 
        \sqrt{ 
          t^{ ( 1 - \vartheta ) } 
        }
        \,
        \| B \|_{ \operatorname{Lip}^0( V, \gamma( U, V_{ - \vartheta / 2 } ) ) }
      }{
        \sqrt{ 1 - \vartheta }
      }
    \bigg)
  \Bigg]
  .
\end{split}
\end{equation}
\end{proposition}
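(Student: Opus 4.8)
The plan is to apply the mild Itô formula from~\cite{CoxJentzenKurniawanPusnik2016} to the function $\psi(\bar{Y}_t,\cdot)$ with the fixed first argument $\bar{Y}_t$, treating $Y$ and $\bar{Y}$ as the two processes that are being compared, and to exploit the crucial fact that both $Y$ and $\bar{Y}$ solve \emph{the same} mild equation with the same drift $F(Y_{\floor{s}{h}})$ and the same diffusion $B(Y_{\floor{s}{h}})$ but with propagators $e^{(t-\floor{s}{h})A}$ and $e^{(t-s)A}$ respectively. Concretely, I would first fix $t\in(0,T]$ and observe that, since $\psi(\bar{Y}_t,\cdot)$ is $C^2$ with derivatives of at most polynomial growth of order $\power$, Lemma~\ref{lem:Kp_estimate} together with the assumption $\power\leq p-3$ gives $K_{\power+3}<\infty$, which immediately yields the finiteness claim $\ES[\|\psi(\bar{Y}_t,Y_t)-\psi(\bar{Y}_t,\bar{Y}_t)\|_{\mathcal{V}}]<\infty$ via the polynomial growth bound and Hölder's inequality, exactly as in the proof of Proposition~\ref{prop:weak_temporal_regularity_1st}.

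For the quantitative bound~\eqref{eq:weak_distance}, I would run the mild Itô formula (Proposition~3.11 in~\cite{CoxJentzenKurniawanPusnik2016}) to expand $\ES[\psi(\bar{Y}_t,Y_t)-\psi(\bar{Y}_t,\bar{Y}_t)]$. The starting points $Y_0=\bar{Y}_0$ coincide, so the initial term drops out; what remains are integrals over $[0,t]$ of (i) a first-order term $\psi_{0,1}(\bar{Y}_t,\cdot)$ applied to the \emph{difference} of the drift contributions $[e^{(t-\floor{r}{h})A}-e^{(t-r)A}]F(Y_{\floor{r}{h}})$, and (ii) second-order correction terms $\psi_{0,2}(\bar{Y}_t,\cdot)$ applied to the diffusion terms. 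The key structural observation is that the drift difference carries a factor of the form $\SGchi{\varrho+\vartheta-\rho}\,\SGchi{\varrho}\,h^\varrho\,(t-r)^{-\max\{\vartheta+\varrho-\rho,0\}}$ — precisely the estimate already isolated in Lemma~\ref{lem:euler_integrated_strong} — while the second-order diffusion term, because $\psi_{0,2}$ itself only enters through its Lipschitz modulus in the second variable, produces a factor $Y_r-\bar{Y}_r$ which is again controlled by Lemma~\ref{lem:euler_integrated_strong}. I would split the analysis into the ``naive'' contribution of $\psi_{0,1}$ evaluated at a single argument (giving the terms $\SGchi{\rho+\vartheta}+2\,\SGchi{\vartheta/2}\,\SGchi{\rho+\vartheta/2}$ in~\eqref{eq:weak_distance}) and the contribution coming from the Lipschitz increment of $\psi_{0,1}$ and $\psi_{0,2}$ in the second slot, which is where the extra factor $2\,\SGchi{\rho}(|\SGchi{\vartheta/2}|^2+\SGchi{\vartheta})$ times the a priori bound on $\|F\|$, $\|B\|$, $K_p^{1/p}$ from Lemma~\ref{lem:euler_integrated_strong} appears.

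For each of these pieces I would bound the $\mathcal{V}$-norm of the integrand pointwise in $r$, pulling out the polynomial-growth prefactor $\psiC\,|\SGchi{0}|^\power\max\{1,\|\bar{Y}_t\|^\power_V,\|Y_r\|^\power_V,\|\bar{Y}_r\|^\power_V\}$, then take expectations and use the moment estimate~\eqref{eq:optimal_moment_1}-type bound to absorb everything into $K_{\power+3}$ (the index $\power+3$ arising from $\power$ growth $+$ one factor from $B^b(\cdot)$ squared contributing a $g_2$ term, matching the bookkeeping in Proposition~\ref{prop:weak_temporal_regularity_1st}); finally I would integrate the resulting singular kernels $(t-r)^{-\max\{\vartheta+\varrho-\rho,0\}}$ in $r$ over $[0,t]$, producing the $\frac{t^{1-\vartheta-\rho}}{1-\vartheta-\rho}$ factor after choosing $\varrho=\rho$ in the invocation of Lemma~\ref{lem:euler_integrated_strong}. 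The main obstacle I anticipate is the careful bookkeeping in the second step: one must correctly identify which propagator appears on which slot of each multilinear term coming out of the mild Itô formula — the ``outer'' semigroup acting on $\bar{Y}_t$ versus the time-dependent propagators $e^{(t-\floor{r}{h})A}$ and $e^{(t-r)A}$ acting inside $F$ and $B$ — and verify that the difference of the two propagators always lands on a factor that can be bounded via the smoothing estimates encoded in the constants $\SGchi{r}$, so that the singular exponents never exceed the integrability threshold $1-\vartheta-\rho>0$ (respectively $1-\vartheta-2\rho>0$ for the stochastic term), which is exactly the constraint $\rho\in[0,1-\vartheta)$ in the hypothesis.
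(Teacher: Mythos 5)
Your high-level architecture matches the paper in several respects — using the mild It\^o formula, exploiting $Y_0 = \bar{Y}_0$ to drop the initial term, invoking Lemma~\ref{lem:euler_integrated_strong} to control $Y_s-\bar{Y}_s$ in a negative Sobolev scale, carrying the polynomial growth through H\"older into $K_{\power+3}$, and integrating the singular kernels to produce $t^{1-\vartheta-\rho}/(1-\vartheta-\rho)$. However, there is a genuine structural gap at the very first step. You propose to apply the mild It\^o formula to the function $\psi(\bar{Y}_t,\cdot)$ with the first argument frozen at $\bar{Y}_t$. This cannot work: $\bar{Y}_t$ is an $\mathcal{F}_t$-measurable random variable, not a fixed point and not $\mathcal{F}_0$-measurable, so $\psi(\bar{Y}_t,\cdot)$ is not an admissible test function for an It\^o-type expansion along $s\in[0,t]$. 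The paper instead applies Proposition~3.11 in~\cite{CoxJentzenKurniawanPusnik2016} to the \emph{bivariate} function $\psi$, evaluated at the pair of processes $(\bar{Y}_s,Y_s)$ and separately at $(\bar{Y}_s,\bar{Y}_s)$, where \emph{both slots} carry mild dynamics. Consequently the partial derivatives $\psi_{i,j}$ in the resulting integrands are evaluated at $(e^{(t-s)A}\bar{Y}_s,\,e^{(t-s)A}Y_s)$ — with the semigroup propagating both arguments — not at $(\bar{Y}_t,\cdot)$ as in your description.

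This error propagates into the decomposition you sketch. Because the first slot is also evolving, the drift-side integrand $\tilde{F}_{s,t}$ in the paper contains a term in which $\psi_{1,0}$ (derivative with respect to the \emph{first} variable), applied to the drift of $\bar{Y}$, is differenced between the two evaluation points $(\cdot,e^{(t-s)A}Y_s)$ and $(\cdot,e^{(t-s)A}\bar{Y}_s)$; your outline only accounts for $\psi_{0,1}$ and $\psi_{0,2}$ contributions, which is precisely what one would see if the first slot could legitimately be frozen. The same applies to the diffusion side, where $\psi_{2,0}$ and the cross term $\psi_{1,1}$ both arise. The Lipschitz-in-the-second-slot hypothesis of the proposition is then applied \emph{to all of} $\psi_{1,0}$, $\psi_{0,1}$, $\psi_{2,0}$, $\psi_{0,2}$, $\psi_{1,1}$ to control the $(v-u)$-type increments, and it is only in combination with the propagator difference $e^{(t-\floor{s}{h})A}-e^{(t-s)A}$ in the second slot's dynamics (and with Lemma~\ref{lem:euler_integrated_strong} for the $(v-u)$ increments) that the full set of smoothing constants $\SGchi{\rho+\vartheta}$, $\SGchi{\nicefrac{\vartheta}{2}}\SGchi{\rho+\nicefrac{\vartheta}{2}}$, $\SGchi{\rho}(|\SGchi{\nicefrac{\vartheta}{2}}|^2+\SGchi{\vartheta})$ appearing in~\eqref{eq:weak_distance} is produced. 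To repair your outline, you would need to run the mild It\^o formula on the $V\times V$-valued process pair, not on $\psi$ with a frozen first slot.
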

\begin{proof}
Throughout this proof 
let 
$
  ( g_r )_{ r \in [ 0, \infty ) }
  \subseteq
  C( V, \R )
$ 
be the functions
which satisfy
for all $ r \in [0,\infty) $, $ x \in V $
that
$
  g_r( x )
  =
  \max\{
  1, \| x \|^r_V
  \}
$
and let 
$
  \psi_{1,0} \colon V \times V \to L( V, {\mathcal{V}} ) 
$, 
$
  \psi_{0,1} \colon V \times V \to L( V, {\mathcal{V}} ) 
$, 
$
  \psi_{2,0} \colon V \times V \to L^{(2)}( V, {\mathcal{V}} ) 
$, 
$
  \psi_{0,2} \colon V \times V \to L^{(2)}( V, {\mathcal{V}} ) 
$, 
$
  \psi_{1,1} \colon V \times V \to L^{(2)}( V, {\mathcal{V}} ) 
$
be the functions which satisfy
for all 
$ x, y, v_1, v_2 \in V $ 
that 
\begin{equation}
\begin{split}
\psi_{1,0}( x, y ) \, v_1
=
\big(\tfrac{ \partial }{ \partial x } \psi\big)( x, y ) \, v_1
,&
\qquad
\psi_{0,1}( x, y ) \, v_1
=
\big(\tfrac{ \partial }{ \partial y } \psi\big)( x, y ) \, v_1
,
\\
\psi_{2,0}( x, y )( v_1, v_2 )
=
\big(
\tfrac{ \partial^2 }{ \partial x^2 } \psi\big)( x, y )
( v_1, v_2 )
,&
\qquad
\psi_{0,2}( x, y )( v_1, v_2 )
=
\big(
\tfrac{ \partial^2 }{ \partial y^2 } \psi\big)( x, y )
( v_1, v_2 )
,
\\
\psi_{ 1, 1 }( x, y )( v_1, v_2 )
=
\big(
\tfrac{ \partial }{ \partial y } 
\tfrac{ \partial }{ \partial x } 
\psi\big)( x, y )
( v_1, v_2 )
.&
\end{split}
\end{equation}
Next we observe that Lemma~\ref{lem:Kp_estimate}
and the assumption that $ q \leq p - 3 $ ensure that $ K_{ q + 1 } \leq K_{ q + 3 } < \infty $.
Combining this with the fact that
\begin{equation}
  \forall \, x, y_1, y_2 \in V \colon
  \| 
    \psi( x, y_1 ) 
    - 
    \psi( x, y_2 ) 
  \|_{\mathcal{V}}
  \leq 
  2 \, \psiC
  \max\!\big\{ 
    1, \| x \|^{ q + 1 }_V , \| y_1 \|^{ q + 1 }_V , \| y_2 \|^{ q + 1 }_V 
  \big\}
\end{equation}
shows that for all $ t \in ( 0, T ] $ it holds that 
$
  \ES\big[
    \| 
      \psi( \bar{Y}_t, Y_t )
      -
      \psi( \bar{Y}_t, \bar{Y}_t ) 
    \|_{\mathcal{V}}
  \big]
  < \infty
$.
It thus remains to prove \eqref{eq:weak_distance}.
To do so, we make use of the mild It\^{o} formula in~\cite{CoxJentzenKurniawanPusnik2016}. 
For this let 
$ \tilde{ F }_{ s, t } \colon V \times V \times V \to {\mathcal{V}} $, 
$ ( s, t ) \in \angle $, 
be the functions which satisfy
for all 
$ ( s, t ) \in \angle $, 
$ u, v, w \in V $ 
that 
\begin{equation}
\begin{split}
  \tilde{F}_{ s, t }( u, v, w )
& =
  \big[
  \psi_{1,0}
  \big(
  e^{ ( t - s )A } \, u,
  e^{ ( t - s )A } \, v
  \big)
  -
  \psi_{1,0}
  \big(
  e^{ ( t - s )A } \, u,
  e^{ ( t - s )A } \, u
  \big)
  \big]
  \,
  e^{ ( t - s )A } \, F( w )
\\ & +
  \psi_{0,1}
  \big(
  e^{ ( t - s )A } \, u,
  e^{ ( t - s )A } \, v
  \big)
  \,
  e^{ ( t - \floor{s}{h} )A } \, F( w )
  -
  \psi_{0,1}
  \big(
  e^{ ( t - s )A } \, u,
  e^{ ( t - s )A } \, u
  \big)
  \,
  e^{ ( t - s )A } \, F( w )
\end{split}
\end{equation}
and let 
$ \tilde{ B }_{ s, t } \colon V \times V \times V \to {\mathcal{V}} $, 
$ ( s, t ) \in \angle $, 
be the functions which satisfy
for all 
$ ( s, t ) \in \angle $, 
$ u, v, w \in V $ 
that 
\begin{equation}
\begin{split}
&
  \tilde{B}_{ s, t }( u, v, w )
\\ & =
  \tfrac{ 1 }{ 2 }
  \smallsum\limits_{ b \in \mathbb{ U 	} }
  \big[
  \psi_{ 2,0 }
  \big(
  e^{ ( t - s )A } \, u,
  e^{ ( t - s )A } \, v
  \big)
  -
  \psi_{ 2,0 }
  \big(
  e^{ ( t - s )A } \, u,
  e^{ ( t - s )A } \, u
  \big)
  \big]
  \big(
    e^{ ( t - s )A } \, B^b( w ),
    e^{ ( t - s )A } \, B^b( w )
  \big)
\\ & +
  \tfrac{ 1 }{ 2 }
  \smallsum\limits_{ b \in \mathbb{ U 	} }
  \psi_{ 0,2 }
  \big(
  e^{ ( t - s )A } \, u,
  e^{ ( t - s )A } \, v
  \big)
  \big(
    e^{ ( t - \floor{s}{h} )A } \, B^b( w ),
    e^{ ( t - \floor{s}{h} )A } \, B^b( w )
  \big)
\\ & 
-
  \tfrac{ 1 }{ 2 }
  \smallsum\limits_{ b \in \mathbb{ U 	} }
  \psi_{ 0,2 }
  \big(
  e^{ ( t - s )A } \, u,
  e^{ ( t - s )A } \, u
  \big)
  \big(
    e^{ ( t - s )A } \, B^b( w ),
    e^{ ( t - s )A } \, B^b( w )
  \big)
\\ & +
  \smallsum\limits_{ b \in \mathbb{ U } }
  \psi_{ 1,1 }
  \big(
  e^{ ( t - s )A } \, u,
  e^{ ( t - s )A } \, v
  \big)
  \big(
    e^{ ( t - s )A } \, B^b( w ),
    e^{ ( t - \floor{s}{h} )A } \, B^b( w )
  \big)
\\ & 
  -
  \smallsum\limits_{ b \in \mathbb{ U } }
  \psi_{ 1,1 }
  \big(
  e^{ ( t - s )A } \, u,
  e^{ ( t - s )A } \, u
  \big)
  \big(
    e^{ ( t - s )A } \, B^b( w ),
    e^{ ( t - s )A } \, B^b( w )
  \big)
  .
\end{split}
\end{equation}
An application of Proposition~3.11 in~\cite{CoxJentzenKurniawanPusnik2016}
shows
that for all $ t \in ( 0, T ] $ it holds that 
\begin{equation}
\label{eq:mild_ito_2nd}
\begin{split}
&
  \left\|
  \E\left[
    \psi\big(
      \bar{Y}_{ t } ,
      Y_{ t }
    \big)
    -
    \psi\big(
      \bar{Y}_{ t } ,
      \bar{Y}_{ t }
    \big)
  \right]\right\|_{\mathcal{V}}
\leq
  \int^{ t }_0
  \E\left[
    \big\|
      \tilde{F}_{ s, t }\big(
        \bar{Y}_s, Y_s, Y_{ \floor{ s }{ h } }
      \big)
    \big\|_{\mathcal{V}}
  \right]
  +
  \E\left[
    \big\|
      \tilde{B}_{ s, t }\big(
        \bar{Y}_s, Y_s, Y_{ \floor{ s }{ h } }
      \big)
    \big\|_{\mathcal{V}}
  \right]
  ds
  .
\end{split}
\end{equation}
In the following we establish suitable estimates for the two integrands on the right hand side of \eqref{eq:mild_ito_2nd}. 
We begin with the first integrand on the right hand side of \eqref{eq:mild_ito_2nd}. 
Observe that for all 
$ ( s, t ) \in \angle $, 
$ u, v, w \in V $ 
it holds that 
\begin{equation}
\label{eq:tilde_F_1st_decompose}
\begin{split}
&
  \left\|
  \big[
    \psi_{1,0}\big(
      e^{ ( t - s )A } \, u,
      e^{ ( t - s )A } \, v
    \big)
    -
    \psi_{1,0}\big(
      e^{ ( t - s )A } \, u,
      e^{ ( t - s )A } \, u
    \big)
  \big]
  \, e^{ ( t - s ) A } \, F( w )
  \right\|_{\mathcal{V}}
\\ & \leq
  \psiC
  \max\!\big\{
    1,
    \| e^{ ( t - s )A } \, v \|^\power_V,
    \| e^{ ( t - s )A } \, u \|^\power_V
  \big\}
  \|
  e^{ ( t - s )A }(v-u)
  \|_V
  \,
  \|
  e^{ ( t - s )A } \, F( w )
  \|_V
\\ & \leq
  \frac{
    \psiC
    \,
    | \SGchi{0} |^\power
    \max\{
      1,
      \| u \|^\power_V,
      \| v \|^\power_V
    \}
      \| e^{ ( t - s )A } ( v - u ) \|_V
    \,
    \SGchi{\vartheta} 
    \,
    \| F( w ) \|_{ V_{ -\vartheta } }
  }{
    ( t - s )^\vartheta
  }
\\ & \leq
  \frac{
    \psiC
    \,
    | \SGchi{0} |^\power
    \,
    \SGchi{\vartheta} \,
    \SGchi{\rho} \,
    \max\!\big\{
      1,
      \| u \|^\power_V,
      \| v \|^\power_V
    \big\}
    \,
    \| v - u \|_{ V_{ -\rho } }
    \| F \|_{ \operatorname{Lip}^0( V, V_{ -\vartheta } ) }
    \, g_1( w )
  }{
    ( t - s )^{(\rho+\vartheta)}
  }
  .
\end{split}
\end{equation}
Moreover, we note that the assumption that 
\begin{equation}
  \forall \, x, y_1, y_2 \in V \colon 
  \|
    \psi
    ( x, y_1 )
  -
    \psi
    ( x, y_2 )
  \|_{\mathcal{V}}
\leq
  \psiC
  \max\!\big\{ 
    1, \| x \|^\power_V, \| y_1 \|^\power_V, \| y_2 \|^\power_V 
  \big\}
  \,
  \| y_1 - y_2 \|_V
\end{equation}
implies that 
for all $ x, y \in V $ 
it holds that
$
  \| \psi_{ 0, 1 }( x, y ) \|_{ L( V, {\mathcal{V}} ) }
  \leq
  \psiC \,
  \max\{
    1
    ,
    \| x \|^\power_V
    ,
    \| y \|^\power_V
  \}
$.
This, in turn, proves that for all 
$ ( s, t ) \in \angle $, 
$ u, v, w \in V $ 
it holds that 
\allowdisplaybreaks
\begin{align}
\label{eq:tilde_F_2nd_decompose}
&
  \left\|
  \psi_{0,1}
  \big(
  e^{ ( t - s )A } \, u,
  e^{ ( t - s )A } \, v
  \big)
  \,
  e^{ ( t - \floor{s}{h} )A } \, F( w )
  -
  \psi_{0,1}
  \big(
  e^{ ( t - s )A } \, u,
  e^{ ( t - s )A } \, u
  \big)
  \,
  e^{ ( t - s )A } \, F( w )
  \right\|_{\mathcal{V}}
\\ & \leq
\nonumber
  \left\|
  \psi_{0,1}
  \big(
  e^{ ( t - s )A } \, u,
  e^{ ( t - s )A } \, v
  \big)
  \big[
    e^{ ( t - \floor{s}{h} )A }
    -
    e^{ ( t - s )A }
  \big]
  F( w )
  \right\|_{\mathcal{V}}
\\ & +
\nonumber
  \left\|
  \big[
    \psi_{0,1}
    \big(
    e^{ ( t - s )A } \, u,
    e^{ ( t - s )A } \, v
    \big)
    -
    \psi_{0,1}
    \big(
    e^{ ( t - s )A } \, u,
    e^{ ( t - s )A } \, u
    \big)
  \big]
  \,
  e^{ ( t - s )A } \, F( w )
  \right\|_{\mathcal{V}}
\\ & \leq
\nonumber
  \psiC \,
  \max\!\big\{
    1,
    \| e^{ ( t - s )A } \, v \|^\power_V,
    \| e^{ ( t - s )A } \, u \|^\power_V
  \big\} 
  \,
  \big\|
  \big[
    e^{ ( t - \floor{s}{h} )A }
    -
    e^{ ( t - s )A }
  \big]
  F( w )
  \big\|_V
\\ & +
\nonumber
  \psiC \,
  \max\!\big\{
    1,
    \| e^{ ( t - s )A } \, v \|^\power_V,
    \| e^{ ( t - s )A } \, u \|^\power_V
  \big\}
  \,
  \|
    e^{ ( t - s )A }(v-u)
  \|_V 
  \,
  \| e^{ ( t - s )A } \, F(w) \|_V
\\ & \leq
\nonumber
  \psiC \,
  | \SGchi{0} |^\power
  \max\{
    1,
    \| u \|^\power_V,
    \| v \|^\power_V
  \}
  \,
  \| F \|_{ \operatorname{Lip}^0( V, V_{ -\vartheta } ) }
  \, g_1( w )
\\ & \cdot
\nonumber
  \Bigg[
    \frac{
    \SGchi{\rho+\vartheta} \, \SGchi{\rho}
    }{(t-s)^{(\rho+\vartheta)}} \,
    h^\rho
    +
    \frac{
    \SGchi{\vartheta} \,
    \SGchi{\rho} \,
    \| v - u \|_{ V_{ -\rho } }
    }{
    ( t - s )^{(\rho+\vartheta)}
    }
  \Bigg]
  .
\end{align}
Inequalities \eqref{eq:tilde_F_1st_decompose} and \eqref{eq:tilde_F_2nd_decompose} imply that for all 
$ ( s, t ) \in \angle $, 
$ u, v, w \in V $ 
it holds that 
\begin{equation}\label{eq:absolute_tilde_F_2nd}
\begin{split}
  \| 
    \tilde{F}_{ s, t }( u, v, w )
  \|_{\mathcal{V}}
& \leq
  \psiC
  \,
  | \SGchi{0} |^\power \,
  \SGchi{\rho} \,
  \max\!\big\{
    1,
    \| u \|^\power_V ,
    \| v \|^\power_V
  \big\}
  \,
  \| F \|_{ \operatorname{Lip}^0( V, V_{ -\vartheta } ) }
  \;
  g_1( w )
\\ & \quad \cdot
  \Bigg[
    \frac{
      \SGchi{\rho+\vartheta}
    }{ 
      ( t - s )^{ ( \rho + \vartheta ) } 
    } \,
    h^\rho
    +
    \frac{
      2 \, \SGchi{\vartheta} \,
      \| v - u \|_{ V_{ -\rho } }
    }{
      { ( t - s )^{ ( \rho + \vartheta ) } }
    }
  \Bigg]
  .
\end{split}
\end{equation}
Next we estimate the second integrand on the right hand side of~\eqref{eq:mild_ito_2nd}. 
Next we observe that Lemma~\ref{lem:gamma.estimate} shows that for all 
$ ( s, t ) \in \angle $, 
$ u, v, w \in V $ 
it holds that 
\begin{equation}
\label{eq:weak_distance_B_1}
\begin{split}
&
  \Big\|
  \smallsum\limits_{ b \in \mathbb{ U 	} }
  \big[
  \psi_{2,0}
  \big(
  e^{ ( t - s )A } \, u,
  e^{ ( t - s )A } \, v
  \big)
  -
  \psi_{2,0}
  \big(
  e^{ ( t - s )A } \, u,
  e^{ ( t - s )A } \, u
  \big)
  \big]
  \big(
    e^{ ( t - s )A } \, B^b( w ),
    e^{ ( t - s )A } \, B^b( w )
  \big)
  \Big\|_{\mathcal{V}}
\\ & \leq
  \frac{
    \psiC
    \,
    | \SGchi{0} |^\power
    \,
    |
      \SGchi{ \nicefrac{ \vartheta }{ 2 } }
    |^2 \,
      \SGchi{\rho} \,
    \max\{
      1,
      \| u \|^\power_V,
      \| v \|^\power_V
    \}
    \,
      \| v - u \|_{ V_{ -\rho } } \,
    \| B \|^2_{ \operatorname{Lip}^0( V, \gamma( U, V_{ -\nicefrac{ \vartheta }{ 2 } } ) ) }
    \; 
    g_2( w )
  }{
    ( t - s )^{(\rho+\vartheta)}
  }
  ,
\end{split}
\end{equation}
\allowdisplaybreaks
\begin{align}
&
  \Big\|
  \smallsum\limits_{ b \in \mathbb{ U 	} }
  \big[
  \psi_{0,2}
  \big(
  e^{ ( t - s )A } \, u,
  e^{ ( t - s )A } \, v
  \big)
  \big(
    e^{ ( t - \floor{s}{h} )A } \, B^b( w ),
    e^{ ( t - \floor{s}{h} )A } \, B^b( w )
  \big)
\\ & 
\nonumber
  -
  \psi_{0,2}
  \big(
  e^{ ( t - s )A } \, u,
  e^{ ( t - s )A } \, u
  \big)
  \big(
    e^{ ( t - s )A } \, B^b( w ),
    e^{ ( t - s )A } \, B^b( w )
  \big)\big]
  \Big\|_{\mathcal{V}}
\\ & \leq
\nonumber
  \Big\|
  \smallsum\limits_{ b \in \mathbb{ U 	} }
    \psi_{0,2}\big(
      e^{ ( t - s )A } \, u,
      e^{ ( t - s )A } \, v
    \big)
    \big(
      [e^{ ( t - \floor{s}{h} )A }
      +
      e^{ ( t - s )A }]
      \, B^b( w ),
      [e^{ ( t - \floor{s}{h} )A }
      -
      e^{ ( t - s )A }]
      \, B^b( w )
    \big)
  \Big\|_{\mathcal{V}}
\\ & +
\nonumber
  \Big\|
  \smallsum\limits_{ b \in \mathbb{ U 	} }
  \big[
    \psi_{0,2}\big(
      e^{ ( t - s )A } \, u,
      e^{ ( t - s )A } \, v
    \big)
    -
    \psi_{0,2}\big(
      e^{ ( t - s )A } \, u,
      e^{ ( t - s )A } \, u
    \big)
  \big]
  \big(
    e^{ ( t - s )A } \, B^b( w ),
    e^{ ( t - s )A } \, B^b( w )
  \big)
  \Big\|_{\mathcal{V}}
\\ & \leq
\nonumber
  \psiC
  \,
  | \SGchi{0} |^\power
    \max\{
      1,
      \| u \|^\power_V,
      \| v \|^\power_V
    \}
  \Big[
  \|
    e^{ ( t - s )A }(v-u)
  \|_V
  \,
  \|
    e^{ ( t - s )A } \, B( w )
  \|^2_{ \gamma(U,V) }
\\ & +
\nonumber
  \|
    (e^{ ( t - \floor{s}{h} )A }
    +
    e^{ ( t - s )A })
    \, B( w )
  \|_{ \gamma(U,V) }
  \,
  \|
    (e^{ ( t - \floor{s}{h} )A }
    -
    e^{ ( t - s )A })
    \, B( w )
  \|_{ \gamma(U,V) }
  \Big]
\\ & \leq
\nonumber
  \psiC
  \,
  | \SGchi{0} |^\power
  \max\{
    1,
    \| u \|^\power_V,
    \| v \|^\power_V
  \}
  \,
  \| B \|^2_{ \operatorname{Lip}^0( V, \gamma( U, V_{ -\nicefrac{ \vartheta }{ 2 } } ) ) } \,
  g_2( w )
\\ & \cdot
\nonumber
  \Bigg[
  \frac{
    2 \, \SGchi{\nicefrac{\vartheta}{2}} \, 
    \SGchi{\rho+\nicefrac{\vartheta}{2}} \, 
    \SGchi{\rho}
  }{
    ( t - s )^{(\rho+\vartheta)}
  } \,
  h^\rho
+
  \frac{
    | \SGchi{ \nicefrac{ \vartheta }{ 2 } } |^2 \,
    \SGchi{\rho} \,
    \| v - u \|_{ V_{ -\rho } }
  }{
    ( t - s )^{(\rho+\vartheta)}
  }
  \Bigg],
\end{align}
and 
\allowdisplaybreaks
\begin{align}
\label{eq:weak_distance_B_3}
&
  \Big\|
  \smallsum\limits_{ b \in \mathbb{ U } }
  \big[
  \psi_{1,1}
  \big(
  e^{ ( t - s )A } \, u,
  e^{ ( t - s )A } \, v
  \big)
  \big(
    e^{ ( t - s )A } \, B^b( w ),
    e^{ ( t - \floor{s}{h} )A } \, B^b( w )
  \big)
\\ & 
\nonumber
  -
  \psi_{1,1}
  \big(
  e^{ ( t - s )A } \, u,
  e^{ ( t - s )A } \, u
  \big)
  \big(
    e^{ ( t - s )A } \, B^b( w ),
    e^{ ( t - s )A } \, B^b( w )
  \big)\big]
  \Big\|_{\mathcal{V}}
\\ & \leq
\nonumber
  \Big\|
  \smallsum\limits_{ b \in \mathbb{ U } }
  \psi_{1,1}\big(
    e^{ ( t - s )A } \, u,
    e^{ ( t - s )A } \, v
  \big)
  \big(
    e^{ ( t - s )A } \, B^b( w ),
    [e^{ ( t - \floor{s}{h} )A }
    -
    e^{ ( t - s )A }
    ] \, B^b( w )
  \big)
  \Big\|_{\mathcal{V}}
\\ & +
\nonumber
  \Big\|
  \smallsum\limits_{ b \in \mathbb{ U } }
  \big[
  \psi_{1,1}
  \big(
  e^{ ( t - s )A } \, u,
  e^{ ( t - s )A } \, v
  \big)
  -
  \psi_{1,1}
  \big(
  e^{ ( t - s )A } \, u,
  e^{ ( t - s )A } \, u
  \big)\big]
  \big(
    e^{ ( t - s )A } \, B^b( w ),
    e^{ ( t - s )A } \, B^b( w )
  \big)
  \Big\|_{\mathcal{V}}
\\ & \leq
\nonumber
  \psiC
  \,
  | \SGchi{0} |^\power
    \max\{
      1,
      \| u \|^\power_V,
      \| v \|^\power_V
    \}
  \Big[
  \|
    e^{ ( t - s )A }(v-u)
  \|_V
  \,
  \|
    e^{ ( t - s )A } \, B( w )
  \|^2_{ \gamma(U,V) }
\\ & +
\nonumber
  \|
  e^{ ( t - s )A } \, B( w )
  \|_{ \gamma( U,V ) }
  \,
  \|
    [e^{ ( t - \floor{s}{h} )A }
    -
    e^{ ( t - s )A }
    ] \, B( w )
  \|_{ \gamma( U,V ) }
  \Big]
\\ & \leq
\nonumber
  \psiC
  \,
  | \SGchi{0} |^\power
    \max\{
      1,
      \| u \|^\power_V,
      \| v \|^\power_V
    \}
    \,
  \| B \|^2_{ \operatorname{Lip}^0( V, \gamma( U, V_{ -\nicefrac{ \vartheta }{ 2 } } ) ) } 
  \,
  g_2( w )
\\ & \cdot
\nonumber
  \Bigg[
  \frac{
    \SGchi{ \nicefrac{ \vartheta }{ 2 } }
    \,
    \SGchi{ \rho+\nicefrac{ \vartheta }{ 2 } }
    \,
    \SGchi{\rho}
  }{
    ( t - s )^{ (\rho+\vartheta) }
  } \,
  h^\rho
  +
  \frac{
    | \SGchi{ \nicefrac{ \vartheta }{ 2 } } |^2 \,
      \SGchi{\rho}
      \,
      \| v - u \|_{ V_{ -\rho } }
  }{
    ( t - s )^{ (\rho+\vartheta) }
  }
  \Bigg]
  .
\end{align}
Combining \eqref{eq:weak_distance_B_1}--\eqref{eq:weak_distance_B_3} implies 
that for all $ ( s, t ) \in \angle $, 
$ u, v, w \in V $ 
it holds that 
\begin{equation}\label{eq:absolute_tilde_B_2nd}
\begin{split}
&
  \|\tilde{B}_{ s, t }( u, v, w )\|_{\mathcal{V}}
\\ & \leq
  2 
  \, 
  \psiC
  \,
  | \SGchi{0} |^\power
  \,
  \SGchi{ \nicefrac{ \vartheta }{ 2 } } \,
  \SGchi{\rho} \,
  \max\{
    1,
    \| u \|^\power_V,
    \| v \|^\power_V
  \}
  \,
  \| B \|^2_{ \operatorname{Lip}^0( V, \gamma( U, V_{ -\nicefrac{ \vartheta }{ 2 } } ) ) } 
  \,
  g_2( w )
\\ & \cdot
  \Bigg[
      \frac{
       \SGchi{ \rho+\nicefrac{ \vartheta }{ 2 } }
      }{
       ( t - s )^{ ( \rho + \vartheta ) }
      } \,
    h^\rho
+
    \frac{
      \SGchi{ \nicefrac{ \vartheta }{ 2 } }
      \,
      \| v - u \|_{ V_{ -\rho } }
    }{
      ( t - s )^{ ( \rho + \vartheta ) }
    }
  \Bigg].
\end{split}
\end{equation}
Next observe that \eqref{eq:absolute_tilde_F_2nd} and \eqref{eq:absolute_tilde_B_2nd} show that for all 
$ ( s, t ) \in \angle $, 
$ u, v, w \in V $ 
it holds that 
\begin{equation}\label{eq:absolute_sum_FB_tilde_2nd}
\begin{split}
&
  \|
    \tilde{F}_{ s, t }( u, v, w )
  \|_{\mathcal{V}}
  +
  \|
    \tilde{B}_{ s, t }( u, v, w )
  \|_{\mathcal{V}}
\leq
  \psiC
  \,
  | \SGchi{0} |^\power \,
  \SGchi{\rho} \,
    \max\{
      1,
      \| u \|^\power_V,
      \| v \|^\power_V
    \}
    \,
  \varsigma_{ F, B } 
  \,
  g_2( w )
\\ & \cdot
  \Bigg[
    \left[
      \frac{
        \SGchi{ \rho+\vartheta } + 2 \, \SGchi{ \nicefrac{ \vartheta }{ 2 } } \, \SGchi{ \rho+\nicefrac{ \vartheta }{ 2 } }
      }{
        ( t - s )^{ ( \rho + \vartheta ) }
      } 
    \right] 
    h^\rho
    +
    \frac{
      2 
      \, 
      ( 
        | \SGchi{ \nicefrac{ \vartheta }{ 2 } } |^2 + \SGchi{\vartheta} 
      ) 
      \, 
      \| v - u \|_{ 
        V_{ -\rho } 
      }
    }{
      ( t - s )^{ ( \rho + \vartheta ) }
    }
  \Bigg]
  .
\end{split}
\end{equation}
In addition, note that H\"{o}lder's inequality ensures that 
for all 
$ r \in ( 0, \infty ) $, 
$ s \in [ 0, T ] $ 
it holds that 
\begin{equation}
\label{eq:optimal_moment_2}
\begin{split}
&
  \ES\big[\!
  \max\!\big\{
    1,
    \| \bar{Y}_s \|^r_V,
    \| Y_s \|^r_V
  \big\} \,
  g_2\big( Y_{ \floor{ s }{ h } } \big)
  \big]
\\ & \leq
  \left(
  \sup_{ u, v \in [ 0, T ] }
  \big\|\!
  \max\!\big\{
    1,
    \| \bar{Y}_u \|^r_V,
    \| Y_v \|^r_V
  \big\}
  \big\|_{ \lpn{ 1 + \nicefrac{2}{r} }{ \P }{ \R } }
  \right)
  \left(
  \sup_{ u \in [ 0, T ] }
  \big\|
  \!
  \max\!\big\{
    1,
    \| Y_u \|^2_V
  \big\}
  \big\|_{ \lpn{ 1 + \nicefrac{r}{2} }{ \P }{ \R } }
  \right)
\\ & \leq
  | K_{ r + 2 } |^{ \frac{ 1 }{ 1 + \nicefrac{2}{r} } }
  \,
  | K_{ r + 2 } |^{ \frac{ 1 }{ 1 + \nicefrac{r}{2} } }
  = K_{ r + 2 },
\end{split}
\end{equation}
\begin{equation}
\begin{split}
  \ES\big[
    g_2(
      Y_{ \floor{ s }{ h } }
    )
    \,
    \|
      Y_s - \bar{Y}_s
    \|_{ V_{ -\rho } }
  \big]
& \leq
  \|
    g_2(
      Y_{ \floor{ s }{ h } }
    )
  \|_{ \lpn{ \nicefrac{ 3 }{ 2 } }{ \P }{ \R } } 
  \,
  \|
    Y_s - \bar{Y}_s
  \|_{ \lpn{ 3 }{ \P }{ V_{ -\rho } } }
\\ & \leq
  | K_{ 3 } |^{ \nicefrac{ 2 }{ 3 } } 
  \left(
  \sup_{ u \in [ 0, T ] }
  \big\|
      Y_u - \bar{Y}_u
  \big\|_{ \lpn{ 3 }{ \P }{ V_{ -\rho } } }
  \right),
\end{split}
\end{equation}
and 
\begin{equation}\label{eq:optimal_moment_3}
\begin{split}
&
    \ES\big[\!
      \max\!\big\{
        1 ,
        \| \bar{Y}_s \|^r_V ,
        \| Y_s \|^r_V
      \big\}
      \,
      g_2(
        Y_{ \floor{ s }{ h } }
      )
      \,
      \|
        Y_s - \bar{Y}_s
      \|_{ V_{ -\rho } }
    \big]
\\ & \leq
  \|\!
    \max\{
      1
      ,
      \| \bar{Y}_s \|^r_V
      ,
      \| Y_s \|^r_V
    \}
  \|_{ \lpn{ 1 + \nicefrac{ 3 }{ r } }{ \P }{ \R } } 
  \,
  \|
    g_2(
      Y_{ \floor{ s }{ h } }
    )
  \|_{ 
    \lpn{ \nicefrac{ ( r + 3 ) }{ 2 } }{ \P }{ \R } 
  } 
  \,
  \|
      Y_s - \bar{Y}_s
  \|_{ \lpn{ r + 3 }{ \P }{ V_{ -\rho } } }
\\ & \leq
  | K_{ r + 3 } |^{ \frac{ r + 2 }{ r + 3 } } 
  \left(
    \sup_{ u \in [ 0, T ] }
    \big\|
      Y_u - \bar{Y}_u
    \big\|_{ \lpn{ r + 3 }{ \P }{ V_{ -\rho } } }
  \right)
  .
\end{split}
\end{equation}
Combining 
\eqref{eq:absolute_sum_FB_tilde_2nd}--\eqref{eq:optimal_moment_3} 
with
Lemma~\ref{lem:euler_integrated_strong} 
and the fact that 
$ 
  1 - \max\{\frac{ 1 + \vartheta }{ 2 } - \rho,0\} > \rho 
$ 
yields that for all $ t \in ( 0, T ] $ it holds that 
\begin{equation}
\label{eq:integrated_absolute_sum_FB_tilde_2nd_conclude}
\begin{split}
&
  \int^{ t }_0
  \E\left[
    \big\|
      \tilde{F}_{ s, t }\big(
        \bar{Y}_s, Y_s, Y_{ \floor{ s }{ h } }
    \big)
    \big\|_{\mathcal{V}}
  \right]
  +
  \E\left[
    \big\|
      \tilde{B}_{ s, t }\big(
        \bar{Y}_s, Y_s, Y_{ \floor{ s }{ h } }
      \big)
    \big\|_{\mathcal{V}}
  \right]
  ds
\\&\leq
  \tfrac{
    \psiC
    \,
    | \SGchi{0} |^\power
    \,
    \SGchi{\rho} \,
    \varsigma_{ F, B } 
    \,
    K_{ \power + 3 } 
    \, 
    h^\rho
    \,
    t^{ ( 1 - \vartheta - \rho ) }
  }{ 
    ( 1 - \vartheta - \rho ) 
  }
  \Bigg[
    \SGchi{ \rho+\vartheta } + 2 \, \SGchi{ \nicefrac{ \vartheta }{ 2 } } \, \SGchi{ \rho+\nicefrac{ \vartheta }{ 2 } }
    +
    2 \, \SGchi{\rho} \,
    ( | \SGchi{ \nicefrac{ \vartheta }{ 2 } } |^2 + \SGchi{\vartheta} )
\\ & 
    \cdot
    \bigg(
      \tfrac{
        \SGchi{ \vartheta } \, t^{ ( 1 - \vartheta ) } \,
        \| F \|_{ \operatorname{Lip}^0( V, V_{ -\vartheta } ) }
      }{ 
        ( 1 - \vartheta ) 
      }
      +
      \tfrac{
        \BDG{\power+3}{} \,
        \SGchi{ \nicefrac{\vartheta}{2} } 
        \, 
        \sqrt{ 
          t^{ ( 1 - \vartheta ) } 
        }
        \,
        \| B \|_{ \operatorname{Lip}^0( V, \gamma( U, V_{ - \vartheta / 2 } ) ) }
      }{
        \sqrt{ 1 - \vartheta }
      }
    \bigg)
  \Bigg]
  .
\end{split}
\end{equation}
Putting~\eqref{eq:integrated_absolute_sum_FB_tilde_2nd_conclude} 
into 
\eqref{eq:mild_ito_2nd} 
proves \eqref{eq:weak_distance}.
This finishes
the proof of Proposition~\ref{prop:weak_temporal_regularity_2nd}.
\end{proof}

\section{Weak error estimates for exponential Euler approximations of 
SPDEs with mollified nonlinearities}
\label{sec:euler_integrated_mollified}

\subsection{Regularity properties for solutions of infinite dimensional Kolmogorov equations in Banach spaces}

\begin{lemma}
\label{lem:markov.lip}
Assume the setting in Section~\ref{sec:global_setting},
let
$
  \varphi \in \operatorname{Lip}^0(V,\mathcal{V})
$, 
$
  F \in \operatorname{Lip}^0(V,V)
$, 
$ 
  B \in \operatorname{Lip}^0(V,\gamma(U,V))
$, 
let
$
A \colon D(A)
\subseteq
V \rightarrow V
$
be a generator of a strongly continuous analytic semigroup
with 
$
\operatorname{spectrum}( A )
\subseteq
\{
z \in \mathbb{C}
\colon
\text{Re}( z ) < \eta
\}
$,
let 
$
  X^x \colon [0,T] \times \Omega \to V
$, 
$ x \in V $, 
be $(\mathcal{F}_t)_{t\in[0,T]}$-predictable stochastic processes 
which satisfy for all $x\in V$
that 
$
  \sup_{t\in[0,T]}
  \ES\big[\|X^x_t\|^2_V\big]
  < \infty
$
and which satisfy
that for all $x\in V$, $t\in[0,T]$ it holds $\P$-a.s.\ that
\begin{equation}
  X^x_t
  =
  e^{tA} x
  +
  \int^t_0
  e^{(t-s)A} F(X^x_s)
  \, ds
  +
  \int^t_0
  e^{(t-s)A} B(X^x_s)
  \, dW_s
  ,
\end{equation}
let $Y\colon[0,T]\times\Omega\to V$ be a continuous $(\mathcal{F}_t)_{t\in[0,T]}$-adapted stochastic process which satisfies for all $t\in[0,T]$ that 
$
  \ES\big[\|Y_t\|_V\big]
  < \infty
$
and which satisfies that for all $t\in[0,T]$ it holds $\P$-a.s.\ that
\begin{equation}
  Y_t
  =
  e^{tA} Y_0
  +
  \int^t_0
  e^{(t-s)A} F(Y_s)
  \, ds
  +
  \int^t_0
  e^{(t-s)A} B(Y_s)
  \, dW_s,
\end{equation}
and let $u\colon[0,T]\times V\to\mathcal{V}$
be the function which satisfies for all $x\in V$, $t\in[0,T]$
that 
$
  u(t,x)
  =
  \ES[\varphi(X^x_{T-t})]
$.
Then 
\begin{enumerate}[(i)]
	\item 
	\label{item:test.markov}
	it holds for all $s,t\in[0,T]$ that 
	$
	\ES\big[\|\varphi(Y_t)\|_{\mathcal{V}}+\|u(t,Y_s)\|_{\mathcal{V}}\big]
	< \infty
	$
	and 
	\item
	\label{item:simple.markov}
	it holds for all $t,h\in[0,T]$ with $t+h\leq T$
	that 
	\begin{equation}
	\ES[\varphi(Y_{T-t})]
	=
	\ES[u(t+h,Y_h)]
	.
	\end{equation}
\end{enumerate}
\end{lemma}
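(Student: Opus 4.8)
The plan is to derive item~\eqref{item:test.markov} from the global Lipschitz continuity of $\varphi$ together with an affine a priori bound for $u(t,\cdot)$, and to derive item~\eqref{item:simple.markov} from uniqueness of mild solutions combined with the Markov/independence structure of the driving noise.

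First I would prove item~\eqref{item:test.markov}. Since $\varphi\in\operatorname{Lip}^0(V,\mathcal{V})$, it holds for every $v\in V$ that $\|\varphi(v)\|_{\mathcal{V}}\leq\|\varphi(0)\|_{\mathcal{V}}+|\varphi|_{\operatorname{Lip}^0(V,\mathcal{V})}\|v\|_V$, which together with the hypothesis $\ES[\|Y_t\|_V]<\infty$ yields $\ES[\|\varphi(Y_t)\|_{\mathcal{V}}]<\infty$ for all $t\in[0,T]$. To bound $u$, I would apply Proposition~\ref{prop:numerics_Lp_bound} with $p=2$, $\vartheta=0$, $\kappa=0$, $L_{0,t}=e^{tA}$, $L_{s,t}=e^{(t-s)A}$, $\varPi=\operatorname{id}_{[0,T]}$ and with $X^x$ in the role of $Y^0$ (its hypothesis $\sup_{t\in[0,T]}\|X^x_t\|_{\lpn{2}{\P}{V}}<\infty$ being precisely the assumed $L^2$-bound), so as to obtain constants $c_1,c_2\in[0,\infty)$, independent of $x$, with $\sup_{r\in[0,T]}\|X^x_r\|_{\lpn{2}{\P}{V}}\leq c_1\|x\|_V+c_2$. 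Since the norm of a Bochner integral does not exceed the integral of the norm, this implies $\|u(t,x)\|_{\mathcal{V}}\leq\ES[\|\varphi(X^x_{T-t})\|_{\mathcal{V}}]\leq\|\varphi(0)\|_{\mathcal{V}}+|\varphi|_{\operatorname{Lip}^0(V,\mathcal{V})}(c_1\|x\|_V+c_2)$, i.e., $u(t,\cdot)$ has at most affine growth; consequently $\ES[\|u(t,Y_s)\|_{\mathcal{V}}]\leq\|\varphi(0)\|_{\mathcal{V}}+|\varphi|_{\operatorname{Lip}^0(V,\mathcal{V})}(c_1\,\ES[\|Y_s\|_V]+c_2)<\infty$, which establishes item~\eqref{item:test.markov}.

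For item~\eqref{item:simple.markov}, fix $t,h\in[0,T]$ with $t+h\leq T$. The first step is the observation that the time-shifted process $\hat{Y}_r:=Y_{h+r}$, $r\in[0,T-h]$, is a mild solution of the same SPDE on $[0,T-h]$ with respect to the filtration $(\mathcal{F}_{h+r})_{r\in[0,T-h]}$ and the shifted $\operatorname{Id}_U$-cylindrical Wiener process $\hat{W}_r:=W_{h+r}-W_h$, with initial value $Y_h$; this follows by splitting the $ds$- and $dW$-integrals in the equation for $Y_{h+r}$ at the point $h$, factoring out $e^{rA}$ from the contributions on $[0,h]$, and substituting $s\mapsto h+s$ in the contributions on $[h,h+r]$. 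The second step is that, via the globally convergent Picard iteration associated with the SPDE, for each fixed $r$ the random variable $X^x_r$ can be represented as $\Phi_r(x,W|_{[0,r]})$ for a measurable functional $\Phi_r$ that is independent of the filtration, and that the Lipschitz dependence $\|X^x_r-X^{\bar{x}}_r\|_{\lpn{2}{\P}{V}}\leq C\|x-\bar{x}\|_V$ supplied by Corollary~\ref{cor:initial_perturbation} (applied with $\vartheta=0$ and $S_t=e^{tA}$), together with uniqueness of mild solutions, gives $Y_{T-t}=\hat{Y}_{T-t-h}=\Phi_{T-t-h}(Y_h,\hat{W}|_{[0,T-t-h]})$ $\P$-a.s. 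The third step uses that $\hat{W}$ is independent of $\mathcal{F}_h$, that $Y_h$ is $\mathcal{F}_h$-measurable, and that $\hat{W}$ has the same law as $W$ on $[0,T-h]$, so that the substitution (``freezing'') lemma for conditional expectations yields $\ES[\varphi(Y_{T-t})\,|\,\mathcal{F}_h]=(\ES[\varphi(\Phi_{T-t-h}(x,W))])|_{x=Y_h}=(\ES[\varphi(X^x_{T-t-h})])|_{x=Y_h}=u(t+h,Y_h)$; taking expectations proves $\ES[\varphi(Y_{T-t})]=\ES[u(t+h,Y_h)]$, which is item~\eqref{item:simple.markov}.

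I expect the main obstacle to be the rigorous implementation of the second step: one has to check that the Picard iterates are genuine measurable functionals of $(x,\omega)$ through the driving noise path alone (so that the associated solution map is filtration-independent), that they converge to $X^x$ uniformly enough in $x$ to transfer measurability to the limit, and that $\hat{Y}$ lies in the class in which the uniqueness statement underlying Corollary~\ref{cor:initial_perturbation} applies --- the last point requiring, if $Y_0$ is merely integrable rather than square-integrable, a routine localization over the $\mathcal{F}_0$-measurable events $\{\|Y_0\|_V\leq n\}$, $n\in\N$. All remaining estimates are routine consequences of the global Lipschitz hypotheses on $F$, $B$, and $\varphi$.
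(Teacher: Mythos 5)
Your proof of item~\eqref{item:test.markov} follows essentially the same idea as the paper's: the affine growth of $u(t,\cdot)$ is extracted from a Lipschitz bound on $\varphi$ together with an a priori estimate of $\sup_r\|X^x_r\|_{\lpn{2}{\P}{V}}$ that is linear in $\|x\|_V$ (the paper cites Cox \& Van Neerven for this estimate rather than Proposition~\ref{prop:numerics_Lp_bound}, but the content is the same). This part is fine.

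Your argument for item~\eqref{item:simple.markov}, however, takes a genuinely different route from the paper's. The paper reduces to \emph{bounded} test functions $\varphi_n=\psi_n\circ\varphi$ obtained by radial truncation of $\varphi$, invokes Kunze's abstract theory of local martingale problems (pathwise uniqueness via Gronwall and the Burkholder--Davis--Gundy inequality, uniqueness in law, existence, and well-posedness of the local martingale problem in the sense of~\cite{Kunze2013}, with the Markov identity $\ES[\varphi_n(Y_{T-t})]=\ES[u_n(t+h,Y_h)]$ coming directly from~\cite[Theorem~4.2]{Kunze2013}), and then passes to the limit $n\to\infty$ by dominated convergence using the affine growth bound. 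You instead construct a pathwise solution functional $\Phi_r$ via Picard iteration, exploit the shift covariance of the mild formulation to identify $Y_{h+r}$ with $\Phi_r(Y_h,\hat W)$ through pathwise uniqueness, and then apply the substitution ("freezing") lemma for conditional expectations to land on $\ES[\varphi(Y_{T-t})\,|\,\mathcal{F}_h]=u(t+h,Y_h)$. Both routes are viable. Your approach is more elementary in the sense that it avoids the local-martingale-problem apparatus and the truncation step altogether; the price is that it puts the burden on showing that the solution is a jointly measurable Borel functional of $(x,W|_{[0,r]})$ (joint measurability of the Picard iterates, uniform-enough convergence to preserve it in the limit, adaptedness to the smaller filtration $\sigma(Y_h)\vee\sigma(\hat W|_{[0,\cdot]})$ by pathwise uniqueness, and a localization over $\{\|Y_0\|_V\leq n\}$ because $Y_0$ is only assumed integrable). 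You flag these correctly; none are fatal, but they are exactly the technical overhead that the paper's Kunze-based argument is designed to avoid, since for bounded $\varphi_n$ the Markov identity follows from well-posedness of the local martingale problem without any explicit pathwise representation.
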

\begin{proof}
Throughout this proof let 
$\psi_n\colon\mathcal{V}\to\mathcal{V}$, $n\in\N$, 
be the functions which satisfy for all $n\in\N$, $v\in\mathcal{V}$ that 
\begin{equation}
  \psi_n(v)=
  \begin{cases}
  v &\colon \|v\|_{\mathcal{V}} \leq n\\
  \frac{nv}{\|v\|_{\mathcal{V}}}&\colon \|v\|_{\mathcal{V}} > n
  \end{cases},
\end{equation}
let 
$
  \varphi_n \colon V \to \mathcal{V}
$, 
$ n\in\N $, 
be the functions which satisfy for all $n\in\N$, $v\in\mathcal{V}$ that 
\begin{equation}
  \varphi_n(v)
  =
  \psi_n(\varphi(v))
  ,
\end{equation}
and let 
$
  u_n \colon [0,T] \times V \to \mathcal{V}
$, $n\in\N$, 
be the functions which satisfy for all $n\in\N$, $x\in V$, $t\in[0,T]$ 
that 
\begin{equation}
  u_n(t,x)
  =
  \ES[\varphi_n(X^x_{T-t})]
  .
\end{equation}
Observe that for all $n\in\N$ 
it holds that 
$\varphi_n \in \mathcal{M}(\mathcal{B}(V),\mathcal{B}(\mathcal{V}))$
and 
\begin{equation}
\label{eq:Bb.approximation}
  \sup_{v\in V}
  \|\varphi_n(v)\|_{\mathcal{V}}
  \leq n.
\end{equation}
We note that the Burkholder-Davis-Gundy type inequality in, e.g., \cite[Corollary~3.10]{vvw07}, Gronwall's lemma, Fatou's lemma, and the fact that 
$F\in\operatorname{Lip}^0(V,V)$ 
and 
$B\in\operatorname{Lip}^0(V,\gamma(U,V))$ 
ensure that 
for every probability space 
$
  (\tilde{\Omega},\tilde{\mathcal{F}},\tilde{\P})
$
with a normal filtration 
$(\tilde{\mathcal{F}}_t)_{t\in[0,T]}$, 
every $\operatorname{Id}_U$-cylindrical 
$
  (\tilde{\Omega},\tilde{\mathcal{F}},\tilde{\P},(\tilde{\mathcal{F}}_t)_{t\in[0,T]})
$-Wiener process 
$(\tilde{W}_t)_{t\in[0,T]}$, 
and all continuous $(\tilde{\mathcal{F}}_t)_{t\in[0,T]}$-adapted stochastic processes 
$\tilde{X}^{(i)}\colon[0,T]\times\tilde{\Omega}\to V$,
$i\in\{1,2\}$, 
which satisfy 
$\tilde{\P}(\tilde{X}^{(1)}_0=\tilde{X}^{(2)}_0)=1$
and which satisfy that for all $i\in\{1,2\}$, $t\in[0,T]$ it holds $\tilde{\P}$-a.s.\ that 
\begin{equation}
  \tilde{X}^{(i)}_t
  =
  e^{tA} \tilde{X}^{(i)}_0
  +
  \int^t_0
  e^{(t-s)A} F(\tilde{X}^{(i)}_s)
  \, ds
  +
  \int^t_0
  e^{(t-s)A} B(\tilde{X}^{(i)}_s)
  \, d\tilde{W}_s
\end{equation}
it holds that 
$
  \tilde{\P}(\sup_{t\in[0,T]} \|\tilde{X}^{(1)}_t-\tilde{X}^{(2)}_t\|_V=0)
  =1
$
(cf., e.g., Kunze~\cite[Theorem~5.6]{Kunze2010arXiv}).
This and, e.g., Kunze~\cite[Theorem~3.6, Theorem~5.3, \& Proposition~6.9]{Kunze2013} 
guarantee the uniqueness in law for solutions of the local martingale problem associated to $(A,F,B)$ (see, e.g., \cite[(3.2)]{Kunze2013}).
Moreover, note that, e.g., Theorem 6.2 in Van Neerven et al.~\cite{vvw08} ensure that for every probability space 
$
(\tilde{\Omega},\tilde{\mathcal{F}},\tilde{\P})
$
with a normal filtration 
$(\tilde{\mathcal{F}}_t)_{t\in[0,T]}$ 
and every $\operatorname{Id}_U$-cylindrical 
$
(\tilde{\Omega},\tilde{\mathcal{F}},\tilde{\P},(\tilde{\mathcal{F}}_t)_{t\in[0,T]})
$-Wiener process 
$(\tilde{W}_t)_{t\in[0,T]}$
there exist continuous $(\tilde{\mathcal{F}}_t)_{t\in[0,T]}$-adapted stochastic processes 
$\tilde{X}^x\colon[0,T]\times\tilde{\Omega}\to V$,
$x \in V$, 
which satisfy that for all $x\in V$, $t\in[0,T]$ it holds $\tilde{\P}$-a.s.\ that 
\begin{equation}
\tilde{X}^x_t
=
e^{tA} x
+
\int^t_0
e^{(t-s)A} F(\tilde{X}^x_s)
\, ds
+
\int^t_0
e^{(t-s)A} B(\tilde{X}^x_s)
\, d\tilde{W}_s
.
\end{equation}
This and, e.g., \cite[Theorem~3.6 \& Proposition~6.9]{Kunze2013}
assure that the local martingale problem associated to $(A,F,B)$ is well-posed (see, e.g., \cite[Definition~2.3]{Kunze2013}).
Combining this with~\eqref{eq:Bb.approximation} and, e.g., \cite[Theorem~4.2 and item~(4) of Theorem~2.2]{Kunze2013} implies that for all $n\in\N$, $t,h\in[0,T]$ with $t+h\leq T$ it holds that 
\begin{equation}
\label{eq:markov.Bb}
  \ES[\varphi_n(Y_{T-t})]
  =
  \ES\big[\ES[\varphi(Y_{(T-t-h)+h})\vert\mathcal{F}_h]\big]
  =
  \ES[u_n(t+h,Y_h)]
  .
\end{equation}
Next note that for all $n\in\N$, $v\in V$
it holds that 
\begin{equation}
  \|\varphi_n(v)-\varphi(v)\|_{\mathcal{V}}
  \leq
  2 \mathbbm{1}_{\{y\in\mathcal{V}\colon\|y\|_{\mathcal{V}}> n\}}(\varphi(v)) \,
  \|\varphi(v)\|_{\mathcal{V}}
  \leq
  2\|\varphi\|_{\operatorname{Lip}^0(V,\mathcal{V})} \,
  (1+\|v\|_V)
  .
\end{equation}
This implies that for all $n\in\N$, $x\in V$, $t\in[0,T]$
it holds that 
\begin{equation}
\label{eq:Bb.approx.majorante.Y}
  \ES\big[\|\varphi_n(Y_t)-\varphi(Y_t)\|_{\mathcal{V}}\big]
  \leq
  2\|\varphi\|_{\operatorname{Lip}^0(V,\mathcal{V})} \,
  \big(1+\ES\big[\|Y_t\|_V\big]\big)
  < \infty  
\end{equation}
and 
\begin{equation}
\label{eq:Bb.approx.majorante.X}
  \ES\big[\|\varphi_n(X^x_t)-\varphi(X^x_t)\|_{\mathcal{V}}\big]
  \leq
  2\|\varphi\|_{\operatorname{Lip}^0(V,\mathcal{V})} \,
  \big(1+\ES\big[\|X^x_t\|_V\big]\big)
  < \infty.
\end{equation}
Note that~\eqref{eq:Bb.approximation} and~\eqref{eq:Bb.approx.majorante.Y} show that for all $x\in V$, $t\in[0,T]$ it holds that 
\begin{equation}
\label{eq:test.Y}
\ES\big[\|\varphi(Y_t)\|_{\mathcal{V}}\big]
< \infty.
\end{equation}
Moreover, combining~\eqref{eq:Bb.approx.majorante.Y}--\eqref{eq:Bb.approx.majorante.X} with Lebesgue's theorem of dominated convergence and the fact that 
\begin{equation}
\label{eq:Bb.pointwise.approximation}
  \forall \, v \in V \colon
  \limsup_{n\to\infty}
  \|\varphi_n(v)-\varphi(v)\|_{\mathcal{V}}
  =0
\end{equation}
yields that for all $x\in V$, $t\in[0,T]$ it holds that 
\begin{equation}
\label{eq:Bb.pathwise.convergence}
  \limsup_{n\to\infty}
  \big\|\ES[\varphi_n(Y_t)]-\ES[\varphi(Y_t)]\big\|_{\mathcal{V}}
  +
  \limsup_{n\to\infty}
  \|u_n(t,x)-u(t,x)\|_{\mathcal{V}}
  =
  0.
\end{equation}
Next observe that for all $t\in[0,T]$ it holds that 
\begin{equation}
\label{eq:apriori.bound}
  \sup_{x\in V}
  \left[
  \frac{\ES\big[\|X^x_t\|_V\big]}{(1+\|x\|_V)}
  \right]
  \leq
  \sup_{x\in V}
  \left[
  \frac{\|X^x_t\|_{\lpn{2}{\P}{V}}}{\max\{1,\|x\|_V\}}
  \right]
  < \infty
\end{equation}
(cf., e.g., Cox \& Van Neerven~\cite[(2.1) and Theorem~2.7]{CoxVanNeerven2013}).
Next observe that~\eqref{eq:Bb.approx.majorante.X} imply that for all $n\in\N$, $x\in V$, $t\in[0,T]$ it holds that
\begin{equation}
\begin{split}
  \|u_n(t,x)-u(t,x)\|_{\mathcal{V}}
  &\leq
  \ES\big[\|\varphi_n(X^x_{T-t})-\varphi(X^x_{T-t})\|_{\mathcal{V}}\big]
  \\&\leq
  2\|\varphi\|_{\operatorname{Lip}^0(V,\mathcal{V})} \,
  \big(1+\ES\big[\|X^x_{T-t}\|_V\big]\big)
  \\&\leq
  2\|\varphi\|_{\operatorname{Lip}^0(V,\mathcal{V})} \,
  \bigg(1+  
  \sup_{v\in V}
    \left[
    \frac{\ES\big[\|X^v_{T-t}\|_V\big]}{(1+\|v\|_V)}
    \right]
    (1+\|x\|_V)
    \bigg)
    .
\end{split}
\end{equation}
This and~\eqref{eq:apriori.bound} yield that for all 
$n\in\N$, 
$s,t\in[0,T]$
it holds that 
\begin{equation}
\label{eq:u.majorante}
\begin{split}
&
  \ES\big[\|u_n(t,Y_s)-u(t,Y_s)\|_{\mathcal{V}}\big]
  \leq
  2\|\varphi\|_{\operatorname{Lip}^0(V,\mathcal{V})} \,
  \bigg(1+  
  \sup_{v\in V}
    \left[
    \frac{\ES\big[\|X^v_t\|_V\big]}{(1+\|v\|_V)}
    \right]
    (1+\ES\big[\|Y_s\|_V\big])
    \bigg)
    < \infty.
\end{split}
\end{equation}
This and~\eqref{eq:Bb.approximation} show that for all 
$s,t\in[0,T]$ it holds that 
\begin{equation}
\ES\big[\|u(t,Y_s)\|_{\mathcal{V}}\big]
< \infty.
\end{equation}
This and~\eqref{eq:test.Y} prove item~\eqref{item:test.markov}.
Next we combine~\eqref{eq:Bb.pathwise.convergence} and~\eqref{eq:u.majorante} with Lebesgue's theorem of dominated convergence to obtain that for all 
$s,t\in[0,T]$
it holds that 
\begin{equation}
  \limsup_{n\to\infty}
  \big\|\ES\big[u_n(t,Y_s)-u(t,Y_s)\big]\big\|_{\mathcal{V}}
  =0.
\end{equation}
This, \eqref{eq:markov.Bb}, and~\eqref{eq:Bb.pathwise.convergence} yield that for all $t,h\in[0,T]$ with $t+h\leq T$ it holds that
\begin{equation}
  \ES[\varphi(Y_{T-t})]
  =
  \ES[u(t+h,Y_h)]
  .
\end{equation}
This proves item~\eqref{item:simple.markov}.
The proof of Lemma~\ref{lem:markov.lip} is thus completed.
\end{proof}

\begin{lemma}
\label{lem:Kolmogorov}
Assume the setting in Section~\ref{sec:global_setting},
let $ \mathbb{U} \subseteq U $ be an orthonormal basis of $ U $, 
let
$
A \colon D(A)
\subseteq
V \rightarrow V
$
be a generator of a strongly continuous analytic semigroup
with 
$
\operatorname{spectrum}( A )
\subseteq
\{
z \in \mathbb{C}
\colon
\text{Re}( z ) < \eta
\}
$,
	let 
	$
	(
	V_r
	,
	\left\| \cdot \right\|_{ V_r }
	)
	$,
	$ r \in [0,\infty) $,
	be the $\R$-Banach spaces which satisfy for all 
	$r\in[0,\infty)$ that 
	$
	(
	V_r
	,
	\left\| \cdot \right\|_{ V_r }
	)
	=
	(
	D((\eta-A)^r)
	,
	\left\| (\eta-A)^r(\cdot) \right\|_V
	)	
	$, 
let 
$
  \varphi \in \operatorname{Lip}^4( V, \mathcal{V})
$, 
$
  F \in \operatorname{Lip}^4(V,V_1)
$, 
$ 
  B \in \operatorname{Lip}^4(V,\gamma(U,V_1))
$, 
let 
$ \Pi_k \in 
\mathcal{P}\big(\mathcal{P}\big(
\mathcal{P}( \N )
\big)\big)
$, 
$ k \in \N_0 $,
be the sets which satisfy for all $ k \in \N $
that
$ \Pi_0 = \emptyset $
and
\begin{equation}
\Pi_k =
\big\{
C \subseteq \mathcal{P}( \N )
\colon
\left[
\emptyset \notin C
\right]
\wedge
\left[
\cup_{ a \in C }
a
=
\left\{ 1, \dots, k \right\}
\right]
\wedge
\left[
\forall \, a, b \in C \colon
\left(
a \neq b
\Rightarrow
a \cap b = \emptyset
\right)
\right]
\big\}
,
\end{equation}
and for every $ k \in \N $,
$ \varpi \in \Pi_k $
let 
$
I^\varpi_i
\in
\varpi
$, 
$i\in\{1,\ldots,\#_\varpi\}$, 
be the sets which satisfy that 
$
\min( I^\varpi_1 ) < 
\dots < 
\min( 
I_{ \#_\varpi }^{ \varpi } 
)
$,
let 
$
I_{ i, j }^\varpi
\in
I_i^{ \varpi }
$, 
$j\in\{1,\ldots,\#_{I^\varpi_i}\}$, 
$ i \in \{ 1, \dots, \#_\varpi \} $, 
be the natural numbers which satisfy for all 
$ i \in \{ 1, \dots, \#_\varpi \} $ 
that 
$
I_{ i, 1 }^\varpi < I_{ i, 2 }^\varpi < \dots < I_{ i, \#_{ I_i^{ \varpi } } }^\varpi
$, 
and let 
$
[ \cdot ]_i^\varpi
\colon
V^{ k + 1 }
\to 
V^{ 
	\#_{I_i^\varpi} + 1
}
$, 
$
i \in \{ 1, \dots, \#_\varpi \} 
$, 
be the functions which satisfy for all 
$
i \in \{ 1, \dots, \#_\varpi \} 
$, 
$
\mathbf{v} = (v_0, v_1, \dots, v_k)
\in 
V^{ k + 1 }
$
that
$
[ \mathbf{v} ]_i^\varpi
= ( v_0, v_{ I_{ i, 1 }^\varpi } , \dots , v_{ I_{ i, \#_{I_i^\varpi} }^\varpi } )
$.
Then
\begin{enumerate}[(i)]
\item
\label{item:derivative.processes}
		there exist up-to-modifications unique $(\mathcal{F}_t)_{t\in[0,T]}$-predictable stochastic processes
		$
		X^{ k,\mathbf{v} }
		\colon
		[ 0 , T ] \times \Omega
		\to V
		$, 
		$
		\mathbf{v} \in V^{k+1}
		$, 
		$
		k \in \{ 0, 1, 2 \}
		$,
		which satisfy
		for all
		$
		k \in \{ 0, 1, 2 \}
		$,
		$
		\mathbf{v} \in V^{k+1}
		$, 
		$ p \in (0,\infty) $
		that
		$
		\sup_{t\in[0,T]}
		\ES\big[\|X^{k,\mathbf{v}}_t\|^p_V\big]
		< \infty
		$ 
		and 
		which satisfy that
		for all
		$
		k \in \{ 0, 1, 2 \}
		$,
		$
		\mathbf{v} = (v_0,v_1,\ldots,v_k) \in V^{k+1}
		$, 
		$ t \in [0,T] $
		it holds $\P$-a.s.\ that
		\begin{equation}
		\label{eq:derivative.processes}
		\begin{split}
		&\!\!\!
		X_t^{k,\mathbf{v}}
		=
		\mathbbm{1}_{ \{ 0, 1 \} }(k) \, 
		e^{tA}
		v_k  
		\\ &\!\!\!
		+
		\int_0^t
		e^{ ( t - s ) A }
		\Big[
		\mathbbm{1}_{ \{ 0 \} }(k)
		\,
		F(X_s^{0,v_0})
		+
		{\smallsum\limits_{ \varpi\in \Pi_k }}
		F^{ ( \#_\varpi ) }( X_s^{ 0,v_0 } )
		\big(
		X_s^{ \#_{I^\varpi_1}, [ \mathbf{v} ]_1^{ \varpi } }
		,
		\dots
		,
		X_s^{ \#_{I^\varpi_{\#_\varpi}}, [\mathbf{v} ]_{ \#_\varpi }^{ \varpi } }
		\big)
		\Big]
		\, ds
		\\ &\!\!\!
		+
		\int_0^t
		e^{ ( t - s ) A }
		\Big[
		\mathbbm{1}_{ \{ 0 \} }(k)
		\,
		B(X_s^{0,v_0})
		+
		{\smallsum\limits_{ \varpi\in \Pi_k }}
		B^{ ( \#_\varpi ) }( X_s^{ 0,v_0 } )
		\big(
		X_s^{ \#_{I^\varpi_1}, [ \mathbf{v} ]_1^{ \varpi } }
		,
		\dots
		,
		X_s^{ \#_{I^\varpi_{\#_\varpi}}, [\mathbf{v} ]_{ \#_\varpi }^{ \varpi } }
		\big)
		\Big]
		\, dW_s
		,
		\end{split}
		\end{equation}
\item
\label{item:transition.function}
		there exists a unique function
		$\phi\colon[0,T]\times V \to \mathcal{V}$ 
		which satisfies for all 
		$x\in V$, 
		$t\in[0,T]$ 
		that
		$
		\phi(t,x)
		=
		\ES[\varphi(X^{0,x}_t)]
		$,
\item
\label{item:kolmogorov.diff}
it holds for all $t\in[0,T]$ that 
$
  (V \ni x \mapsto \phi(t,x) \in \mathcal{V})
  \in C^4_b(V,\mathcal{V})
$, 
		\item
		\label{item:lem.trans.derivative.integrability}
		it holds for all 
		$ k \in \{1,2\} $, 
		$
		\mathbf{v} = (v_0 , v_1 , \dots , v_k) \in
		V^{k+1}
		$, 
		$ t \in [ 0 , T ] $ 
		that 
		\begin{equation}
		\sum\limits_{
			\varpi \in \Pi_k
		}
		\ES\Big[\big\|
		\varphi^{(\#_\varpi)}(X_t^{0,v_0})
		\big(
		X_t^{ \#_{I^\varpi_1}, [ \mathbf{v} ]_1^{ \varpi } }
		,
		\dots
		,
		X_t^{ \#_{I^\varpi_{\#_\varpi}}, [\mathbf{v} ]_{ \#_\varpi }^{ \varpi } }
		\big)
		\big\|_{\mathcal{V}}\Big]
		< \infty, 
		\end{equation}
		\item
		\label{item:lem.representation}
		it holds for all 
		$ k \in \{1,2\} $, 
		$
		\mathbf{v} \in
		V^k
		$, 
		$ x \in V $, 
		$ t \in [ 0 , T ] $ 
		that 
		\begin{equation}
		\begin{split}
		&
		\big(
		\tfrac{\partial^k}{\partial x^k}
		\phi
		\big)(t,x)
		\mathbf{v}
		=
		\sum\limits_{
			\varpi \in \Pi_k
		}
		\ES
		\Big[
		\varphi^{(\#_\varpi)}(X_t^{0,x})
		\big(
		X_t^{\#_{I^\varpi_1},[(x,\mathbf{v})]_1^{\varpi}}
		,
		\dots
		,
		X_t^{\#_{I^\varpi_{\#_\varpi}},[(x,\mathbf{v})]_{\#_\varpi}^\varpi}
		\big)
		\Big]
		,
		\end{split}
		\end{equation}
\item
\label{item:kolmogorov.c.delta}
it holds for all 
$ k \in \{ 1, 2, 3, 4 \} $, 
$ \delta_1, \dots, \delta_k \in (-\nicefrac{1}{2},0] $
with
$
  \sum^k_{i=1} \delta_i
  > -\nicefrac{1}{2}
$ 
that 
\begin{equation}
\begin{split} 
&
  \sup_{
    t \in (0,T]
  }
  \sup_{ 
    x \in V
  }
  \sup_{ 
    v_1, \dots, v_k \in V \setminus \{ 0 \}
  }
  \left[
  \frac{
    \big\|
      ( 
        \frac{ 
          \partial^k
        }{
          \partial x^k
        }
        \phi
      )( t, x )( v_1, \dots, v_k )
    \big\|_{\mathcal{V}}
  }{
    t^{ 
      (
        \delta_1 + \ldots + \delta_k
      ) 
    }
    \left\| (\eta-A)^{\delta_1} v_1 \right\|_V
    \cdot
    \ldots
    \cdot
    \left\| (\eta-A)^{\delta_k} v_k \right\|_V
  }
  \right]
  < \infty
  ,
\end{split}
\end{equation}
\item
\label{item:kolmogorov.lip.c.delta}
it holds for all 
$k\in\{1,2,3,4\}$, 
$ \delta_1, \dots, \delta_k \in (-\nicefrac{1}{2},0] $
with
$
\sum^k_{i=1} \delta_i
> -\nicefrac{1}{2}
$ 
that 
\begin{equation}
\begin{split} 
&
\!\!\!\!\!\!\!\!\!
\sup_{t \in (0,T] }
\sup_{\substack{x, y \in V, \\ x \neq y}}
  \sup_{ 
  	v_1, \dots, v_k \in V \setminus \{ 0 \}
  }
\left[
\frac{
	\big\|
	\big[
	\big(
	\tfrac{\partial^4}{\partial x^4}
	\phi
	\big)(t,x)
	-
	\big(
	\tfrac{\partial^4}{\partial x^4}
	\phi
	\big)(t,y)
	\big]
	( v_1, \dots, v_k )
	\big\|_{\mathcal{V}}    
}{
    t^{ 
    	(
    	\delta_1 + \ldots + \delta_k
    	) 
    } \,
    \|x-y\|_V
    \cdot
    \left\| (\eta-A)^{\delta_1} v_1 \right\|_V
    \cdot
    \ldots
    \cdot
    \left\| (\eta-A)^{\delta_k} v_k \right\|_V
}
\right]
< \infty
,
\end{split}
\end{equation}
\item
\label{item:base.Lipschitz}
it holds for all $p\in(0,\infty)$ that 
\begin{equation}
\sup_{\substack{x,y\in V, \\ x\neq y}}
\sup_{t\in[0,T]}
\left[
\frac{
	\|X^{0,x}_t-X^{0,y}_t\|_{\lpn{p}{\P}{V}}
}{
\|x-y\|_V
}
\right]
< \infty,
\end{equation}
\item
\label{item:derivative.process.linearity}
it holds for all $k\in\{1,2\}$, $p\in(0,\infty)$ that 
\begin{equation}
  \sup_{t\in[0,T]}
  \sup_{x\in V}
  \sup_{v_1,\ldots,v_k\in V\setminus\{0\}}
  \left[
  \frac{
  	\|X^{k,(x,v_1,\ldots,v_k)}_t\|_{\lpn{p}{\P}{V}}
  	}{
  	  \|v_1\|_V \cdot\ldots\cdot \|v_k\|_V
  	}
  \right]
  < \infty
  ,
\end{equation}
\item
\label{item:derivative.process.continuity}
it holds for all $k\in\{1,2\}$, $p\in(0,\infty)$ that 
\begin{equation}
 \sup_{\substack{x,y\in V, \\ x\neq y}}
\sup_{t\in[0,T]}
\sup_{v_1,\ldots,v_k\in V\setminus\{0\}}
\left[
\frac{
	\|X^{k,(x,v_1,\ldots,v_k)}_t-X^{k,(y,v_1,\ldots,v_k)}_t\|_{\lpn{p}{\P}{V}}
}{
\|x-y\|_V \cdot
\|v_1\|_V \cdot\ldots\cdot \|v_k\|_V
}
\right]
< \infty
,
\end{equation}
\item
\label{item:solution.V1}
it holds for all $x\in V_1$, $t\in[0,T]$ that 
$
  \P(X^{0,x}_t\in V_1)
  =1
$, 
\item
\label{item:V1.moment}
it holds for all 
$p\in(0,\infty)$,
$x\in V_1$, 
$t\in[0,T]$
that
$
  \ES\big[\|X^{0,x}_t\mathbbm{1}_{\{X^{0,x}_t \in V_1\}}\|^p_{V_1}\big]
  < \infty
$, 
\item
\label{item:time.continuity.V1}
it holds for all $l\in\{0,1\}$, $p\in[1,\infty)$, $x \in V_l$ that 
\begin{equation}
  ([0,T] \ni t \mapsto [X^{0,x}_t]_{\P,\mathcal{B}(V_l)} \in \lpnb{p}{\P}{V_l})
  \in C([0,T],\lpnb{p}{\P}{V_l})
  ,
\end{equation}
\item
\label{item:time.continuity.derivative}
it holds for all $k\in\{1,2\}$, $p,r\in(0,\infty)$, $x\in V$, $t\in[0,T]$ that 
\begin{equation}
\limsup_{[0,T] \ni s\to t}
\sup_{v_1,\ldots,v_k\in V_{r\mathbbm{1}_{\{1\}}(k)}\setminus\{0\}}
\left[
\frac{
	\|X^{k,(x,v_1,\ldots,v_k)}_s-X^{k,(x,v_1,\ldots,v_k)}_t\|_{\lpn{p}{\P}{V}}
}{
\|v_1\|_{V_{r\mathbbm{1}_{\{1\}}(k)}} \cdot\ldots\cdot \|v_k\|_{V_{r\mathbbm{1}_{\{1\}}(k)}}
}
\right]
=0
,
\end{equation}
\item
\label{item:space-time.diff}
it holds for all $x\in V_1$ that
$
  ( [0,T] \ni t \mapsto \phi(t,x) \in \mathcal{V} )
  \in C^1([0,T],\mathcal{V})
$, 
\item
\label{item:time.derivative.continuity}
it holds that 
$
  \big([0,T] \times V_1 \ni (t,x) \mapsto \big(\tfrac{\partial}{\partial t}\phi\big)(t,x) \in \mathcal{V}\big)
  \in C([0,T]\times V_1,\mathcal{V})
$, 
\item
\label{item:space.derivative.continuity}
it holds for all $k\in\{1,2\}$, $r\in(0,\infty)$ that 
\begin{multline}
\big([0,T] \times V_r \ni (t,x) \mapsto \big((V_r)^k\ni\mathbf{u}\mapsto\big(\tfrac{\partial^k}{\partial x^k}\phi\big)(t,x)\mathbf{u} \in \mathcal{V}\big) \in L^{(k)}(V_r,\mathcal{V})\big)
\\\in C([0,T]\times V_r,L^{(k)}(V_r,\mathcal{V})),
\end{multline}
\item
\label{item:space.derivative.bounded}
it holds for all $k\in\{1,2\}$ that 
$
  \sup_{t\in[0,T]}
  \sup_{x\in V}
  \big\|\big(\tfrac{\partial^k}{\partial x^k}\phi\big)(t,x)\big\|_{L^{(k)}(V,\mathcal{V})}
  < \infty
$, 
and 
\item
\label{item:kolmogorov.eq}
it holds for all $ x \in V_1 $, $ t \in (0,T] $ that 
\begin{equation}
  \big(\tfrac{\partial}{\partial t}\phi\big)(t,x)=
  \big(\tfrac{\partial}{\partial x}\phi\big)(t,x)(Ax+F(x))
  +\frac{1}{2}
  \sum_{b\in\mathbb{U}}
  \big(\tfrac{\partial^2}{\partial x^2}\phi\big)(t,x)(B(x)b,B(x)b)
  .
\end{equation}
\end{enumerate}
\end{lemma}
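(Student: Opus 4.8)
The plan is to establish items~\eqref{item:derivative.processes}--\eqref{item:kolmogorov.eq} in three blocks, building everything on the strong a~priori estimate of Section~\ref{sec:strong_a_priori} and the strong perturbation estimate of Section~\ref{sec:strong_perturbation}, together with the mild It\^o formula of~\cite{CoxJentzenKurniawanPusnik2016}. \emph{Block~1 (the SDE and its derivative processes).} First I would construct, by a Banach fixed point argument in the complete metric space of $(\mathcal{F}_t)_{t\in[0,T]}$-predictable processes $Z\colon[0,T]\times\Omega\to V$ with $\sup_{t\in[0,T]}\|Z_t\|_{\lpn{p}{\P}{V}}<\infty$, the mild solution processes $X^{0,x}$, $x\in V$, together with the processes $X^{k,\mathbf v}$, $\mathbf v\in V^{k+1}$, $k\in\{1,2,3,4\}$, solving the formally differentiated equations~\eqref{eq:derivative.processes} (and their order $3$ and $4$ analogues built from $F^{(j)},B^{(j)}$, $j\le4$); this gives item~\eqref{item:derivative.processes}. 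The $\lpnb{p}{\P}{V}$-moment bounds follow from Proposition~\ref{prop:strong_apriori_estimate} with $\vartheta=0$, and items~\eqref{item:base.Lipschitz}, \eqref{item:derivative.process.linearity}, and~\eqref{item:derivative.process.continuity} follow from Corollary~\ref{prop:general_perturb} applied to the linear equations satisfied by the differences of the relevant processes. For the $V_1$-regularity (items~\eqref{item:solution.V1}--\eqref{item:V1.moment}) I would exploit that $F$ maps $V$ into $V_1$ and $B$ maps $V$ into $\gamma(U,V_1)$, that $(e^{tA})_{t\ge0}$ restricts to a strongly continuous semigroup on the separable UMD type~$2$ space $V_1$, and that the Burkholder--Davis--Gundy constant $\BDG{p}{V_1}$ is finite; then the variation-of-constants identity for $X^{0,x}$ with $x\in V_1$ holds in $V_1$, giving finite $\lpnb{p}{\P}{V_1}$-norms and hence $\P$-a.s.\ membership in $V_1$. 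The time-continuity statements~\eqref{item:time.continuity.V1} and~\eqref{item:time.continuity.derivative} are the usual continuity-of-stochastic-convolution estimates, using strong continuity of $(e^{tA})_{t\ge0}$ on $V_l$ and the moment bounds just obtained.

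\emph{Block~2 (differentiability and regularity of $\phi$).} Next I would prove that $V\ni x\mapsto X^{0,x}_t\in\lpnb{p}{\P}{V}$ is four times Fr\'echet differentiable with $j$-th derivative $(v_1,\dots,v_j)\mapsto X^{j,(x,v_1,\dots,v_j)}_t$; this is a difference-quotient argument that differentiates the fixed point equation and is again controlled by Corollary~\ref{prop:general_perturb}. Interchanging $\ES$ with the differentiation---justified by Lebesgue's dominated convergence theorem together with the moment bounds and the Lipschitz continuity and at most linear growth of the derivatives $\varphi^{(j)}$, $j\le4$---and the Fa\`a di Bruno chain rule give items~\eqref{item:transition.function}, \eqref{item:kolmogorov.diff}, \eqref{item:lem.trans.derivative.integrability}, \eqref{item:lem.representation}, the boundedness~\eqref{item:space.derivative.bounded}, and, combined with Block~1, the continuity~\eqref{item:space.derivative.continuity}. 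The quantitative smoothing estimates~\eqref{item:kolmogorov.c.delta} and~\eqref{item:kolmogorov.lip.c.delta} I would obtain by induction on $k$: for $k=1$, $X^{1,(x,v)}$ satisfies $X^{1,(x,v)}_t=e^{tA}v+\int_0^t e^{(t-s)A}F'(X^{0,x}_s)X^{1,(x,v)}_s\,ds+\int_0^t e^{(t-s)A}B'(X^{0,x}_s)X^{1,(x,v)}_s\,dW_s$, and, using the analytic smoothing estimate $\|(\eta-A)^{-\delta_1}e^{sA}\|_{L(V)}\le C\,s^{\delta_1}$ (valid for $-\delta_1\in[0,\infty)$, cf.\ \cite[Lemma~11.36]{rr93}), the boundedness of $F',B'$ and of $e^{(s-r)A}$ from $V_1$ into $V$, and a Gronwall argument with the time-singular but integrable inhomogeneity $s\mapsto s^{2\delta_1}$ (integrability being exactly the condition $\delta_1>-\tfrac12$), one gets $\|X^{1,(x,v)}_s\|_{\lpn{p}{\P}{V}}\le C\,s^{\delta_1}\|(\eta-A)^{\delta_1}v\|_V$; for $k\ge2$ the leading term in~\eqref{eq:derivative.processes} vanishes, $X^{k,\mathbf v}$ is driven by the inhomogeneity built from the lower order derivative processes indexed by $\Pi_k$, a H\"older splitting of these products together with the induction hypothesis bounds the inhomogeneity by $C\,s^{\delta_1+\dots+\delta_k}\prod_i\|(\eta-A)^{\delta_i}v_i\|_V$, and the same Gronwall argument propagates the bound, the condition $\sum_i\delta_i>-\tfrac12$ being precisely what keeps the resulting stochastic-integral time-singularity square-integrable. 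Feeding these bounds into the representation~\eqref{item:lem.representation} (and its order $3$ and $4$ versions obtained by differentiating once or twice more) yields~\eqref{item:kolmogorov.c.delta}, and~\eqref{item:kolmogorov.lip.c.delta} is the analogous estimate for the differences $X^{k,(x,\mathbf v)}_t-X^{k,(y,\mathbf v)}_t$ using the Lipschitz continuity of $\varphi^{(4)}$ and item~\eqref{item:derivative.process.continuity}; note that item~\eqref{item:space.derivative.bounded} is the special case $\delta_1=\dots=\delta_k=0$ of~\eqref{item:kolmogorov.c.delta}.

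\emph{Block~3 (the Kolmogorov equation).} Finally, for items~\eqref{item:space-time.diff}, \eqref{item:time.derivative.continuity}, and~\eqref{item:kolmogorov.eq}, I would apply Lemma~\ref{lem:markov.lip} (with $Y=X^{0,z}$, which is admissible since $F\in\operatorname{Lip}^0(V,V)$ and $B\in\operatorname{Lip}^0(V,\gamma(U,V))$) to deduce the Markov identity $\phi(\sigma+h,x)=\ES[\phi(\sigma,X^{0,x}_h)]$ for all $x\in V$ and all $\sigma,h\in[0,T]$ with $\sigma+h\le T$. Given $x\in V_1$ and $\sigma\in[0,T]$, I would then apply the mild It\^o formula (Proposition~3.11 in~\cite{CoxJentzenKurniawanPusnik2016}) to the function $\phi(\sigma,\cdot)\in C^2_b(V,\mathcal{V})$ along $X^{0,x}$ on $[0,h]$, take expectations (the stochastic integral vanishes by the boundedness of $\partial_x\phi$ and the moment bounds, and the sum over $b\in\mathbb{U}$ is controlled by Lemma~\ref{lem:gamma.estimate}), divide by $h$, and let $h\downarrow0$; using $\tfrac1h(e^{hA}x-x)\to Ax$ in $V$ (which requires $x\in V_1$), the continuity at $r=0$ of the integrand together with items~\eqref{item:time.continuity.V1}, \eqref{item:space.derivative.continuity}, \eqref{item:space.derivative.bounded}, and the continuity of $F$ and $B$, this identifies both one-sided $t$-derivatives of $\phi(\cdot,x)$ with the right-hand side of~\eqref{item:kolmogorov.eq}, proving~\eqref{item:kolmogorov.eq}; joint continuity of that right-hand side in $(t,x)\in[0,T]\times V_1$ (using continuity of $V_1\ni x\mapsto Ax+F(x)\in V$, of $V_1\ni x\mapsto B(x)\in\gamma(U,V)$, and of $\partial_x\phi,\partial_x^2\phi$) gives item~\eqref{item:time.derivative.continuity}, and item~\eqref{item:space-time.diff} follows since a function with continuous derivative is $C^1$.

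I expect the main obstacle to be the inductive smoothing estimate of item~\eqref{item:kolmogorov.c.delta}: the delicate bookkeeping of the $\lpn{\,\cdot\,}{\P}{\,\cdot\,}$-exponents when H\"older-splitting products of derivative processes, and the verification that every convolution against $e^{(t-s)A}$ combined with the factors $(t-s)^{\delta_i}$ stays in the range where the (generalized) Gronwall lemma of~\cite[Chapter~7]{h81} closes the estimate, is the technical heart on which items~\eqref{item:kolmogorov.c.delta}--\eqref{item:kolmogorov.eq} rest; a close second is the rigorous justification that $x\mapsto X^{0,x}_t$ is genuinely Fr\'echet (not merely G\^ateaux) differentiable with the claimed derivative processes.
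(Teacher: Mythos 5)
Your proposal is correct and follows essentially the same three-block strategy as the paper's proof: construct the derivative processes via a fixed point in $\lpnb{p}{\P}{V}$-spaces and control them by the perturbation estimate of Corollary~\ref{prop:general_perturb}; differentiate under the expectation and exploit Fa\`a di Bruno plus Gronwall-type smoothing estimates for items~\eqref{item:transition.function}--\eqref{item:kolmogorov.lip.c.delta}; and derive the Kolmogorov equation from the Markov property of Lemma~\ref{lem:markov.lip} plus an It\^o formula. The one substantive technical variation is in Block~3: the paper applies the \emph{standard} It\^o formula of Brze\'zniak et al.\ (Theorem~2.4 of~\cite{bvvw08}) to $\varphi$ and to $\psi(t+h,\cdot)$ along $X^{0,x}$, which is legitimate precisely because items~\eqref{item:solution.V1}--\eqref{item:time.continuity.V1} establish that $X^{0,x}$ lives in $V_1$ so that $A X^{0,x}_s$ makes pathwise sense; you instead invoke the \emph{mild} It\^o formula (Proposition~3.11 in~\cite{CoxJentzenKurniawanPusnik2016}), which operates directly on mild solutions and only needs $x\in V_1$ to pass to the limit $\tfrac1h(e^{hA}x-x)\to Ax$ in the leading term. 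Both routes reach the same Kolmogorov equation; your choice decouples the derivation of item~\eqref{item:kolmogorov.eq} from item~\eqref{item:solution.V1} (which you still prove independently, as it is part of the statement). Beyond that, the paper outsources items~\eqref{item:derivative.processes}--\eqref{item:derivative.process.continuity} to Andersson et al.\ (Theorem~3.3 of~\cite{AnderssonHefterJentzenKurniawan2016}, Corollary~2.10 of~\cite{AnderssonJentzenKurniawan2015arXiv}, Theorem~2.1 of~\cite{AnderssonJentzenKurniawan2016a}) rather than spelling out the construction, whereas you sketch the fixed-point and Gronwall machinery explicitly -- an expository rather than substantive difference. Your reading of items~\eqref{item:kolmogorov.c.delta}--\eqref{item:kolmogorov.lip.c.delta} correctly isolates the constraint $\sum_i \delta_i > -\tfrac12$ as the condition keeping the stochastic-convolution singularity square-integrable, which is indeed the technical heart of those estimates; one detail you compress is the Vitali/uniform-integrability argument (cf.\ the paper's use of Proposition~4.5 in~\cite{HutzenthalerJentzenSalimova2016arXiv}) needed to justify the $L^2$-convergence of $\partial_x^k\psi(t+h,X^{0,x}_s)$ in the $h\downarrow0$ limit, but that is a reasonable level of omission for a proof sketch.
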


\begin{proof}
	Throughout this proof 
	let $\SGchi{}\in\R$ be the real number given by
	$
	\chi=
	\sup_{t\in[0,T]}
	\|e^{tA}\|_{L(V)}
	$. 
	The proof of items~\eqref{item:derivative.processes}--\eqref{item:kolmogorov.lip.c.delta} is entirely analogous to the proof of items~(i)--(v), (vii), \& (x) of Theorem~3.3 in Andersson et al.~\cite{AnderssonHefterJentzenKurniawan2016}. 
	Item~\eqref{item:transition.function} ensures that there exists a unique function
	$\psi\colon[0,T]\times V \to \mathcal{V}$ 
	which satisfies for all 
	$x\in V$, 
	$t\in[0,T]$ 
	that
	\begin{equation}
	\psi(t,x)
	=
	\phi(T-t,x)
	=
	\ES[\varphi(X^{0,x}_{T-t})]
	.
	\end{equation}
	The proof of item~\eqref{item:base.Lipschitz} is entirely analogous to the proof of item~(iii) of Corollary~2.10 in Andersson et al.~\cite{AnderssonJentzenKurniawan2015arXiv}.
	The proof of items~\eqref{item:derivative.process.linearity}--\eqref{item:derivative.process.continuity} is entirely analogous to the proof of items~(ii) \& (iv) of Theorem~2.1 in Andersson et al.~\cite{AnderssonJentzenKurniawan2016a}.
	The fact that 
	$F\in\operatorname{Lip}^0(V,V_1)$, 
	the fact that
	$B\in\operatorname{Lip}^0(V,\gamma(U,V_1))$, 
	and the fact that
	$
	\forall \, x \in V, \, p \in (0,\infty)
	\colon
	\sup_{t\in[0,T]}
	\ES\big[\|X^{0,x}_t\|^p_V\big]
	< \infty
	$ 
	show that for all $x\in V$, $t\in[0,T]$ it holds that 
	\begin{equation}
	\label{eq:integral.V1}
	  \int^t_0
	  \|e^{(t-s)A}F(X^{0,x}_s)\|_{\lpn{p}{\P}{V_1}}
	  +
	  \|e^{(t-s)A}B(X^{0,x}_s)\|^2_{\lpn{p}{\P}{\gamma(U,V_1)}}
	  \, ds
	  < \infty.
	\end{equation}
	This, \eqref{eq:derivative.processes}, Jensen's inequality, and the Burkholder-Davis-Gundy type inequality in, e.g., \cite[Corollary~3.10]{vvw07} prove items~\eqref{item:solution.V1}--\eqref{item:V1.moment}.
	Next note that~\eqref{eq:derivative.processes} implies that for all 
	$x\in V$, 
	$s,t\in[0,T]$ with $s\leq t$ 
	it holds $\P$-a.s.\ that
	\begin{equation}
	\begin{split}
	&
	X^{0,x}_t-X^{0,x}_s
	\\&=
	e^{sA}(e^{(t-s)A}-\operatorname{Id}_V) x
	+
	\int^t_s
	e^{(t-u)A} F(X^{0,x}_u) \,du
	+
	\int^t_s
	e^{(t-u)A} B(X^{0,x}_u) \,dW_u	
	\\&+
	\int^s_0
	e^{(s-u)A}(e^{(t-s)A}-\operatorname{Id}_V) F(X^{0,x}_u) \,du
	+
	\int^s_0
	e^{(s-u)A}(e^{(t-s)A}-\operatorname{Id}_V) B(X^{0,x}_u) \,dW_u	
	.	 
	\end{split}
	\end{equation}
	Combining the Burkholder-Davis-Gundy type inequality in, e.g., \cite[Corollary~3.10]{vvw07} with
	the fact that 
	$F\in\operatorname{Lip}^0(V,V_1)$, 
	the fact that
	$B\in\operatorname{Lip}^0(V,\gamma(U,V_1))$,
	the fact that
	$
	  \forall \, l \in \{0,1\}, \, x \in V_l
	  \colon
	  \limsup_{t\searrow 0}
	  \|(e^{tA}-\operatorname{Id}_V) (\eta-A)^lx\|_V
	  =0
	$,
	and the fact that
	$
	  \forall \, x \in V, \, p \in (0,\infty)
	  \colon
	  \sup_{t\in[0,T]}
	  $
	  $
	  \ES\big[\|X^{0,x}_t\|^p_V\big]
	  < \infty
	$ 
	hence proves item~\eqref{item:time.continuity.V1}.
	Next note that~\eqref{eq:derivative.processes}, the Burkholder-Davis-Gundy type inequality in, e.g., \cite[Corollary~3.10]{vvw07}, the fact that 
	$F\in\operatorname{Lip}^0(V,V_1)$, and the fact that
	$B\in\operatorname{Lip}^0(V,\gamma(U,V_1))$
	show that for all 
	$p\in[2,\infty)$, 
	$x,y\in V_1$, 
	$t\in[0,T]$ 
	it holds that 
	\begin{equation}
	\label{eq:bootstrapping.V1}
	\begin{split}
	\|X^{0,x}_t-X^{0,y}_t\|_{\lpn{p}{\P}{V_1}}
	&\leq
	\|e^{tA}(x-y)\|_{V_1}
	+
	\bigg\|
	\int^t_0
	e^{(t-s)A} (F(X^{0,x}_s)-F(X^{0,y}_s))
	\, ds
	\bigg\|_{\lpn{p}{\P}{V_1}}
	\\&\quad+
	\bigg\|
	\int^t_0
	e^{(t-s)A} (B(X^{0,x}_s)-B(X^{0,y}_s))
	\, dW_s
	\bigg\|_{\lpnb{p}{\P}{V_1}}	
	\\&\leq
	\SGchi{} \, \|x-y\|_{V_1}	
	+
	\int^t_0
	\|e^{(t-s)A} (F(X^{0,x}_s)-F(X^{0,y}_s))\|_{\lpn{p}{\P}{V_1}}
	\, ds
	\\&\quad+
	\BDG{p}{}
	\bigg[
	\int^t_0
	\|e^{(t-s)A} (B(X^{0,x}_s)-B(X^{0,y}_s))\|^2_{\lpn{p}{\P}{\gamma(U,V_1)}}
	\, ds
	\bigg]^{\nicefrac{1}{2}}
	\\&\leq
	\SGchi{} \, \|x-y\|_{V_1}
	+
	T \, \SGchi{} \, |F|_{\operatorname{Lip}^0(V,V_1)} \,
	\bigg[
	\sup_{s\in[0,T]}
	\|X^{0,x}_s-X^{0,y}_s\|_{\lpn{p}{\P}{V}}
	\bigg]
	\\&\quad+
	\BDG{p}{} \, \sqrt{T} \, \SGchi{} \, |B|_{\operatorname{Lip}^0(V,\gamma(U,V_1))} \,
	\bigg[
	\sup_{s\in[0,T]}
	\|X^{0,x}_s-X^{0,y}_s\|_{\lpn{p}{\P}{V}}
	\bigg].	
	\end{split}
	\end{equation}
	Moreover, item~\eqref{item:base.Lipschitz} and the fact that $V_1\subseteq V$ continuously imply that for all 
	$p\in[2,\infty)$ 
	it holds that 
	\begin{equation}
	\sup_{\substack{x,y\in V_1, \\ x\neq y}}
	\sup_{t\in[0,T]}
	\left[
	\frac{
		\|X^{0,x}_t-X^{0,y}_t\|_{\lpn{p}{\P}{V}}
	}{
	\|x-y\|_{V_1}
	}
	\right]
	< \infty.
	\end{equation}
	This, \eqref{eq:bootstrapping.V1}, and Jensen's inequality prove that for all $p\in(0,\infty)$ it holds that 
	\begin{equation}
	\label{eq:base.Lipschitz.V1}
	\sup_{\substack{x,y\in V_1, \\ x\neq y}}
	\sup_{t\in[0,T]}
	\left[
	\frac{
		\|X^{0,x}_t-X^{0,y}_t\|_{\lpn{p}{\P}{V_1}}
	}{
	\|x-y\|_{V_1}
	}
	\right]
	< \infty.
	\end{equation}
	In the next step observe that~\eqref{eq:derivative.processes} implies that for all 
	$k\in\{1,2\}$, 
	$x\in V$, 
	$\mathbf{v}=(v_1,\ldots,v_k)\in V^k$, 
	$s,t\in[0,T]$ with $s\leq t$ 
	it holds $\P$-a.s.\ that
	\begin{equation}
	\begin{split}
	&
	X^{k,(x,\mathbf{v})}_t-X^{k,(x,\mathbf{v})}_s
	=
	\mathbbm{1}_{\{1\}}(k) \,
	e^{sA}(e^{(t-s)A}-\operatorname{Id}_V) v_k
	\\&+
	\int^t_s
	e^{(t-u)A} 
		{\smallsum\limits_{ \varpi\in \Pi_k }}
		F^{ ( \#_\varpi ) }( X_u^{ 0,x } )
		\big(
		X_u^{ \#_{I^\varpi_1}, [ (x,\mathbf{v}) ]_1^{ \varpi } }
		,
		\dots
		,
		X_u^{ \#_{I^\varpi_{\#_\varpi}}, [(x,\mathbf{v}) ]_{ \#_\varpi }^{ \varpi } }
		\big)
	\,du
	\\&+
	\int^t_s
	e^{(t-u)A}
		{\smallsum\limits_{ \varpi\in \Pi_k }}
		B^{ ( \#_\varpi ) }( X_u^{ 0,x } )
		\big(
		X_u^{ \#_{I^\varpi_1}, [ (x,\mathbf{v}) ]_1^{ \varpi } }
		,
		\dots
		,
		X_u^{ \#_{I^\varpi_{\#_\varpi}}, [(x,\mathbf{v}) ]_{ \#_\varpi }^{ \varpi } }
		\big)	
	\,dW_u	
	\\&+
	\int^s_0
	e^{(s-u)A}(e^{(t-s)A}-\operatorname{Id}_V) 
		{\smallsum\limits_{ \varpi\in \Pi_k }}
		F^{ ( \#_\varpi ) }( X_u^{ 0,x } )
		\big(
		X_u^{ \#_{I^\varpi_1}, [ (x,\mathbf{v}) ]_1^{ \varpi } }
		,
		\dots
		,
		X_u^{ \#_{I^\varpi_{\#_\varpi}}, [(x,\mathbf{v}) ]_{ \#_\varpi }^{ \varpi } }
		\big)	
	\,du
	\\&+
	\int^s_0
	e^{(s-u)A}(e^{(t-s)A}-\operatorname{Id}_V)
		{\smallsum\limits_{ \varpi\in \Pi_k }}
		B^{ ( \#_\varpi ) }( X_u^{ 0,x } )
		\big(
		X_u^{ \#_{I^\varpi_1}, [ (x,\mathbf{v}) ]_1^{ \varpi } }
		,
		\dots
		,
		X_u^{ \#_{I^\varpi_{\#_\varpi}}, [(x,\mathbf{v}) ]_{ \#_\varpi }^{ \varpi } }
		\big)	
	\,dW_u	
	.	 
	\end{split}
	\end{equation}
	Combining Jensen's inequality and the Burkholder-Davis-Gundy type inequality in, e.g., \cite[Corollary~3.10]{vvw07} with item~\eqref{item:derivative.process.linearity}, H\"{o}lder's inequality, 
	the fact that 
	$F\in C^2_b(V,V)$, and the fact that
	$B\in C^2_b(V,\gamma(U,V))$
	therefore establish item~\eqref{item:time.continuity.derivative}
	and prove that for all $k\in\{1,2\}$, $p\in(0,\infty)$, $x\in V$, $t\in(0,T]$ it holds that 
	\begin{equation}
	\label{eq:derivative.time.continuous.exclude.0}
	\limsup_{[0,T] \ni s\to t}
	\sup_{v_1,\ldots,v_k\in V\setminus\{0\}}
	\left[
	\frac{
		\|X^{k,(x,v_1,\ldots,v_k)}_s-X^{k,(x,v_1,\ldots,v_k)}_t\|_{\lpn{p}{\P}{V}}
	}{
	\|v_1\|_V \cdot\ldots\cdot \|v_k\|_V
	}
	\right]
	=0
	.
	\end{equation}
	In the next step we combine items~\eqref{item:derivative.processes} \& \eqref{item:solution.V1}--\eqref{item:time.continuity.V1}, 
	the fact that 
	$\varphi\in C^2_b(V,\mathcal{V})$, 
	the fact that
	$F\in\operatorname{Lip}^0(V,V)$, 
	and the fact that
	$B\in\operatorname{Lip}^0(V,\gamma(U,V))$ 	
	with the standard It{\^o} formula in Theorem~2.4 in
	Brze\'{z}niak et al.~\cite{bvvw08} to obtain that for all 
	$ x \in V_1 $, 
	$
	  t\in[0,T]
	$
	it holds $\P$-a.s.\ that 
	\begin{equation}
	\label{eq:kolmogorov.ito}
\begin{split}
	\varphi(X^{0,x}_t)
	&=
	\varphi(x)
	+
	\int^t_0
	\varphi'(X^{0,x}_s)(AX^{0,x}_s+F(X^{0,x}_s))
	\, ds
	+
	\int^t_0
	\varphi'(X^{0,x}_s) B(X^{0,x}_s)
	\, dW_s
	\\&\quad+
	\int^t_0
	\frac{1}{2} \,
	{\smallsum\limits_{b\in\mathbb{U}}}
	\varphi''(X^{0,x}_s)(B(X^{0,x}_s)b,B(X^{0,x}_s)b)
	\, ds
	.
\end{split}
	\end{equation}
	Lemma~\ref{lem:gamma.estimate}, items~\eqref{item:solution.V1}--\eqref{item:time.continuity.V1}, the fact that 
	$\varphi\in C^2_b(V,\mathcal{V})$, 
	the fact that
	$F\in\operatorname{Lip}^0(V,V)$, and the fact that
	$B\in\operatorname{Lip}^0(V,\gamma(U,V))$ 
	show that for all 
	$x\in V_1$, $t\in[0,T]$ it holds that 
	\begin{equation}
	\begin{split}
	&
	\int^t_0
	\ES\bigg[
	\|\varphi'(X^{0,x}_s)(AX^{0,x}_s+F(X^{0,x}_s))\|_{\mathcal{V}}
	+
	\bigg\|
	{\smallsum\limits_{b\in\mathbb{U}}}
	\varphi''(X^{0,x}_s)(B(X^{0,x}_s)b,B(X^{0,x}_s)b)	
	\bigg\|_{\mathcal{V}}
	\\&+
	\|\varphi'(X^{0,x}_s) B(X^{0,x}_s)\|^2_{\gamma(U,\mathcal{V})}
	\bigg]
	\, ds
	< \infty.
	\end{split}
	\end{equation}
	This and~\eqref{eq:kolmogorov.ito} imply that for all 
	$ x \in V_1 $, 
	$
	t\in[0,T]
	$
	it holds that 
	\begin{equation}
	\label{eq:time.ito}
\begin{split}
	\phi(t,x)
	&=
	\ES[\varphi(X^{0,x}_t)]
	\\&=\varphi(x)
	+
	\ES\bigg[
	\int^t_0
	\varphi'(X^{0,x}_s)(AX^{0,x}_s+F(X^{0,x}_s))
	\, ds
	\bigg]
	+
	\ES\bigg[
	\int^t_0
	\varphi'(X^{0,x}_s)B(X^{0,x}_s)
	\, dW_s
	\bigg]	
	\\&\quad+
	\ES\bigg[
	\int^t_0
	\frac{1}{2} \,
	{\smallsum\limits_{b\in\mathbb{U}}}
	\varphi''(X^{0,x}_s)(B(X^{0,x}_s)b,B(X^{0,x}_s)b)
	\, ds
	\bigg]
	\\&=\phi(0,x)
	+
	\int^t_0
	\ES\big[\varphi'(X^{0,x}_s)(AX^{0,x}_s+F(X^{0,x}_s))\big]
	\, ds
	\\&\quad+
	\int^t_0
	\frac{1}{2} \,
	\ES\bigg[
	{\smallsum\limits_{b\in\mathbb{U}}}
	\varphi''(X^{0,x}_s)(B(X^{0,x}_s)b,B(X^{0,x}_s)b)
	\bigg]
	\, ds
	.
\end{split}
	\end{equation}	
	Furthermore, note that H\"{o}lder's inequality, 
	Lemma~\ref{lem:gamma.estimate}, 
	items~\eqref{item:base.Lipschitz} \& \eqref{item:solution.V1}--\eqref{item:time.continuity.V1},
	\eqref{eq:base.Lipschitz.V1},
	the fact that 
	$\varphi\in\operatorname{Lip}^2(V,\mathcal{V})$, 
	the fact that
	$F\in\operatorname{Lip}^0(V,V)$, 
	and the fact that
	$B\in\operatorname{Lip}^0(V,\gamma(U,V))$
	show that 
	\begin{multline}
	  \bigg(
	  [0,T]\times V_1 \ni (t,x) \mapsto \ES\big[\varphi'(X^{0,x}_t)(AX^{0,x}_t+F(X^{0,x}_t))\big]
	  \\+
	\frac{1}{2} \,
	\ES\bigg[
	{\smallsum\limits_{b\in\mathbb{U}}}
	\varphi''(X^{0,x}_t)(B(X^{0,x}_t)b,B(X^{0,x}_t)b)
	\bigg] \in \mathcal{V}	 \
	\bigg) \in C([0,T]\times V_1,\mathcal{V})
	. 
	\end{multline}
	This, \eqref{eq:time.ito}, and the fact that 
	\begin{equation}
	\label{eq:FTC}
	  \forall \, g \in C([0,T],\mathcal{V}), \, t \in [0,T]
	  \colon
	  \limsup_{[-t,T-t]\setminus\{0\}\ni h \to 0}
	  \bigg\|\frac{1}{h}\int^{t+h}_t g(s) \, ds - g(t)\bigg\|_{\mathcal{V}}
	  =0
	\end{equation}
	prove items~\eqref{item:space-time.diff}--\eqref{item:time.derivative.continuity}.	
	Moreover, H\"{o}lder's inequality, \eqref{eq:derivative.time.continuous.exclude.0},  items~\eqref{item:lem.trans.derivative.integrability}--\eqref{item:lem.representation} \& \eqref{item:base.Lipschitz}--\eqref{item:time.continuity.derivative}, the fact that 
	$\varphi\in\operatorname{Lip}^2(V,\mathcal{V})$,
	and the fact that
	$
	  \forall \, r \in (0,\infty) \colon
	  V_r \subseteq V 
	$
	continuously establish items~\eqref{item:space.derivative.continuity}--\eqref{item:space.derivative.bounded}
	and prove that for all $k\in\{1,2\}$ it holds that 
	\begin{equation}
	\label{eq:derivative.time.continuity.exclude.0}
	\big((0,T] \times V \ni (t,x) \mapsto \big(\tfrac{\partial^k}{\partial x^k}\phi\big)(t,x) \in L^{(k)}(V,\mathcal{V})\big)
	\in C((0,T]\times V,L^{(k)}(V,\mathcal{V}))
	.
	\end{equation}
	It thus remains to prove item~\eqref{item:kolmogorov.eq}.
	For this observe that Lemma~\ref{lem:markov.lip} and the fact that for every $x\in V$ it holds that $X^{0,x}$ has a continuous modification imply that for all 
	$x\in V$, 
	$t,h\in[0,T]$ 
	with 
	$t+h\leq T$
	it holds that 
	\begin{equation}
	\label{eq:markovs}
	\begin{split}
	  \psi(t,x)
	  &=
	  \ES[\varphi(X^{0,x}_{T-t})]
	  =
	  \ES[\psi(t+h,X^{0,x}_h)]
	  .
	\end{split}
	\end{equation}
	Moreover, Lemma~\ref{lem:gamma.estimate}, items~\eqref{item:kolmogorov.diff} \& \eqref{item:solution.V1}--\eqref{item:time.continuity.V1}, the fact that 
	$F\in\operatorname{Lip}^0(V,V)$,
	and the fact that
	$B\in\operatorname{Lip}^0(V,\gamma(U,V))$
	ensure that for all 
	$x\in V_1$, 
	$t\in[0,T]$, 
	$h\in[0,T-t]$
	it holds that 
	\begin{equation}
	\begin{split}
	&
	\int^h_0
	\ES\bigg[
	\Big\|\big(\tfrac{\partial}{\partial x}\psi\big)(t+h,X^{0,x}_s)(AX^{0,x}_s+F(X^{0,x}_s))\Big\|_{\mathcal{V}}
	+
	\bigg\|
	{\smallsum\limits_{b\in\mathbb{U}}}
	\big(\tfrac{\partial^2}{\partial x^2}\psi\big)(t+h,X^{0,x}_s)(B(X^{0,x}_s)b,B(X^{0,x}_s)b)	
	\bigg\|_{\mathcal{V}}
	\\&+
	\Big\|\big(\tfrac{\partial}{\partial x}\psi\big)(t+h,X^{0,x}_s)B(X^{0,x}_s)\Big\|^2_{\gamma(U,\mathcal{V})}
	\bigg]	
	\, ds
	< \infty. 
	\end{split}
	\end{equation}
	This, \eqref{eq:markovs}, 
	item~\eqref{item:kolmogorov.diff}, 
	and the standard It{\^o} formula in Theorem~2.4 in
	Brze\'{z}niak et al.~\cite{bvvw08} yield that for all 
	$x\in V_1$, 
	$t\in[0,T)$, 
	$h\in[0,T-t]$
	it holds that 
	\begin{equation}
	\label{eq:kolmogorov.ito.2}
	\begin{split}
	&
	\psi(t+h,x)-\psi(t,x)
	\\&=
	\psi(t+h,x)-\ES[\psi(t+h,X^{0,x}_h)]
	\\&=
	-\ES\bigg[\int^h_0	
	\big(\tfrac{\partial}{\partial x}\psi\big)(t+h,X^{0,x}_s)(AX^{0,x}_s+F(X^{0,x}_s))
	\, ds
	\bigg]
	\\&\quad-\ES\bigg[\int^h_0	
	\big(\tfrac{\partial}{\partial x}\psi\big)(t+h,X^{0,x}_s)B(X^{0,x}_s)
	\, dW_s
	\bigg]		
	\\&\quad-
	\ES\bigg[\int^h_0	
	\frac{1}{2} \,
	{\smallsum\limits_{b\in\mathbb{U}}}
	\big(\tfrac{\partial^2}{\partial x^2}\psi\big)(t+h,X^{0,x}_s)(B(X^{0,x}_s)b,B(X^{0,x}_s)b)
	\, ds
	\bigg]
	\\&=
	-\int^h_0	
	\ES\big[
	\big(\tfrac{\partial}{\partial x}\psi\big)(t+h,X^{0,x}_s)(AX^{0,x}_s+F(X^{0,x}_s))
	\big]
	\, ds
	\\&\quad-
	\int^h_0	
	\frac{1}{2} \,
	\ES\bigg[
	{\smallsum\limits_{b\in\mathbb{U}}}
	\big(\tfrac{\partial^2}{\partial x^2}\psi\big)(t+h,X^{0,x}_s)(B(X^{0,x}_s)b,B(X^{0,x}_s)b)
	\bigg]
	\, ds
	.	
	\end{split}
	\end{equation}
	Next observe that H\"{o}lder's inequality and Lemma~\ref{lem:gamma.estimate} show that for all 
	$x\in V_1$, 
	$t\in[0,T)$
	it holds that 
	\begin{equation}
	\label{eq:intermediate.kolmogorov}
	\begin{split}
	&
	\limsup_{(0,T-t]\ni h \to 0}
	\frac{1}{h}
	\bigg\|
	\int^h_0	
	\ES\Big[
	\big[\big(\tfrac{\partial}{\partial x}\psi\big)(t+h,X^{0,x}_s)-\big(\tfrac{\partial}{\partial x}\psi\big)(t,X^{0,x}_s)\big](AX^{0,x}_s+F(X^{0,x}_s))
	\Big]
	\, ds
	\\&\quad+
	\int^h_0	
	\frac{1}{2} \,
	\ES\bigg[
	{\smallsum\limits_{b\in\mathbb{U}}}
	\big[\big(\tfrac{\partial^2}{\partial x^2}\psi\big)(t+h,X^{0,x}_s)-\big(\tfrac{\partial^2}{\partial x^2}\psi\big)(t,X^{0,x}_s)\big](B(X^{0,x}_s)b,B(X^{0,x}_s)b)
	\bigg]
	\, ds	
	\bigg\|_{\mathcal{V}}
	\\&\leq
	\limsup_{(0,T-t]\ni h \to 0}
	\frac{1}{h}
	\bigg[
	\int^h_0	
	\Big(\ES\Big[\big\|\big(\tfrac{\partial}{\partial x}\psi\big)(t+h,X^{0,x}_s)-\big(\tfrac{\partial}{\partial x}\psi\big)(t,X^{0,x}_s)\big\|^2_{L(V,\mathcal{V})}\Big]\Big)^{\nicefrac{1}{2}}
	\\&\quad\cdot
	\|AX^{0,x}_s+F(X^{0,x}_s)\|_{\lpn{2}{\P}{V}}
	\, ds
	\\&\quad+
	\int^h_0	
	\frac{1}{2} \,
	\Big(\ES\Big[\big\|\big(\tfrac{\partial^2}{\partial x^2}\psi\big)(t+h,X^{0,x}_s)-\big(\tfrac{\partial^2}{\partial x^2}\psi\big)(t,X^{0,x}_s)\big\|^2_{L^{(2)}(V,\mathcal{V})}\Big]\Big)^{\nicefrac{1}{2}}
	\\&\quad\cdot
	\|B(X^{0,x}_s)\|^2_{\lpn{4}{\P}{\gamma(U,V)}}
	\, ds
	\bigg]
	\\&\leq
	\bigg[
	\limsup_{[0,T-t]\ni h \to 0}
	\sup_{s\in[0,h]}
	\Big(\ES\Big[\big\|\big(\tfrac{\partial}{\partial x}\psi\big)(t+h,X^{0,x}_s)-\big(\tfrac{\partial}{\partial x}\psi\big)(t,X^{0,x}_s)\big\|^2_{L(V,\mathcal{V})}\Big]\Big)^{\nicefrac{1}{2}}
	\bigg]
	\\&\quad\cdot
	\bigg[
	\sup_{s\in[0,T]}
	\|AX^{0,x}_s+F(X^{0,x}_s)\|_{\lpn{2}{\P}{V}}
	\bigg]
	\\&\quad+
	\bigg[
	\limsup_{[0,T-t]\ni h \to 0}
	\sup_{s\in[0,h]}
	\Big(\ES\Big[\big\|\big(\tfrac{\partial^2}{\partial x^2}\psi\big)(t+h,X^{0,x}_s)-\big(\tfrac{\partial^2}{\partial x^2}\psi\big)(t,X^{0,x}_s)\big\|^2_{L^{(2)}(V,\mathcal{V})}\Big]\Big)^{\nicefrac{1}{2}}
	\bigg]
	\\&\quad\cdot
	\bigg[
	\sup_{s\in[0,T]}
	\|B(X^{0,x}_s)\|^2_{\lpn{4}{\P}{\gamma(U,V)}}
	\bigg]
	.
	\end{split}
	\end{equation}
	Note that~\eqref{eq:derivative.time.continuity.exclude.0} and item~\eqref{item:time.continuity.V1}
	imply that for all 
	$k\in\{1,2\}$, 
	$x\in V$, 
	$\varepsilon\in(0,\infty)$,
	$t_0\in[0,T-t)$,
	$s_0\in[0,T]$
	it holds that 
	\begin{equation}	
	\limsup_{[0,T-t)\times[0,T]\ni(t,s)\to(t_0,s_0)}
	\P\Big(\big\|(\tfrac{\partial^k}{\partial x^k}\psi)(t,X^{0,x}_s)-(\tfrac{\partial^k}{\partial x^k}\psi)(t_0,X^{0,x}_{s_0})\big\|_{L^{(k)}(V,\mathcal{V})}\geq\varepsilon\Big)
	=0
	.
	\end{equation}
	Item~\eqref{item:space.derivative.bounded} and the Vitali convergence theorem in, e.g., Proposition~4.5 in 
	Hutzenthaler et al.~\cite{HutzenthalerJentzenSalimova2016arXiv} therefore imply that for all 
	$k\in\{1,2\}$, 
	$x\in V$, 
	$t_0\in[0,T-t)$,
	$s_0\in[0,T]$
	it holds that 
	\begin{equation}
	\limsup_{[0,T-t)\times[0,T]\ni(t,s)\to(t_0,s_0)}
	\ES\Big[\big\|\big(\tfrac{\partial^k}{\partial x^k}\psi\big)(t,X^{0,x}_s)-\big(\tfrac{\partial^k}{\partial x^k}\psi\big)(t_0,X^{0,x}_{s_0})\big\|^2_{L^{(k)}(V,\mathcal{V})}\Big]
	=0
	.	
	\end{equation}
	This shows that for all 
	$k\in\{1,2\}$, 
	$x\in V$, 
	$t\in[0,T)$ 
	it holds that 
	\begin{equation}
	\begin{split}
	&
	\limsup_{[0,T-t]\ni h \to 0}
	\sup_{s\in[0,h]}
	\ES\Big[\big\|\big(\tfrac{\partial^k}{\partial x^k}\psi\big)(t+h,X^{0,x}_s)-\big(\tfrac{\partial^k}{\partial x^k}\psi\big)(t,X^{0,x}_s)\big\|^2_{L^{(k)}(V,\mathcal{V})}\Big]
	\\&\leq
	\limsup_{[0,T-t]\ni h \to 0}
	\sup_{s\in[0,h]}
	\ES\Big[\big\|\big(\tfrac{\partial^k}{\partial x^k}\psi\big)(t+h,X^{0,x}_s)-\big(\tfrac{\partial^k}{\partial x^k}\psi\big)(t,X^{0,x}_0)\big\|^2_{L^{(k)}(V,\mathcal{V})}\Big]
	\\&+
	\limsup_{[0,T-t]\ni h \to 0}
	\sup_{s\in[0,h]}
	\ES\Big[\big\|\big(\tfrac{\partial^k}{\partial x^k}\psi\big)(t,X^{0,x}_0)-\big(\tfrac{\partial^k}{\partial x^k}\psi\big)(t,X^{0,x}_s)\big\|^2_{L^{(k)}(V,\mathcal{V})}\Big]
	=0.
	\end{split}
	\end{equation}
	Combining this with~\eqref{eq:intermediate.kolmogorov}, item~\eqref{item:time.continuity.V1}, 
	the fact that 
	$F\in\operatorname{Lip}^0(V,V)$, and the fact that
	$B\in\operatorname{Lip}^0(V,\gamma(U,V))$ 
	yields that for all 
	$x\in V_1$, 
	$t\in[0,T)$
	it holds that 
	\begin{equation}
	\label{eq:intermediate.kolmogorov.limit}
	\begin{split}
	&
	\limsup_{(0,T-t]\ni h \to 0}
	\frac{1}{h}
	\bigg\|
	\int^h_0	
	\ES\Big[
	\big[\big(\tfrac{\partial}{\partial x}\psi\big)(t+h,X^{0,x}_s)-\big(\tfrac{\partial}{\partial x}\psi\big)(t,X^{0,x}_s)\big](AX^{0,x}_s+F(X^{0,x}_s))
	\Big]
	\, ds
	\\&\quad+
	\int^h_0	
	\frac{1}{2} \,
	\ES\bigg[
	{\smallsum\limits_{b\in\mathbb{U}}}
	\big[\big(\tfrac{\partial^2}{\partial x^2}\psi\big)(t+h,X^{0,x}_s)-\big(\tfrac{\partial^2}{\partial x^2}\psi\big)(t,X^{0,x}_s)\big](B(X^{0,x}_s)b,B(X^{0,x}_s)b)
	\bigg]
	\, ds	
	\bigg\|_{\mathcal{V}}
	=
	0.
	\end{split}
	\end{equation}
	In the next step note that
	H\"{o}lder's inequality, 
	Lemma~\ref{lem:gamma.estimate}, 
	items~\eqref{item:kolmogorov.diff}, \eqref{item:kolmogorov.lip.c.delta}, 
	\& \eqref{item:time.continuity.V1},
	the fact that 
	$F\in\operatorname{Lip}^0(V,V)$, and the fact that
	$B\in\operatorname{Lip}^0(V,\gamma(U,V))$ 
	show that for all 
	$x\in V_1$, 
	$t\in[0,T]$
	it holds that 
	\begin{multline}
	\!\!
	[0,T] \ni s \mapsto 	
	\ES\big[
	\big(\tfrac{\partial}{\partial x}\psi\big)(t,X^{0,x}_s)(AX^{0,x}_s+F(X^{0,x}_s))
	\big]
	\\+
	\frac{1}{2} \,
	\ES\bigg[
	{\smallsum\limits_{b\in\mathbb{U}}}
	\big(\tfrac{\partial^2}{\partial x^2}\psi\big)(t,X^{0,x}_s)(B(X^{0,x}_s)b,B(X^{0,x}_s)b)
	\bigg] 
	\in \mathcal{V}	  
	\end{multline}
	is continuous.	
	This, \eqref{eq:FTC}, and the fact that 
	$
	  \forall \, x \in V
	  \colon
	  \P(X^{0,x}_0=x)=1
	$
	ensure that for all 
	$x\in V_1$, $t\in[0,T)$ it holds that 
	\begin{equation}
	\begin{split}
	&
	\limsup_{(0,T-t]\ni h \to 0}
	\bigg\|
	\frac{1}{h}
	\int^h_0	
	\ES\big[
	\big(\tfrac{\partial}{\partial x}\psi\big)(t,X^{0,x}_s)(AX^{0,x}_s+F(X^{0,x}_s))
	\big]
	\, ds
	\\&\quad+
	\frac{1}{h}
	\int^h_0	
	\frac{1}{2} \,
	\ES\bigg[
	{\smallsum\limits_{b\in\mathbb{U}}}
	\big(\tfrac{\partial^2}{\partial x^2}\psi\big)(t,X^{0,x}_s)(B(X^{0,x}_s)b,B(X^{0,x}_s)b)
	\bigg]
	\, ds	
	\\&-
	\big(\tfrac{\partial}{\partial x}\psi\big)(t,x)(Ax+F(x))
	-
	\frac{1}{2} \,
	{\smallsum\limits_{b\in\mathbb{U}}}
	\big(\tfrac{\partial^2}{\partial x^2}\psi\big)(t,x)(B(x)b,B(x)b)
	\bigg\|_{\mathcal{V}}
	=0.
	\end{split}
	\end{equation}
	Combining this with~\eqref{eq:kolmogorov.ito.2}, \eqref{eq:intermediate.kolmogorov.limit}, and the triangle inequality assures that for all 
	$x\in V_1$, $t\in[0,T)$
	it holds that 
	\begin{multline}
	\limsup_{(0,T-t]\ni h\to 0}
	\bigg\|
	\frac{
	  \psi(t+h,x)-\psi(t,x)
	}{h}
	+
	\big(\tfrac{\partial}{\partial x}\psi\big)(t,x)(Ax+F(x))
	\\+
	\frac{1}{2} \,
	{\smallsum\limits_{b\in\mathbb{U}}}
	\big(\tfrac{\partial^2}{\partial x^2}\psi\big)(t,x)(B(x)b,B(x)b)
	\bigg\|_{\mathcal{V}}
	=0.
	\end{multline}
	This, item~\eqref{item:space-time.diff}, 
	and the fact that
	$
	\forall \, k \in \{1,2\}, \, x \in V_1, \, t\in[0,T]
	\colon
	(\frac{\partial}{\partial t}\phi)(t,x)
	=
	-(\frac{\partial}{\partial t}\psi)(t,x)
	$
	and 
	$
	(\frac{\partial^k}{\partial x^k}\phi)(t,x)
	=
	(\frac{\partial^k}{\partial x^k}\psi)(t,x)	
	$
	establish item~\eqref{item:kolmogorov.eq}.
	The proof of Lemma~\ref{lem:Kolmogorov} is thus completed.
\end{proof}

\subsection{Setting}
\label{sec:setting_euler_integrated_mollified}

Assume the setting in Section~\ref{sec:global_setting},
let $ \mathbb{U} \subseteq U $ be an orthonormal basis of $ U $, 
let
$
A \colon D(A)
\subseteq
V \rightarrow V
$
be a generator of a strongly continuous analytic semigroup
with 
$
\operatorname{spectrum}( A )
\subseteq
\{
z \in \mathbb{C}
\colon
\text{Re}( z ) < \eta
\}
$,
let
$
(
V_r
,
\left\| \cdot \right\|_{ V_r }
)
$,
$ r \in \R $,
be a family of interpolation spaces associated to
$
\eta - A
$, 
let 
$ h \in (0,\infty) $, 
$ \vartheta \in [0,\frac{1}{2}) $,
$
  F \in 
  \operatorname{Lip}^4( V , V_2 ) 
$, 
$
  B \in 
  \operatorname{Lip}^4( 
    V, 
    \gamma( 
      U, 
      V_2
    ) 
  ) 
$, 
$
  \varphi \in \operatorname{Lip}^4( V, {\mathcal{V}})
$, 
let 
$ 
  ( B^b )_{ b \in \mathbb{U} } \subseteq C( V, V ) 
$ 
be the functions which satisfy for all 
$ b \in \mathbb{U} $, 
$ v \in V $ 
that 
$
     B^b( v ) 
    = 
      B( 
        v 
      )
      \,
      b
$, 
let
$ \varsigma_{ F, B } \in \R $
be the real number given by
$  
  \varsigma_{ F, B } 
  =
    \max\!\big\{
    1
    ,
    \|
      F
    \|_{ 
      C_b^3( V, V_{-\vartheta} )
    }^3
    ,
    \|
      B
    \|_{ 
      C_b^3( V, \gamma( U, V_{ - \vartheta / 2 } ) )
    }^6
    \big\}
$, 
let 
$
\SGchi{r}
\in [1,\infty)
$, 
$r\in[0,1]$, 
be the real numbers 
which satisfy for all 
$r\in[0,1]$
that 
$
\SGchi{r}
=
\max\{
1
,
\sup_{ t \in (0,T] }
t^r
\,
\|
( \eta - A )^r
e^{ t A }
\|_{ L( V ) }
,
\sup_{ t \in (0,T] }
t^{-r}
\|
( \eta - A )^{ -r }
( e^{ t A } - \operatorname{Id}_V )
\|_{ L( V ) }
\}
$, 
let
$
  X, Y \colon [0,T] \times \Omega \to V
$, 
$
  \bar{Y} \colon [0,T] \times \Omega \to V_2
$, 
and 
$
  X^x \colon [0,T] \times \Omega \to V
$, 
$ x \in V $, 
be 
$
  ( \mathcal{F}_t )_{ t \in [0,T] }
$-predictable stochastic processes 
which satisfy for all 
$ x \in V $ 
that 
$
  \sup_{ t \in [0,T] }
  \big[
  \| X_t \|_{ \lpn{5}{\P}{ V } } 
  +
  \| X^x_t \|_{ \lpn{5}{\P}{V} } 
  \big]
  < \infty
$, 
$ X^x_0 = x $, 
$
  \bar{Y}_0 \in \lpn{5}{\P}{V_2}
$,
and 
$
  Y_0 = X_0 = \bar{Y}_0 
$ 
and which satisfy that for all
$ x \in V $, 
$ t \in (0,T] $ 
it holds $ \P $-a.s.\ that 
\begin{equation}
  X_t
  = 
    e^{ t A }\, X_0 
  + 
    \int_0^t e^{ ( t - s )A }\, F( X_s ) \, ds
  + 
    \int_0^t e^{ ( t - s )A }\, B( X_s ) \, dW_s
    ,
\end{equation}
\begin{equation}
\label{eq:mollified.solution}
  X^x_t
  = 
    e^{ t A }\, x 
  + 
    \int_0^t e^{ ( t - s )A }\, F( X^x_s ) \, ds
  + 
    \int_0^t e^{ ( t - s )A }\, B( X^x_s ) \, dW_s
    ,
\end{equation}
\begin{equation}
  Y_t
  = 
    e^{tA}\, Y_0 
  + 
    \int_0^t e^{ (t-\lfloor s \rfloor_h)A }\, F( Y_{ \floor{ s }{ h } } ) \, ds
  + 
    \int_0^t e^{ (t-\lfloor s \rfloor_h)A }\, B( Y_{ \floor{ s }{ h } } ) \, dW_s 
    ,
\end{equation}
\begin{equation}
  \bar{Y}_t
  = 
    e^{ t A } \, \bar{Y}_0 
  + 
    \int_0^t e^{ ( t - s )A } \, F( Y_{ \floor{ s }{ h } } ) \, ds
  + 
    \int_0^t e^{ ( t - s )A } \, B( Y_{ \floor{ s }{ h } } ) \, dW_s 
\end{equation}
(cf., e.g.,  Theorem 4.3 in Brze{\'z}niak~\cite{b97b} and 
Theorem 6.2 in Van Neerven et al.~\cite{vvw08}), 
let 
$
  ( K_r )_{ r \in [ 0, \infty ) }
  \subseteq
  [ 0, \infty ]
$ 
be the extended real numbers which satisfy 
for all 
$ r \in [ 0, \infty ) $ 
that 
$
  K_r
  =
  \sup_{ s, t \in [ 0, T ] }
  \ES\big[\!
    \max\{
    1,
$
$
    \| \bar{Y}_s \|^r_V,
    \| Y_t \|^r_V
    \}
  \big]
$, 
let 
$
  u \colon [0,T] \times V \to {\mathcal{V}}  
$
be the function 
which satisfies for all $ x \in V $, $ t \in [0,T] $
that
$
  u( t, x ) = 
  \ES\big[ 
    \varphi( X^x_{ T - t } )
  \big]
$,
let 
$
  c_{ \delta_1, \dots, \delta_k }
  \in [0,\infty]	
$,
$ \delta_1, \dots, \delta_k \in (-\nicefrac{1}{2},0] $,
$ k \in \{ 1, 2, 3, 4 \} $,
be the extended real numbers 
which satisfy
for all $ k \in \{ 1, 2, 3, 4 \} $,
$ \delta_1, \dots, \delta_k \in (-\nicefrac{1}{2},0] $
that
\begin{equation}
\begin{split} 
&
  c_{ \delta_1, \dots, \delta_k }
=
  \sup_{
    t \in [0,T)
  }
  \sup_{ 
    x \in V
  }
  \sup_{ 
    v_1, \dots, v_k \in V \setminus \{ 0 \}
  }
  \left[
  \frac{
    \big\|
      ( 
        \frac{ 
          \partial^k
        }{
          \partial x^k
        }
        u
      )( t, x )( v_1, \dots, v_k )
    \big\|_{\mathcal{V}}
  }{
    ( T - t )^{ 
      (
        \delta_1 + \ldots + \delta_k
      ) 
    }
    \left\| v_1 \right\|_{ 
      V_{ \delta_1 } 
    }
    \cdot
    \ldots
    \cdot
    \left\| v_k \right\|_{ 
      V_{ \delta_k } 
    }
  }
  \right]
  ,
\end{split}
\end{equation}
let 
$
  \tilde{c}_{ \delta_1, \delta_2, \delta_3, \delta_4 }
  \in [0,\infty]	
$,
$ \delta_1, \delta_2, \delta_3, \delta_4 \in (-\nicefrac{1}{2},0] $, 
be the extended real numbers 
which satisfy for all 
$ \delta_1, \dots, \delta_4 \in (-\nicefrac{1}{2},0] $
that
\begin{equation}
\begin{split} 
&
  \tilde{c}_{ \delta_1, \delta_2, \delta_3, \delta_4 }
\\ & =
  \sup_{ t \in [ 0, T ) }\,
  \sup_{\substack{
    x_1, x_2 \in V, \\ x_1 \neq x_2
  }}\,
  \sup_{ 
    v_1, \ldots, v_4 \in V \setminus \{ 0 \}
  }
  \left[
  \frac{
    \big\|
    \big(
      \big( 
        \frac{ 
          \partial^4
        }{
          \partial x^4
        }
        u
      \big)( t, x_1 )
      -
      \big( 
        \frac{ 
          \partial^4
        }{
          \partial x^4
        }
        u
      \big)( t, x_2 )
      \big)
      ( v_1, \dots, v_4 )
    \big\|_{\mathcal{V}}
  }{
    ( T - t )^{ 
      (
        \delta_1 + \ldots + \delta_4
      ) 
    }
    \left\| x_1 - x_2 \right\|_V
    \left\| v_1 \right\|_{ V_{ \delta_1 } }
     \cdot
     \ldots
    \cdot
    \left\| v_4 \right\|_{ V_{ \delta_4 } }
  }
  \right]
  ,
\end{split}
\end{equation}
and let
$ 
  u_{1,0} \colon [0,T] \times V_1 \to {\mathcal{V}} 
$
and
$
  u_{0,k} \colon [0,T] \times V \to L^{ (k) }( V, {\mathcal{V}} ) 
$, 
$ k \in \{ 1, 2, 3, 4 \} $, 
be the functions 
which satisfy
for all
$ k \in \{ 1, 2, 3, 4 \} $, 
$ x, v_1, \ldots, v_k \in V_1 $, 
$ t \in [0,T] $ 
that 
$
  u_{1,0}( t, x )
  =
  \big( \frac{ \partial }{ \partial t } u \big)( t, x )
$
and 
$
  u_{0,k}( t, x )( v_1, \ldots, v_k )
  =
  \big( \frac{ \partial^k }{ \partial x^k } u \big)( t, x )( v_1, \ldots, v_k )
$
(cf.\ Lemma~\ref{lem:Kolmogorov}).

\subsection{Weak convergence rates for semilinear integrated exponential Euler
approximations of SPDEs with mollified nonlinearities}
\label{sec:euler_integrated_result_mollified}

\begin{lemma}
\label{lem:mollified.c.delta.finite}
Assume the setting in Section~\ref{sec:setting_euler_integrated_mollified}. Then 
\begin{enumerate}[(i)]
\item 
\label{item:mollified.c.delta.finite}
it holds for all 
$ k \in \{ 1, 2, 3, 4 \} $, 
$ \delta_1, \dots, \delta_k \in (-\nicefrac{1}{2},0] $
with
$
\sum^k_{i=1} \delta_i
> -\nicefrac{1}{2}
$ 
that 
$
c_{\delta_1,\ldots,\delta_k}
< \infty
$
and
\item
\label{item:mollified.lip.c.delta.finite}
it holds for all 
$ \delta_1, \dots, \delta_4 \in (-\nicefrac{1}{2},0] $
with
$
\sum^4_{i=1} \delta_i
> -\nicefrac{1}{2}
$ 
that 
$
\tilde{c}_{\delta_1,\delta_2,\delta_3,\delta_4}
< \infty
$. 
\end{enumerate}
\end{lemma}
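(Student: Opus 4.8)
The plan is to deduce both claims from items~\eqref{item:kolmogorov.c.delta} and~\eqref{item:kolmogorov.lip.c.delta} of Lemma~\ref{lem:Kolmogorov}. First I would apply Lemma~\ref{lem:Kolmogorov} with the objects $ \mathbb{U} $, $ A $, $ ( V_r )_{ r \in [0,\infty) } $, $ \varphi $, $ F $, $ B $ from Section~\ref{sec:setting_euler_integrated_mollified}; its hypotheses hold because $ V_2 \hookrightarrow V_1 $ continuously (and hence $ \gamma( U, V_2 ) \hookrightarrow \gamma( U, V_1 ) $ continuously), so that $ F \in \operatorname{Lip}^4( V, V_2 ) \subseteq \operatorname{Lip}^4( V, V_1 ) $ and $ B \in \operatorname{Lip}^4( V, \gamma( U, V_2 ) ) \subseteq \operatorname{Lip}^4( V, \gamma( U, V_1 ) ) $, while $ \varphi \in \operatorname{Lip}^4( V, \mathcal{V} ) $ is assumed directly. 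This provides the transition function $ \phi \colon [0,T] \times V \to \mathcal{V} $, $ \phi( s, y ) = \ES[ \varphi( X^{ 0, y }_s ) ] $, which by item~\eqref{item:kolmogorov.diff} of Lemma~\ref{lem:Kolmogorov} satisfies $ \phi( s, \cdot ) \in \Cb{4}( V, \mathcal{V} ) $ for every $ s \in [0,T] $ and which by items~\eqref{item:kolmogorov.c.delta}--\eqref{item:kolmogorov.lip.c.delta} of Lemma~\ref{lem:Kolmogorov} enjoys the required fractional bounds on its spatial derivatives.

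The second step is to identify $ u $ with a time-reversal of $ \phi $. For every $ x \in V $ the process $ X^x $ from Section~\ref{sec:setting_euler_integrated_mollified} and the process $ X^{ 0, x } $ from item~\eqref{item:derivative.processes} of Lemma~\ref{lem:Kolmogorov} are both mild solutions of the same SPDE, driven by the same Wiener process on the same filtered probability space, with initial value $ x $; uniqueness up to modifications for such mild solutions — which is part of the well-posedness established inside the proof of Lemma~\ref{lem:markov.lip} — then yields that for all $ x \in V $, $ t \in [0,T] $ it holds $ \P $-a.s.\ that $ X^x_t = X^{ 0, x }_t $, whence $ u( t, x ) = \ES[ \varphi( X^x_{ T - t } ) ] = \ES[ \varphi( X^{ 0, x }_{ T - t } ) ] = \phi( T - t, x ) $ for all $ x \in V $, $ t \in [0,T] $. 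Consequently, for all $ k \in \{ 1, 2, 3, 4 \} $, $ t \in [0,T] $, $ x \in V $, $ v_1, \ldots, v_k \in V $ it holds that $ \big( \tfrac{ \partial^k }{ \partial x^k } u \big)( t, x )( v_1, \ldots, v_k ) = \big( \tfrac{ \partial^k }{ \partial x^k } \phi \big)( T - t, x )( v_1, \ldots, v_k ) $, which in particular makes sense of the derivatives entering the definitions of $ c_{ \delta_1, \ldots, \delta_k } $ and $ \tilde{c}_{ \delta_1, \delta_2, \delta_3, \delta_4 } $ (and of the functions $ u_{0,k} $).

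Once this is in place, the conclusion is pure bookkeeping. For $ \delta \in ( -\nicefrac{1}{2}, 0 ] $ and $ v \in V $ one has $ \| v \|_{ V_\delta } = \| ( \eta - A )^\delta v \|_V $ (up to the fixed norm equivalence built into the interpolation spaces associated to $ \eta - A $), and the map $ t \mapsto T - t $ is a bijection from $ [0,T) $ onto $ (0,T] $. Hence, for $ k \in \{ 1, 2, 3, 4 \} $ and $ \delta_1, \ldots, \delta_k \in ( -\nicefrac{1}{2}, 0 ] $ with $ \sum_{ i = 1 }^k \delta_i > -\nicefrac{1}{2} $, the quantity $ c_{ \delta_1, \ldots, \delta_k } $ equals — or is bounded by a constant times — the finite supremum in item~\eqref{item:kolmogorov.c.delta} of Lemma~\ref{lem:Kolmogorov}, which proves item~\eqref{item:mollified.c.delta.finite}; and for $ \delta_1, \delta_2, \delta_3, \delta_4 \in ( -\nicefrac{1}{2}, 0 ] $ with $ \sum_{ i = 1 }^4 \delta_i > -\nicefrac{1}{2} $ the quantity $ \tilde{c}_{ \delta_1, \delta_2, \delta_3, \delta_4 } $ equals — or is bounded by a constant times — the $ k = 4 $ instance of the finite supremum in item~\eqref{item:kolmogorov.lip.c.delta} of Lemma~\ref{lem:Kolmogorov}, the factor $ \| x_1 - x_2 \|_V $ being untouched by the reduction, which proves item~\eqref{item:mollified.lip.c.delta.finite}. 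I expect the only genuinely delicate point to be the identification $ u( t, \cdot ) = \phi( T - t, \cdot ) $ together with the attendant interchange of $ \tfrac{ \partial^k }{ \partial x^k } $ and the time-reversal; everything after that is a direct quotation of Lemma~\ref{lem:Kolmogorov}.
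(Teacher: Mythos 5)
Your proof is correct and follows essentially the same route as the paper's: both reduce items~\eqref{item:mollified.c.delta.finite}--\eqref{item:mollified.lip.c.delta.finite} to items~\eqref{item:kolmogorov.c.delta}--\eqref{item:kolmogorov.lip.c.delta} of Lemma~\ref{lem:Kolmogorov} via the identity $u(T-t,x)=\ES[\varphi(X^x_t)]=\phi(t,x)$, which rests on the identification $X^x=X^{0,x}$ (up to modifications) coming from~\eqref{eq:mollified.solution}. The paper states this in a single sentence; you have simply unfolded the uniqueness argument, the time reversal $t\mapsto T-t$ between $[0,T)$ and $(0,T]$, and the norm equivalence $\|v\|_{V_\delta}\asymp\|(\eta-A)^\delta v\|_V$ that the terse version leaves implicit.
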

\begin{proof}
	Items~\eqref{item:mollified.c.delta.finite}--\eqref{item:mollified.lip.c.delta.finite} are an immediate consequence of~\eqref{eq:mollified.solution},  of items~\eqref{item:kolmogorov.c.delta}--\eqref{item:kolmogorov.lip.c.delta} of Lemma~\ref{lem:Kolmogorov}, and of the fact that 
	$
	  \forall \, x \in V, \, t \in [0,T]
	  \colon
	  u(T-t,x)
	  =
	  \ES[\varphi(X^x_t)]
	$.
	The proof of Lemma~\ref{lem:mollified.c.delta.finite} is thus completed.
\end{proof}

\begin{lemma}\label{lem:weak_regularity_F_2nd_term}
Assume the setting in Section~\ref{sec:setting_euler_integrated_mollified} and let 
$ t \in [ 0, T ) $, 
$
  \psi = ( \psi(x,y) )_{ x, y \in V } \in \mathbb{M}( V \times V, {\mathcal{V}} )
$, 
$
  \phi \in \mathbb{M}( V, {\mathcal{V}} )
$
satisfy for all 
$ x, y \in V $ 
that 
$
  \psi( x, y )
  =
  u_{0,1}( t, x )
  F( y )
$
and 
$
  \phi(x)
  =
  \psi(x,x)
$.
Then it holds 
for all 
$ x, x_1, x_2, y, y_1, y_2 \in V $ 
that 
$ \psi \in C^3( V \times V, {\mathcal{V}} ) $, 
$ \phi \in C^3(V, {\mathcal{V}}) $, 
and 
\begin{equation}
\label{eq:psiF_Lipschitzx}
\begin{split}
&
    \max_{
    i, j \in \N_0
    ,\,
    i + j \leq 2
    }
    \big\|
    \big(
    \tfrac{ \partial^{(i+j)} }{ \partial x^i \partial y^j }
    \psi
    \big)
    ( x_1, y )
    -
    \big(
    \tfrac{ \partial^{(i+j)} }{ \partial x^i \partial y^j }
    \psi
    \big)
    ( x_2, y )
    \big\|_{ L^{ (i+j) }( V, {\mathcal{V}} ) }
\\ & \leq
  \tfrac{
    \| x_1 - x_2 \|_V
  }{
  ( T - t )^{ \vartheta }
  } \,
  \| F \|_{ C^2_b( V, V_{ -\vartheta } ) } \,
  \big[ c_{ -\vartheta, 0 } + c_{ -\vartheta, 0, 0 } + c_{ -\vartheta, 0, 0, 0 } \big]
  \max\{ 1, \| y \|_V \}
  ,
\end{split}
\end{equation}
\begin{equation}
\label{eq:psiF_Lipschitzy}
\begin{split}
&
    \max_{
    i, j \in \N_0
    ,\,
    i + j \leq 2
    }
    \big\|
    \big(
    \tfrac{ \partial^{(i+j)} }{ \partial x^i \partial y^j }
    \psi
    \big)
    ( x, y_1 )
    -
    \big(
    \tfrac{ \partial^{(i+j)} }{ \partial x^i \partial y^j }
    \psi
    \big)
    ( x, y_2 )
    \big\|_{ L^{ (i+j) }( V, {\mathcal{V}} ) }
\\ & \leq
  \tfrac{
    \| y_1 - y_2 \|_V
  }{
  ( T - t )^{ \vartheta }
  } \,
  \| F \|_{ C^3_b( V, V_{ -\vartheta } ) } \,
  \big[ c_{ -\vartheta } + c_{ -\vartheta, 0 } + c_{ -\vartheta, 0, 0 } \big]
  ,
\end{split}
\end{equation}
\begin{equation}
\label{eq:psiF_Lipschitzxx}
\begin{split}
&
    \max_{
    i \in \{ 0, 1, 2 \}
    }
    \big\|
    \phi^{ (i) }
    ( x_1 )
    -
    \phi^{ (i) }
    ( x_2 )
    \big\|_{ L^{ (i) }( V, {\mathcal{V}} ) }
\\ & \leq
  \tfrac{
    3 \,
    \| x_1 - x_2 \|_V
  }{
  ( T - t )^{ \vartheta }
  } \,
  \| F \|_{ C^3_b( V, V_{ -\vartheta } ) } \,
  \big[ c_{ -\vartheta } + c_{ -\vartheta, 0 } + c_{ -\vartheta, 0, 0 } + c_{ -\vartheta, 0, 0, 0 } \big]
  \max\{ 1, \| x_1 \|_V, \| x_2 \|_V \}
  .
\end{split}
\end{equation}
\end{lemma}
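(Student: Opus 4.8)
The plan is to treat $\psi(x,y) = u_{0,1}(t,x)F(y)$ as a product of two factors whose regularity is already controlled by the earlier results, namely the spatial smoothness of $u$ (Lemma~\ref{lem:Kolmogorov}, as transferred to the mollified setting in Lemma~\ref{lem:mollified.c.delta.finite}) and the assumption $F \in \operatorname{Lip}^4(V,V_2) \subseteq C^3_b(V,V_{-\vartheta})$. First I would record that for fixed $t \in [0,T)$ the map $x \mapsto u_{0,1}(t,x) = \big(\tfrac{\partial}{\partial x} u\big)(t,x) \in L(V,\mathcal V)$ is $C^3$ by item~\eqref{item:kolmogorov.diff} of Lemma~\ref{lem:Kolmogorov} (so $u$ is $C^4$ in the spatial variable), and that $y \mapsto F(y) \in V_{-\vartheta}$ is $C^3$; hence $\psi \in C^3(V \times V,\mathcal V)$ and, by the chain rule, $\phi(x) = \psi(x,x) \in C^3(V,\mathcal V)$. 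The partial derivatives of $\psi$ factor cleanly: $\big(\tfrac{\partial^{i+j}}{\partial x^i\partial y^j}\psi\big)(x,y)$ applied to test vectors is just $u_{0,1+i}(t,x)(\cdots)$ contracted against $F^{(j)}(y)(\cdots)$, because the $x$-argument and the $y$-argument enter $\psi$ through disjoint slots.

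The three estimates then reduce to estimating these factored derivatives. For \eqref{eq:psiF_Lipschitzx}, I would fix $y$ and $i+j \le 2$, write the difference $\big(\tfrac{\partial^{i+j}}{\partial x^i\partial y^j}\psi\big)(x_1,y) - \big(\tfrac{\partial^{i+j}}{\partial x^i\partial y^j}\psi\big)(x_2,y)$ as $\big[u_{0,1+i}(t,x_1) - u_{0,1+i}(t,x_2)\big]$ contracted with $F^{(j)}(y)$, and control the bracket by the fundamental theorem of calculus along the segment $[x_1,x_2]$, which produces $u_{0,2+i}(t,\xi)(x_1-x_2,\cdot)$ evaluated at an intermediate point $\xi$. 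Bounding $u_{0,2+i}$ with the constant $c_{-\vartheta,0,\dots,0}$ (with $2+i$ indices, the first being $-\vartheta$ to absorb the $V_{-\vartheta}$-norm of $F^{(j)}(y)$, the rest $0$) gives a factor $(T-t)^{-\vartheta}\|x_1-x_2\|_V$, and the $V_{-\vartheta}$-operator norm of $F^{(j)}(y)$ is bounded by $\|F\|_{C^2_b(V,V_{-\vartheta})}\max\{1,\|y\|_V\}$ for $j\in\{0,1,2\}$ (the $\max\{1,\|y\|_V\}$ coming from the $j=0$ case $\|F(y)\|_{V_{-\vartheta}} \le \|F\|_{C^1_b}(1+\|y\|_V)$); summing over $i \in \{0,1,2\}$ yields the three constants $c_{-\vartheta,0} + c_{-\vartheta,0,0} + c_{-\vartheta,0,0,0}$. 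For \eqref{eq:psiF_Lipschitzy} the $x$-factor $u_{0,1+i}(t,x)$ is held fixed and I would instead apply the fundamental theorem of calculus to $F^{(j)}(y_1) - F^{(j)}(y_2)$, using $\|F\|_{C^3_b(V,V_{-\vartheta})}$ (since $j$ can be $2$, hence $F^{(3)}$ appears), and bound $u_{0,1+i}$ by $c_{-\vartheta}$, $c_{-\vartheta,0}$, $c_{-\vartheta,0,0}$ for $i=0,1,2$. For \eqref{eq:psiF_Lipschitzxx} I would differentiate $\phi = \psi(\cdot,\cdot)$ by Leibniz/chain rule, expressing $\phi^{(i)}(x)$ as a sum of at most three terms of the form $u_{0,1+a}(t,x)(\cdots)$ contracted with $F^{(b)}(x)(\cdots)$ with $a+b=i \le 2$; the Lipschitz-in-$x$ estimate of each such term combines the bound from \eqref{eq:psiF_Lipschitzx}-type reasoning (differencing the $u$-factor) with the bound from \eqref{eq:psiF_Lipschitzy}-type reasoning (differencing the $F$-factor), and the three-term count together with the extra term in the $a+1 \le 4$ derivative index produces the factor $3$ and the four constants $c_{-\vartheta} + c_{-\vartheta,0} + c_{-\vartheta,0,0} + c_{-\vartheta,0,0,0}$.

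The main obstacle is purely bookkeeping rather than conceptual: one must be careful to pair the correct number of derivatives on $u$ with the correct number on $F$, to place the single $-\vartheta$ index in the right slot of $c_{\bullet}$ so that it absorbs precisely the $V_{-\vartheta}$-regularity of the $F$-factor while the remaining slots stay at $0$ (matching $V$-norms of the generic test vectors), and to verify that the condition $\sum \delta_i > -\nicefrac12$ needed for finiteness of these constants in Lemma~\ref{lem:mollified.c.delta.finite} is met — which it is, since only one index is negative and equals $-\vartheta > -\nicefrac12$ by the standing assumption $\vartheta \in [0,\tfrac12)$. The $\max\{1,\|y\|_V\}$ and $\max\{1,\|x_1\|_V,\|x_2\|_V\}$ factors are handled uniformly by the elementary bound $\|F(z)\|_{V_{-\vartheta}} \le \|F\|_{C^1_b(V,V_{-\vartheta})}\max\{1,\|z\|_V\}$; all other derivatives $F^{(j)}$, $j\ge1$, are globally bounded, so they contribute no growth in the argument. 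Once the factorisation and the segment-integration are set up, each of \eqref{eq:psiF_Lipschitzx}--\eqref{eq:psiF_Lipschitzxx} follows by a short triangle-inequality computation.
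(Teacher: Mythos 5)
Your plan is correct and matches the paper's proof essentially step for step: you differentiate $\psi(x,y)=u_{0,1}(t,x)F(y)$, observe that the partials factor as $u_{0,1+i}(t,x)$ contracted against $F^{(j)}(y)$, bound each factor using the constants $c_{-\vartheta,0,\ldots,0}$ (finiteness from Lemma~\ref{lem:mollified.c.delta.finite}) and $\|F\|_{C^k_b(V,V_{-\vartheta})}$, and then pass to the Lipschitz estimates by the fundamental theorem of calculus along the segment $[x_1,x_2]$ (resp.\ $[y_1,y_2]$). One small imprecision worth flagging: in a Banach-space target there is no mean value theorem giving ``$u_{0,2+i}(t,\xi)$ at an intermediate point $\xi$''; the fundamental theorem of calculus supplies the integral $\int_0^1 u_{0,2+i}(t,x_2+s(x_1-x_2))(x_1-x_2,\cdot)\,ds$, and it is the uniform supremum bound $c_{-\vartheta,0,\ldots,0}$ that makes this behave like a mean-value estimate. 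Also, for the $\max\{1,\|y\|_V\}$ factor the sharp route is $\|F(y)\|_{V_{-\vartheta}}\le\|F(0)\|_{V_{-\vartheta}}+\|y\|_V\,|F|_{C^1_b}\le\max\{1,\|y\|_V\}\,\|F\|_{C^1_b}$, which avoids the loss of a factor $2$ that the parenthetical bound $\|F\|_{C^1_b}(1+\|y\|_V)$ would introduce; the main displayed bound you state is, however, the correct one.
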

\begin{proof}
We first note that item~\eqref{item:kolmogorov.diff} of Lemma~\ref{lem:Kolmogorov} ensures that 
$
\big(
V \ni x \mapsto
u_{ 0, 1 }( t, x ) \in L( V, \mathcal{V} )
\big)
\in
C^3( V, L( V, \mathcal{V} ) ) 
$.
The assumption that
$
  F \in \operatorname{Lip}^4( V, V_2 )
$
therefore assures that
$ \psi \in C^3( V \times V, \mathcal{V} ) $ 
and 
$ \phi \in C^3( V, \mathcal{V} ) $. 
Next we observe that for all 
$ x, y, v_1, v_2, v_3 \in V $ 
with 
$ \max\{\|v_1\|_V, \|v_2\|_V, \|v_3\|_V\} \leq 1 $
it holds that 
\begin{equation}
\label{eq:psiF_xderivatives_begin}
\begin{split}
&
  \big\|
   \big(\tfrac{ \partial }{ \partial x }
   \psi\big)( x, y ) \, v_1
  \big\|_{\mathcal{V}}
=
  \left\|
    u_{0,2}( t, x )
    \big(
     F( y ), v_1
    \big)
  \right\|_{\mathcal{V}}
\leq
  \tfrac{
    c_{ -\vartheta, 0 }
  }{
  ( T - t )^{ \vartheta }
  } \,
  \| F( y ) \|_{ V_{ -\vartheta } },
\end{split}
\end{equation}
\begin{equation}
\begin{split}
&
  \big\|
   \big(\tfrac{ \partial^2 }{ \partial x^2 }
   \psi\big)( x, y ) \, ( v_1, v_2 )
  \big\|_{\mathcal{V}}
=
  \left\|
    u_{0,3}( t, x )
    \big(
     F( y ), v_1, v_2
    \big)
  \right\|_{\mathcal{V}}
\leq
  \tfrac{
    c_{ -\vartheta, 0, 0 }
  }{
  ( T - t )^{ \vartheta }
  } \,
  \| F( y ) \|_{ V_{ -\vartheta } },
\end{split}
\end{equation}
\begin{equation}
\label{eq:psiF_xderivatives_end}
\begin{split}
&
  \big\|
   \big(\tfrac{ \partial^3 }{ \partial x^3 }
   \psi\big)( x, y ) \, ( v_1, v_2, v_3 )
  \big\|_{\mathcal{V}}
=
  \left\|
    u_{0,4}( t, x )
    \big(
     F( y ), v_1, v_2, v_3
    \big)
  \right\|_{\mathcal{V}}
\leq
  \tfrac{
    c_{ -\vartheta, 0, 0, 0 }
  }{
  ( T - t )^{ \vartheta }
  } \,
  \| F( y ) \|_{ V_{ -\vartheta } },
\end{split}
\end{equation}
\begin{equation}
\label{eq:psiF_yderivatives_begin}
\begin{split}
&
  \big\|
   \big(\tfrac{ \partial }{ \partial y }
   \psi\big)( x, y ) \, v_1
  \big\|_{\mathcal{V}}
=
  \left\|
    u_{0,1}( t, x )
    \,
     F'( y ) \, v_1
  \right\|_{\mathcal{V}}
\leq
  \tfrac{
    c_{ -\vartheta }
  }{
  ( T - t )^{ \vartheta }
  } \,
  \| F'( y ) \|_{ L( V, V_{ -\vartheta } ) },
\end{split}
\end{equation}
\begin{equation}
\begin{split}
&
  \big\|
   \big(\tfrac{ \partial^2 }{ \partial y^2 }
   \psi\big)( x, y )( v_1, v_2 )
  \big\|_{\mathcal{V}}
=
  \left\|
    u_{0,1}( t, x )
    \big(
     F''( y ) ( v_1, v_2 )
    \big)
  \right\|_{\mathcal{V}}
\leq
  \tfrac{
    c_{ -\vartheta }
  }{
  ( T - t )^{ \vartheta }
  } \,
  \| F''( y ) \|_{ L^{ (2) }( V, V_{ -\vartheta } ) },
\end{split}
\end{equation}
\begin{equation}
\label{eq:psiF_yderivatives_end}
\begin{split}
  \big\|
   \big(\tfrac{ \partial^3 }{ \partial y^3 }
   \psi\big)( x, y )( v_1, v_2, v_3 )
  \big\|_{\mathcal{V}}
&=
  \left\|
    u_{0,1}( t, x )
    \big(
     F^{(3)}( y ) ( v_1, v_2, v_3 )
    \big)
  \right\|_{\mathcal{V}}
\\ & \leq
  \tfrac{
    c_{ -\vartheta }
  }{
  ( T - t )^{ \vartheta }
  } \,
  \| F^{(3)}( y ) \|_{ L^{ (3) }( V, V_{ -\vartheta } ) },
\end{split}
\end{equation}
\begin{equation}
\label{eq:psiF_mixedderivatives_begin}
\begin{split}
&
  \big\|
   \big(\tfrac{ \partial^2 }{ \partial x \partial y }
   \psi
   \big)( x, y )( v_1, v_2 )
  \big\|_{\mathcal{V}}
=
  \left\|
    u_{0,2}( t, x )
    \big(
     F'( y ) \, v_1,
     v_2
    \big)
  \right\|_{\mathcal{V}}
\leq
  \tfrac{
    c_{ -\vartheta, 0 }
  }{
  ( T - t )^{ \vartheta }
  } \,
  \| F'( y ) \|_{ L( V, V_{ -\vartheta } ) },
\end{split}
\end{equation}
\begin{equation}
\begin{split}
  \big\|
   \big(\tfrac{ \partial^3 }{ \partial x^2 \partial y }
   \psi\big)( x, y )( v_1, v_2, v_3 )
  \big\|_{\mathcal{V}}
&=
  \left\|
    u_{0,3}( t, x )
    \big(
     F'( y ) \, v_1,
     v_2, v_3
    \big)
  \right\|_{\mathcal{V}}
\\ & \leq
  \tfrac{
  c_{ -\vartheta, 0, 0 }
  }{
  ( T - t )^{ \vartheta }
  } \,
  \| F'( y ) \|_{ L( V, V_{ -\vartheta } ) },
\end{split}
\end{equation}
\begin{equation}
\label{eq:psiF_mixedderivatives_end}
\begin{split}
  \big\|
   \big(\tfrac{ \partial^3 }{ \partial x \partial y^2 }
   \psi\big)( x, y )( v_1, v_2, v_3 )
  \big\|_{\mathcal{V}}
&=
  \left\|
    u_{0,2}( t, x )
    \big(
     F''( y ) ( v_1, v_2 ),
     v_3
    \big)
  \right\|_{\mathcal{V}}
\\ & \leq
  \tfrac{
    c_{ -\vartheta, 0 }
  }{
  ( T - t )^{ \vartheta }
  } \,
  \| F''( y ) \|_{ L^{ ( 2 ) }( V, V_{ -\vartheta } ) }.
\end{split}
\end{equation}
Combining \eqref{eq:psiF_xderivatives_begin}--\eqref{eq:psiF_xderivatives_end} and 
\eqref{eq:psiF_mixedderivatives_begin}--\eqref{eq:psiF_mixedderivatives_end} 
with item~\eqref{item:mollified.c.delta.finite} of Lemma~\ref{lem:mollified.c.delta.finite} and the fundamental theorem of calculus in Banach spaces proves~\eqref{eq:psiF_Lipschitzx}.
Moreover, combining \eqref{eq:psiF_yderivatives_begin}--\eqref{eq:psiF_mixedderivatives_end} 
with item~\eqref{item:mollified.c.delta.finite} of Lemma~\ref{lem:mollified.c.delta.finite} and the fundamental theorem of calculus in Banach spaces shows~\eqref{eq:psiF_Lipschitzy}. 
It thus remains to prove~\eqref{eq:psiF_Lipschitzxx}. 
For this we observe that \eqref{eq:psiF_xderivatives_begin}--\eqref{eq:psiF_mixedderivatives_end} ensure that 
for all $ x, v_1, v_2, v_3 \in V $ with 
$ \max\{\|v_1\|_V, \|v_2\|_V, \|v_3\|_V\} \leq 1 $
it holds that 
\begin{equation}
\label{eq:psiF_xxderivatives_begin}
\begin{split}
&
  \big\|
   \phi'( x ) \, v_1
  \big\|_{\mathcal{V}}
  \leq
  \big\|
   \big(\tfrac{ \partial }{ \partial x }
   \psi\big)( x, x ) \, v_1
  \big\|_{\mathcal{V}}
  +
  \big\|
   \big(\tfrac{ \partial }{ \partial y }
   \psi\big)( x, x ) \, v_1
  \big\|_{\mathcal{V}}
\\ & \leq
  \tfrac{
    c_{ -\vartheta, 0 } \,
    \| F( x ) \|_{ V_{ -\vartheta } }
    +
    c_{ -\vartheta } \,
    \| F'( x ) \|_{ L( V, V_{ -\vartheta } ) }
  }{
  ( T - t )^{ \vartheta }
  }
  \leq
  \tfrac{
    [ c_{ -\vartheta }
    +
    c_{ -\vartheta, 0 } ]
  }{
  ( T - t )^{ \vartheta }
  }
  \,
  \| F \|_{ C^1_b( V, V_{ -\vartheta } ) } \,
  \max\{ 1, \| x \|_V \}
  ,
\end{split}
\end{equation}
\begin{equation}
\begin{split}
&
  \big\|
   \phi''( x ) \, ( v_1, v_2 )
  \big\|_{\mathcal{V}}
\\ & \leq
  \big\|
   \big(\tfrac{ \partial^2 }{ \partial x^2 }
   \psi\big)( x, x ) \, ( v_1, v_2 )
  \big\|_{\mathcal{V}}
  +
  2 \,
  \big\|
   \big(\tfrac{ \partial^2 }{ \partial x \partial y }
   \psi\big)( x, x ) \, ( v_1, v_2 )
  \big\|_{\mathcal{V}}
  +
  \big\|
   \big(\tfrac{ \partial^2 }{ \partial y^2 }
   \psi\big)( x, x ) \, ( v_1, v_2 )
  \big\|_{\mathcal{V}}
\\ & \leq
  \tfrac{
    c_{ -\vartheta, 0, 0 } \,
    \| F( x ) \|_{ V_{ -\vartheta } }
    +
    2 \,
    c_{ -\vartheta, 0 } \,
    \| F'( x ) \|_{ L( V, V_{ -\vartheta } ) }
    +
    c_{ -\vartheta } \,
    \| F''( x ) \|_{ L^{(2)}( V, V_{ -\vartheta } ) }
  }{
  ( T - t )^{ \vartheta }
  }
\\ & \leq
  \tfrac{
    2 \,
    [ c_{ -\vartheta }
    +
    c_{ -\vartheta, 0 } 
    + c_{ -\vartheta, 0, 0 } ]
  }{
  ( T - t )^{ \vartheta }
  }
  \,
  \| F \|_{ C^2_b( V, V_{ -\vartheta } ) } \,
  \max\{ 1, \| x \|_V \}
  ,
\end{split}
\end{equation}
and 
\begin{equation}
\label{eq:psiF_xxderivatives_end}
\begin{split}
&
  \big\|
   \phi^{(3)}( x ) \, ( v_1, v_2, v_3 )
  \big\|_{\mathcal{V}}
\leq
  \big\|
   \big(\tfrac{ \partial^3 }{ \partial x^3 }
   \psi\big)( x, x ) \, ( v_1, v_2, v_3 )
  \big\|_{\mathcal{V}}
+
  3 \,
  \big\|
   \big(\tfrac{ \partial^3 }{ \partial x^2 \partial y }
   \psi\big)( x, x ) \, ( v_1, v_2, v_3 )
  \big\|_{\mathcal{V}}
\\ & +
  3 \,
  \big\|
   \big(\tfrac{ \partial^3 }{ \partial x \partial y^2 }
   \psi\big)( x, x ) \, ( v_1, v_2, v_3 )
  \big\|_{\mathcal{V}}
  +
  \big\|
   \big(\tfrac{ \partial^3 }{ \partial y^3 }
   \psi\big)( x, x ) \, ( v_1, v_2, v_3 )
  \big\|_{\mathcal{V}}
\\ & \leq
  \tfrac{
    c_{ -\vartheta, 0, 0, 0 } \,
    \| F( x ) \|_{ V_{ -\vartheta } }
    +
    3 \,
    c_{ -\vartheta, 0, 0 } \,
    \| F'( x ) \|_{ L( V, V_{ -\vartheta } ) }
    +
    3 \,
    c_{ -\vartheta, 0 } \,
    \| F''( x ) \|_{ L^{(2)}( V, V_{ -\vartheta } ) }
    +
    c_{ -\vartheta } \,
    \| F^{(3)}( x ) \|_{ L^{(3)}( V, V_{ -\vartheta } ) }
  }{
  ( T - t )^{ \vartheta }
  }
\\ & \leq
  \tfrac{
    3 \,
    [ c_{ -\vartheta }
    +
    c_{ -\vartheta, 0 } 
    + c_{ -\vartheta, 0, 0 } 
    + c_{ -\vartheta, 0, 0, 0 } ]
  }{
  ( T - t )^{ \vartheta }
  }
  \,
  \| F \|_{ C^3_b( V, V_{ -\vartheta } ) } \,
  \max\{ 1, \| x \|_V \}
  .
\end{split}
\end{equation}
Combining \eqref{eq:psiF_xxderivatives_begin}--\eqref{eq:psiF_xxderivatives_end} 
with item~\eqref{item:mollified.c.delta.finite} of Lemma~\ref{lem:mollified.c.delta.finite} and the fundamental theorem of calculus in Banach spaces establishes~\eqref{eq:psiF_Lipschitzxx}. 
The proof of Lemma~\ref{lem:weak_regularity_F_2nd_term} is thus completed.
\end{proof}

\begin{lemma}
\label{lem:weak_regularity_B_2nd_term}
Assume the setting in Section~\ref{sec:setting_euler_integrated_mollified} and let 
$ t \in [ 0, T ) $, 
$
  \psi = ( \psi(x,y) )_{ x, y \in V } \in \mathbb{M}( V \times V, {\mathcal{V}} )
$, 
$
  \phi \in \mathbb{M}( V, {\mathcal{V}} )
$ 
satisfy for all 
$ x, y \in V $ 
that
$
  \psi( x, y )
  =
  \smallsum_{ b \in \mathbb{U} }
  u_{0,2}( t, x )
  \big(
    B^b( y ),
    B^b( y )
  \big)
$ 
and 
$
  \phi(x)
  =
  \psi(x,x)
$. 
Then it holds 
for all 
$ x, x_1, x_2, y, y_1, y_2 \in V $ 
that 
$ \psi \in C^2( V \times V, {\mathcal{V}} ) $, 
$ \phi \in C^2( V, {\mathcal{V}} ) $,  
and 
\begin{equation}
\label{eq:psiB_Lipschitzx}
\begin{split}
&
    \max_{
    i, j \in \N_0
    ,\,
    i + j \leq 2
    }
    \big\|
    \big(
    \tfrac{ \partial^{(i+j)} }{ \partial x^i \partial y^j }
    \psi
    \big)
    ( x_1, y )
    -
    \big(
    \tfrac{ \partial^{(i+j)} }{ \partial x^i \partial y^j }
    \psi
    \big)
    ( x_2, y )
    \big\|_{ L^{ (i+j) }( V, {\mathcal{V}} ) }
\leq
  \tfrac{
    2 \, \| x_1 - x_2 \|_V
  }{
    ( T - t )^{ \vartheta }
  }
\\ & \cdot
  \| B \|^2_{ C^2_b( V, \gamma( U, V_{ -\nicefrac{ \vartheta }{ 2 } } ) ) } \,
  \big[ 
    c_{ - \nicefrac{ \vartheta }{ 2 }, -\nicefrac{ \vartheta }{ 2 }, 0 } 
    + c_{ - \nicefrac{ \vartheta }{ 2 }, -\nicefrac{ \vartheta }{ 2 }, 0, 0 }
    + \tilde{c}_{ - \nicefrac{ \vartheta }{ 2 }, -\nicefrac{ \vartheta }{ 2 }, 0, 0 }
  \big]
  \max\{ 1, \| y \|_{ V }^2 \}
  ,
\end{split}
\end{equation}
\begin{equation}
\label{eq:psiB_Lipschitzy}
\begin{split}
&
    \max_{
    i, j \in \N_0
    ,\,
    i + j \leq 2
    }
    \big\|
    \big(
    \tfrac{ \partial^{(i+j)} }{ \partial x^i \partial y^j }
    \psi
    \big)
    ( x, y_1 )
    -
    \big(
    \tfrac{ \partial^{(i+j)} }{ \partial x^i \partial y^j }
    \psi
    \big)
    ( x, y_2 )
    \big\|_{ L^{ (i+j) }( V, {\mathcal{V}} ) }
\leq
  \tfrac{
    6 \, \| y_1 - y_2 \|_V
  }{
    ( T - t )^{ \vartheta }
  }
\\ & \cdot
  \| B \|^2_{ C^3_b( V, \gamma( U, V_{ -\nicefrac{ \vartheta }{ 2 } } ) ) } \,
  \big[ c_{ -\nicefrac{ \vartheta }{ 2 }, -\nicefrac{ \vartheta }{ 2 } } + c_{ -\nicefrac{ \vartheta }{ 2 }, -\nicefrac{ \vartheta }{ 2 }, 0 } + c_{ -\nicefrac{ \vartheta }{ 2 }, -\nicefrac{ \vartheta }{ 2 }, 0, 0 } \big]
  \max\{ 1, \| y_1 \|_{ V }, \| y_2 \|_{ V } \}
  ,
\end{split}
\end{equation}
\begin{equation}
\label{eq:psiB_Lipschitzxx}
\begin{split}
&
    \max_{
    i \in \{ 0, 1, 2 \}
    }
    \big\|
    \phi^{(i)}
    ( x_1 )
    -
    \phi^{(i)}
    ( x_2 )
    \big\|_{ L^{ (i) }( V, {\mathcal{V}} ) }
\leq
  \tfrac{
    8 \,
    \| x_1 - x_2 \|_V
  }{
  ( T - t )^{ \vartheta }
  }
  \,
  \| B \|^2_{ C^3_b( V, \gamma( U, V_{ -\nicefrac{ \vartheta }{ 2 } } ) ) } 
\\ & \cdot
  \big[ 
    c_{ 
      - \nicefrac{ \vartheta }{ 2 }, 
      - \nicefrac{ \vartheta }{ 2 } 
    } 
    + 
    c_{ 
      - \nicefrac{ \vartheta }{ 2 }, 
      - \nicefrac{ \vartheta }{ 2 }, 0 
    } 
    + 
    c_{ -\nicefrac{ \vartheta }{ 2 }, -\nicefrac{ \vartheta }{ 2 }, 0, 0 } 
    + 
    \tilde{c}_{ -\nicefrac{ \vartheta }{ 2 }, -\nicefrac{ \vartheta }{ 2 }, 0, 0 } 
  \big]
  \max\{ 1, \| x_1 \|^2_V, \| x_2 \|^2_V \}
  .
\end{split}
\end{equation}
\end{lemma}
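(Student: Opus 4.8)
The plan is to follow the strategy of the proof of Lemma~\ref{lem:weak_regularity_F_2nd_term}, with two new features: the summation over the orthonormal basis $\mathbb{U}$ will be controlled by means of Lemma~\ref{lem:gamma.estimate} (which, in particular, ensures that the $\mathbb{U}$-indexed sums defining $\psi$ and its derivatives are well-defined elements of $\mathcal{V}$), and the fact that $\psi$ and $\phi$ are only twice continuously differentiable---because $u_{0,2}(t,\cdot)=\big(\tfrac{\partial^{(2)}}{\partial x^2}u\big)(t,\cdot)$ is the second derivative of the function $u(t,\cdot)\in C^4_b(V,\mathcal{V})$---will force a different argument for the Lipschitz estimate~\eqref{eq:psiB_Lipschitzxx} of $\phi''$.

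First I would establish the regularity assertions. Item~\eqref{item:kolmogorov.diff} of Lemma~\ref{lem:Kolmogorov} shows that $\big(V\ni x\mapsto u_{0,2}(t,x)\in L^{(2)}(V,\mathcal{V})\big)\in C^2(V,L^{(2)}(V,\mathcal{V}))$ and, more generally, that $u_{0,k}(t,\cdot)$ is $C^{4-k}$ for $k\in\{1,2,3,4\}$. Item~\eqref{item:mollified.c.delta.finite} of Lemma~\ref{lem:mollified.c.delta.finite}, together with the assumption $\vartheta<\nicefrac{1}{2}$ (so that sums of fractional exponents such as $-\nicefrac{\vartheta}{2}-\nicefrac{\vartheta}{2}=-\vartheta$ exceed $-\nicefrac{1}{2}$), ensures that for every $x\in V$ the multilinear forms $u_{0,k}(t,x)$, $k\in\{2,3,4\}$, extend uniquely---by density of $V$ in $V_{-\nicefrac{\vartheta}{2}}$---to bounded multilinear forms on $V_{-\nicefrac{\vartheta}{2}}$ with operator norms controlled by $(T-t)^{-\vartheta}$ times the relevant constants $c_{\ldots}$. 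Combining this with the inclusion $B\in\operatorname{Lip}^4(V,\gamma(U,V_2))\subseteq C^3_b(V,\gamma(U,V_{-\nicefrac{\vartheta}{2}}))$ (using $V_2\hookrightarrow V_{-\nicefrac{\vartheta}{2}}$ continuously) and with item~(iii) of Lemma~\ref{lem:gamma.estimate}, which shows that the trilinear map $(\ell,A_1,A_2)\mapsto\sum_{b\in\mathbb{U}}\ell(A_1b,A_2b)$ from $L^{(2)}(V_{-\nicefrac{\vartheta}{2}},\mathcal{V})\times\gamma(U,V_{-\nicefrac{\vartheta}{2}})\times\gamma(U,V_{-\nicefrac{\vartheta}{2}})$ to $\mathcal{V}$ is bounded and hence smooth, I would conclude by the chain rule that $\psi\in C^2(V\times V,\mathcal{V})$ and therefore $\phi\in C^2(V,\mathcal{V})$.

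Next I would compute, for $i,j\in\N_0$ with $i+j\leq 3$, explicit expressions for the mixed partial derivatives $\big(\tfrac{\partial^{(i+j)}}{\partial x^i\partial y^j}\psi\big)(x,y)$: differentiation in $x$ replaces $u_{0,2}$ by $u_{0,2+i}$ (adding $i$ argument slots), and differentiation in $y$ produces, by the product rule, terms of the form $\sum_{b\in\mathbb{U}}u_{0,2+i}(t,x)\big(B^{(\cdot)}(y)(\ldots)b,B^{(\cdot)}(y)(\ldots)b,v_1,\ldots,v_i\big)$ with the total $y$-differentiation order distributed over the two copies of $B$. Each such term is then bounded, via item~(iii) of Lemma~\ref{lem:gamma.estimate} and the definitions of the constants $c_{\ldots}$ and of the $C^k_b$-norms of $B$, by $(T-t)^{-\vartheta}$ times an appropriate $c_{\ldots}$ times $\|B\|^2_{C^k_b(V,\gamma(U,V_{-\nicefrac{\vartheta}{2}}))}$ times $\max\{1,\|y\|^2_V\}$ (or $\max\{1,\|y\|_V\}$ when one copy of $B$ is differentiated), entirely analogously to \eqref{eq:psiF_xderivatives_begin}--\eqref{eq:psiF_mixedderivatives_end}. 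Integrating these bounds along line segments by the fundamental theorem of calculus in Banach spaces then yields \eqref{eq:psiB_Lipschitzy} (where the difference in $y$ is controlled by $\partial_x^i\partial_y^{j+1}\psi$ with $j+1\leq 3$, hence by the finite constants $c_{\ldots}$) as well as the cases $i\in\{0,1\}$ of \eqref{eq:psiB_Lipschitzx} (where the difference in $x$ is controlled by $\partial_x^{i+1}\partial_y^j\psi$, which involves only $u_{0,k}$ with $k\leq 4$).

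The main---though still mild---obstacle is the remaining case $(i,j)=(2,0)$ of \eqref{eq:psiB_Lipschitzx}: here the formal $x$-derivative of $\partial_x^2\psi$ would involve $u_{0,5}$, which is not available, so instead I would write $\big(\tfrac{\partial^{(2)}}{\partial x^2}\psi\big)(x_1,y)-\big(\tfrac{\partial^{(2)}}{\partial x^2}\psi\big)(x_2,y)=\sum_{b\in\mathbb{U}}\big[u_{0,4}(t,x_1)-u_{0,4}(t,x_2)\big]\big(B^b(y),B^b(y),\cdot,\cdot\big)$ and bound it directly using item~(iii) of Lemma~\ref{lem:gamma.estimate} together with the definition of $\tilde{c}_{-\nicefrac{\vartheta}{2},-\nicefrac{\vartheta}{2},0,0}$, which is finite by item~\eqref{item:mollified.lip.c.delta.finite} of Lemma~\ref{lem:mollified.c.delta.finite}; this completes \eqref{eq:psiB_Lipschitzx}. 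Finally, for \eqref{eq:psiB_Lipschitzxx} I would use the Leibniz identities $\phi^{(i)}(x)=\sum_{m=0}^{i}\binom{i}{m}\big(\tfrac{\partial^{(i)}}{\partial x^m\partial y^{i-m}}\psi\big)(x,x)$ for $i\in\{0,1,2\}$: for $i\in\{0,1\}$ the asserted Lipschitz estimate follows by integrating the bound on $\phi^{(i+1)}$, whereas for $i=2$---since $\phi$ has only been shown to be $C^2$---I would decompose each difference $\big(\tfrac{\partial^{(2)}}{\partial x^m\partial y^{2-m}}\psi\big)(x_1,x_1)-\big(\tfrac{\partial^{(2)}}{\partial x^m\partial y^{2-m}}\psi\big)(x_2,x_2)$ through the intermediate point $(x_2,x_1)$ and apply \eqref{eq:psiB_Lipschitzx} to the $x$-difference and \eqref{eq:psiB_Lipschitzy} to the $y$-difference; collecting the resulting combinatorial factors produces the prefactor $8$, the norm $\|B\|^2_{C^3_b(V,\gamma(U,V_{-\nicefrac{\vartheta}{2}}))}$, and the bracket of constants in \eqref{eq:psiB_Lipschitzxx}, which finishes the proof of Lemma~\ref{lem:weak_regularity_B_2nd_term}.
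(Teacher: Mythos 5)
Your proposal follows the paper's route closely for the regularity of $\psi$ and $\phi$, for the derivative computations, for both cases of \eqref{eq:psiB_Lipschitzx} (fundamental theorem of calculus for $i\in\{0,1\}$, direct use of $\tilde{c}_{-\nicefrac{\vartheta}{2},-\nicefrac{\vartheta}{2},0,0}$ for $(i,j)=(2,0)$), for \eqref{eq:psiB_Lipschitzy}, and for the $i\in\{0,1\}$ cases of \eqref{eq:psiB_Lipschitzxx}. The one place you genuinely diverge is the $i=2$ case of \eqref{eq:psiB_Lipschitzxx}, and there your approach does not yield the stated constant.

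Concretely: you expand $\phi''(x)=\sum_{m=0}^{2}\binom{2}{m}\big(\tfrac{\partial^{2}}{\partial x^{m}\partial y^{2-m}}\psi\big)(x,x)$ and then pass every one of the $1+2+1=4$ (with multiplicity) Leibniz terms through the intermediate point $(x_2,x_1)$, controlling the $x$-legs by \eqref{eq:psiB_Lipschitzx} (prefactor $2$) and the $y$-legs by \eqref{eq:psiB_Lipschitzy} (prefactor $6$). That yields a total prefactor on the order of $4\cdot(2+6)=32$, not $8$, so the claim that ``collecting the resulting combinatorial factors produces the prefactor $8$'' is incorrect as stated. The paper avoids this loss by splitting $\phi''(x_1)-\phi''(x_2)$ into the difference of the $C^1$ function $x\mapsto\phi''(x)-\big(\tfrac{\partial^{2}}{\partial x^{2}}\psi\big)(x,x)$ (a two-term sum of mixed/$\,y$-derivatives, whose $x$-derivative is bounded explicitly in \eqref{eq:psiB_2ndxxLipschitz_begin} with constant $6$ and is then treated by the fundamental theorem of calculus) plus the single problematic difference $\big(\tfrac{\partial^{2}}{\partial x^{2}}\psi\big)(x_1,x_1)-\big(\tfrac{\partial^{2}}{\partial x^{2}}\psi\big)(x_2,x_2)$, for which alone the two-step decomposition through $(x_2,x_1)$ is used (\eqref{eq:psiB_2ndxxLipschitz_end}, constant $2$); this gives $6+2=8$. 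Your estimate is still a valid Lipschitz bound and would suffice for the downstream arguments (where the precise numerical constant is immaterial), but it does not prove \eqref{eq:psiB_Lipschitzxx} with the constant $8$ that the lemma asserts. To repair this, restrict the two-step decomposition to $\partial_x^2\psi$ and apply the fundamental theorem of calculus to the remaining $C^1$ part, as the paper does.
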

\begin{proof}
We first note that item~\eqref{item:kolmogorov.diff} of Lemma~\ref{lem:Kolmogorov} ensures that 
$
\big(
V \ni x \mapsto
u_{0,2}( t, x ) \in L^{(2)}(V,{\mathcal{V}})
\big)
\in
C^2( V, L^{(2)}(V,{\mathcal{V}}) ) 
$.
Lemma~\ref{lem:gamma.estimate} and
the assumption that 
$ B \in \operatorname{Lip}^4( V, \gamma( U, V_2 ) ) $
therefore assure that 
\begin{equation}
  \psi \in 
  C^2( V \times V, {\mathcal{V}} ),
  \qquad
  \phi \in C^2( V, {\mathcal{V}} ), 
\end{equation}
\begin{equation}
  \big(
  V \times V \ni (x,y) 
  \mapsto
  \big( \tfrac{ \partial }{ \partial y } \psi\big)( x, y ) 
  \in L(V,{\mathcal{V}})
  \big)
  \in
  C^2( V \times V, L(V,{\mathcal{V}}) )
  , 
\end{equation}
and
\begin{equation}
  \forall \, x \in V \colon
  \big(
  V \ni y 
  \mapsto
  \big( \tfrac{ \partial^2 }{ \partial x^2 } \psi\big)( x, y ) 
  \in L^{(2)}(V,{\mathcal{V}})
  \big)
  \in
  C^1( V, L^{(2)}(V,{\mathcal{V}}) )
  .
\end{equation}
Next we use Lemma~\ref{lem:gamma.estimate} and Lemma~\ref{lem:mollified.c.delta.finite} to obtain that for all 
$ x, x_1, x_2, y, v_1, v_2, v_3 \in V $ with 
$ \max\{\|v_1\|_V, \|v_2\|_V, \|v_3\|_V\} \leq 1 $
it holds that 
\begin{equation}
\label{eq:psiB_xderivatives_begin}
\begin{split}
  \big\|
   \big(\tfrac{ \partial }{ \partial x }
   \psi\big)( x, y ) \, v_1
  \big\|_{\mathcal{V}}
&=
  \Big\|
  \smallsum\limits_{ b \in \mathbb{U} }
  u_{0,3}( t, x )
  \big(
  B^b( y ),
  B^b( y ), v_1
  \big)
  \Big\|_{\mathcal{V}}
\\ & \leq
  \tfrac{
    c_{ -\nicefrac{ \vartheta }{ 2 }, -\nicefrac{ \vartheta }{ 2 }, 0 }
  }{
  ( T - t )^{ \vartheta }
  } \,
  \| B( y ) \|^2_{ \gamma( U, V_{ -\nicefrac{ \vartheta }{ 2 } } ) },
\end{split}
\end{equation}
\begin{equation}
\begin{split}
  \big\|
   \big(\tfrac{ \partial^2 }{ \partial x^2 }
   \psi\big)( x, y ) \, ( v_1, v_2 )
  \big\|_{\mathcal{V}}
&=
  \Big\|
  \smallsum\limits_{ b \in \mathbb{U} }
  u_{0,4}( t, x )
  \big(
  B^b( y ),
  B^b( y ), v_1, v_2
  \big)
  \Big\|_{\mathcal{V}}
\\ & \leq
  \tfrac{
    c_{ -\nicefrac{ \vartheta }{ 2 }, -\nicefrac{ \vartheta }{ 2 }, 0, 0 }
  }{
  ( T - t )^{ \vartheta }
  } \,
  \| B( y ) \|^2_{ \gamma( U, V_{ -\nicefrac{ \vartheta }{ 2 } } ) },
\end{split}
\end{equation}
\begin{equation}
\label{eq:psiB_xderivatives_end}
\begin{split}
&
  \big\|
   \big(\tfrac{ \partial^2 }{ \partial x^2 }
   \psi\big)( x_1, y ) \, ( v_1, v_2 )
   -
   \big(\tfrac{ \partial^2 }{ \partial x^2 }
   \psi\big)( x_2, y ) \, ( v_1, v_2 )
  \big\|_{\mathcal{V}}
\\ & =
  \Big\|
  \smallsum\limits_{ b \in \mathbb{U} }
  \big(
    u_{0,4}( t, x_1 ) - u_{0,4}( t, x_2 )
  \big)
  \big(
  B^b( y ),
  B^b( y ), v_1, v_2
  \big)
  \Big\|_{\mathcal{V}}
\\ & \leq
  \tfrac{
    \tilde{c}_{ -\nicefrac{ \vartheta }{ 2 }, -\nicefrac{ \vartheta }{ 2 }, 0, 0 } \,
    \| x_1 - x_2 \|_{\mathcal{V}}
  }{
  ( T - t )^{ \vartheta }
  } \,
  \| B( y ) \|^2_{ \gamma( U, V_{ -\nicefrac{ \vartheta }{ 2 } } ) },
\end{split}
\end{equation}
\begin{equation}
\label{eq:psiB_yderivatives_begin}
\begin{split}
  \big\|
   \big(\tfrac{ \partial }{ \partial y }
   \psi\big)( x, y ) \, v_1
  \big\|_{\mathcal{V}}
&=
  2 \,
  \Big\|
  \smallsum\limits_{ b \in \mathbb{U} }
  u_{0,2}( t, x )
  \big(
  B^b( y ),
  ( B^b )'( y ) \, v_1
  \big)
  \Big\|_{\mathcal{V}}
\\ & \leq
  \tfrac{
    2 \, c_{ -\nicefrac{ \vartheta }{ 2 }, -\nicefrac{ \vartheta }{ 2 } }
  }{
  ( T - t )^{ \vartheta }
  } \,
  \| B( y ) \|_{ \gamma( U, V_{ -\nicefrac{ \vartheta }{ 2 } } ) } \,
  \| B'( y ) \|_{ L( V, \gamma( U, V_{ -\nicefrac{ \vartheta }{ 2 } } ) ) },
\end{split}
\end{equation}
\begin{equation}
\begin{split}
&
  \big\|
   \big(\tfrac{ \partial^2 }{ \partial y^2 }
   \psi\big)( x, y )( v_1, v_2 )
  \big\|_{\mathcal{V}}
\\ & =
  2 \,
  \Big\|
  \smallsum\limits_{ b \in \mathbb{U} }
  u_{0,2}( t, x )
  \big(
  ( B^b )'( y ) \, v_1,
  ( B^b )'( y ) \, v_2
  \big)
  +
    u_{0,2}( t, x )
    \big(
    B^b( y ),
    ( B^b )''( y )( v_1, v_2 )
    \big)
  \Big\|_{\mathcal{V}}
\\ & \leq
  \tfrac{
    2 \, c_{ -\nicefrac{ \vartheta }{ 2 }, -\nicefrac{ \vartheta }{ 2 } }
  }{
  ( T - t )^{ \vartheta }
  }
  \,
  \big(
  \| B'( y ) \|^2_{ L( V, \gamma( U, V_{ -\nicefrac{ \vartheta }{ 2 } } ) ) }
  +
  \| B( y ) \|_{ \gamma( U, V_{ -\nicefrac{ \vartheta }{ 2 } } ) } \,
  \| B''( y ) \|_{ L^{(2)}( V, \gamma( U, V_{ -\nicefrac{ \vartheta }{ 2 } } ) ) }
  \big)
  ,
\end{split}
\end{equation}
\begin{equation}
\label{eq:psiB_yderivatives_end}
\begin{split}
&
  \big\|
   \big(\tfrac{ \partial^3 }{ \partial y^3 }
   \psi\big)( x, y )( v_1, v_2, v_3 )
  \big\|_{\mathcal{V}}
\\ & =
  2 \,
  \Big\|
  \smallsum\limits_{ b \in \mathbb{U} }
  u_{0,2}( t, x )
  \big(
  ( B^b )'( y ) \, v_2,
  ( B^b )''( y ) ( v_1, v_3 )
  \big)
\\ & \quad
  +
  u_{0,2}( t, x )
  \big(
  ( B^b )'( y ) \, v_1,
  ( B^b )''( y ) ( v_2, v_3 )
  \big)
\\ & \quad
    +
    u_{0,2}( t, x )
    \big(
    ( B^b )'( y ) \, v_3,
    ( B^b )''( y )( v_1, v_2 )
    \big)
\\ & \quad
    +
    u_{0,2}( t, x )
    \big(
    B^b( y ),
    ( B^b )^{(3)}( y )( v_1, v_2, v_3 )
    \big)
  \Big\|_{\mathcal{V}}
\\ & \leq
  \tfrac{
    2 \, c_{ -\nicefrac{ \vartheta }{ 2 }, -\nicefrac{ \vartheta }{ 2 } }
  }{
  ( T - t )^{ \vartheta }
  }
  \,
  \big(
  3 \, \| B'( y ) \|_{ L( V, \gamma( U, V_{ -\nicefrac{ \vartheta }{ 2 } } ) ) } \,
  \| B''( y ) \|_{ L^{(2)}( V, \gamma( U, V_{ -\nicefrac{ \vartheta }{ 2 } } ) ) }
\\ & \quad
  +
  \| B( y ) \|_{ \gamma( U, V_{ -\nicefrac{ \vartheta }{ 2 } } ) } \,
  \| B^{(3)}( y ) \|_{ L^{(3)}( V, \gamma( U, V_{ -\nicefrac{ \vartheta }{ 2 } } ) ) }
  \big)
  ,
\end{split}
\end{equation}
\begin{equation}
\label{eq:psiB_mixedderivatives_begin}
\begin{split}
  \big\|
   \big(\tfrac{ \partial^2 }{ \partial x \partial y }
   \psi\big)( x, y )( v_1, v_2 )
  \big\|_{\mathcal{V}}
&=
  2\,
  \Big\|
  \smallsum\limits_{ b \in \mathbb{U} }
  u_{0,3}( t, x )
  \big(
  B^b( y ),
  ( B^b )'( y ) \, v_1,
  v_2
  \big)
  \Big\|_{\mathcal{V}}
\\ & \leq
  \tfrac{
    2 \, c_{ -\nicefrac{ \vartheta }{ 2 }, -\nicefrac{ \vartheta }{ 2 }, 0 }
  }{
  ( T - t )^{ \vartheta }
  }
  \,
  \| B( y ) \|_{ \gamma( U, V_{ -\nicefrac{ \vartheta }{ 2 } } ) } \,
  \| B'( y ) \|_{ L( V, \gamma( U, V_{ -\nicefrac{ \vartheta }{ 2 } } ) ) },
\end{split}
\end{equation}
\begin{equation}
\label{eq:psiB_mixedderivatives_middle}
\begin{split}
  \big\|
   \big(\tfrac{ \partial^3 }{ \partial x^2 \partial y }
   \psi\big)( x, y )( v_1, v_2, v_3 )
  \big\|_{\mathcal{V}}
& =
  2\, 
  \Big\|
  \smallsum\limits_{ b \in \mathbb{U} }
  u_{0,4}( t, x )
  \big(
  B^b( y ),
  ( B^b )'( y ) \, v_1,
  v_2, v_3
  \big)
  \Big\|_{\mathcal{V}}
\\ & \leq
  \tfrac{
    2 \, c_{ -\nicefrac{ \vartheta }{ 2 }, -\nicefrac{ \vartheta }{ 2 }, 0, 0 }
  }{
  ( T - t )^{ \vartheta }
  } \,
  \,
  \| B( y ) \|_{ \gamma( U, V_{ -\nicefrac{ \vartheta }{ 2 } } ) } \,
  \| B'( y ) \|_{ L( V, \gamma( U, V_{ -\nicefrac{ \vartheta }{ 2 } } ) ) },
\end{split}
\end{equation}
\begin{equation}
\label{eq:derivative.yxx}
  \big\|
  \big(\tfrac{ \partial^3 }{ \partial y \partial x^2 }
  \psi\big)( x, y )( v_1, v_2, v_3 )
  \big\|_{\mathcal{V}}
  \leq
  \tfrac{
  	2 \, c_{ -\nicefrac{ \vartheta }{ 2 }, -\nicefrac{ \vartheta }{ 2 }, 0, 0 }
  }{
  ( T - t )^{ \vartheta }
} \,
\,
\| B( y ) \|_{ \gamma( U, V_{ -\nicefrac{ \vartheta }{ 2 } } ) } \,
\| B'( y ) \|_{ L( V, \gamma( U, V_{ -\nicefrac{ \vartheta }{ 2 } } ) ) },
\end{equation}
\begin{equation}
\begin{split}
\label{eq:psiB_mixedderivatives_end}
&
  \big\|
   \big(\tfrac{ \partial^3 }{ \partial x \partial y^2 }
   \psi\big)( x, y )( v_1, v_2, v_3 )
  \big\|_{\mathcal{V}}
\\ & =
  2\,
  \Big\|
  \smallsum\limits_{ b \in \mathbb{U} }
  u_{0,3}( t, x )
  \big(
  ( B^b )'( y ) \, v_1,
  ( B^b )'( y ) \, v_2,
  v_3
  \big)
+
  u_{0,3}( t, x )
  \big(
  B^b( y ),
  ( B^b )''( y )( v_1, v_2 ),
  v_3
  \big)
  \Big\|_{\mathcal{V}}
\\ & \leq
  \tfrac{
    2 \, c_{ -\nicefrac{ \vartheta }{ 2 }, -\nicefrac{ \vartheta }{ 2 }, 0 }
  }{
  ( T - t )^{ \vartheta }
  }
  \,
  \big(
  \| B'( y ) \|^2_{ L( V, \gamma( U, V_{ -\nicefrac{ \vartheta }{ 2 } } ) ) }
  +
  \| B( y ) \|_{ \gamma( U, V_{ -\nicefrac{ \vartheta }{ 2 } } ) }
  \,
  \| B''( y ) \|_{ L^{(2)}( V, \gamma( U, V_{ -\nicefrac{ \vartheta }{ 2 } } ) ) }
  \big)
  ,
\end{split}
\end{equation}
and 
\begin{equation}
\label{eq:derivative.yxy}
\begin{split}
& 
  \big\|
  \big(\tfrac{ \partial^3 }{ \partial y \partial x \partial y }
  \psi\big)( x, y )( v_1, v_2, v_3 )
  \big\|_{\mathcal{V}}
\\ & \leq
\tfrac{
	2 \, c_{ -\nicefrac{ \vartheta }{ 2 }, -\nicefrac{ \vartheta }{ 2 }, 0 }
}{
( T - t )^{ \vartheta }
}
\,
\big(
\| B'( y ) \|^2_{ L( V, \gamma( U, V_{ -\nicefrac{ \vartheta }{ 2 } } ) ) }
+
\| B( y ) \|_{ \gamma( U, V_{ -\nicefrac{ \vartheta }{ 2 } } ) }
\,
\| B''( y ) \|_{ L^{(2)}( V, \gamma( U, V_{ -\nicefrac{ \vartheta }{ 2 } } ) ) }
\big)
.
\end{split}
\end{equation}
Combining \eqref{eq:psiB_xderivatives_begin}--\eqref{eq:psiB_xderivatives_end}, \eqref{eq:psiB_mixedderivatives_begin}, \eqref{eq:psiB_mixedderivatives_middle}, 
and \eqref{eq:psiB_mixedderivatives_end}
with Lemma~\ref{lem:mollified.c.delta.finite} and the fundamental theorem of calculus in Banach spaces proves~\eqref{eq:psiB_Lipschitzx}. 
Moreover, combining \eqref{eq:psiB_yderivatives_begin}--\eqref{eq:psiB_mixedderivatives_begin}, \eqref{eq:derivative.yxx}, and~\eqref{eq:derivative.yxy} with item~\eqref{item:mollified.c.delta.finite} of Lemma~\ref{lem:mollified.c.delta.finite} and the fundamental theorem of calculus in Banach spaces establishes~\eqref{eq:psiB_Lipschitzy}. 
It thus remains to prove~\eqref{eq:psiB_Lipschitzxx}.
For this we observe that 
\eqref{eq:psiB_xderivatives_begin}--\eqref{eq:psiB_mixedderivatives_end} ensure that 
for all 
$ x, v_1, v_2, v_3 \in V $ with 
$ \max\{\|v_1\|_V, \|v_2\|_V, \|v_3\|_V\} \leq 1 $
it holds that 
\begin{equation}
\label{eq:psiB_xxderivatives_begin}
\begin{split}
&
  \big\|
   \phi'( x ) \, v_1
  \big\|_{\mathcal{V}}
  \leq
  \big\|
   \big(\tfrac{ \partial }{ \partial x }
   \psi\big)( x, x ) \, v_1
  \big\|_{\mathcal{V}}
  +
  \big\|
   \big(\tfrac{ \partial }{ \partial y }
   \psi\big)( x, x ) \, v_1
  \big\|_{\mathcal{V}}
\\ & \leq
  \tfrac{
    c_{ -\nicefrac{ \vartheta }{ 2 }, -\nicefrac{ \vartheta }{ 2 }, 0 } \,
    \| B( x ) \|^2_{ \gamma( U, V_{ -\nicefrac{ \vartheta }{ 2 } } ) }
    +
    2 \,
    c_{ -\nicefrac{ \vartheta }{ 2 }, -\nicefrac{ \vartheta }{ 2 } } \,
    \| B( x ) \|_{ \gamma( U, V_{ -\nicefrac{ \vartheta }{ 2 } } ) } \,
    \| B'( x ) \|_{ L( V, \gamma( U, V_{ -\nicefrac{ \vartheta }{ 2 } } ) ) }
  }{
  ( T - t )^{ \vartheta }
  }
\\ & \leq
  \tfrac{
    2 \,
    [ c_{ -\nicefrac{ \vartheta }{ 2 }, -\nicefrac{ \vartheta }{ 2 } }
    +
    c_{ -\nicefrac{ \vartheta }{ 2 }, -\nicefrac{ \vartheta }{ 2 }, 0 } ]
  }{
  ( T - t )^{ \vartheta }
  }
  \,
  \| B \|^2_{ C^1_b( V, \gamma( U, V_{ -\nicefrac{ \vartheta }{ 2 } } ) ) } \,
  \max\{ 1, \| x \|^2_V \}
\end{split}
\end{equation}
and 
\begin{equation}
\label{eq:psiB_xxderivatives_end}
\begin{split}
&
  \big\|
   \phi''( x ) \, ( v_1, v_2 )
  \big\|_{\mathcal{V}}
\\ & \leq
  \big\|
   \big(\tfrac{ \partial^2 }{ \partial x^2 }
   \psi\big)( x, x ) \, ( v_1, v_2 )
  \big\|_{\mathcal{V}}
  +
  2 \,
  \big\|
   \big(\tfrac{ \partial^2 }{ \partial x \partial y }
   \psi\big)( x, x ) \, ( v_1, v_2 )
  \big\|_{\mathcal{V}}
  +
  \big\|
   \big(\tfrac{ \partial^2 }{ \partial y^2 }
   \psi\big)( x, x ) \, ( v_1, v_2 )
  \big\|_{\mathcal{V}}
\\ & \leq
  \tfrac{
    c_{ -\nicefrac{ \vartheta }{ 2 }, -\nicefrac{ \vartheta }{ 2 }, 0, 0 } \,
    \| B( x ) \|^2_{ \gamma( U, V_{ -\nicefrac{ \vartheta }{ 2 } } ) }
    +
    4 \,
    c_{ -\nicefrac{ \vartheta }{ 2 }, -\nicefrac{ \vartheta }{ 2 }, 0 } \,
    \| B( x ) \|_{ \gamma( U, V_{ -\nicefrac{ \vartheta }{ 2 } } ) } \,
    \| B'( x ) \|_{ L( V, \gamma( U, V_{ -\nicefrac{ \vartheta }{ 2 } } ) ) }
  }{
  ( T - t )^{ \vartheta }
  }
\\ & \quad +
  \tfrac{
    2 \,
    c_{ -\nicefrac{ \vartheta }{ 2 }, -\nicefrac{ \vartheta }{ 2 } } \,
  \big(
  \| B'( x ) \|^2_{ L( V, \gamma( U, V_{ -\nicefrac{ \vartheta }{ 2 } } ) ) }
  +
  \| B( x ) \|_{ \gamma( U, V_{ -\nicefrac{ \vartheta }{ 2 } } ) } \,
  \| B''( x ) \|_{ L^{(2)}( V, \gamma( U, V_{ -\nicefrac{ \vartheta }{ 2 } } ) ) }
  \big)
  }{
  ( T - t )^{ \vartheta }
  }
\\ & \leq
  \tfrac{
    4 \,
    [
    c_{ -\nicefrac{ \vartheta }{ 2 }, -\nicefrac{ \vartheta }{ 2 } }
    +
    c_{ -\nicefrac{ \vartheta }{ 2 }, -\nicefrac{ \vartheta }{ 2 }, 0 } 
    +
    c_{ -\nicefrac{ \vartheta }{ 2 }, -\nicefrac{ \vartheta }{ 2 }, 0, 0 }
    ]
  }{
  ( T - t )^{ \vartheta }
  }
  \,
  \| B \|^2_{ C^2_b( V, \gamma( U, V_{ -\nicefrac{ \vartheta }{ 2 } } ) ) } \,
  \max\{ 1, \| x \|^2_V \}
  .
\end{split}
\end{equation}
In the next step we observe that 
\eqref{eq:psiB_yderivatives_end}, \eqref{eq:psiB_mixedderivatives_middle}, 
\eqref{eq:psiB_mixedderivatives_end},
\eqref{eq:derivative.yxy}, 
and the fact that
$
  \big(
    V \ni x
    \mapsto
   \phi''( x )
   -
   \big(\tfrac{ \partial^2 }{ \partial x^2 }
   \psi\big)( x, x )
   \in L^{(2)}( V, {\mathcal{V}} )
  \big)
  \in C^1( V, L^{(2)}( V, {\mathcal{V}} ) )
$ 
show that for all $ x, x_1, x_2, v_1, v_2, v_3 \in V $ with 
$
  \max\{\|v_1\|_V, \|v_2\|_V,
  \|v_3\|_V\}
  \leq 1
$
it holds that 
\begin{equation}
\begin{split}
\label{eq:psiB_2ndxxLipschitz_begin}
&
  \big\|
   \tfrac{ \partial }{ \partial x }
   \big(
   \phi''( x )
   -
   \big(\tfrac{ \partial^2 }{ \partial x^2 }
   \psi\big)( x, x )
   \big)
   ( v_1, v_2, v_3 )
  \big\|_{\mathcal{V}}
\leq
  2 \,
  \big\|
   \big(\tfrac{ \partial^3 }{ \partial x^2 \partial y }
   \psi\big)( x, x ) \, ( v_1, v_2, v_3 )
  \big\|_{\mathcal{V}}
\\&+
  2 \,
  \big\|
  \big(\tfrac{ \partial^3 }{ \partial y \partial x \partial y }
  \psi\big)( x, x ) \, ( v_1, v_2, v_3 )
  \big\|_{\mathcal{V}}
+
  \big\|
   \big(\tfrac{ \partial^3 }{ \partial x \partial y^2 }
   \psi\big)( x, x ) \, ( v_1, v_2, v_3 )
  \big\|_{\mathcal{V}}
\\&+
  \big\|
   \big(\tfrac{ \partial^3 }{ \partial y^3 }
   \psi\big)( x, x ) \, ( v_1, v_2, v_3 )
  \big\|_{\mathcal{V}}
\\ & \leq
  \tfrac{
    4 \, c_{ -\nicefrac{ \vartheta }{ 2 }, -\nicefrac{ \vartheta }{ 2 }, 0, 0 } \,
    \| B( x ) \|_{ \gamma( U, V_{ -\nicefrac{ \vartheta }{ 2 } } ) } \,
    \| B'( x ) \|_{ L( V, \gamma( U, V_{ -\nicefrac{ \vartheta }{ 2 } } ) ) }
  }{
  ( T - t )^{ \vartheta }
  }
\\ & +
  \tfrac{
    6 \,
    c_{ -\nicefrac{ \vartheta }{ 2 }, -\nicefrac{ \vartheta }{ 2 }, 0 } \,
  \big(
  \| B'( x ) \|^2_{ L( V, \gamma( U, V_{ -\nicefrac{ \vartheta }{ 2 } } ) ) }
  +
  \| B( x ) \|_{ \gamma( U, V_{ -\nicefrac{ \vartheta }{ 2 } } ) }
  \,
  \| B''( x ) \|_{ L^{(2)}( V, \gamma( U, V_{ -\nicefrac{ \vartheta }{ 2 } } ) ) }
  \big)
  }{
  ( T - t )^{ \vartheta }
  }
\\ & +
  \tfrac{
    6 \, c_{ -\nicefrac{ \vartheta }{ 2 }, -\nicefrac{ \vartheta }{ 2 } }
    \,
    \big(
    \| B'( x ) \|_{ L( V, \gamma( U, V_{ -\nicefrac{ \vartheta }{ 2 } } ) ) } \,
    \| B''( x ) \|_{ L^{(2)}( V, \gamma( U, V_{ -\nicefrac{ \vartheta }{ 2 } } ) ) }
    +
    \| B( x ) \|_{ \gamma( U, V_{ -\nicefrac{ \vartheta }{ 2 } } ) } \,
    \| B^{(3)}( x ) \|_{ L^{(3)}( V, \gamma( U, V_{ -\nicefrac{ \vartheta }{ 2 } } ) ) }
    \big)
  }{
  ( T - t )^{ \vartheta }
  }
\\ & \leq
  \tfrac{
    6 \,
    [
    c_{ -\nicefrac{ \vartheta }{ 2 }, -\nicefrac{ \vartheta }{ 2 } }
    +
    c_{ -\nicefrac{ \vartheta }{ 2 }, -\nicefrac{ \vartheta }{ 2 }, 0 } 
    +
    c_{ -\nicefrac{ \vartheta }{ 2 }, -\nicefrac{ \vartheta }{ 2 }, 0, 0 }
    ]
  }{
  ( T - t )^{ \vartheta }
  }
  \,
  \| B \|^2_{ C^3_b( V, \gamma( U, V_{ -\nicefrac{ \vartheta }{ 2 } } ) ) } \,
  \max\{ 1, \| x \|_V \}
  .
\end{split}
\end{equation}
In addition, we combine~\eqref{eq:psiB_xderivatives_end} 
and~\eqref{eq:derivative.yxx}
with item~\eqref{item:mollified.c.delta.finite} of Lemma~\ref{lem:mollified.c.delta.finite} and the fundamental theorem of calculus in Banach spaces 
to obtain that for all $ x_1, x_2, v_1, v_2 \in V $ with 
$
  \max\{\|v_1\|_V, \|v_2\|_V\}
  \leq 1
$
it holds that 
\begin{equation}
\label{eq:psiB_2ndxxLipschitz_end}
\begin{split}
&
  \big\|
   \big(
   \big(\tfrac{ \partial^2 }{ \partial x^2 }
   \psi\big)( x_1, x_1 )
   -
   \big(\tfrac{ \partial^2 }{ \partial x^2 }
   \psi\big)( x_2, x_2 )
   \big)
   ( v_1, v_2 )
  \big\|_{\mathcal{V}}
\\ & \leq
  \big\|
   \big(
   \big(\tfrac{ \partial^2 }{ \partial x^2 }
   \psi\big)( x_1, x_1 )
   -
   \big(\tfrac{ \partial^2 }{ \partial x^2 }
   \psi\big)( x_2, x_1 )
   \big)
   ( v_1, v_2 )
  \big\|_{\mathcal{V}}
\\ & 
  +
  \big\|
   \big(
   \big(\tfrac{ \partial^2 }{ \partial x^2 }
   \psi\big)( x_2, x_1 )
   -
   \big(\tfrac{ \partial^2 }{ \partial x^2 }
   \psi\big)( x_2, x_2 )
   \big)
   ( v_1, v_2 )
  \big\|_{\mathcal{V}}
\\ & \leq
  \tfrac{
    \tilde{c}_{ -\nicefrac{ \vartheta }{ 2 }, -\nicefrac{ \vartheta }{ 2 }, 0, 0 } \,
    \| x_1 - x_2 \|_V
  }{
  ( T - t )^{ \vartheta }
  } \,
  \| B( x_1 ) \|^2_{ \gamma( U, V_{ -\nicefrac{ \vartheta }{ 2 } } ) }
\\ & 
  +
  \tfrac{
    2 \,
    c_{ -\nicefrac{ \vartheta }{ 2 }, -\nicefrac{ \vartheta }{ 2 }, 0, 0 } \,
    \| x_1 - x_2 \|_V
  }{
    ( T - t )^{ \vartheta }
  } 
  \,
  \| B \|^2_{ 
    C^1_b( V, \gamma( U, V_{ -\nicefrac{ \vartheta }{ 2 } } ) ) 
  } 
  \,
  \max\{ 1, \| x_1 \|_V, \| x_2 \|_V \}
\\ & \leq
  \tfrac{
    2 \,
    \| x_1 - x_2 \|_V
  }{
  ( T - t )^{ \vartheta }
  }
  \,
  \| B \|^2_{ C^1_b( V, \gamma( U, V_{ -\nicefrac{ \vartheta }{ 2 } } ) ) } \,
  \big[
    c_{ -\nicefrac{ \vartheta }{ 2 }, -\nicefrac{ \vartheta }{ 2 }, 0, 0 } 
    +
    \tilde{c}_{ -\nicefrac{ \vartheta }{ 2 }, -\nicefrac{ \vartheta }{ 2 }, 0, 0 }
  \big]
  \max\{ 1, \| x_1 \|^2_V, \| x_2 \|^2_V \}
  .
\end{split}
\end{equation}
Combining~\eqref{eq:psiB_xxderivatives_begin}--\eqref{eq:psiB_2ndxxLipschitz_end} with Lemma~\ref{lem:mollified.c.delta.finite} and the fundamental theorem of calculus in Banach spaces 
finally yields~\eqref{eq:psiB_Lipschitzxx}. 
The proof of Lemma~\ref{lem:weak_regularity_B_2nd_term} is thus completed.
\end{proof}

\begin{lemma}[Weak convergence of semilinear integrated exponential Euler
approximations of SPDEs with mollified nonlinearities]
\label{lem:mollified_weak_solution-integrated_num}
Assume the setting in Section~\ref{sec:setting_euler_integrated_mollified} 
and let $ \rho \in [ 0, 1 - \vartheta ) $. 
Then it holds that 
$
  \ES\big[
  \| \varphi(X_T) \|_{\mathcal{V}}
  +
  \| \varphi(\bar{Y}_T) \|_{\mathcal{V}}
  \big]
  < \infty
$ 
and 
\begin{equation}
\label{eq:mollified_weak_solution-integrated_num}
\begin{split}
&
  \left\|
    \ES\big[
      \varphi( X_T )
    \big]
    -
    \ES\big[
      \varphi( \bar{Y}_T )
    \big]
  \right\|_{\mathcal{V}}
\leq
  \tfrac{
    5 \, |\SGchi{0}|^3 \, | \SGchi{\rho} |^2 \, 
    T^{ ( 1 - \vartheta - \rho ) }
  }{ 
    ( 1 - \vartheta - \rho ) 
  } \, 
  \varsigma_{ F, B } \, K_5 \, h^\rho \,
\\ & \cdot
  \Big[
  c_{ -\vartheta } + c_{ -\vartheta, 0 }
  + c_{ -\vartheta, 0, 0 } + c_{ -\vartheta, 0, 0, 0 }
  +
  c_{ -\nicefrac{ \vartheta }{ 2 }, -\nicefrac{ \vartheta }{ 2 } }
  +
  c_{ -\nicefrac{ \vartheta }{ 2 }, -\nicefrac{ \vartheta }{ 2 }, 0 }
  +
  c_{ -\nicefrac{ \vartheta }{ 2 }, -\nicefrac{ \vartheta }{ 2 }, 0, 0 }
  +
  \tilde{c}_{ -\nicefrac{ \vartheta }{ 2 }, -\nicefrac{ \vartheta }{ 2 }, 0, 0 }
  \Big]
\\ & \cdot
  \Bigg[
    2^\rho
    +
    \tfrac{ T^{ ( 1 - \vartheta ) } }{ ( 1 - \vartheta - \rho ) }
  \bigg(
       3 \, \SGchi{\vartheta} + 2 \, \SGchi{ \rho + \vartheta } + 3 \, | \SGchi{ \nicefrac{\vartheta}{2} } |^2
       +
       4 \, \SGchi{ \rho + \nicefrac{\vartheta}{2} } \, \SGchi{ \nicefrac{\vartheta}{2} } 
+
  2 \, ( | \SGchi{ \nicefrac{ \vartheta }{ 2 } } |^2 + \SGchi{\vartheta} ) \, \SGchi{\rho}
\\&\cdot
  \Big[
    \tfrac{
    \SGchi{ \vartheta } \, T^{ ( 1 - \vartheta ) } \,
    }{ ( 1 - \vartheta ) }
    +
    \tfrac{
    \BDG{4}{} \, \SGchi{ \nicefrac{\vartheta}{2} } \, 
    T^{ ( 1 - \vartheta )/2 }
    }{
    \sqrt{ 1 - \vartheta }
    }
  \Big]
  \bigg)
  \Bigg]
  < \infty
  .
\end{split}
\end{equation}
\end{lemma}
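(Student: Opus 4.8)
The plan is to combine the mild It\^{o} formula with the telescoping
Markov representation of the weak error, following the scheme of
Theorem~1.1 in~\cite{JentzenKurniawan2015arXiv}, now carried out in the
UMD Banach space setting. First I would invoke Lemma~\ref{lem:Kolmogorov}
(applicable since $F \in \operatorname{Lip}^4(V,V_2)$,
$B \in \operatorname{Lip}^4(V,\gamma(U,V_2))$, and
$\varphi \in \operatorname{Lip}^4(V,\mathcal{V})$) together with
Lemma~\ref{lem:markov.lip} to represent, for $h$ the step size and
$N = \nicefrac{\floor{T}{h}}{h}$,
\begin{equation*}
  \ES\big[\varphi(X_T)\big] - \ES\big[\varphi(\bar{Y}_T)\big]
  =
  \ES\big[u(T,\bar{Y}_T)\big] - \ES\big[u(0,\bar{Y}_0)\big]
  =
  \smallsum\limits_{k=0}^{N-1}
  \Big(
    \ES\big[u(\floor{T}{h}-kh,\bar{Y}_{kh+h})\big]
    -
    \ES\big[u(\floor{T}{h}-kh,\bar{Y}_{kh})\big]
  \Big),
\end{equation*}
plus boundary terms on $[\floor{T}{h},T]$ which are handled the same way.
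The finiteness claim
$\ES[\|\varphi(X_T)\|_{\mathcal{V}} + \|\varphi(\bar{Y}_T)\|_{\mathcal{V}}] < \infty$
follows from $\varphi \in \operatorname{Lip}^0(V,\mathcal{V})$, from
Lemma~\ref{lem:Kp_estimate} (ensuring $K_p < \infty$), and from the moment
bound $\bar{Y}_0 \in \lpn{5}{\P}{V_2}$ via Lemma~\ref{lem:markov.lip}.

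Next I would split each telescoping increment into two pieces using the
function $\bar{Y}$ (the semilinear integrated counterpart of the Euler
scheme $Y$). Writing $\psi(x,y) = u(\cdot,x)$ evaluated against the
relevant integrands, each increment
$\ES[\psi(\bar{Y}_{kh+h},Y_{kh}) - \psi(\bar{Y}_{kh},Y_{kh})]$ is a
\emph{weak temporal regularity} term controlled by
Proposition~\ref{prop:weak_temporal_regularity_1st}, while the
difference between evaluating at $Y_{kh}$ versus $\bar{Y}_{kh}$ is the
\emph{weak distance} term controlled by
Proposition~\ref{prop:weak_temporal_regularity_2nd}; here
Lemma~\ref{lem:euler_integrated_strong} supplies the $h^\varrho$-rate for
$\|Y_t - \bar{Y}_t\|_{\lpn{p}{\P}{V_{-\rho}}}$. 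The hypotheses of those
two propositions — the Lipschitz-type bounds on $\psi$ and its mixed
derivatives with the appropriate polynomial growth in $\|x\|_V$,
$\|y\|_V$ and the singularity $(T-t)^{-\vartheta}$ — are exactly what
Lemma~\ref{lem:weak_regularity_F_2nd_term} and
Lemma~\ref{lem:weak_regularity_B_2nd_term} verify for the two choices
$\psi(x,y) = u_{0,1}(t,x)F(y)$ and
$\psi(x,y) = \smallsum_{b\in\mathbb{U}} u_{0,2}(t,x)(B^b(y),B^b(y))$, with
the constants $c_{\delta_1,\ldots,\delta_k}$ and
$\tilde{c}_{\delta_1,\ldots,\delta_4}$ finite by
Lemma~\ref{lem:mollified.c.delta.finite}. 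The parameter $q$ in those
propositions is taken to be $2$, so that $q + 3 = 5 \le p$ is forced; this
explains the appearance of $K_5$ and the fifth-moment assumptions in
Section~\ref{sec:setting_euler_integrated_mollified}.

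I would then assemble the estimates: apply
Proposition~\ref{prop:weak_temporal_regularity_1st} (with
$\rho$ there taken equal to the present $\rho$, $\varsigma_{F,B}$ absorbed
into the constant, and the test function from
Lemmas~\ref{lem:weak_regularity_F_2nd_term}--\ref{lem:weak_regularity_B_2nd_term})
and Proposition~\ref{prop:weak_temporal_regularity_2nd} (contributing the
$h^\rho$ factor and, through Lemma~\ref{lem:euler_integrated_strong}, the
extra bracket involving
$\SGchi{\vartheta}\,T^{(1-\vartheta)}\|F\|_{\operatorname{Lip}^0(V,V_{-\vartheta})}$
and
$\BDG{4}{}\,\SGchi{\nicefrac{\vartheta}{2}}\,T^{(1-\vartheta)/2}\|B\|_{\operatorname{Lip}^0(V,\gamma(U,V_{-\vartheta/2}))}$),
sum the resulting bounds over $k \in \{0,1,\ldots,N-1\}$, and recognize the
Riemann sum $\smallsum_k (\floor{T}{h}-kh-kh')^{-\vartheta-\rho} h
\lesssim T^{(1-\vartheta-\rho)}/(1-\vartheta-\rho)$; the telescoped
$2^\rho t^{-\rho}$ contributions collapse to the leading $2^\rho$ term in
the final display. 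Collecting the $\SGchi{\bullet}$ factors, the constant
$\max\{|\SGchi{0}|^3,\ldots\}$, and the bracket of $c$'s yields exactly
\eqref{eq:mollified_weak_solution-integrated_num}. The main obstacle I
anticipate is bookkeeping rather than conceptual: one must correctly match
the smoothing exponents $\delta_i \in (-\nicefrac12,0]$ against
$\vartheta \in [0,\nicefrac12)$ so that all the constants
$c_{\delta_1,\ldots,\delta_k}$, $\tilde{c}_{\delta_1,\ldots,\delta_4}$
invoked (those with $-\vartheta$ and those with pairs
$-\nicefrac{\vartheta}{2}$) satisfy $\sum_i \delta_i > -\nicefrac12$, which
forces $\vartheta < \nicefrac12$ and, in the $B$-terms where two factors
contribute $-\nicefrac{\vartheta}{2}$ each, is exactly tight; and one must
ensure the moment exponents line up so that every H\"{o}lder split in
Proposition~\ref{prop:weak_temporal_regularity_2nd} stays within
$K_5 < \infty$. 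Once these constraints are checked, the assembly is a
direct application of the quoted propositions.
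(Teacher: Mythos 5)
The overall architecture you propose (Markov/Kolmogorov representation, then combine Propositions~\ref{prop:weak_temporal_regularity_1st} and~\ref{prop:weak_temporal_regularity_2nd} with the test-function regularity supplied by Lemmas~\ref{lem:weak_regularity_F_2nd_term}--\ref{lem:weak_regularity_B_2nd_term} and Lemma~\ref{lem:mollified.c.delta.finite}) is the right skeleton and matches the paper's ingredients, but two parts of your proposal contain genuine gaps.

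First, your split of the local error into \emph{two} pieces does not close. After the Markov representation and the Kolmogorov equation (item~\eqref{item:kolmogorov.eq} of Lemma~\ref{lem:Kolmogorov}), the paper arrives at
\begin{equation*}
\ES\big[\varphi(\bar{Y}_T)\big]-\ES\big[\varphi(X_T)\big]
=\int_0^T\ES\Big[u_{0,1}(t,\bar{Y}_t)\big(F(Y_{\floor{t}{h}})-F(\bar{Y}_t)\big)\Big]\,dt
+\tfrac12\smallsum_{b}\int_0^T\ES\big[\ldots\big]\,dt
\end{equation*}
(see~\eqref{eq:Kolmogorov}), and each integrand has the form $\psi(\bar{Y}_t,Y_{\floor{t}{h}})-\psi(\bar{Y}_t,\bar{Y}_t)$. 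This is decomposed into \emph{three} pieces:
$[\psi(\bar{Y}_t,Y_{\floor{t}{h}})-\psi(\bar{Y}_{\floor{t}{h}},Y_{\floor{t}{h}})]+[\psi(\bar{Y}_{\floor{t}{h}},Y_{\floor{t}{h}})-\psi(\bar{Y}_{\floor{t}{h}},\bar{Y}_{\floor{t}{h}})]+[\psi(\bar{Y}_{\floor{t}{h}},\bar{Y}_{\floor{t}{h}})-\psi(\bar{Y}_t,\bar{Y}_t)]$.
The first and third are both controlled by Proposition~\ref{prop:weak_temporal_regularity_1st}; the middle one by Proposition~\ref{prop:weak_temporal_regularity_2nd}. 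Your proposal accounts for the first and the middle but omits the third. That third piece is not an afterthought: it requires applying Proposition~\ref{prop:weak_temporal_regularity_1st} to the \emph{diagonal} test function $\phi(x)=\psi(x,x)$, and the Lipschitz control for $\phi$ is precisely the purpose of~\eqref{eq:psiF_Lipschitzxx} in Lemma~\ref{lem:weak_regularity_F_2nd_term} and~\eqref{eq:psiB_Lipschitzxx} in Lemma~\ref{lem:weak_regularity_B_2nd_term}, which your proposal never invokes. Without that third comparison the decomposition does not return you to $\psi(\bar{Y}_t,\bar{Y}_t)$ and the error representation is left unbounded.

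Second, the paper does not telescope over the grid. It applies the standard It\^{o} formula (Theorem~2.4 in Brze\'{z}niak et al.) to $u(t,\bar{Y}_t)$ over $[0,T]$ and then invokes the Kolmogorov equation to cancel the $u_{1,0}$, $u_{0,1}(\cdot)(A\bar{Y}_t+F(\bar{Y}_t))$, and $\smallsum_b u_{0,2}(\cdot)(B^b(\bar{Y}_t),B^b(\bar{Y}_t))$ contributions, leaving only the $F(Y_{\floor{t}{h}})-F(\bar{Y}_t)$ and $B^b(Y_{\floor{t}{h}})$-versus-$B^b(\bar{Y}_t)$ differences. Your discrete telescoping over $[kh,(k+1)h]$ as written does not actually telescope, because the time argument $\floor{T}{h}-kh$ of $u$ differs across summands, and to make it work one would need to reapply the Markov property of the \emph{exact} flow on each step and then It\^{o}-expand $u(t_k,\cdot)$ over the step --- which is more bookkeeping, not less, and re-derives the same comparison terms. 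The continuous route via Lemma~\ref{lem:Kolmogorov} item~\eqref{item:kolmogorov.eq} is what collapses everything cleanly to the form where the quoted propositions apply, and that step should be explicit in any complete proof.
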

\begin{proof}
We first observe that the assumption that 
$
  \sup_{ t \in [ 0, T ] }
  \ES\big[
  \| X_t \|^5_V
  \big]
  < \infty
$ 
implies that 
$
  \ES\big[
    \| \varphi(X_T) \|_{\mathcal{V}}
  \big]
$
$
  < \infty
$. 
Moreover, combining the assumption that 
$
  Y_0
  \in \lpn{5}{\P}{V_2}
$ 
with Lemma~\ref{lem:Kp_estimate} 
proves that $ K_5 < \infty $. 
This shows, in particular, that
\begin{equation}
\label{eq:moment.finite}
  \sup_{ s \in [ 0, T ] } 
  \ES\big[
    \| \varphi(\bar{Y}_T) \|_{\mathcal{V}}
    +
    \| \bar{Y}_s \|_{ V_2 } 
    +
      \int^T_0
      \|
      u_{ 0, 1 }( t, \bar{Y}_t )
      \,
      B( Y_{ \floor{t}{h} } )
      \|^2_{ \gamma(U,{\mathcal{V}}) }
      \, dt
  \big]
  < \infty
  .
\end{equation}
In addition, note that 
Kolmogorov-Chentsov's theorem,
the fact that 
$
  X_0
  \in \lpn{5}{\P}{V}
$, 
the fact that
$F\in\operatorname{Lip}^0(V,V)$, 
and the fact that
$B\in\operatorname{Lip}^0(V,\gamma(U,V))$
ensure that there exists a continuous modification of $X$.
This, Lemma~\ref{lem:markov.lip}, and the assumption that $X_0=\bar{Y}_0$ yield that
\begin{equation}
    \ES\big[ 
      \varphi( \bar{Y}_T )
    \big]
    -
    \ES\big[ 
      \varphi( X_T )
    \big]
    =
  \E\left[ 
    u( T, \bar{Y}_T )
    -
    u( 0, \bar{Y}_0 )
  \right]
  .
\end{equation}
Items~\eqref{item:time.derivative.continuity}--\eqref{item:space.derivative.continuity} of Lemma~\ref{lem:Kolmogorov}, \eqref{eq:moment.finite},
and the standard It{\^o} formula in Theorem~2.4 in
Brze\'{z}niak et al.~\cite{bvvw08}
therefore
prove that
\begin{equation}
\begin{split}
&
    \ES\big[ 
      \varphi( \bar{Y}_T )
    \big]
    -
    \ES\big[ 
      \varphi( X_T )
    \big]
    =
  \E\left[ 
    u( T, \bar{Y}_T )
    -
    u( 0, \bar{Y}_0 )
  \right]
\\ & =
  \int_0^T
  \E\left[
    u_{1,0}( t, \bar{Y}_t )
    +
    u_{0,1}( t, \bar{Y}_t )
    \!\left(
      A
      \bar{Y}_t
      +
      F(
        Y_{ \floor{t}{h} } 
      )
    \right)
  \right]
  dt
\\ & 
  +
  \frac{ 1 }{ 2 }
  \sum_{ b\in\mathbb{U} }
  \int_0^T
  \E\left[
    u_{0,2}( t, \bar{Y}_t )\!\left(
      B^b(
        Y_{ \floor{t}{h} }
      )
      ,
      B^b(
        Y_{ \floor{t}{h} }
      )
    \right)
  \right]
  dt
  .
\end{split}
\end{equation}
Item~\eqref{item:kolmogorov.eq} of Lemma~\ref{lem:Kolmogorov}
hence shows that
\begin{equation}
\label{eq:Kolmogorov}
\begin{split}
&
    \ES\big[ 
      \varphi( \bar{Y}_T )
    \big]
    -
    \ES\big[ 
      \varphi( X_T )
    \big]
\\ & =
  \int_0^T
  \E\left[
    u_{0,1}( t, \bar{Y}_t )
    \,
      F(
        Y_{ \floor{ t }{ h } } 
      )
  -
    u_{0,1}( t, \bar{Y}_t )
    \,
      F(
        \bar{Y}_t 
      )
  \right]
  dt
\\ & +
  \frac{ 1 }{ 2 }
  \sum_{ b\in\mathbb{U} }
  \int_0^T
  \E\left[
    u_{0,2}( t, \bar{Y}_t )\!\left(
      B^b\big(
        Y_{ \floor{ t }{ h } }
      \big)
      ,
      B^b\big(
        Y_{ \floor{ t }{ h } }
      \big)
    \right)
  -
    u_{0,2}( t, \bar{Y}_t )\!\left(
      B^b\big(
        \bar{Y}_t 
      \big)
      ,
      B^b\big(
        \bar{Y}_t 
      \big)
    \right)
  \right]
  dt
  .
\end{split}
\end{equation}
The triangle inequality hence shows that 
\begin{align}
\label{eq:mild_ito}
&
  \left\|
  \ES
  \big[
  \varphi( X_T )
  \big]
  -
  \ES
  \big[
  \varphi( \bar{Y}_T )
  \big]
  \right\|_{\mathcal{V}}
\nonumber
\\ & \leq
\nonumber
  \smallint_0^T
  \left\|\E\left[
    u_{0,1}( t, \bar{Y}_t )
    \,
      F(
        Y_{ \floor{ t }{ h } } 
      )
    -
    u_{0,1}( t, \bar{Y}_{ \floor{ t }{ h } } )
    \,
      F(
        Y_{ \floor{ t }{ h } } 
      )
  \right]\right\|_{\mathcal{V}}
  dt
\\ & +
\nonumber
  \smallint_0^T
  \left\|\E\left[
    u_{0,1}( t, \bar{Y}_{ \floor{ t }{ h } } )
    \,
      F(
        Y_{ \floor{ t }{ h } } 
      )
  -
    u_{0,1}( t, \bar{Y}_{ \floor{ t }{ h } } )
    \,
      F(
        \bar{Y}_{ \floor{ t }{ h } } 
      )
  \right]\right\|_{\mathcal{V}}
  dt
\\ & +
  \smallint_0^T
  \left\|\E\left[
    u_{0,1}( t, \bar{Y}_{ \floor{ t }{ h } } )
    \,
      F(
        \bar{Y}_{ \floor{ t }{ h } } 
      )
    -
    u_{0,1}( t, \bar{Y}_t )
    \,
      F(
        \bar{Y}_t 
      )
  \right]\right\|_{\mathcal{V}}
  dt
\\ & +
\nonumber
  \tfrac{ 1 }{ 2 } 
  \smallint_0^T
  \left\|\E\left[
  \,{\smallsum\limits_{ b\in\mathbb{U} }}
    u_{0,2}( t, \bar{Y}_t )\!\left(
      B^b\big(
        Y_{ \floor{ t }{ h } }
      \big)
      ,
      B^b\big(
        Y_{ \floor{ t }{ h } }
      \big)
    \right)
  -
  {\smallsum\limits_{ b\in\mathbb{U} }}
    u_{0,2}( t, \bar{Y}_{ \floor{ t }{ h } } )\!\left(
      B^b\big(
        Y_{ \floor{ t }{ h } }
      \big)
      ,
      B^b\big(
        Y_{ \floor{ t }{ h } }
      \big)
    \right)
  \right]\right\|_{\mathcal{V}}
  dt
\\ & +
\nonumber
  \tfrac{ 1 }{ 2 } 
  \smallint_0^T
  \left\|\E\left[
  \,{\smallsum\limits_{ b\in\mathbb{U} }}
    u_{0,2}( t, \bar{Y}_{ \floor{ t }{ h } } )\!\left(
      B^b\big(
        Y_{ \floor{ t }{ h } }
      \big)
      ,
      B^b\big(
        Y_{ \floor{ t }{ h } }
      \big)
    \right)
  -
  {\smallsum\limits_{ b\in\mathbb{U} }}
    u_{0,2}( t, \bar{Y}_{ \floor{ t }{ h } } )\!\left(
      B^b\big(
        \bar{Y}_{ \floor{ t }{ h } } 
      \big)
      ,
      B^b\big(
        \bar{Y}_{ \floor{ t }{ h } } 
      \big)
    \right)
  \right]\right\|_{\mathcal{V}}
  dt
\\ & +
\nonumber
  \tfrac{ 1 }{ 2 } 
  \smallint_0^T
  \left\|\E\left[
  \,{\smallsum\limits_{ b\in\mathbb{U} }}
    u_{0,2}( t, \bar{Y}_{ \floor{ t }{ h } } )\!\left(
      B^b\big(
        \bar{Y}_{ \floor{ t }{ h } }
      \big)
      ,
      B^b\big(
        \bar{Y}_{ \floor{ t }{ h } }
      \big)
    \right)
-
  {\smallsum\limits_{ b\in\mathbb{U} }}
    u_{0,2}( t, \bar{Y}_t )\!\left(
      B^b\big(
        \bar{Y}_t
      \big)
      ,
      B^b\big(
        \bar{Y}_t
      \big)
    \right)
  \right]\right\|_{\mathcal{V}}
  dt
  .
\end{align}
In the next step we combine 
Lemma~\ref{lem:weak_regularity_F_2nd_term} and Lemma~\ref{lem:weak_regularity_B_2nd_term} 
with Proposition~\ref{prop:weak_temporal_regularity_1st} 
to obtain that for all 
$ t \in ( 0, T ) $ 
it holds that 
\allowdisplaybreaks
\begin{align}
\label{eq:mild_ito_1st_term}
&
  \left\|\E\left[
    u_{0,1}( t, \bar{Y}_t )
    \,
      F(
        Y_{ \floor{ t }{ h } } 
      )
    -
    u_{0,1}( t, \bar{Y}_{ \floor{ t }{ h } } )
    \,
      F(
        Y_{ \floor{ t }{ h } } 
      )
  \right]\right\|_{\mathcal{V}}
\nonumber
\\ & +
\nonumber
  \left\|\E\left[
    u_{0,1}( t, \bar{Y}_{ \floor{ t }{ h } } )
    \,
      F(
        \bar{Y}_{ \floor{ t }{ h } } 
      )
    -
    u_{0,1}( t, \bar{Y}_t )
    \,
      F(
        \bar{Y}_t 
      )
  \right]\right\|_{\mathcal{V}}
\\ & +
\nonumber
  \tfrac{ 1 }{ 2 } 
  \left\|\E\left[
  \,{\smallsum\limits_{ b\in\mathbb{U} }}
    u_{0,2}( t, \bar{Y}_t )\!\left(
      B^b\big(
        Y_{ \floor{ t }{ h } }
      \big)
      ,
      B^b\big(
        Y_{ \floor{ t }{ h } }
      \big)
    \right)
  -
  {\smallsum\limits_{ b\in\mathbb{U} }}
    u_{0,2}( t, \bar{Y}_{ \floor{ t }{ h } } )\!\left(
      B^b\big(
        Y_{ \floor{ t }{ h } }
      \big)
      ,
      B^b\big(
        Y_{ \floor{ t }{ h } }
      \big)
    \right)
  \right]\right\|_{\mathcal{V}}
\\ & +
\nonumber
  \tfrac{ 1 }{ 2 } 
  \left\|\E\left[
  \,{\smallsum\limits_{ b\in\mathbb{U} }}
    u_{0,2}( t, \bar{Y}_{ \floor{ t }{ h } } )\!\left(
      B^b\big(
        \bar{Y}_{ \floor{ t }{ h } }
      \big)
      ,
      B^b\big(
        \bar{Y}_{ \floor{ t }{ h } }
      \big)
    \right)
  -
  {\smallsum\limits_{ b\in\mathbb{U} }}
    u_{0,2}( t, \bar{Y}_t )\!\left(
      B^b\big(
        \bar{Y}_t
      \big)
      ,
      B^b\big(
        \bar{Y}_t
      \big)
    \right)
  \right]\right\|_{\mathcal{V}}
\\ & \leq
  \tfrac{
    | \SGchi{0} |^3 \, 
    | \SGchi{\rho} |^2 \,}{ ( T - t )^\vartheta }
    \,
    K_5 
    \,
    h^\rho
    \,
  \max\!\big\{
    1,
    \|
      F
    \|_{ 
      \operatorname{Lip}^0( V, V_{-\vartheta} )
    }
    ,
    \|
      B
    \|^2_{ 
      \operatorname{Lip}^0( V, \gamma( U, V_{-\nicefrac{\vartheta}{2}} ) )
    }
  \big\}
\\ & \cdot
\nonumber
  \bigg[
  4 \,
  \big[
  c_{ -\vartheta } + c_{ -\vartheta, 0 }
  + c_{ -\vartheta, 0, 0 } + c_{ -\vartheta, 0, 0, 0 }
  \big] \,
  \| F \|_{ C^3_b( V, V_{ -\vartheta } ) }
\\ & +
\nonumber
  5 \,
  \big[
  c_{ -\nicefrac{ \vartheta }{ 2 }, -\nicefrac{ \vartheta }{ 2 } }
  +
  c_{ -\nicefrac{ \vartheta }{ 2 }, -\nicefrac{ \vartheta }{ 2 }, 0 }
  +
  c_{ -\nicefrac{ \vartheta }{ 2 }, -\nicefrac{ \vartheta }{ 2 }, 0, 0 }
  +
  \tilde{c}_{ -\nicefrac{ \vartheta }{ 2 }, -\nicefrac{ \vartheta }{ 2 }, 0, 0 }
  \big] \,
  \| B \|^2_{ C^3_b( V, \gamma( U, V_{ -\nicefrac{ \vartheta }{ 2 } } ) ) }
  \bigg]
\\ & \cdot
\nonumber
  \bigg[
    \tfrac{
      2^{ \rho }
    }{
      t^{ \rho }
    }
    +
    \tfrac{
      \left(
        2 \, 
        \SGchi{\vartheta} 
        +
        \SGchi{ \rho + \vartheta }
        +
        2 \, 
        | \SGchi{ \vartheta / 2 } |^2
        + 
        2 \,
        \SGchi{ \rho + \vartheta / 2 }
        \,
        \SGchi{ \vartheta / 2 }
      \right)
      \,
      |\floor{t}{h}|^{ ( 1 - \vartheta - \rho ) }
      +
      \left( 
        \SGchi{\vartheta} 
        +
        \frac{ 1 }{ 2 }
        | \SGchi{ \nicefrac{ \vartheta }{ 2 } } |^2 
      \right)
      \,
      ( t - \floor{t}{h} )^{ ( 1 - \vartheta - \rho ) }
    }{ 
      ( 1 - \vartheta - \rho ) }
  \bigg]
  .
\end{align}
In addition, we combine 
Lemma~\ref{lem:weak_regularity_F_2nd_term} and 
Lemma~\ref{lem:weak_regularity_B_2nd_term} 
with Proposition~\ref{prop:weak_temporal_regularity_2nd} 
to obtain that for all 
$ t \in ( 0, T ) $ 
it holds that 
\allowdisplaybreaks
\begin{align}
\label{eq:mild_ito_2nd_term}
\nonumber
&
  \left\|\E\left[
    u_{0,1}( t, \bar{Y}_{ \floor{ t }{ h } } )
    \,
      F(
        Y_{ \floor{ t }{ h } } 
      )
  -
    u_{0,1}( t, \bar{Y}_{ \floor{ t }{ h } } )
    \,
      F(
        \bar{Y}_{ \floor{ t }{ h } } 
      )
  \right]\right\|_{\mathcal{V}}
\\ & +
\nonumber
  \tfrac{ 1 }{ 2 } 
  \left\|\E\left[
  \,{\smallsum\limits_{ b\in\mathbb{U} }}
    u_{0,2}( t, \bar{Y}_{ \floor{ t }{ h } } )\!\left(
      B^b\big(
        Y_{ \floor{ t }{ h } }
      \big)
      ,
      B^b\big(
        Y_{ \floor{ t }{ h } }
      \big)
    \right)
  -
  {\smallsum\limits_{ b\in\mathbb{U} }}
    u_{0,2}( t, \bar{Y}_{ \floor{ t }{ h } } )\!\left(
      B^b\big(
        \bar{Y}_{ \floor{ t }{ h } } 
      \big)
      ,
      B^b\big(
        \bar{Y}_{ \floor{ t }{ h } } 
      \big)
    \right)
  \right]\right\|_{\mathcal{V}}
\\ & \leq
\nonumber
  \tfrac{
    \SGchi{0} \, \SGchi{ \rho }
  }{
    ( T - t )^{ \vartheta }
  }
  \,
    K_4 \, h^\rho \,
  \max\!\big\{
    1,
    \|
      F
    \|_{ 
      \operatorname{Lip}^0( V, V_{-\vartheta} )
    }
    ,
    \|
      B
    \|^2_{ 
      \operatorname{Lip}^0( V, \gamma( U, V_{-\nicefrac{\vartheta}{2}} ) )
    }
  \big\}
\\ & \cdot
  \max\!\big\{
    1,
    \|
      F
    \|_{ 
      \operatorname{Lip}^0( V, V_{-\vartheta} )
    }
    ,
    \|
      B
    \|_{ 
      \operatorname{Lip}^0( V, \gamma( U, V_{-\nicefrac{\vartheta}{2}} ) )
    }
  \big\}
  \,
  \Big(
  \big[
  c_{ -\vartheta } + c_{ -\vartheta, 0 }
  + c_{ -\vartheta, 0, 0 }
  \big] \,
  \| F \|_{ C^3_b( V, V_{ -\vartheta } ) }
\\ & +
\nonumber
  3 \,
  \big[
  c_{ -\nicefrac{ \vartheta }{ 2 }, -\nicefrac{ \vartheta }{ 2 } }
  +
  c_{ -\nicefrac{ \vartheta }{ 2 }, -\nicefrac{ \vartheta }{ 2 }, 0 }
  +
  c_{ -\nicefrac{ \vartheta }{ 2 }, -\nicefrac{ \vartheta }{ 2 }, 0, 0 }
  \big] \,
  \| B \|^2_{ C^3_b( V, \gamma( U, V_{ -\nicefrac{ \vartheta }{ 2 } } ) ) }
  \Big)
\\ & \cdot
\nonumber
    \tfrac{ | \floor{t}{h} |^{ ( 1 - \vartheta - \rho ) } }{ ( 1 - \vartheta - \rho ) }
  \bigg(
    \SGchi{ \rho+\vartheta } + 2 \, \SGchi{ \nicefrac{ \vartheta }{ 2 } } \, \SGchi{ \rho+\nicefrac{ \vartheta }{ 2 } }
+
  2 \, ( | \SGchi{ \nicefrac{ \vartheta }{ 2 } } |^2 + \SGchi{\vartheta} ) \, \SGchi{\rho}
  \Big[
    \tfrac{
    \SGchi{ \vartheta } \, T^{ ( 1 - \vartheta ) } \,
    }{ ( 1 - \vartheta ) }
    +
    \tfrac{
    \BDG{4}{} \, \SGchi{ \nicefrac{\vartheta}{2} } \, 
    T^{ ( 1 - \vartheta )/2 }
    }{
    \sqrt{ 1 - \vartheta }
    }
  \Big]
  \bigg)
  .
\end{align}
Combining \eqref{eq:mild_ito}--\eqref{eq:mild_ito_2nd_term} proves that 
\begin{equation}
\label{eq:mild_ito_final}
\begin{split}
&
\left\|
\ES
\big[
\varphi( X_T )
\big]
-
\ES
\big[
\varphi( \bar{Y}_T )
\big]
\right\|_{\mathcal{V}}
\leq
5 \, |\SGchi{0}|^3 \, | \SGchi{\rho} |^2 \, \varsigma_{ F, B } \, K_5 \, h^\rho \,
\smallint^T_0
\tfrac{1}{
	( T - t )^\vartheta \, t^\rho
}
\, dt
\\ & \cdot
\Big[
c_{ -\vartheta } + c_{ -\vartheta, 0 }
+ c_{ -\vartheta, 0, 0 } + c_{ -\vartheta, 0, 0, 0 }
+
c_{ -\nicefrac{ \vartheta }{ 2 }, -\nicefrac{ \vartheta }{ 2 } }
+
c_{ -\nicefrac{ \vartheta }{ 2 }, -\nicefrac{ \vartheta }{ 2 }, 0 }
+
c_{ -\nicefrac{ \vartheta }{ 2 }, -\nicefrac{ \vartheta }{ 2 }, 0, 0 }
+
\tilde{c}_{ -\nicefrac{ \vartheta }{ 2 }, -\nicefrac{ \vartheta }{ 2 }, 0, 0 }
\Big]
\\ & \cdot
\Bigg[
2^\rho
+
\tfrac{ T^{ ( 1 - \vartheta ) } }{ ( 1 - \vartheta - \rho ) }
\bigg(
3 \, \SGchi{\vartheta} + 2 \, \SGchi{ \rho + \vartheta } + 3 \, | \SGchi{ \nicefrac{\vartheta}{2} } |^2
+
4 \, \SGchi{ \rho + \nicefrac{\vartheta}{2} } \, \SGchi{ \nicefrac{\vartheta}{2} }
+
2 \, ( | \SGchi{ \nicefrac{ \vartheta }{ 2 } } |^2 + \SGchi{\vartheta} ) \, \SGchi{\rho}
\\ & \cdot
\Big[
\tfrac{
	\SGchi{ \vartheta } \, T^{ ( 1 - \vartheta ) } \,
}{ ( 1 - \vartheta ) }
+
\tfrac{
	\BDG{4}{arg2} \, \SGchi{ \nicefrac{\vartheta}{2} } \, 
	T^{ ( 1 - \vartheta )/2 }
}{
\sqrt{ 1 - \vartheta }
}
\Big]
\bigg)
\Bigg]
.
\end{split}
\end{equation}
This and the fact that for all 
$x,y\in(0,\infty)$ 
with 
$
(x-1)(y-1)
\geq 0
$
and 
$
x+y > 1
$
it holds that 
\begin{equation}
\label{eq:Beta.ub}
\int^1_0
(1-t)^{(x-1)} \,
t^{(y-1)}
\, dt
\leq
\frac{1}{(x+y-1)}
\end{equation}
establish the first inequality in~\eqref{eq:mollified_weak_solution-integrated_num}. 
The second inequality in~\eqref{eq:mollified_weak_solution-integrated_num} follows from Lemma~\ref{lem:mollified.c.delta.finite}.
The proof of Lemma~\ref{lem:mollified_weak_solution-integrated_num}
is thus completed.
\end{proof}

\subsection{Weak convergence rates for exponential Euler approximations of SPDEs with mollified nonlinearities}

The next result, Corollary~\ref{cor:weak_error_numerics-integrated},
provides a bound for the weak distance of the numerical approximation
and its semilinear integrated counterpart.
Corollary~\ref{cor:weak_error_numerics-integrated}
is an immediate consequence 
of Proposition~\ref{prop:weak_temporal_regularity_2nd}
and of Lemma~\ref{lem:mollified_weak_solution-integrated_num}.

\begin{corollary}[Weak distance between exponential Euler approximations of SPDEs 
with mollified nonlinearities and their semilinear integrated counterparts]
\label{cor:weak_error_numerics-integrated}
Assume the setting in Section~\ref{sec:setting_euler_integrated_mollified} 
and let $ \rho \in [ 0, 1 - \vartheta ) $. Then it holds that 
$
  \ES\big[
  \| \varphi(\bar{Y}_T) \|_{\mathcal{V}}
  +
  \| \varphi(Y_T) \|_{\mathcal{V}}
  \big]
  < \infty
$ 
and 
\begin{equation}
\begin{split}
&
\left\|
\ES\big[ \varphi( \bar{Y}_T ) \big]
-
\ES\big[ \varphi( Y_T ) \big]
\right\|_{\mathcal{V}}
\leq
\SGchi{ \rho } \,
\| \varphi \|_{ \operatorname{Lip}^2( V, {\mathcal{V}} ) }
\,
K_3 \, h^\rho \, \varsigma_{ F, B }
\\ & \cdot
\tfrac{ T^{ ( 1 - \vartheta - \rho ) } }{ ( 1 - \vartheta - \rho ) }
\bigg(
\SGchi{ \rho+\vartheta } + 2 \, \SGchi{ \nicefrac{ \vartheta }{ 2 } } \, \SGchi{ \rho+\nicefrac{ \vartheta }{ 2 } }
+
2 \, ( | \SGchi{ \nicefrac{ \vartheta }{ 2 } } |^2 + \SGchi{\vartheta} ) \, \SGchi{\rho}
\Big[
\tfrac{
	\SGchi{ \vartheta } \, T^{ ( 1 - \vartheta ) } \,
}{ ( 1 - \vartheta ) }
+
\tfrac{
	\BDG{3}{} \,
	\SGchi{ \nicefrac{\vartheta}{2} } \, 
	\sqrt{ T^{ ( 1 - \vartheta ) } }
}{
\sqrt{ 1 - \vartheta }
}
\Big]
\bigg)
.
\end{split}
\end{equation}
\end{corollary}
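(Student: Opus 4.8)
The plan is to derive Corollary~\ref{cor:weak_error_numerics-integrated} directly by specializing Proposition~\ref{prop:weak_temporal_regularity_2nd} to the test function built from $\varphi$ itself, in exactly the same spirit as Lemma~\ref{lem:mollified_weak_solution-integrated_num} but now comparing $\bar{Y}_T$ (the semilinear integrated counterpart) with $Y_T$ (the genuine exponential Euler approximation) rather than with the exact solution $X_T$. First I would observe that the integrability claim $\ES[\|\varphi(\bar{Y}_T)\|_{\mathcal{V}}+\|\varphi(Y_T)\|_{\mathcal{V}}]<\infty$ follows immediately from $\varphi\in\operatorname{Lip}^4(V,\mathcal{V})\subseteq\operatorname{Lip}^0(V,\mathcal{V})$ together with the moment bound $K_5<\infty$ (which holds by Lemma~\ref{lem:Kp_estimate} since $\bar{Y}_0\in\lpn{5}{\P}{V_2}\subseteq\lpn{5}{\P}{V}$), and hence $K_3\le K_5<\infty$.

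Next I would apply Proposition~\ref{prop:weak_temporal_regularity_2nd} with the particular choice $\psi(x,y)=\varphi(y)$ (constant in $x$), $\mathcal{V}$ as given, $\power=0$, $\psiC=|\varphi|_{\operatorname{Lip}^1(V,\mathcal{V})}\le\|\varphi\|_{\operatorname{Lip}^2(V,\mathcal{V})}$, and the same $\rho\in[0,1-\vartheta)$. One checks that $\psi\in C^2(V\times V,\mathcal{V})$ with $\tfrac{\partial}{\partial x}\psi=0$, $\tfrac{\partial^2}{\partial x^2}\psi=0$, $\tfrac{\partial}{\partial x\partial y}\psi=0$, so the only nonvanishing derivatives are $\tfrac{\partial}{\partial y}\psi(x,y)=\varphi'(y)$ and $\tfrac{\partial^2}{\partial y^2}\psi(x,y)=\varphi''(y)$; the hypothesis of Proposition~\ref{prop:weak_temporal_regularity_2nd} — Lipschitz continuity in $x$ of the mixed derivatives up to order two with constant $\psiC\max\{1,\|x\|^0_V,\|y_1\|^0_V,\|y_2\|^0_V\}$ — is then satisfied trivially (all $x$-dependence is zero, so the left-hand side vanishes), and we may in fact take $\psiC=\|\varphi\|_{\operatorname{Lip}^2(V,\mathcal{V})}$ to absorb the bounds on $\varphi'$ and $\varphi''$ uniformly. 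Here I must note that the setting of Section~\ref{sec:setting_weak_temporal_regularity} requires $F\in\operatorname{Lip}^0(V,V_{-\vartheta})$ and $B\in\operatorname{Lip}^0(V,\gamma(U,V_{-\vartheta/2}))$, which is met because in Section~\ref{sec:setting_euler_integrated_mollified} we have the stronger $F\in\operatorname{Lip}^4(V,V_2)$, $B\in\operatorname{Lip}^4(V,\gamma(U,V_2))$, and $V_2\hookrightarrow V\hookrightarrow V_{-\vartheta}$ continuously; moreover $\varsigma_{F,B}$ as defined in Section~\ref{sec:setting_euler_integrated_mollified} dominates the quantity $\max\{1,\|F\|_{\operatorname{Lip}^0(V,V_{-\vartheta})},\|B\|^2_{\operatorname{Lip}^0(V,\gamma(U,V_{-\vartheta/2}))}\}$ appearing in Proposition~\ref{prop:weak_temporal_regularity_2nd}.

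With these substitutions, inequality~\eqref{eq:weak_distance} of Proposition~\ref{prop:weak_temporal_regularity_2nd} reads, for $t=T$,
\begin{equation*}
  \|\E[\psi(\bar{Y}_T,Y_T)-\psi(\bar{Y}_T,\bar{Y}_T)]\|_{\mathcal{V}}
  =
  \|\E[\varphi(Y_T)-\varphi(\bar{Y}_T)]\|_{\mathcal{V}},
\end{equation*}
since $\psi(x,y)=\varphi(y)$. The right-hand side of~\eqref{eq:weak_distance} then already has the precise shape asserted in the corollary: the prefactor $\psiC|\SGchi{0}|^\power\SGchi{\rho}\varsigma_{F,B}K_{\power+3}h^\rho$ becomes $\|\varphi\|_{\operatorname{Lip}^2(V,\mathcal{V})}\SGchi{\rho}\varsigma_{F,B}K_3h^\rho$ (using $\power=0$, $|\SGchi{0}|^0=1$, $K_{\power+3}=K_3$), and the bracketed factor $\tfrac{t^{(1-\vartheta-\rho)}}{(1-\vartheta-\rho)}[\SGchi{\rho+\vartheta}+2\SGchi{\nicefrac{\vartheta}{2}}\SGchi{\rho+\nicefrac{\vartheta}{2}}+2\SGchi{\rho}(|\SGchi{\nicefrac{\vartheta}{2}}|^2+\SGchi{\vartheta})(\cdots)]$ matches the stated bound after specializing $t=T$ and replacing $\BDG{\power+3}{}=\BDG{3}{}$. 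The one point requiring a little care is the appearance of $\BDG{\power+3}{}$ in Proposition~\ref{prop:weak_temporal_regularity_2nd} versus $\BDG{3}{}$ in the corollary: with $\power=0$ these coincide, so no estimate is lost. I do not expect any genuine obstacle here — the result is, as the text says, ``an immediate consequence'' — the only mild subtlety is bookkeeping the identifications of constants ($\psiC\leftrightarrow\|\varphi\|_{\operatorname{Lip}^2}$, $\varsigma_{F,B}$ of the two settings, $K_3\le K_5<\infty$) and verifying that the hypotheses of the earlier proposition are inherited from the stronger hypotheses of Section~\ref{sec:setting_euler_integrated_mollified} via the continuous embeddings $V_2\hookrightarrow V\hookrightarrow V_{-\vartheta}$; having done that, \eqref{eq:weak_distance} yields the displayed inequality verbatim and the proof is complete.
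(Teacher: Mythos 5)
Your proposal is correct and follows precisely the route the paper indicates: apply Proposition~\ref{prop:weak_temporal_regularity_2nd} with $\psi(x,y)=\varphi(y)$, $\power=0$, $\psiC=\|\varphi\|_{\operatorname{Lip}^2(V,{\mathcal{V}})}$, and specialize $t=T$, with the integrability and the finiteness of $K_3\le K_5$ coming from Lemma~\ref{lem:Kp_estimate} exactly as in Lemma~\ref{lem:mollified_weak_solution-integrated_num}. The one step you elide is that the inner bracket of~\eqref{eq:weak_distance} still carries the explicit factors $\|F\|_{\operatorname{Lip}^0(V,V_{-\vartheta})}$ and $\|B\|_{\operatorname{Lip}^0(V,\gamma(U,V_{-\vartheta/2}))}$, so to reach the corollary's form one must absorb $\varsigma_{F,B}^{\text{Sec.5}}\cdot\max\{\|F\|_{\operatorname{Lip}^0},\|B\|_{\operatorname{Lip}^0}\}$ into the (larger) $\varsigma_{F,B}=\max\{1,\|F\|^3_{C^3_b},\|B\|^6_{C^3_b}\}$ of Section~\ref{sec:setting_euler_integrated_mollified}; this is an elementary check, so nothing is lost and the proof is complete.
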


The next result is a direct consequence of the triangle inequality,
of Corollary~\ref{cor:weak_error_numerics-integrated}, 
and of Lemma~\ref{lem:mollified_weak_solution-integrated_num}.

\begin{corollary}[Weak convergence of exponential Euler approximations of SPDEs with mollified nonlinearities]
\label{cor:mollified_weak_solution-num}
Assume the setting in Section~\ref{sec:setting_euler_integrated_mollified} and let 
$ \rho \in [ 0, 1 - \vartheta ) $. 
Then it holds that 
$
  \ES\big[
  \| \varphi(X_T) \|_{\mathcal{V}}
  +
  \| \varphi(Y_T) \|_{\mathcal{V}}
  \big]
  < \infty
$ 
and 
\begin{equation}\label{eq:mollified_weak_solution-num}
\begin{split}
&
  \left\|
  \ES
  \big[
  \varphi( X_T )
  \big]
  -
  \ES
  \big[
  \varphi( Y_T )
  \big]
  \right\|_{\mathcal{V}}
\leq
  \tfrac{
    5 \, 
    | \SGchi{0} |^3 \, 
    | \SGchi{\rho} |^2
    \max\{ 1, T^{ ( 1 - \vartheta ) } \}
  }{ 
    ( 1 - \vartheta - \rho ) \, T^\rho 
  } 
  \, \varsigma_{ F, B } \, K_5 \, h^\rho 
\\ & \cdot
  \Bigg[
    2^\rho
    +
    \tfrac{ T^{ ( 1 - \vartheta ) } }{ ( 1 - \vartheta - \rho ) }
  \bigg(
       3 \, \SGchi{\vartheta} + 2 \, \SGchi{ \rho + \vartheta } + 3 \, | \SGchi{ \nicefrac{\vartheta}{2} } |^2
       +
       4 \, \SGchi{ \rho + \nicefrac{\vartheta}{2} } \, \SGchi{ \nicefrac{\vartheta}{2} }
\\ & +
  2 \, ( | \SGchi{ \nicefrac{ \vartheta }{ 2 } } |^2 + \SGchi{\vartheta} ) \, \SGchi{\rho}
  \Big[
    \tfrac{
    \SGchi{ \vartheta } \, T^{ ( 1 - \vartheta ) } \,
    }{ ( 1 - \vartheta ) }
    +
    \tfrac{
    \max\{\BDG{3}{},\BDG{4}{}\} \, \SGchi{ \nicefrac{\vartheta}{2} } \, 
    T^{ ( 1 - \vartheta )/2 }
    }{
    \sqrt{ 1 - \vartheta }
    }
  \Big]
  \bigg)
  \Bigg]
\\ & \cdot
  \Big[
  \| \varphi \|_{ \operatorname{Lip}^2( V, {\mathcal{V}} ) }
  +
  c_{ -\vartheta } + c_{ -\vartheta, 0 }
  + c_{ -\vartheta, 0, 0 } + c_{ -\vartheta, 0, 0, 0 }
  +
  c_{ -\nicefrac{ \vartheta }{ 2 }, -\nicefrac{ \vartheta }{ 2 } }
  +
  c_{ -\nicefrac{ \vartheta }{ 2 }, -\nicefrac{ \vartheta }{ 2 }, 0 }
\\ & +
  c_{ -\nicefrac{ \vartheta }{ 2 }, -\nicefrac{ \vartheta }{ 2 }, 0, 0 }
+
  \tilde{c}_{ -\nicefrac{ \vartheta }{ 2 }, -\nicefrac{ \vartheta }{ 2 }, 0, 0 }
  \Big]
  < \infty
  .
\end{split}
\end{equation}
\end{corollary}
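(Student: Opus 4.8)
The plan is to derive Corollary~\ref{cor:mollified_weak_solution-num} purely by combining two results that have already been established, namely Corollary~\ref{cor:weak_error_numerics-integrated} (the weak distance between the exponential Euler approximation $Y$ and its semilinear integrated counterpart $\bar Y$) and Lemma~\ref{lem:mollified_weak_solution-integrated_num} (the weak distance between the mollified solution $X$ and $\bar Y$), together with the triangle inequality in $\mathcal V$. First I would record the finiteness statements: Corollary~\ref{cor:weak_error_numerics-integrated} gives $\ES[\|\varphi(\bar Y_T)\|_{\mathcal V}+\|\varphi(Y_T)\|_{\mathcal V}]<\infty$ and Lemma~\ref{lem:mollified_weak_solution-integrated_num} gives $\ES[\|\varphi(X_T)\|_{\mathcal V}+\|\varphi(\bar Y_T)\|_{\mathcal V}]<\infty$; combining these yields $\ES[\|\varphi(X_T)\|_{\mathcal V}+\|\varphi(Y_T)\|_{\mathcal V}]<\infty$, which is the first claim of the corollary.

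For the quantitative bound, the decomposition is the trivial one,
\begin{equation*}
  \left\| \ES[\varphi(X_T)] - \ES[\varphi(Y_T)] \right\|_{\mathcal V}
  \leq
  \left\| \ES[\varphi(X_T)] - \ES[\varphi(\bar Y_T)] \right\|_{\mathcal V}
  +
  \left\| \ES[\varphi(\bar Y_T)] - \ES[\varphi(Y_T)] \right\|_{\mathcal V}
  .
\end{equation*}
I would then plug in the bound from Lemma~\ref{lem:mollified_weak_solution-integrated_num} for the first term and the bound from Corollary~\ref{cor:weak_error_numerics-integrated} for the second term. Both bounds are of the form $(\text{constant depending on }T,\vartheta,\rho,\SGchi{\cdot},\BDG{\cdot}{}) \cdot \varsigma_{F,B} \cdot K_\bullet \cdot h^\rho \cdot (\text{sum of }c\text{'s and }\tilde c\text{'s})$, so the main task is bookkeeping: absorbing the two sets of constants into the single prefactor displayed in~\eqref{eq:mollified_weak_solution-num}. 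The key monotonicity facts to invoke are $K_3\le K_5$ (from Lemma~\ref{lem:Kp_estimate}, since $3\le5$ and $\max\{1,\|x\|_V^3\}\le\max\{1,\|x\|_V^5\}$), $\SGchi{\rho}\le|\SGchi{\rho}|^2$, $|\SGchi{0}|\le|\SGchi{0}|^3$, $\BDG{3}{}\le\max\{\BDG{3}{},\BDG{4}{}\}$, and $\|\varphi\|_{\operatorname{Lip}^2(V,\mathcal V)}\le\|\varphi\|_{\operatorname{Lip}^2(V,\mathcal V)}+c_{-\vartheta}+\dots$; these let me bound the Corollary~\ref{cor:weak_error_numerics-integrated} contribution by the same shape of prefactor as the Lemma~\ref{lem:mollified_weak_solution-integrated_num} contribution, at the cost of enlarging constants, and then I factor out the common $\tfrac{5\,|\SGchi{0}|^3\,|\SGchi{\rho}|^2\max\{1,T^{(1-\vartheta)}\}}{(1-\vartheta-\rho)\,T^\rho}\,\varsigma_{F,B}\,K_5\,h^\rho$. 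The $\max\{1,T^{(1-\vartheta)}\}$ and the $T^{-\rho}$ appear precisely to dominate, respectively, the polynomial-in-$T$ factors and the $2^\rho/t^\rho$-type singularity already handled inside Lemma~\ref{lem:mollified_weak_solution-integrated_num}'s proof (via the Beta-function bound~\eqref{eq:Beta.ub}), so no new integration is needed here.

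The last claim, finiteness of the right-hand side of~\eqref{eq:mollified_weak_solution-num}, is immediate from Lemma~\ref{lem:mollified.c.delta.finite}, which guarantees $c_{-\vartheta},c_{-\vartheta,0},c_{-\vartheta,0,0},c_{-\vartheta,0,0,0}<\infty$ (these use $\delta$-sums $-\vartheta>-\nicefrac12$ since $\vartheta\in[0,\nicefrac12)$) and $c_{-\nicefrac\vartheta2,-\nicefrac\vartheta2},c_{-\nicefrac\vartheta2,-\nicefrac\vartheta2,0},c_{-\nicefrac\vartheta2,-\nicefrac\vartheta2,0,0},\tilde c_{-\nicefrac\vartheta2,-\nicefrac\vartheta2,0,0}<\infty$ (these use $-\vartheta>-\nicefrac12$ as well), combined with $K_5<\infty$ from Lemma~\ref{lem:Kp_estimate} and the standing assumptions $\SGchi{r}\in[1,\infty)$, $\BDG{p}{}\in[0,\infty)$, $\varsigma_{F,B}\in\R$, $\varphi\in\operatorname{Lip}^4(V,\mathcal V)$. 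There is essentially no obstacle in this proof: it is a one-line triangle inequality followed by a constant-chasing argument; the only mild care required is matching the precise numerical factors (the $3$, $4$, $5$, and $2$ multiplying the various $\SGchi{\cdot}$ terms) so that the two summands really do fit under the single bracketed expression in~\eqref{eq:mollified_weak_solution-num}, which one checks by noting that the Corollary~\ref{cor:weak_error_numerics-integrated} bound is termwise dominated by the Lemma~\ref{lem:mollified_weak_solution-integrated_num} bound after the stated monotonicity substitutions.
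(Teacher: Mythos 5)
Your proposal is correct and takes essentially the same approach as the paper: the paper itself states that Corollary~\ref{cor:mollified_weak_solution-num} is ``a direct consequence of the triangle inequality, of Corollary~\ref{cor:weak_error_numerics-integrated}, and of Lemma~\ref{lem:mollified_weak_solution-integrated_num},'' which is precisely your decomposition via $\bar{Y}_T$ followed by constant bookkeeping using $K_3\le K_5$, $\chi_\rho\le|\chi_\rho|^2$, $1\le5|\chi_0|^3$, $T^{(1-\vartheta-\rho)}\le\max\{1,T^{(1-\vartheta)}\}/T^\rho$, and the termwise domination of the $\chi$-sums.
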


\begin{corollary}[Weak convergence of exponential Euler approximations of SPDEs with mollified nonlinearities]
\label{cor:mollified_weak_solution-num_komplete}
Assume the setting in Section~\ref{sec:setting_euler_integrated_mollified} and let 
$ \theta \in [ 0, 1 ) $, 
$ \rho \in [ 0, 1 - \vartheta ) $. 
Then it holds that 
$
  \ES\big[
  \| \varphi(X_T) \|_{\mathcal{V}}
  +
  \| \varphi(Y_T) \|_{\mathcal{V}}
  \big]
  < \infty
$ 
and 
\begin{equation}
\begin{split}
&
  \left\|
  \ES
  \big[
  \varphi( X_T )
  \big]
  -
  \ES
  \big[
  \varphi( Y_T )
  \big]
  \right\|_{\mathcal{V}}
\leq
  \tfrac{
    57 \, |\SGchi{0}|^3 \, | \SGchi{\rho} |^2 \, 
    \max\{ 
      1 , T^{ ( 1 - \vartheta ) } 
    \}
  }{ 
    ( 1 - \vartheta - \rho ) \, T^\rho 
  } 
  \, 
  \varsigma_{ F, B } 
  \, 
  h^\rho 
\\ & \cdot
  \left[
    \SGchi{0} 
    \max\{ 1, \| X_0 \|_{ \lpn{5}{\P}{V} } \}
    +
    \tfrac{
      \SGchi{\theta} \,
      T^{ ( 1 - \theta ) } \,
      \| F \|_{ \operatorname{Lip}^0( V, V_{ -\theta } ) }
    }{
      ( 1 - \theta )
    }
      +
      \BDG{5}{} \,
      \SGchi{ \nicefrac{ \theta }{ 2 } }
      \sqrt{
      \tfrac{
      T^{ ( 1 - \theta ) }
      }{
      ( 1 - \theta )
      }
      } \,
      \| B \|_{ \operatorname{Lip}^0( V, \gamma( U, V_{ -\nicefrac{\theta}{2} } ) ) }
  \right]^{ 10 }
\\ & \cdot
  \left|\mathcal{E}_{ ( 1 - \theta ) }\!\left[
    \tfrac{
      \sqrt{ 2 }
      \,
      \SGchi{ \theta }
      \,
      T^{ ( 1 - \theta ) }
      \,
      |
        F
      |_{
        \operatorname{Lip}^0( V, V_{ - \theta } )
      }
    }{
      \sqrt{1 - \theta}
    }
    +
    \BDG{5}{} \, \SGchi{ 
      \theta / 2 
    }
    \sqrt{
      2 \, T^{ ( 1 - \theta ) }
    } \,
    |
      B
    |_{
      \operatorname{Lip}^0( V, \gamma( U, V_{ - \nicefrac{\theta}{2} } ) )
    }
  \right]\right|^5
\\ & \cdot
\Bigg[
2^\rho
+
\tfrac{ T^{ ( 1 - \vartheta ) } }{ ( 1 - \vartheta - \rho ) }
\bigg(
3 \, \SGchi{\vartheta} + 2 \, \SGchi{ \rho + \vartheta } + 3 \, | \SGchi{ \nicefrac{\vartheta}{2} } |^2
+
4 \, \SGchi{ \rho + \nicefrac{\vartheta}{2} } \, \SGchi{ \nicefrac{\vartheta}{2} }
\\ & +
2 \, ( | \SGchi{ \nicefrac{ \vartheta }{ 2 } } |^2 + \SGchi{\vartheta} ) \, \SGchi{\rho}
\Big[
\tfrac{
	\SGchi{ \vartheta } \, T^{ ( 1 - \vartheta ) } \,
}{ ( 1 - \vartheta ) }
+
\tfrac{
	\max\{\BDG{3}{},\BDG{4}{}\} \, \SGchi{ \nicefrac{\vartheta}{2} } \, 
	T^{ ( 1 - \vartheta )/2 }
}{
\sqrt{ 1 - \vartheta }
}
\Big]
\bigg)
\Bigg]
\\ & \cdot
  \Big[
  \| \varphi \|_{ \operatorname{Lip}^2( V, {\mathcal{V}} ) }
  +
  c_{ -\vartheta } + c_{ -\vartheta, 0 }
  + c_{ -\vartheta, 0, 0 } + c_{ -\vartheta, 0, 0, 0 }
  +
  c_{ -\nicefrac{ \vartheta }{ 2 }, -\nicefrac{ \vartheta }{ 2 } }
  +
  c_{ -\nicefrac{ \vartheta }{ 2 }, -\nicefrac{ \vartheta }{ 2 }, 0 }
\\ & +
  c_{ -\nicefrac{ \vartheta }{ 2 }, -\nicefrac{ \vartheta }{ 2 }, 0, 0 }
+
  \tilde{c}_{ -\nicefrac{ \vartheta }{ 2 }, -\nicefrac{ \vartheta }{ 2 }, 0, 0 }
  \Big]
  < \infty
  .
\end{split}
\end{equation}
\end{corollary}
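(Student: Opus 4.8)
The plan is to derive Corollary~\ref{cor:mollified_weak_solution-num_komplete} by combining the already-established weak convergence estimate in Corollary~\ref{cor:mollified_weak_solution-num} with a uniform bound for the constant $K_5$ in terms of the quantities that appear on the right hand side of the claimed inequality. First I would invoke Corollary~\ref{cor:mollified_weak_solution-num}, which yields (with the same $\rho\in[0,1-\vartheta)$) the bound
\begin{equation*}
  \left\|
  \ES[\varphi(X_T)] - \ES[\varphi(Y_T)]
  \right\|_{\mathcal{V}}
  \leq
  \tfrac{
    5\,|\SGchi{0}|^3\,|\SGchi{\rho}|^2\max\{1,T^{(1-\vartheta)}\}
  }{(1-\vartheta-\rho)\,T^\rho}
  \,\varsigma_{F,B}\,K_5\,h^\rho
  \cdot[\cdots],
\end{equation*}
where $[\cdots]$ denotes the two bracket factors involving the $\SGchi{}$'s and the $c_{\ldots}$'s that already appear verbatim in the statement to be proved. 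So the only gap between Corollary~\ref{cor:mollified_weak_solution-num} and the present statement is that $K_5$ must be replaced by the explicit tenth-power-times-Mittag-Leffler expression multiplied by the extra numerical constant (upgrading $5$ to $57$).

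The key step, therefore, is to estimate $K_5$. For this I would apply Lemma~\ref{lem:Kp_estimate} with $p=5$ and with the parameter of that lemma chosen to be $\theta$ (the lemma's $\vartheta$ plays the role of the present $\theta\in[0,1)$, since in the setting of Section~\ref{sec:setting_euler_integrated_mollified} one has $Y_0=X_0=\bar Y_0\in\lpn{5}{\P}{V_2}\subseteq\lpn{5}{\P}{V}$ and $F\in\operatorname{Lip}^4(V,V_2)\subseteq\operatorname{Lip}^0(V,V_{-\theta})$, $B\in\operatorname{Lip}^4(V,\gamma(U,V_2))\subseteq\operatorname{Lip}^0(V,\gamma(U,V_{-\theta/2}))$). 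Lemma~\ref{lem:Kp_estimate} gives
\begin{equation*}
  K_5
  \leq
  \big[\cdots\big]^{10}\cdot
  2^{(5/2+1)}\,
  \big|\mathcal{E}_{(1-\theta)}[\cdots]\big|^5
  < \infty,
\end{equation*}
where the bracketed base is exactly $\SGchi{0}\max\{1,\|Y_0\|_{\lpn{5}{\P}{V}}\}+\tfrac{\SGchi{\theta}\|F\|_{\operatorname{Lip}^0(V,V_{-\theta})}T^{(1-\theta)}}{(1-\theta)}+\tfrac{\BDG{5}{}\,\SGchi{\nicefrac{\theta}{2}}\sqrt{T^{(1-\theta)}}\|B\|_{\operatorname{Lip}^0(V,\gamma(U,V_{-\theta/2}))}}{\sqrt{1-\theta}}$ and the Mittag-Leffler argument matches the one in the statement. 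Substituting $Y_0=X_0$ converts $\|Y_0\|_{\lpn{5}{\P}{V}}$ into $\|X_0\|_{\lpn{5}{\P}{V}}$. Plugging this bound for $K_5$ into the estimate from Corollary~\ref{cor:mollified_weak_solution-num} and absorbing the factor $2^{7/2}<12$ into the constant (so that $5\cdot 2^{7/2}<57$) yields the claimed inequality; the finiteness assertion follows because $K_5<\infty$ (from Lemma~\ref{lem:Kp_estimate}) and every $c_{\ldots},\tilde c_{\ldots}$ is finite by Lemma~\ref{lem:mollified.c.delta.finite}.

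I expect the only genuine subtlety — rather than an obstacle — to be bookkeeping: making sure that the parameter $\vartheta\in[0,\tfrac12)$ of Section~\ref{sec:setting_euler_integrated_mollified} (which governs the $c_{\ldots}$ bracket and the $\varsigma_{F,B}$ factor) is kept notationally distinct from the free parameter $\theta\in[0,1)$ used only in the $K_5$ bound, and that the embeddings $V_2\hookrightarrow V_{-\theta}$ and $\gamma(U,V_2)\hookrightarrow\gamma(U,V_{-\theta/2})$ are applied so that $\|F\|_{\operatorname{Lip}^0(V,V_{-\theta})}$ and $\|B\|_{\operatorname{Lip}^0(V,\gamma(U,V_{-\theta/2}))}$ are dominated by $\|F\|_{\operatorname{Lip}^4(V,V_2)}$ and $\|B\|_{\operatorname{Lip}^4(V,\gamma(U,V_2))}$ up to the operator norm of the relevant embedding, which is harmless for the stated (non-explicit) constant $57$. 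No new analytic input is needed: the corollary is purely a substitution of the $K_5$-bound from Lemma~\ref{lem:Kp_estimate} into Corollary~\ref{cor:mollified_weak_solution-num}, together with the finiteness statements from Lemma~\ref{lem:mollified.c.delta.finite}.
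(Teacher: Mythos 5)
Your proposal is correct and is precisely the route the paper intends: Corollary~\ref{cor:mollified_weak_solution-num_komplete} is obtained by substituting the bound $K_5 \leq [\cdots]^{10}\cdot 2^{7/2}\,|\mathcal{E}_{(1-\theta)}[\cdots]|^5$ from Lemma~\ref{lem:Kp_estimate} (applied with $p=5$ and with the lemma's regularity parameter taken to be $\theta$, which is admissible because $F\in\operatorname{Lip}^4(V,V_2)\subseteq\operatorname{Lip}^0(V,V_{-\theta})$ and $B\in\operatorname{Lip}^4(V,\gamma(U,V_2))\subseteq\operatorname{Lip}^0(V,\gamma(U,V_{-\theta/2}))$, and $Y_0=X_0\in\lpn{5}{\P}{V_2}\subseteq\lpn{5}{\P}{V}$) into the right-hand side of Corollary~\ref{cor:mollified_weak_solution-num}, and then absorbing $5\cdot 2^{7/2}=40\sqrt{2}<57$ into the numerical prefactor. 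One small remark: the norms $\|F\|_{\operatorname{Lip}^0(V,V_{-\theta})}$ and $\|B\|_{\operatorname{Lip}^0(V,\gamma(U,V_{-\theta/2}))}$ that Lemma~\ref{lem:Kp_estimate} produces appear verbatim in the statement of the corollary, so no embedding constant needs to be tracked there; the ``harmless for the (non-explicit) constant $57$'' remark is a non-issue since $57$ is in fact explicit and is accounted for exactly by the $5\cdot 2^{7/2}$ computation.
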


\section{Weak error estimates for exponential Euler approximations of SPDEs}
\label{sec:weak_convergence_irregular}

\subsection{Setting}
\label{sec:setting_weak_convergence_irregular}

Assume the setting in Section~\ref{sec:global_setting},
let
$
A \colon D(A)
\subseteq
V \rightarrow V
$
be a generator of a strongly continuous analytic semigroup
with 
$
\operatorname{spectrum}( A )
\subseteq
\{
z \in \mathbb{C}
\colon
\text{Re}( z ) < \eta
\}
$,
let
$
(
V_r
,
\left\| \cdot \right\|_{ V_r }
)
$,
$ r \in \R $,
be a family of interpolation spaces associated to
$
\eta - A
$, 
let 
$ h \in (0,T] $, 
$ \theta \in [0,1) $, 
$ \vartheta \in [ 0, \nicefrac{ 1 }{ 2 } ) \cap [ 0, \theta ] $, 
$
  F \in 
  \operatorname{Lip}^4( V , V_{ - \theta } ) 
$, 
$
  B \in 
   \operatorname{Lip}^4( 
    V, 
    \gamma( 
      U, 
      V_{ - \theta / 2 }
    ) 
  ) 
$, 
$
  \varphi \in 
   \operatorname{Lip}^4( V, {\mathcal{V}} )
$, 
let
$ \varsigma_{ F, B } \in \R $
be the real number given by 
$  
  \varsigma_{ F, B } 
  =
    \max\!\big\{
    1
    ,
    \|
      F
    \|^3_{ 
      C_b^3( V, V_{-\theta} )
    }
    ,
    \|
      B
    \|_{ 
      C_b^3( V, \gamma( U, V_{-\nicefrac{\theta}{2}} ) )
    }^6
    \big\}
$, 
let 
$
\SGchi{r}
\in [1,\infty)
$, 
$r\in[0,1]$, 
be the real numbers 
which satisfy for all 
$r\in[0,1]$
that 
$
\SGchi{r}
=
\max\{
1
,
\sup_{ t \in (0,T] }
t^r
\,
\|
( \eta - A )^r
e^{ t A }
\|_{ L( V ) }
,
\sup_{ t \in (0,T] }
t^{-r}
\,
\|
( \eta - A )^{ -r }
( e^{ t A } - \operatorname{Id}_V )
\|_{ L( V ) }
\}
$, 
let 
$
  X, Y \colon [0,T] \times \Omega \to V
$
and
$
  X^{ \kappa, x } \colon [0,T] \times \Omega \to V
$, 
$ x \in V $, 
$ \kappa \in [0,T] $, 
be 
$
  ( \mathcal{F}_t )_{ t \in [0,T] }
$-predictable stochastic processes 
which satisfy for all 
$ \kappa \in [0,T] $, 
$ x \in V $ 
that 
$
  \sup_{ t \in [0,T] }
  \big[
  \| X_t \|_{ \lpn{5}{\P}{V} } 
  +
  \| X^{ \kappa, x }_t \|_{ \lpn{5}{\P}{V} } 
  \big]
  < \infty
$, 
$
  X^{ \kappa, x }_0 = x
$, 
and 
$ Y_0 = X_0 $ 
and which satisfy that
for all 
$ \kappa \in [0,T] $, 
$ x \in V $, 
$ t \in (0,T] $ 
it holds $ \P $-a.s.\ that
\begin{equation}
  X_t
  = 
    e^{ t A } X_0 
  + 
    \int_0^t e^{ ( t - s )A }\, F( X_s ) \, ds
  + 
    \int_0^t e^{ ( t - s )A }\, B( X_s ) \, dW_s
    ,
\end{equation}
\begin{equation}
  X^{ \kappa, x }_t
  = 
    e^{ t A } \, x 
  + 
    \int_0^t e^{ ( \kappa + t - s )A }\, F( X^{ \kappa, x }_s ) \, ds
  + 
    \int_0^t e^{ ( \kappa + t - s )A }\, B( X^{ \kappa, x }_s ) \, dW_s
    ,
\end{equation}
\begin{equation}
  Y_t
  = 
    e^{tA}\, Y_0 
  + 
    \int_0^t e^{ (t-\lfloor s \rfloor_h)A }\, F( Y_{ \floor{ s }{ h } } ) \, ds
  + 
    \int_0^t e^{ (t-\lfloor s \rfloor_h)A }\, B( Y_{ \floor{ s }{ h } } ) \, dW_s 
    ,
\end{equation}
let 
$
  u^{ ( \kappa ) } \colon [0,T] \times V \to {\mathcal{V}}, 
$
$
  \kappa \in ( 0, T ],
$
be the functions 
which satisfy
for all $ \kappa \in ( 0, T ] $, $ x \in V $, $ t \in [ 0, T ] $ 
that
$
  u^{ ( \kappa ) }( t, x ) = 
  \ES\big[ 
    \varphi( X^{ \kappa, x }_{ T - t } )
  \big]
$,
let 
$
  c^{ ( \kappa ) }_{ \delta_1, \dots, \delta_k }
  \in [0,\infty]	
$,
$ \delta_1, \dots, \delta_k \in (-\nicefrac{1}{2},0] $,
$ k \in \{ 1, 2, 3, 4 \} $, 
$ \kappa \in ( 0, T ] $, 
be the extended real numbers 
which satisfy for all 
$ \kappa \in ( 0, T ] $, 
$ k \in \{ 1, 2, 3, 4 \} $,
$ \delta_1, \dots, \delta_k \in (-\nicefrac{1}{2},0] $
that
\begin{equation}
\begin{split} 
&
  c^{ ( \kappa ) }_{ \delta_1, \dots, \delta_k }
=
  \sup_{
    t \in [0,T)
  }
  \sup_{ 
    x \in V
  }
  \sup_{ 
    v_1, \dots, v_k \in V \setminus \{ 0 \}
  }
  \left[
  \frac{
    \big\|
      \big( 
        \frac{ 
          \partial^k
        }{
          \partial x^k
        }
        u^{ ( \kappa ) }
      \big)( t, x )( v_1, \dots, v_k )
    \big\|_{\mathcal{V}}
  }{
    ( T - t )^{ 
      (
        \delta_1 + \ldots + \delta_k
      ) 
    }
    \left\| v_1 \right\|_{ V_{ \delta_1 } }
    \cdot
    \ldots
    \cdot
    \left\| v_k \right\|_{ V_{ \delta_k } }
  }
  \right]
  ,
\end{split}
\end{equation}
and let 
$
  \tilde{c}^{ ( \kappa ) }_{ \delta_1, \delta_2, \delta_3, \delta_4 }
  \in [0,\infty]	
$,
$ \delta_1, \delta_2, \delta_3, \delta_4 \in (-\nicefrac{1}{2},0] $, 
$ \kappa \in ( 0, T ] $, 
be the extended real numbers 
which satisfy
for all 
$ \kappa \in ( 0, T ] $, 
$ \delta_1, \delta_2, \delta_3, \delta_4 \in (-\nicefrac{1}{2},0] $
that
\begin{equation}
\begin{split} 
&
  \tilde{c}^{ ( \kappa ) }_{ \delta_1, \delta_2, \delta_3, \delta_4 }
\\ & =
  \sup_{ t \in [ 0, T ) } \,
  \sup_{\substack{
    x_1, x_2 \in V, \\ x_1 \neq x_2
  }} \,
  \sup_{ 
    v_1, \ldots, v_4 \in V \setminus \{ 0 \}
  }
  \left[
  \frac{
    \big\|
    \big(
      \big( 
        \frac{ 
          \partial^4
        }{
          \partial x^4
        }
        u^{ ( \kappa ) }
      \big)( t, x_1 )
      -
      \big( 
        \frac{ 
          \partial^4
        }{
          \partial x^4
        }
        u^{ ( \kappa ) }
      \big)( t, x_2 )
      \big)
      ( v_1, \dots, v_4 )
    \big\|_{\mathcal{V}}
  }{
    ( T - t )^{ 
      (
        \delta_1 + \ldots + \delta_4
      ) 
    }
    \left\| x_1 - x_2 \right\|_V
    \left\| v_1 \right\|_{ V_{ \delta_1 } }
     \cdot
     \ldots
    \cdot
    \left\| v_4 \right\|_{ V_{ \delta_4 } }
  }
  \right]
\end{split}
\end{equation}
(cf.\ Lemma~\ref{lem:Kolmogorov}).

\subsection{Weak convergence result}
\label{sec:weak_convergence_irregular_result}

\begin{lemma}
	\label{lem:c.delta.finite}
	Assume the setting in Section~\ref{sec:setting_weak_convergence_irregular}. Then 
	\begin{enumerate}[(i)]
		\item 
		\label{item:c.delta.finite}
		it holds for all 
		$ k \in \{ 1, 2, 3, 4 \} $, 
		$ \delta_1, \dots, \delta_k \in (-\nicefrac{1}{2},0] $
		with
		$
		\sum^k_{i=1} \delta_i
		> -\nicefrac{1}{2}
		$ 
		that 
		$
		\sup_{\kappa\in(0,T]}
		c^{(\kappa)}_{\delta_1,\ldots,\delta_k}
		< \infty
		$
		and
		\item
		\label{item:lip.c.delta.finite}
		it holds for all 
		$ \delta_1, \dots, \delta_4 \in (-\nicefrac{1}{2},0] $
		with
		$
		\sum^4_{i=1} \delta_i
		> -\nicefrac{1}{2}
		$ 
		that 
		$
		\sup_{\kappa\in(0,T]}
		\tilde{c}^{(\kappa)}_{\delta_1,\delta_2,\delta_3,\delta_4}
		< \infty
		$. 
	\end{enumerate}
\end{lemma}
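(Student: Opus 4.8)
The plan is to reduce Lemma~\ref{lem:c.delta.finite} to the already-established regularity bounds of Lemma~\ref{lem:Kolmogorov} by exhibiting the processes $X^{\kappa,x}$ as honest (non-mollified) solutions of an associated SPDE with a shifted semigroup and shifted coefficients, uniformly in the shift parameter $\kappa\in(0,T]$. Concretely, first I would observe that for fixed $\kappa\in(0,T]$ the process $X^{\kappa,x}$ satisfies the mild equation
\begin{equation}
  X^{\kappa,x}_t
  =
  e^{tA}x
  +
  \int_0^t e^{(t-s)A}\, e^{\kappa A} F(X^{\kappa,x}_s)\, ds
  +
  \int_0^t e^{(t-s)A}\, e^{\kappa A} B(X^{\kappa,x}_s)\, dW_s,
\end{equation}
so that $X^{\kappa,x}$ is precisely the mild solution of the SPDE with generator $A$, drift $e^{\kappa A}F$, and diffusion $e^{\kappa A}B$. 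Since $e^{\kappa A}\in L(V_{-\theta},V_1)$ and $e^{\kappa A}\in L(V_{-\theta/2},V_1)$ with operator norms bounded by $\SGchi{1+\theta}\,\kappa^{-(1+\theta)}$ and $\SGchi{1+\theta/2}\,\kappa^{-(1+\theta/2)}$ respectively, we get, for each fixed $\kappa$, that $e^{\kappa A}F\in\operatorname{Lip}^4(V,V_1)$ and $e^{\kappa A}B\in\operatorname{Lip}^4(V,\gamma(U,V_1))$; hence Lemma~\ref{lem:Kolmogorov} applies to $(A,e^{\kappa A}F,e^{\kappa A}B,\varphi)$ and yields, via its items~\eqref{item:kolmogorov.c.delta}--\eqref{item:kolmogorov.lip.c.delta} together with the identity $u^{(\kappa)}(T-t,x)=\ES[\varphi(X^{\kappa,x}_t)]$, that $c^{(\kappa)}_{\delta_1,\dots,\delta_k}<\infty$ and $\tilde c^{(\kappa)}_{\delta_1,\delta_2,\delta_3,\delta_4}<\infty$ for every \emph{fixed} $\kappa$.

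The substantive point, and the main obstacle, is the \emph{uniformity} of these bounds over $\kappa\in(0,T]$: a naive application of Lemma~\ref{lem:Kolmogorov} produces constants blowing up like $\kappa^{-(1+\theta)}$ as $\kappa\searrow 0$, which is useless. To get a $\kappa$-independent bound I would not route the smoothing through the coefficients but rather keep $F,B$ in their native (negative-order) spaces and track where the $e^{\kappa A}$ factors enter in the representation formulae for the derivatives of $u^{(\kappa)}$. That is, I would revisit the derivative-process representation of Lemma~\ref{lem:Kolmogorov}\eqref{item:lem.representation}: the $k$-th spatial derivative $(\frac{\partial^k}{\partial x^k}u^{(\kappa)})(t,x)(v_1,\dots,v_k)$ is a finite sum over $\varpi\in\Pi_k$ of expectations involving $\varphi^{(\#_\varpi)}$ composed with derivative processes $X^{\kappa,k,\mathbf v}$, and each such derivative process solves a linear mild equation in which every occurrence of $F^{(j)}$ and $B^{(j)}$ is preceded by $e^{(t-s)A}e^{\kappa A}=e^{(\kappa+t-s)A}$. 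Since $\kappa+t-s\geq t-s$, the smoothing estimate $\|e^{(\kappa+t-s)A}\|_{L(V_{-\theta},V)}\leq \SGchi{\theta}(t-s)^{-\theta}$ already holds with a constant that does \emph{not} depend on $\kappa$ — the extra $\kappa$ in the exponent only helps. Thus, rerunning the a priori estimates for the derivative processes (item~\eqref{item:derivative.process.linearity} of Lemma~\ref{lem:Kolmogorov}), the strong stability estimate for $X^{\kappa,x}$ (item~\eqref{item:base.Lipschitz}), and the smoothing-in-time bounds underlying items~\eqref{item:kolmogorov.c.delta}--\eqref{item:kolmogorov.lip.c.delta}, one finds that \emph{all} the relevant constants can be bounded in terms of $\SGchi{r}$ for $r\in[0,1]$, $T$, the $C^4_b$-norms of $F,B$ in the $V_{-\theta}$-scale, the $\operatorname{Lip}^4$-norm of $\varphi$, and $\BDG{p}{}$ for $p\in[2,5]$, all of which are finite and $\kappa$-free. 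The condition $\sum_i\delta_i>-\nicefrac12$ is exactly what makes the resulting stochastic-convolution integrals convergent uniformly in $\kappa$.

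The cleanest way to organize this in the paper is as follows. First I would state and prove (or cite from Andersson et al.~\cite{AnderssonHefterJentzenKurniawan2016,AnderssonJentzenKurniawan2015arXiv,AnderssonJentzenKurniawan2016a}, whose arguments Lemma~\ref{lem:Kolmogorov} already invokes) a version of the regularity estimate for Kolmogorov backward equations that is uniform with respect to a semigroup shift — i.e., replacing $e^{(t-s)A}$ by $e^{(\kappa+t-s)A}$ throughout with $\kappa\geq 0$ and noting that the upper bounds are monotone decreasing in $\kappa$. Then items~\eqref{item:c.delta.finite}--\eqref{item:lip.c.delta.finite} follow by taking the supremum over $\kappa\in(0,T]$, which is now harmless because the bound is attained (up to constants) at $\kappa=0$. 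In the write-up proper I would therefore keep the proof short: note the mild equation for $X^{\kappa,x}$ displayed above, observe that it is of the form treated by Lemma~\ref{lem:Kolmogorov} with coefficients $e^{\kappa A}F$, $e^{\kappa A}B$ but that the smoothing factors $e^{\kappa A}$ always appear glued to the semigroup as $e^{(\kappa+t-s)A}$, invoke the uniform-in-$\kappa$ versions of items~\eqref{item:kolmogorov.c.delta}--\eqref{item:kolmogorov.lip.c.delta}, and conclude; the bulk of the technical work is inherited verbatim from Lemma~\ref{lem:Kolmogorov} and its cited sources, so the only genuinely new ingredient to verify is the monotonicity/$\kappa$-independence of the constants, which I expect to be the crux of the argument.
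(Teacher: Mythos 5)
Your proposal is correct and takes essentially the same approach as the paper: the paper's own proof is a one-line citation of items~(iv)--(v) of Corollary~4.2 in Andersson, Hefter, Jentzen \& Kurniawan~\cite{AnderssonHefterJentzenKurniawan2016}, declaring the argument ``entirely analogous.'' The uniformity-in-$\kappa$ mechanism you isolate --- that $e^{\kappa A}$ only ever appears glued to the semigroup as $e^{(\kappa+t-s)A}$, so the smoothing bound $\SGchi{\theta}(t-s)^{-\theta}$ holds with a constant monotone improving in $\kappa$ --- is precisely the content of that cited result.
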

\begin{proof}
	The proof of items~\eqref{item:c.delta.finite}--\eqref{item:lip.c.delta.finite} is entirely analogous to the proof of items~(iv)--(v) of Corollary~4.2 in Andersson et al.~\cite{AnderssonHefterJentzenKurniawan2016}.
	The proof of Lemma~\ref{lem:c.delta.finite} is thus completed.
\end{proof}

\begin{proposition}\label{prop:weak_convergence_irregular}
Assume the setting in Section~\ref{sec:setting_weak_convergence_irregular} 
and let 
$ r \in [ 0, 1 - \vartheta ) $, 
$ \rho \in ( 0, 1 - \theta ) $. 
Then it holds that 
$
  \ES\big[
  \| \varphi(X_T) \|_{\mathcal{V}}
  +
  \| \varphi(Y_T) \|_{\mathcal{V}}
  \big]
  < \infty
$
and 
\allowdisplaybreaks
\begin{align}
\label{eq:weak_convergence}
&
  \big\|
    \ES\big[ 
      \varphi( X_T )
    \big]
    -
    \ES\big[ 
      \varphi( Y_T )
    \big]
  \big\|_{\mathcal{V}}
\leq
  \left[
    57 
    \left|
      \max\{ T, \tfrac{ 1 }{ T } \} 
    \right|^{
      ( r + 3 ( \theta - \vartheta ) )
    }
    | \SGchi{0} |^{20} 
  \right]
  h^{
    \frac{ \rho \, r }{ ( \rho + 6 ( \theta - \vartheta ) ) }
  }
\nonumber
\\ & \cdot
\left[
\max\{
1
,
\| X_0 \|_{ \lpn{5}{\P}{V} }
\}
+
\tfrac{
	\SGchi{\theta} \, \SGchi{ \nicefrac{\rho}{2} + \theta } \,
	T^{ ( 1 - \theta ) }
	\,
	\| F \|_{ C^1_b( V, V_{ -\theta } ) }
}{
( 1 - \theta - \nicefrac{\rho}{2} )
}
+
\tfrac{
	\max\{\BDG{2}{},\BDG{5}{}\} \, \SGchi{ \nicefrac{\theta}{2} } \, \SGchi{ \nicefrac{( \rho + \theta )}{2} }
	\sqrt{ T^{ ( 1 - \theta ) } }
	\,
	\| B \|_{ C^1_b( V, \gamma( U, V_{ -\nicefrac{\theta}{2} } ) ) }
}{
\sqrt{ 1 - \theta - \rho }
}
\right]^{ 10 }
\nonumber
\\ & \cdot
\nonumber
\left|
\mathcal{E}_{ ( 1 - \theta ) }\!\left[
\tfrac{
	\sqrt{ 2 }
	\,
	\SGchi{0} \, \SGchi{ \theta }
	\,
	T^{ ( 1 - \theta ) }
	\,
	|
	F
	|_{
		C^1_b( V, V_{ - \theta } )
	}
}{
\sqrt{1 - \theta}
}
+
\max\{\BDG{2}{},\BDG{5}{}\} \, \SGchi{0} \, \SGchi{ 
	\theta / 2 
}
\sqrt{
	2 \, T^{ ( 1 - \theta ) }
} \,
|
B
|_{
	C^1_b( V, \gamma( U, V_{ - \nicefrac{\theta}{2} } ) )
}
\right]
\right|^5
\\ & \cdot
\Bigg[
2^r
+
\tfrac{ T^{ ( 1 - \vartheta ) } }{ ( 1 - \vartheta - r ) }
\bigg(
3 \, \SGchi{\vartheta} + 2 \, \SGchi{ r + \vartheta } + 3 \, | \SGchi{ \nicefrac{\vartheta}{2} } |^2
+
4 \, \SGchi{ r + \nicefrac{\vartheta}{2} } \, \SGchi{ \nicefrac{\vartheta}{2} }
\\ & +
\nonumber
2 \, ( | \SGchi{ \nicefrac{ \vartheta }{ 2 } } |^2 + \SGchi{\vartheta} ) \, \SGchi{r}
\Big[
\tfrac{
	\SGchi{ \vartheta } \, T^{ ( 1 - \vartheta ) } \,
}{ ( 1 - \vartheta ) }
+
\tfrac{
	\max\{\BDG{3}{},\BDG{4}{}\} \, \SGchi{ \nicefrac{\vartheta}{2} } \, 
	T^{ ( 1 - \vartheta )/2 }
}{
\sqrt{ 1 - \vartheta }
}
\Big]
\bigg)
\Bigg]
\\ & \cdot
\nonumber
  \bigg[
  \tfrac{
  |\SGchi{ \nicefrac{\rho}{2} }|^2
  }{
  T^{ \rho/2 }
  } \,
  | \varphi |_{ C^1_b( V, {\mathcal{V}} ) }
  +
  \tfrac{
    | \SGchi{0} |^3 \, 
    | \SGchi{r} |^2 \, 
    | \SGchi{ \theta - \vartheta } |^3 
    \,
    | \SGchi{ \nicefrac{ ( \theta - \vartheta ) }{ 2 } } |^6 
    \max\{ 1, T^{ ( 1 - \vartheta ) } \}
    \,
    \varsigma_{ F, B } 
  }{ 
    ( 1 - \vartheta - r ) \, T^r 
  } 
  \,
  \Big(
  \| \varphi \|_{ C^3_b( V, {\mathcal{V}} ) }
  +
  \sup_{ \kappa \in ( 0, T ] }
  \big[
  c^{ ( \kappa ) }_{ -\vartheta } 
\\ & +
\nonumber
  c^{ ( \kappa ) }_{ -\vartheta, 0 }
+ 
  c^{ ( \kappa ) }_{ -\vartheta, 0, 0 } + c^{ ( \kappa ) }_{ -\vartheta, 0, 0, 0 } 
  +
  c^{ ( \kappa ) }_{ -\nicefrac{ \vartheta }{ 2 }, -\nicefrac{ \vartheta }{ 2 } }
  +
  c^{ ( \kappa ) }_{ -\nicefrac{ \vartheta }{ 2 }, -\nicefrac{ \vartheta }{ 2 }, 0 }
+
  c^{ ( \kappa ) }_{ -\nicefrac{ \vartheta }{ 2 }, -\nicefrac{ \vartheta }{ 2 }, 0, 0 }
  +
  \tilde{c}^{ ( \kappa ) }_{ -\nicefrac{ \vartheta }{ 2 }, -\nicefrac{ \vartheta }{ 2 }, 0, 0 }
  \big]
  \Big)
  \bigg]
  < \infty
  .
\end{align}
\end{proposition}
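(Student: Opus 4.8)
The plan is to prove Proposition~\ref{prop:weak_convergence_irregular} by interpolating between the regular-coefficient weak rate result of Corollary~\ref{cor:mollified_weak_solution-num_komplete} and a crude strong-type error bound, thereby transferring the weak rate from the mollified-nonlinearity setting (coefficients in $V_2$) to the present irregular setting (coefficients in $V_{-\theta}$). First I would fix $\kappa\in(0,T]$ and consider the auxiliary SPDE whose drift and diffusion coefficients are the spatially mollified nonlinearities $e^{\kappa A}F$ and $e^{\kappa A}B$; these map $V$ into $V_{2}$ (up to losing a factor $\SGchi{\theta-\vartheta+2}$ or similar, since $\theta-\vartheta<1$), so the hypotheses of Section~\ref{sec:setting_euler_integrated_mollified} are satisfied for the mollified problem. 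Writing $X^\kappa$ for the mollified mild solution and $Y^\kappa$ for its exponential Euler approximation with step $h$, Corollary~\ref{cor:mollified_weak_solution-num_komplete} applied with $\vartheta\rightsquigarrow\theta$ (or the intermediate regularity $\vartheta$ as in the statement) yields $\|\E[\varphi(X^\kappa_T)]-\E[\varphi(Y^\kappa_T)]\|_{\mathcal V}\le C\,\kappa^{-(\theta-\vartheta)\cdot 6}\,h^{r}$-type bounds, where the blow-up in $\kappa$ comes precisely from the $\SGchi{\theta-\vartheta}^{3}\SGchi{(\theta-\vartheta)/2}^{6}$ factors and the smoothing constants hidden in the $C^3_b(V,V_{-\vartheta})$-norms of $e^{\kappa A}F$, $e^{\kappa A}B$.

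Next I would control the two ``mollification error'' terms: $\|\E[\varphi(X_T)]-\E[\varphi(X^\kappa_T)]\|_{\mathcal V}$ and $\|\E[\varphi(Y_T)]-\E[\varphi(Y^\kappa_T)]\|_{\mathcal V}$. For the first, I would use the Lipschitz property of $\varphi$ together with a strong perturbation estimate — this is exactly Corollary~\ref{cor:initial_perturbation}/Corollary~\ref{prop:general_perturb} applied to the difference of the two mild solution processes, noting that $X_t-X^\kappa_t$ satisfies a fixed-point equation whose inhomogeneity involves $(\operatorname{Id}_V-e^{\kappa A})F(\cdot)$ and $(\operatorname{Id}_V-e^{\kappa A})B(\cdot)$, which are of size $\SGchi{\rho/2}\kappa^{\rho/2}$ in the relevant norms by the very definition of $\SGchi{\cdot}$. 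The same argument, now invoking Proposition~\ref{prop:strong_convergence_numerics} (the bound on the difference between the mollified and non-mollified exponential Euler processes, with $\Pi(s)=\floor{s}{h}$), bounds the $Y$-side by $C\,\kappa^{\rho/2}$ uniformly in $h$. Hence the mollification error is $\lesssim\varsigma_{F,B}\,\kappa^{\rho/2}$ times the $C^1_b$-data constants and an $\mathcal E_{(1-\theta)}$-factor coming from the a priori bound in Proposition~\ref{prop:numerics_Lp_bound}.

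Combining the three pieces via the triangle inequality gives, for every $\kappa\in(0,T]$,
\begin{equation}
\big\|\ES[\varphi(X_T)]-\ES[\varphi(Y_T)]\big\|_{\mathcal V}
\le
C_1\,\kappa^{\rho/2}
+
C_2\,\kappa^{-6(\theta-\vartheta)}\,h^{r},
\end{equation}
where $C_1,C_2$ collect the explicit constants appearing in the cited results (data norms, $\SGchi{\cdot}$, $\BDG{p}{}$, $\varsigma_{F,B}$, the $c^{(\kappa)}$ and $\tilde c^{(\kappa)}$ — which are uniformly bounded in $\kappa$ by Lemma~\ref{lem:c.delta.finite} — and the Mittag-Leffler factors). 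Optimizing over $\kappa$ by balancing the two terms, i.e.\ choosing $\kappa^{\rho/2+6(\theta-\vartheta)}\asymp h^{r}$, that is $\kappa\asymp h^{2r/(\rho+12(\theta-\vartheta))}$, produces the exponent $\frac{\rho\,r}{\rho+6(\theta-\vartheta)}$ of $h$ that appears in~\eqref{eq:weak_convergence} (after rescaling $\rho$), and the $\max\{T,1/T\}^{r+3(\theta-\vartheta)}$ and $|\SGchi{0}|^{20}$ prefactors arise from tracking how $T$-powers and semigroup constants combine in $C_1$ and $C_2$. The integrability statements $\ES[\|\varphi(X_T)\|_{\mathcal V}+\|\varphi(Y_T)\|_{\mathcal V}]<\infty$ follow from $\varphi\in\operatorname{Lip}^0(V,\mathcal V)$, the fifth-moment bounds assumed on $X$ in Section~\ref{sec:setting_weak_convergence_irregular}, and Lemma~\ref{lem:Kp_estimate} applied to $Y$.

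The main obstacle I expect is bookkeeping rather than conceptual: one must verify that all constants produced by Corollary~\ref{cor:mollified_weak_solution-num_komplete} genuinely blow up only like a fixed negative power of $\kappa$ (no worse), which requires checking that $e^{\kappa A}F$ and $e^{\kappa A}B$ have $C^3_b(V,V_{-\vartheta})$-norms growing at most like $\SGchi{\theta-\vartheta}\kappa^{-(\theta-\vartheta)}$ and that the solution-derivative constants $c^{(\kappa)}_{\bullet}$, $\tilde c^{(\kappa)}_{\bullet}$ stay bounded as $\kappa\downarrow0$ — both supplied by Lemma~\ref{lem:c.delta.finite} and the definition of $\SGchi{\cdot}$ — and then to assemble the resulting $\kappa$-dependence, optimize, and reconcile the optimized exponent and prefactor with the precise form stated in~\eqref{eq:weak_convergence}. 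A secondary care point is ensuring the mild solution $X^{\kappa,x}$ of Section~\ref{sec:setting_weak_convergence_irregular} is identified with the mild solution of the $\kappa$-mollified SPDE appearing in Section~\ref{sec:setting_euler_integrated_mollified} (a substitution-of-semigroup argument), so that Corollary~\ref{cor:mollified_weak_solution-num_komplete} is legitimately applicable with the functions $u^{(\kappa)}$ playing the role of $u$ there.
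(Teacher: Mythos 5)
Your overall strategy — mollify the nonlinearities with $e^{\kappa A}$, split by the triangle inequality into two mollification errors plus a weak error for the mollified scheme, then optimize over $\kappa$ — is exactly the paper's strategy, and your identification of the relevant tools (Proposition~\ref{prop:strong_convergence_numerics} and Corollary~\ref{prop:general_perturb}/\ref{cor:initial_perturbation} for the $\kappa^{\rho/2}$ mollification error, Corollary~\ref{cor:mollified_weak_solution-num_komplete} for the mollified weak error, Lemma~\ref{lem:c.delta.finite} for the uniform-in-$\kappa$ boundedness of the derivative constants) is on the mark. However there are two concrete gaps.

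First, you never mollify the initial condition. The setting of Section~\ref{sec:setting_euler_integrated_mollified} requires $\bar{Y}_0\in\lpn{5}{\P}{V_2}$ (the semilinear integrated process $\bar{Y}$ is $V_2$-valued there), whereas in Section~\ref{sec:setting_weak_convergence_irregular} the initial datum $X_0=Y_0$ is only $\lpn{5}{\P}{V}$-valued. Corollary~\ref{cor:mollified_weak_solution-num_komplete} is therefore not directly applicable to the processes started at $X_0$. The paper handles this by introducing a second mollification parameter $\delta$: it replaces $X_0$ by $e^{\delta A}X_0$, proves the bound~\eqref{eq:weak_convergence} uniformly in $\delta\in(0,T]$ for the $\delta$-smoothed processes $\hat{X}^{0,\delta}$, $\hat{Y}^{0,\delta}$, and only at the end passes to the limit $\delta\to 0$ using Corollary~\ref{cor:initial_perturbation} (this is the place in the paper's proof where Corollary~\ref{cor:initial_perturbation} is actually invoked — not, as in your sketch, for the $\kappa$-mollification error, which is handled by Proposition~\ref{prop:strong_convergence_numerics}). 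Without the $\delta$-layer your invocation of the Section~\ref{sec:euler_integrated_mollified} results is not justified.

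Second, the $\kappa$-blow-up exponent in your sketch is off by a factor of two and the discrepancy propagates to the final rate. Since $\|e^{\kappa A}\|_{L(V_{-\theta},V_{-\vartheta})}\leq\SGchi{\theta-\vartheta}\kappa^{-(\theta-\vartheta)}$ and $\|e^{\kappa A}\|_{L(V_{-\theta/2},V_{-\vartheta/2})}\leq\SGchi{(\theta-\vartheta)/2}\kappa^{-(\theta-\vartheta)/2}$, the quantity $\max\{1,\|e^{\kappa A}F\|_{C^3_b(V,V_{-\vartheta})}^3,\|e^{\kappa A}B\|_{C^3_b(V,\gamma(U,V_{-\vartheta/2}))}^6\}$ grows like $\kappa^{-3(\theta-\vartheta)}$ (both the cubed $F$-term and the sixth-power $B$-term contribute $\kappa^{-3(\theta-\vartheta)}$), not $\kappa^{-6(\theta-\vartheta)}$ as you write. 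Balancing $\kappa^{\rho/2}$ against $\kappa^{-3(\theta-\vartheta)}h^r$ gives $\kappa\asymp h^{2r/(\rho+6(\theta-\vartheta))}$ and the rate $h^{\rho r/(\rho+6(\theta-\vartheta))}$ of the statement. Your balance $\kappa^{\rho/2}$ against $\kappa^{-6(\theta-\vartheta)}h^r$ yields the weaker rate $h^{\rho r/(\rho+12(\theta-\vartheta))}$; the parenthetical ``(after rescaling $\rho$)'' cannot fix this, since $\rho$ is a free parameter of the statement and may not be redefined.
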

\begin{proof}
We first note that, e.g., Theorem 2.7 in Cox \& Van Neerven~\cite{CoxVanNeerven2013} and 
Theorem 6.2 in Van Neerven et al.~\cite{vvw08} ensure that there exist 
up-to-modifications unique 
$
  ( \mathcal{F}_t )_{ t \in [0,T] }
$-predictable stochastic processes 
$
  \hat{Y}^{ \kappa, \delta } \colon [0,T] \times \Omega \to V
$, 
$ \kappa,\delta \in [0,T] $, 
and 
$
  \hat{X}^{ \kappa, \delta } \colon [0,T] \times \Omega \to V
$, 
$ \kappa,\delta \in [0,T] $, 
which satisfy for all 
$ \kappa, \delta \in [0,T] $ 
that 
$
  \sup_{ t \in [0,T] }
  \| \hat{X}^{ \kappa, \delta }_t \|_{ \lpn{5}{\P}{V} } 
  < \infty
$ 
and 
$
  \hat{X}^{ \kappa, \delta }_0
  =
  \hat{Y}^{ \kappa, \delta }_0
  =
  e^{ \delta A } X_0
$ 
and which satisfy that
for all 
$ \kappa, \delta \in [0,T] $, 
$ t \in (0,T] $ 
it holds $ \P $-a.s.\ that 
\begin{equation}
\label{eq:mollified_solution}
  \hat{X}_t^{ \kappa, \delta } 
= 
    e^{ t A } \hat{X}^{ \kappa, \delta }_0
  + 
    \int_0^t e^{ ( \kappa + t - s ) A } 
  F( \hat{X}_s^{ \kappa, \delta } ) \, ds
  +
    \int_0^t e^{ ( \kappa + t - s ) A } 
  B( \hat{X}_s^{ \kappa, \delta } ) \, dW_s 
  ,
\end{equation}
\begin{equation}
\label{eq:mollified_numerics}
  \hat{Y}_t^{ \kappa, \delta } 
= 
    e^{tA} \, \hat{Y}^{ \kappa, \delta }_0
  + 
    \int_0^t e^{ ( t - \floor{s}{h} )A } \, e^{ \kappa A } 
  F( \hat{Y}_{ \floor{ s }{ h } }^{ \kappa, \delta } ) \, ds
  +
    \int_0^t e^{ ( t - \floor{s}{h} )A } \, e^{ \kappa A } 
  B( \hat{Y}_{ \floor{ s }{ h } }^{ \kappa, \delta } ) \, dW_s 
  .
\end{equation}
In the next step we combine 
Lemma~\ref{lem:Kp_estimate} with the fact that 
$
  \forall \,
  \kappa, \delta \in [0,T]
  \colon
  \| \hat{Y}^{ \kappa, \delta }_0 \|_{ \lpn{5}{\P}{V} } 
  < \infty
$ 
to obtain that for all 
$ \kappa, \delta \in [ 0, T ] $ 
it holds that 
$
  \sup_{ t \in [0,T] }
  \| \hat{Y}^{ \kappa, \delta }_t \|_{ \lpn{5}{\P}{V} } 
  < \infty
$. 
This, the fact that
$
  \forall \, \kappa, \delta \in [0,T]
  \colon
  \sup_{ t \in [0,T] }
  \| 
    \hat{X}^{ \kappa, \delta }_t 
  \|_{ \lpn{5}{\P}{V} } 
  < \infty
$ 
and the assumption that 
$ \varphi \in \operatorname{Lip}^4( V, {\mathcal{V}} ) $ 
ensure that 
for all $ \kappa, \delta \in [ 0, T ] $ it holds that 
\begin{equation}
  \ES\big[
    \| \varphi(\hat{X}^{ \kappa, \delta }_T) \|_{\mathcal{V}}
    +
    \| \varphi(\hat{Y}^{ \kappa, \delta }_T) \|_{\mathcal{V}}
  \big]
  < \infty
  .
\end{equation}
This proves, in particular, that
$
  \ES\big[
  \| \varphi(X_T) \|_{\mathcal{V}}
  +
  \| \varphi(Y_T) \|_{\mathcal{V}}
  \big]
  < \infty
$. 
It thus remains to show \eqref{eq:weak_convergence}. 
For this we observe that the triangle inequality ensures that for all 
$ \kappa, \delta \in [ 0, T ] $ 
it holds that 
\begin{equation}\label{eq:mollified_decompose_solution}
\begin{split}
&
  \big\|
    \ES\big[ 
      \varphi( \hat{X}^{ 0, \delta }_T )
    \big]
    -
    \ES\big[ 
      \varphi( \hat{Y}_T^{ 0, \delta } )
    \big]
  \big\|_{\mathcal{V}}
\leq 
  \big\|
    \ES\big[ 
      \varphi( \hat{X}^{ 0, \delta }_T )
    \big]
    -
    \ES\big[ 
      \varphi( \hat{X}_T^{ \kappa, \delta } )
    \big]
  \big\|_{\mathcal{V}}
\\ & +
  \big\|
    \ES\big[ 
      \varphi( \hat{X}_T^{ \kappa, \delta } )
    \big]
    -
    \ES\big[ 
      \varphi( \hat{Y}_T^{ \kappa, \delta } )
    \big]
  \big\|_{\mathcal{V}}
  +
  \big\|
    \ES\big[ 
      \varphi( \hat{Y}_T^{ \kappa, \delta } )
    \big]
    -
    \ES\big[ 
      \varphi( \hat{Y}_T^{ 0, \delta } )
    \big]
  \big\|_{\mathcal{V}}.
\end{split}
\end{equation}
In the following we provide suitable bounds for the three summands 
on the right hand side of \eqref{eq:mollified_decompose_solution}.
For the first and the third summand on the right hand side of \eqref{eq:mollified_decompose_solution}
we observe that  
Proposition~\ref{prop:strong_convergence_numerics} 
together with H\"{o}lder's inequality and the fact that 
$
  \forall \, \kappa, \delta \in [ 0, T ]
  \colon
  \sup_{ t \in [0,T] }
  \| \hat{Y}^{ \kappa, \delta }_{ \floor{t}{h} } \|_{ \lpn{2}{\P}{V} } 
  \leq
  \sup_{ t \in [0,T] }
  \| \hat{Y}^{ \kappa, \delta }_t \|_{ \lpn{2}{\P}{V} } 
  < \infty
$
shows
that for all 
$ \kappa, \delta \in [0,T] $ 
it holds that
\begin{align}
\label{eq:mollified_strong_convergence}
&
  \big\|
    \ES\big[ 
      \varphi( \hat{X}^{ 0, \delta }_T )
    \big]
    -
    \ES\big[ 
      \varphi( \hat{X}_T^{ \kappa, \delta } )
    \big]
  \big\|_{\mathcal{V}}
  +
  \big\|
    \ES\big[ 
      \varphi( \hat{Y}_T^{ \kappa,\delta} )
    \big]
    -
    \ES\big[ 
      \varphi( \hat{Y}_T^{ 0, \delta } )
    \big]
  \big\|_{\mathcal{V}}
  \leq
  \tfrac{
    4 \, |\SGchi{ \nicefrac{\rho}{2} }|^2
  }{
    T^{ \rho/2 }
  } 
  \,
  | \varphi |_{ C^1_b( V, {\mathcal{V}} ) } \,
  \kappa^{ \frac{ \rho }{ 2 } }
\\ & \cdot
\nonumber
  \left[
    \SGchi{0} 
    \max\{
      1
      ,
      \| e^{ \delta A } X_0 \|_{ \lpn{2}{\P}{V} }
    \}
+
  \tfrac{
    \SGchi{\theta} \, \SGchi{ \nicefrac{\rho}{2} + \theta } \,
    T^{ ( 1 - \theta ) }
    \,
    \| F \|_{ C^1_b( V, V_{ -\theta } ) }
  }{
    ( 1 - \theta - \nicefrac{\rho}{2} )
  }
  +
  \tfrac{
    \BDG{2}{} \, \SGchi{ \nicefrac{\theta}{2} } \, \SGchi{ \nicefrac{( \rho + \theta )}{2} }
    \sqrt{ T^{ ( 1 - \theta ) } }
    \,
    \| B \|_{ C^1_b( V, \gamma( U, V_{ -\nicefrac{\theta}{2} } ) ) }
  }{
    \sqrt{ 1 - \theta - \rho }
  }
  \right]^2
\\ & \cdot
\nonumber
  \left|
  \mathcal{E}_{ ( 1 - \theta ) }\!\left[
    \tfrac{ 
      \sqrt{2} 
      \, 
      T^{ ( 1 - \theta ) } 
      \, 
      \SGchi{0} 
      \, 
      \SGchi{\theta} 
    }{ 
      \sqrt{ 1 - \theta } 
    }
    | F |_{ C^1_b( V, V_{ -\theta } ) }
    +
    \BDG{2}{} \, 
    \sqrt{ 
      2 \, T^{ ( 1 - \theta ) } 
    }
    \,
    \SGchi{0} \, 
    \SGchi{ \nicefrac{ \theta }{ 2 } } \,
    | B |_{ 
      C^1_b( V, \gamma( U, V_{ -\nicefrac{ \theta }{ 2 } } ) ) 
    }
  \right]
  \right|^2
  .
\end{align}
Next we bound the second summand on the right hand side 
of \eqref{eq:mollified_decompose_solution}. For this we note that 
for all $ \kappa \in (0,T] $ it holds that
\begin{equation}
\begin{split}
&
  \max\!\big\{
    1
    ,
    \|
      e^{ \kappa A } F
    \|_{ 
      C_b^3( V, V_{-\vartheta} )
    }^3
    ,
    \|
      e^{ \kappa A } B
    \|_{ 
      C_b^3( V, \gamma( U, V_{-\nicefrac{\vartheta}{2}} ) )
    }^6
  \big\}
\\ & \leq
  | \SGchi{ \theta - \vartheta } |^3 
  \,
  | \SGchi{ \nicefrac{ ( \theta - \vartheta ) }{ 2 } } |^6 
  \,
  \varsigma_{ F, B } 
  \max\!\big\{ 
    1, 
    \kappa^{ - 3 ( \theta - \vartheta ) } 
  \big\}
  .
\end{split}
\end{equation}
This and Corollary~\ref{cor:mollified_weak_solution-num_komplete} show that for all 
$ \kappa, \delta \in ( 0, T ] $ 
it holds that 
\allowdisplaybreaks
\begin{align}
\label{eq:mollified_weak_convergence}
&
  \big\|
    \ES\big[ 
      \varphi( \hat{X}_T^{ \kappa, \delta } )
    \big]
    -
    \ES\big[ 
      \varphi( \hat{Y}_T^{ \kappa, \delta } )
    \big]
  \big\|_{\mathcal{V}}
\nonumber
\\ & \leq
\nonumber
  \tfrac{
    57 \, 
    |\SGchi{0}|^3 \, 
    | \SGchi{r} |^2 \, 
    | \SGchi{ \theta - \vartheta } |^3 \,
    | \SGchi{ \nicefrac{ ( \theta - \vartheta ) }{ 2 } } |^6 
    \max\{ 1, T^{ ( 1 - \vartheta ) } \}
  }{ 
    ( 1 - \vartheta - r ) \; T^r 
  } 
  \,
  \varsigma_{ F, B } 
  \max\{ 1, \kappa^{ - 3 ( \theta - \vartheta ) } \} 
  \, 
  h^r
\\ & \cdot
\nonumber
  \left[
    \SGchi{0} 
    \max\{ 1, \| e^{ \delta A } X_0 \|_{ \lpn{5}{\P}{V} } \}
    +
    \tfrac{
      \SGchi{\theta} 
      \,
      T^{ ( 1 - \theta ) }
      \,
      \| e^{ \kappa A } F \|_{ C^1_b( V, V_{ -\theta } ) }
    }{
      ( 1 - \theta )
    }
      +
    \tfrac{
      \BDG{5}{} \,
      \SGchi{ \nicefrac{ \theta }{ 2 } }
      \sqrt{ T^{ ( 1 - \theta ) } } 
      \,
      \| e^{ \kappa A } B \|_{ C^1_b( V, \gamma( U, V_{ - \theta / 2 } ) ) }
    }{
      \sqrt{1 - \theta}
    }
  \right]^{ 10 }
\\ & \cdot
\nonumber
  \left|\mathcal{E}_{ ( 1 - \theta ) }\!\left[
    \tfrac{
      \sqrt{ 2 }
      \,
      \SGchi{ \theta }
      \,
      T^{ ( 1 - \theta ) }
      \,
      |
        e^{ \kappa A } F
      |_{
        C^1_b( V, V_{ - \theta } )
      }
    }{
      \sqrt{1 - \theta}
    }
    +
    \BDG{5}{} \, \SGchi{ 
      \theta / 2 
    }
    \sqrt{
      2 \, T^{ ( 1 - \theta ) }
    } \,
    |
      e^{ \kappa A } B
    |_{
      C^1_b( V, \gamma( U, V_{ - \nicefrac{\theta}{2} } ) )
    }
  \right]\right|^5
\\ & \cdot
  \Bigg[
    2^r
    +
    \tfrac{ T^{ ( 1 - \vartheta ) } }{ ( 1 - \vartheta - r ) }
  \bigg(
       3 \, \SGchi{\vartheta} + 2 \, \SGchi{ r + \vartheta } + 3 \, | \SGchi{ \nicefrac{\vartheta}{2} } |^2
       +
       4 \, \SGchi{ r + \nicefrac{\vartheta}{2} } \, \SGchi{ \nicefrac{\vartheta}{2} }
\\ & +
\nonumber
  2 \, 
  ( | \SGchi{ \nicefrac{ \vartheta }{ 2 } } |^2 + \SGchi{\vartheta} ) \, 
  \SGchi{r} \,
  \Big[
    \tfrac{
    \SGchi{ \vartheta } \, T^{ ( 1 - \vartheta ) } \,
    }{ ( 1 - \vartheta ) }
    +
    \tfrac{
    \max\{\BDG{3}{},\BDG{4}{}\} \, \SGchi{ \nicefrac{\vartheta}{2} } \, 
    T^{ ( 1 - \vartheta )/2 }
    }{
    \sqrt{ 1 - \vartheta }
    }
  \Big]
  \bigg)
  \Bigg]
\\ & \cdot
\nonumber
  \Big[
  \| \varphi \|_{ C^3_b( V, {\mathcal{V}} ) }
  +
  c^{ ( \kappa ) }_{ -\vartheta } + c^{ ( \kappa ) }_{ -\vartheta, 0 }
  + c^{ ( \kappa ) }_{ -\vartheta, 0, 0 } + c^{ ( \kappa ) }_{ -\vartheta, 0, 0, 0 }
+
  c^{ ( \kappa ) }_{ -\nicefrac{ \vartheta }{ 2 }, -\nicefrac{ \vartheta }{ 2 } }
  +
  c^{ ( \kappa ) }_{ -\nicefrac{ \vartheta }{ 2 }, -\nicefrac{ \vartheta }{ 2 }, 0 }
\\ & +
\nonumber
  c^{ ( \kappa ) }_{ -\nicefrac{ \vartheta }{ 2 }, -\nicefrac{ \vartheta }{ 2 }, 0, 0 }
+
  \tilde{c}^{ ( \kappa ) }_{ -\nicefrac{ \vartheta }{ 2 }, -\nicefrac{ \vartheta }{ 2 }, 0, 0 }
  \,
  \Big]
  .
\end{align}
Plugging \eqref{eq:mollified_strong_convergence} and \eqref{eq:mollified_weak_convergence} 
into \eqref{eq:mollified_decompose_solution} 
then shows that for all 
$ \kappa, \delta \in ( 0, T ] $ 
it holds that 
\begin{align}
\label{eq:mollified_combined}
&
  \big\|
    \ES\big[ 
      \varphi( \hat{X}^{ 0, \delta }_T )
    \big]
    -
    \ES\big[ 
      \varphi( \hat{Y}_T^{ 0, \delta } )
    \big]
  \big\|_{\mathcal{V}}
\leq
  \max\!\left\{
    4 \, \kappa^{ \frac{\rho}{2} }
    ,
    57 
    \max\!\big\{ 
      1 , 
      \kappa^{ - 3 ( \theta - \vartheta ) } 
    \big\} 
    \, h^r
  \right\}
  | \SGchi{0} |^{ 20 } 
\nonumber
\\ & \cdot
  \left[
    \max\{
      1
      ,
      \| X_0 \|_{ \lpn{5}{\P}{V} }
    \}
    +
  \tfrac{
    \SGchi{\theta} \, \SGchi{ \nicefrac{\rho}{2} + \theta } \,
    T^{ ( 1 - \theta ) }
    \,
    \| F \|_{ C^1_b( V, V_{ -\theta } ) }
  }{
    ( 1 - \theta - \nicefrac{\rho}{2} )
  }
  +
  \tfrac{
    \max\{\BDG{2}{},\BDG{5}{}\} \, \SGchi{ \nicefrac{\theta}{2} } \, \SGchi{ \nicefrac{( \rho + \theta )}{2} }
    \sqrt{ T^{ ( 1 - \theta ) } }
    \,
    \| B \|_{ C^1_b( V, \gamma( U, V_{ -\nicefrac{\theta}{2} } ) ) }
  }{
    \sqrt{ 1 - \theta - \rho }
  }
  \right]^{ 10 }
\nonumber
\\ & \cdot
\nonumber
  \left|
  \mathcal{E}_{ ( 1 - \theta ) }\!\left[
    \tfrac{
      \sqrt{ 2 }
      \,
      \SGchi{0} \, \SGchi{ \theta }
      \,
      T^{ ( 1 - \theta ) }
      \,
      |
        F
      |_{
        C^1_b( V, V_{ - \theta } )
      }
    }{
      \sqrt{1 - \theta}
    }
    +
    \max\{\BDG{2}{},\BDG{5}{}\} \, \SGchi{0} \, \SGchi{ 
      \theta / 2 
    }
    \sqrt{
      2 \, T^{ ( 1 - \theta ) }
    } \,
    |
      B
    |_{
      C^1_b( V, \gamma( U, V_{ - \nicefrac{\theta}{2} } ) )
    }
  \right]
  \right|^5
\\ & \cdot
  \Bigg[
    2^r
    +
    \tfrac{ T^{ ( 1 - \vartheta ) } }{ ( 1 - \vartheta - r ) }
  \bigg(
       3 \, \SGchi{\vartheta} + 2 \, \SGchi{ r + \vartheta } + 3 \, | \SGchi{ \nicefrac{\vartheta}{2} } |^2
       +
       4 \, \SGchi{ r + \nicefrac{\vartheta}{2} } \, \SGchi{ \nicefrac{\vartheta}{2} }
\\ & +
\nonumber
  2 \, ( | \SGchi{ \nicefrac{ \vartheta }{ 2 } } |^2 + \SGchi{\vartheta} ) \, \SGchi{r}
  \Big[
    \tfrac{
    \SGchi{ \vartheta } \, T^{ ( 1 - \vartheta ) } \,
    }{ ( 1 - \vartheta ) }
    +
    \tfrac{
    \max\{\BDG{3}{},\BDG{4}{}\} \, \SGchi{ \nicefrac{\vartheta}{2} } \, 
    T^{ ( 1 - \vartheta )/2 }
    }{
    \sqrt{ 1 - \vartheta }
    }
  \Big]
  \bigg)
  \Bigg]
\\ & \cdot
\nonumber
  \bigg[
  \tfrac{
  |\SGchi{ \nicefrac{\rho}{2} }|^2
  }{
  T^{ \rho/2 }
  } \,
  | \varphi |_{ C^1_b( V, {\mathcal{V}} ) }
  +
  \tfrac{
    |\SGchi{0}|^3 \, | \SGchi{r} |^2  \,
    | \SGchi{ \theta - \vartheta } |^3 \,
    | \SGchi{ \nicefrac{ ( \theta - \vartheta ) }{ 2 } } |^6 
    \max\{ 1, T^{ ( 1 - \vartheta ) } \}
  }{ 
    ( 1 - \vartheta - r ) \, T^r 
  } 
  \, 
  \varsigma_{ F, B } 
  \,
  \Big[
  \| \varphi \|_{ C^3_b( V, {\mathcal{V}} ) }
  +
  c^{ ( \kappa ) }_{ -\vartheta } + c^{ ( \kappa ) }_{ -\vartheta, 0 }
\\ & + 
\nonumber
  c^{ ( \kappa ) }_{ -\vartheta, 0, 0 } 
+
  c^{ ( \kappa ) }_{ -\vartheta, 0, 0, 0 }
+
  c^{ ( \kappa ) }_{ -\nicefrac{ \vartheta }{ 2 }, -\nicefrac{ \vartheta }{ 2 } }
  +
  c^{ ( \kappa ) }_{ -\nicefrac{ \vartheta }{ 2 }, -\nicefrac{ \vartheta }{ 2 }, 0 }
+
  c^{ ( \kappa ) }_{ -\nicefrac{ \vartheta }{ 2 }, -\nicefrac{ \vartheta }{ 2 }, 0, 0 }
+
  \tilde{c}^{ ( \kappa ) }_{ -\nicefrac{ \vartheta }{ 2 }, -\nicefrac{ \vartheta }{ 2 }, 0, 0 }
  \,
  \Big]
  \bigg]
  .
\end{align}
In addition, we observe that
\begin{equation}\label{eq:optimal_rate}
\begin{split}
&
  \inf_{ \kappa \in ( 0, T ] }
  \max\!\left\{
    4 \, 
    \kappa^{ \frac{ \rho }{ 2 } }
    ,
    57 
    \max\!\big\{
      1,
      \kappa^{ -3( \theta - \vartheta ) }
    \big\} 
    \,
    h^r
  \right\}
\\ & \leq
  \max\!\left\{
  4 
  \left[
    \min\{ 1, T \}
    \left|
      \tfrac{h}{T}
    \right|^{ \frac{ 2r }{ ( \rho + 6 ( \theta - \vartheta ) ) } }
  \right]^{ \frac{ \rho }{ 2 } }
  ,
    57  
    \max\!\left\{
    1,
    \left[
  \min\{ 1, T \}
  \left|
  \tfrac{h}{T}
  \right|^{ \frac{ 2r }{ ( \rho + 6 ( \theta - \vartheta ) ) } }
    \right]^{ -3( \theta - \vartheta ) }
    \right\} 
    h^r
  \right\}
\\ & =
  \max\!\left\{
  4 
  \left[
    \min\{ 1, T \}
    \left|
      \tfrac{h}{T}
    \right|^{ \frac{ 2r }{ ( \rho + 6 ( \theta - \vartheta ) ) } }
  \right]^{ \frac{ \rho }{ 2 } }
  ,
    57  
    \,
    h^r
    \left[
  \min\{ 1, T \}
  \left|
  \tfrac{h}{T}
  \right|^{ \frac{ 2r }{ ( \rho + 6 ( \theta - \vartheta ) ) } }
    \right]^{ -3( \theta - \vartheta ) }
  \right\}
\\ & =
  \max\!\left\{
  \tfrac{
    4 \,
    |\! \min\{ 1, T \} |^{ \frac{ \rho }{ 2 } }
  }{
    T^{
      \frac{
      \rho r
      }{
      ( \rho + 6 ( \theta - \vartheta ) )
      }
    }
  }
  ,
  \tfrac{
    57 \,
    T^{
      \frac{
      6 ( \theta - \vartheta ) r
      }{
      ( \rho + 6 ( \theta - \vartheta ) )
      }
    }
  }{
    |\!\min\{ 1, T \}|^{ 3( \theta - \vartheta ) }
  }
  \right\}
  h^{
    \frac{ \rho \, r }{ ( \rho + 6 ( \theta - \vartheta ) ) }
  }
\\ & 
\leq
  57
  \max\!\left\{
  \tfrac{
    1
  }{
    \left| \min\{ 1, T \} \right|^r
  }
  ,
  \tfrac{
    \left| \max\{ 1, T \} \right|^r
  }{
    |\!\min\{ 1, T \}|^{ 3( \theta - \vartheta ) }
  }
  \right\}
  h^{
    \frac{ \rho \, r }{ ( \rho + 6 ( \theta - \vartheta ) ) }
  }
\leq
  \frac{
    57 
    \,
    h^{
      \frac{ \rho \, r }{ ( \rho + 6 ( \theta - \vartheta ) ) }
    }
  }{
    \left| 
      \min\{ T, \frac{ 1 }{ T } \} 
    \right|^{ ( r + 3( \theta - \vartheta ) ) }
  } 
  .
\end{split}
\end{equation}
Combining \eqref{eq:mollified_combined} and \eqref{eq:optimal_rate} yields that for all 
$ \delta \in (0,T] $ it holds that 
\allowdisplaybreaks
\begin{align}
\label{eq:weak_convergence_conclude}
&\nonumber
  \big\|
    \ES\big[ 
      \varphi( \hat{X}^{ 0, \delta }_T )
    \big]
    -
    \ES\big[ 
      \varphi( \hat{Y}_T^{ 0, \delta } )
    \big]
  \big\|_{\mathcal{V}}
\leq
  \left[
    57 
    \left| 
      \max\{ T, \tfrac{ 1 }{ T } \} 
    \right|^{ ( r + 3( \theta - \vartheta ) ) }
    | \SGchi{0} |^{ 20 } 
  \right]
  h^{
    \frac{ \rho \, r }{ ( \rho + 6 ( \theta - \vartheta ) ) } 
  }
\\ & \cdot
\left[
\max\{
1
,
\| X_0 \|_{ \lpn{5}{\P}{V} }
\}
+
\tfrac{
	\SGchi{\theta} \, \SGchi{ \nicefrac{\rho}{2} + \theta } \,
	T^{ ( 1 - \theta ) }
	\,
	\| F \|_{ C^1_b( V, V_{ -\theta } ) }
}{
( 1 - \theta - \nicefrac{\rho}{2} )
}
+
\tfrac{
	\max\{\BDG{2}{},\BDG{5}{}\} \, \SGchi{ \nicefrac{\theta}{2} } \, \SGchi{ \nicefrac{( \rho + \theta )}{2} }
	\sqrt{ T^{ ( 1 - \theta ) } }
	\,
	\| B \|_{ C^1_b( V, \gamma( U, V_{ -\nicefrac{\theta}{2} } ) ) }
}{
\sqrt{ 1 - \theta - \rho }
}
\right]^{ 10 }
\nonumber
\\ & \cdot
\nonumber
\left|
\mathcal{E}_{ ( 1 - \theta ) }\!\left[
\tfrac{
	\sqrt{ 2 }
	\,
	\SGchi{0} \, \SGchi{ \theta }
	\,
	T^{ ( 1 - \theta ) }
	\,
	|
	F
	|_{
		C^1_b( V, V_{ - \theta } )
	}
}{
\sqrt{1 - \theta}
}
+
\max\{\BDG{2}{},\BDG{5}{}\} \, \SGchi{0} \, \SGchi{ 
	\theta / 2 
}
\sqrt{
	2 \, T^{ ( 1 - \theta ) }
} \,
|
B
|_{
	C^1_b( V, \gamma( U, V_{ - \nicefrac{\theta}{2} } ) )
}
\right]
\right|^5
\\ & \cdot
\Bigg[
2^r
+
\tfrac{ T^{ ( 1 - \vartheta ) } }{ ( 1 - \vartheta - r ) }
\bigg(
3 \, \SGchi{\vartheta} + 2 \, \SGchi{ r + \vartheta } + 3 \, | \SGchi{ \nicefrac{\vartheta}{2} } |^2
+
4 \, \SGchi{ r + \nicefrac{\vartheta}{2} } \, \SGchi{ \nicefrac{\vartheta}{2} }
\\ & +
\nonumber
2 \, ( | \SGchi{ \nicefrac{ \vartheta }{ 2 } } |^2 + \SGchi{\vartheta} ) \, \SGchi{r}
\Big[
\tfrac{
	\SGchi{ \vartheta } \, T^{ ( 1 - \vartheta ) } \,
}{ ( 1 - \vartheta ) }
+
\tfrac{
	\max\{\BDG{3}{},\BDG{4}{}\} \, \SGchi{ \nicefrac{\vartheta}{2} } \, 
	T^{ ( 1 - \vartheta )/2 }
}{
\sqrt{ 1 - \vartheta }
}
\Big]
\bigg)
\Bigg]
\\ & \cdot
\nonumber
  \bigg[
  \tfrac{
  |\SGchi{ \nicefrac{\rho}{2} }|^2
  }{
  T^{ \rho/2 }
  } \,
  | \varphi |_{ C^1_b( V, {\mathcal{V}} ) }
  +
  \tfrac{
  |\SGchi{0}|^3 \, | \SGchi{r} |^2 \,
  | \SGchi{ \theta - \vartheta } |^3 \,
  | \SGchi{ \nicefrac{ ( \theta - \vartheta ) }{ 2 } } |^6 \,
  \max\{ 1, T^{ ( 1 - \vartheta ) } \}
  }{ ( 1 - \vartheta - r ) \, T^r } \, \varsigma_{ F, B } \, 
  \Big[
  \| \varphi \|_{ C^3_b( V, {\mathcal{V}} ) }
  +
  \sup_{ \kappa \in ( 0, T ] }
  \big[
  c^{ ( \kappa ) }_{ -\vartheta } 
\\ & +
\nonumber
  c^{ ( \kappa ) }_{ -\vartheta, 0 }
+ 
  c^{ ( \kappa ) }_{ -\vartheta, 0, 0 } + c^{ ( \kappa ) }_{ -\vartheta, 0, 0, 0 } 
  +
  c^{ ( \kappa ) }_{ -\nicefrac{ \vartheta }{ 2 }, -\nicefrac{ \vartheta }{ 2 } }
  +
  c^{ ( \kappa ) }_{ -\nicefrac{ \vartheta }{ 2 }, -\nicefrac{ \vartheta }{ 2 }, 0 }
+
  c^{ ( \kappa ) }_{ -\nicefrac{ \vartheta }{ 2 }, -\nicefrac{ \vartheta }{ 2 }, 0, 0 }
  +
  \tilde{c}^{ ( \kappa ) }_{ -\nicefrac{ \vartheta }{ 2 }, -\nicefrac{ \vartheta }{ 2 }, 0, 0 }
  \big]
  \Big]
  \bigg]
  .
\end{align}
In the next step we 
note that Corollary~\ref{cor:initial_perturbation} yields that 
$
  \lim_{ \delta \to 0 }
  \ES\big[
    \varphi(
      \hat{X}^{ 0, \delta }_T
    )
  \big]
  =
  \ES\big[
    \varphi(
      X_T
    )
  \big]
$ 
and 
$
  \lim_{ \delta \to 0 }
  \ES\big[
    \varphi(
      \hat{Y}^{ 0, \delta }_T
    )
  \big]
  =
  \ES\big[
    \varphi(
      Y_T
    )
  \big]
$. 
Combining this with inequality~\eqref{eq:weak_convergence_conclude} proves the first 
inequality 
in~\eqref{eq:weak_convergence}. 
The second inequality 
in~\eqref{eq:weak_convergence}
follows from Lemma~\ref{lem:c.delta.finite}.
The proof of Proposition~\ref{prop:weak_convergence_irregular} is thus completed. 
\end{proof}

\begin{corollary}
\label{cor:weak_convergence_irregular}
Assume the setting in Section~\ref{sec:setting_weak_convergence_irregular} 
and let 
$ \rho \in ( 0, 1 - \theta ) \cap ( 6( \theta - \vartheta ), \infty ) $. 
Then it holds that 
$
  \ES\big[
  \| \varphi(X_T) \|_{\mathcal{V}}
  +
  \| \varphi(Y_T) \|_{\mathcal{V}}
  \big]
  < \infty
$
and 
\allowdisplaybreaks
\begin{align}
\label{eq:weak_convergence_simplified_rate}
&
  \big\|
    \ES\big[ 
      \varphi( X_T )
    \big]
    -
    \ES\big[ 
      \varphi( Y_T )
    \big]
  \big\|_{\mathcal{V}}
\leq
  \left[
    57  \, 
    |
    \!
    \max\{ T, \tfrac{ 1 }{ T } \} 
    |^{
    	3 ( \rho + \theta )
    } \,
    | \SGchi{0} |^{20} 
  \right]
    h^{ 
      \left(
        \rho - 6 ( \theta - \vartheta ) 
      \right)
    }
\nonumber
\\ & \cdot
\left[
\max\{
1
,
\| X_0 \|_{ \lpn{5}{\P}{V} }
\}
+
\tfrac{
	\SGchi{\theta} \, \SGchi{ \nicefrac{\rho}{2} + \theta } \,
	T^{ ( 1 - \theta ) }
	\,
	\| F \|_{ C^1_b( V, V_{ -\theta } ) }
}{
( 1 - \theta - \nicefrac{\rho}{2} )
}
+
\tfrac{
	\max\{\BDG{2}{},\BDG{5}{}\} \, \SGchi{ \nicefrac{\theta}{2} } \, \SGchi{ \nicefrac{( \rho + \theta )}{2} }
	\sqrt{ T^{ ( 1 - \theta ) } }
	\,
	\| B \|_{ C^1_b( V, \gamma( U, V_{ -\nicefrac{\theta}{2} } ) ) }
}{
\sqrt{ 1 - \theta - \rho }
}
\right]^{ 10 }
\nonumber
\\ & \cdot
\left|
\mathcal{E}_{ ( 1 - \theta ) }\!\left[
\tfrac{
	\sqrt{ 2 }
	\,
	\SGchi{0} \, \SGchi{ \theta }
	\,
	T^{ ( 1 - \theta ) }
	\,
	|
	F
	|_{
		C^1_b( V, V_{ - \theta } )
	}
}{
\sqrt{1 - \theta}
}
+
\max\{\BDG{2}{},\BDG{5}{}\} \, \SGchi{0} \, \SGchi{ 
	\theta / 2 
}
\sqrt{
	2 \, T^{ ( 1 - \theta ) }
} \,
|
B
|_{
	C^1_b( V, \gamma( U, V_{ - \nicefrac{\theta}{2} } ) )
}
\right]
\right|^5
\nonumber
\\ & \cdot
\Bigg[
2^\rho
+
\tfrac{ T^{ ( 1 - \vartheta ) } }{ ( 1 - \vartheta - \rho ) }
\bigg(
3 \, \SGchi{\vartheta} + 2 \, \SGchi{ \rho + \vartheta } + 3 \, | \SGchi{ \nicefrac{\vartheta}{2} } |^2
+
4 \, \SGchi{ \rho + \nicefrac{\vartheta}{2} } \, \SGchi{ \nicefrac{\vartheta}{2} }
\\ & +
\nonumber
2 \, ( | \SGchi{ \nicefrac{ \vartheta }{ 2 } } |^2 + \SGchi{\vartheta} ) \, \SGchi{\rho}
\Big[
\tfrac{
	\SGchi{ \vartheta } \, T^{ ( 1 - \vartheta ) } \,
}{ ( 1 - \vartheta ) }
+
\tfrac{
	\max\{\BDG{3}{},\BDG{4}{}\} \, \SGchi{ \nicefrac{\vartheta}{2} } \, 
	T^{ ( 1 - \vartheta )/2 }
}{
\sqrt{ 1 - \vartheta }
}
\Big]
\bigg)
\Bigg]
\\ & \cdot
\nonumber
\bigg[
\tfrac{
	|\SGchi{ \nicefrac{\rho}{2} }|^2
}{
T^{ \rho/2 }
} \,
| \varphi |_{ C^1_b( V, {\mathcal{V}} ) }
+
\tfrac{
	| \SGchi{0} |^3 \, 
	| \SGchi{\rho} |^2 \, 
	| \SGchi{ \theta - \vartheta } |^3 \,
	| \SGchi{ \nicefrac{ ( \theta - \vartheta ) }{ 2 } } |^6 
	\max\{ 
	1, T^{ ( 1 - \vartheta ) } 
	\}
	\, 
	\varsigma_{ F, B }
}{ 
( 1 - \vartheta - \rho ) \, T^\rho 
}  
\, 
\Big(
\| \varphi \|_{ C^3_b( V, {\mathcal{V}} ) }
+
\sup_{ \kappa \in ( 0, T ] }
\big[
c^{ ( \kappa ) }_{ -\vartheta } 
\\ & +
\nonumber
c^{ ( \kappa ) }_{ -\vartheta, 0 }
+ 
c^{ ( \kappa ) }_{ -\vartheta, 0, 0 } + c^{ ( \kappa ) }_{ -\vartheta, 0, 0, 0 } 
+
c^{ ( \kappa ) }_{ -\nicefrac{ \vartheta }{ 2 }, -\nicefrac{ \vartheta }{ 2 } }
+
c^{ ( \kappa ) }_{ -\nicefrac{ \vartheta }{ 2 }, -\nicefrac{ \vartheta }{ 2 }, 0 }
+
c^{ ( \kappa ) }_{ -\nicefrac{ \vartheta }{ 2 }, -\nicefrac{ \vartheta }{ 2 }, 0, 0 }
+
\tilde{c}^{ ( \kappa ) }_{ -\nicefrac{ \vartheta }{ 2 }, -\nicefrac{ \vartheta }{ 2 }, 0, 0 }
\big]
\Big)
\bigg]
< \infty
.
\end{align}
\end{corollary}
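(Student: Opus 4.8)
The plan is to derive Corollary~\ref{cor:weak_convergence_irregular} from Proposition~\ref{prop:weak_convergence_irregular} purely by an optimization-over-parameters argument. First I would invoke Proposition~\ref{prop:weak_convergence_irregular} directly: under the hypotheses of Corollary~\ref{cor:weak_convergence_irregular}, for every admissible pair $(r,\rho)$ with $r\in[0,1-\vartheta)$ and $\rho\in(0,1-\theta)$ we already have $\ES[\|\varphi(X_T)\|_{\mathcal{V}}+\|\varphi(Y_T)\|_{\mathcal{V}}]<\infty$ together with the estimate~\eqref{eq:weak_convergence}. The remaining work is to make a judicious choice of $r$ as a function of $\rho$, $\theta$, $\vartheta$ so that the exponent of $h$, namely $\frac{\rho\,r}{\rho+6(\theta-\vartheta)}$, simplifies to the stated rate $\rho-6(\theta-\vartheta)$, while simultaneously checking that this choice of $r$ is still admissible (i.e. lies in $[0,1-\vartheta)$) under the extra assumption $\rho\in(6(\theta-\vartheta),\infty)$ imposed in the corollary.

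The key algebraic step is to set, in the application of Proposition~\ref{prop:weak_convergence_irregular}, the roles ``$\rho$'' and ``$r$'' of the proposition equal to suitable values expressed through the corollary's $\rho$. Concretely I would take the proposition's $\rho$ to be the corollary's $\rho$ and solve $\frac{\rho\,r}{\rho+6(\theta-\vartheta)} = \rho - 6(\theta-\vartheta)$ for the proposition's $r$; this forces a value of the form $r=\frac{(\rho-6(\theta-\vartheta))(\rho+6(\theta-\vartheta))}{\rho}=\rho-\frac{36(\theta-\vartheta)^2}{\rho}$, which is nonnegative exactly when $\rho\ge 6(\theta-\vartheta)$ and is bounded above by $\rho<1-\theta\le 1-\vartheta$, hence admissible. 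Then I would substitute this $r$ into the bracketed prefactors of~\eqref{eq:weak_convergence}, using the monotonicity properties of the constants $\chi_{(\cdot)}$, of the functions $\mathcal{E}_{(1-\theta)}$, and of $c^{(\kappa)}_{(\cdot)}$ in their indices, together with elementary inequalities such as $|\max\{T,1/T\}|^{(r+3(\theta-\vartheta))}\le|\max\{T,1/T\}|^{3(\rho+\theta)}$ (valid since $r\le\rho$ and $3(\theta-\vartheta)\le 3\theta\le 3(\rho+\theta)$), to absorb the $r$-dependent exponents into the cleaner $\rho$-dependent bounds displayed in~\eqref{eq:weak_convergence_simplified_rate}. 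The second inequality in~\eqref{eq:weak_convergence_simplified_rate} (finiteness) follows, as in the proposition, from Lemma~\ref{lem:c.delta.finite}, which guarantees $\sup_{\kappa\in(0,T]}c^{(\kappa)}_{(\cdot)}<\infty$ and $\sup_{\kappa\in(0,T]}\tilde c^{(\kappa)}_{(\cdot)}<\infty$ for the relevant index tuples.

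I expect the main obstacle to be bookkeeping rather than conceptual: one must verify that every occurrence of the proposition's $r$ in the lengthy product on the right-hand side of~\eqref{eq:weak_convergence} can be replaced by the corollary's $\rho$ (or by a fixed $\theta$-, $\vartheta$-dependent quantity) at the cost of at most multiplicative constants already present in the prefactor $57\,|\max\{T,1/T\}|^{3(\rho+\theta)}\,|\chi_0|^{20}$, and that no term blows up as $r\uparrow\rho$. In particular the factors $\frac{1}{1-\vartheta-r}$ must be controlled; since $r\le\rho<1-\theta\le 1-\vartheta$ we have $1-\vartheta-r\ge 1-\vartheta-\rho>0$, so $\frac{1}{1-\vartheta-r}\le\frac{1}{1-\vartheta-\rho}$ and the replacement is licit. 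Once these monotonicity replacements are carried out and the exponent identity for $h$ is recorded, assembling the final bound is a routine rearrangement, and the proof of Corollary~\ref{cor:weak_convergence_irregular} is complete.
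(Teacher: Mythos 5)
Your proposal takes a genuinely different route from the paper, and that route has a gap. The paper applies Proposition~\ref{prop:weak_convergence_irregular} with $r=\rho$ (see the footnote in the paper's proof), so that every $r$-indexed constant on the right-hand side of~\eqref{eq:weak_convergence} is \emph{literally} the corresponding $\rho$-indexed constant; the only mismatch left is the exponent $\nicefrac{\rho^2}{(\rho+6(\theta-\vartheta))}$ of $h$, which in~\eqref{eq:simplified_rate} is split as $h^{a}\cdot h^{\rho-6(\theta-\vartheta)}$ with $a\ge 0$ and the harmless extra factor $h^{a}\le |\max\{1,T\}|^{\rho}$ absorbed into the prefactor $57\,|\max\{T,\nicefrac 1T\}|^{3(\rho+\theta)}$. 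You instead fix $r=\rho-\tfrac{36(\theta-\vartheta)^2}{\rho}<\rho$ so that the $h$-exponent comes out exactly right, and then plan to upgrade all $r$-dependent quantities to $\rho$-dependent ones.

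That last step is where the argument breaks. Your bound from the proposition carries $\SGchi{r}$, $\SGchi{r+\vartheta}$, and $\SGchi{r+\nicefrac{\vartheta}{2}}$, and the corollary's target bound has $\SGchi{\rho}$, $\SGchi{\rho+\vartheta}$, $\SGchi{\rho+\nicefrac{\vartheta}{2}}$. You invoke ``monotonicity properties of the constants $\SGchi{(\cdot)}$,'' but the quantities $\SGchi{s}=\max\{1,\sup_t t^s\|(\eta-A)^s e^{tA}\|_{L(V)},\sup_t t^{-s}\|(\eta-A)^{-s}(e^{tA}-\operatorname{Id}_V)\|_{L(V)}\}$ are not monotone in $s$ in general, and no such monotonicity is stated or used anywhere in the paper. (By moment/interpolation inequalities one can show $\SGchi{r}\le\max\{\SGchi{0},\SGchi{\rho}\}$ for $0\le r\le\rho\le 1$, which together with the $|\SGchi{0}|^{20}$ in the prefactor might ultimately salvage the argument, but this is a substantive additional step you did not carry out, not routine bookkeeping.) A similar sign issue arises with the factor $T^{-r}$: for $T>1$ one has $T^{-r}\ge T^{-\rho}$, so that term too would have to be traded off against the $|\max\{T,\nicefrac 1T\}|^{\cdots}$ prefactor rather than replaced outright. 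The paper's choice $r=\rho$ makes all these substitutions unnecessary, which is exactly why it is the cleaner route.
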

\begin{proof}
We first apply\footnote{with $ r = \rho $ in the notation of Proposition~\ref{prop:weak_convergence_irregular}} 
Proposition~\ref{prop:weak_convergence_irregular}
to obtain 
that 
$
  \ES\big[
  \| \varphi(X_T) \|_{\mathcal{V}}
  +
  \| \varphi(Y_T) \|_{\mathcal{V}}
  \big]
  < \infty
$
and 
\allowdisplaybreaks
\begin{align}
\label{eq:weak_convergence_simplified_rate_proof}
&
  \big\|
    \ES\big[ 
      \varphi( X_T )
    \big]
    -
    \ES\big[ 
      \varphi( Y_T )
    \big]
  \big\|_{\mathcal{V}}
\leq
  \left[
      57 \,
      |\! \max\{ T, \tfrac{ 1 }{ T } \} |^{ ( \rho + 3 ( \theta - \vartheta ) ) } \, 
      | \SGchi{0} |^{20} \,
  \right]
  h^{
    \frac{ \rho^2 }{ ( \rho + 6 ( \theta - \vartheta ) ) } 
  }
\nonumber
\\ & \cdot
  \left[
    \max\{
    1
    ,
    \| X_0 \|_{ \lpn{5}{\P}{V} }
    \}
+
  \tfrac{
    \SGchi{\theta} \, \SGchi{ \nicefrac{\rho}{2} + \theta } \,
    T^{ ( 1 - \theta ) }
    \,
    \| F \|_{ C^1_b( V, V_{ -\theta } ) }
  }{
    ( 1 - \theta - \nicefrac{\rho}{2} )
  }
  +
  \tfrac{
    \max\{\BDG{2}{},\BDG{5}{}\} \, \SGchi{ \nicefrac{\theta}{2} } \, \SGchi{ \nicefrac{( \rho + \theta )}{2} }
    \sqrt{ T^{ ( 1 - \theta ) } }
    \,
    \| B \|_{ C^1_b( V, \gamma( U, V_{ -\nicefrac{\theta}{2} } ) ) }
  }{
    \sqrt{ 1 - \theta - \rho }
  }
  \right]^{ 10 }
\nonumber
\\ & \cdot
  \left|
  \mathcal{E}_{ ( 1 - \theta ) }\!\left[
    \tfrac{
      \sqrt{ 2 }
      \,
      \SGchi{0} \, \SGchi{ \theta }
      \,
      T^{ ( 1 - \theta ) }
      \,
      |
        F
      |_{
        C^1_b( V, V_{ - \theta } )
      }
    }{
      \sqrt{1 - \theta}
    }
    +
    \max\{\BDG{2}{},\BDG{5}{}\} \, \SGchi{0} \, \SGchi{ 
      \theta / 2 
    }
    \sqrt{
      2 \, T^{ ( 1 - \theta ) }
    } \,
    |
      B
    |_{
      C^1_b( V, \gamma( U, V_{ - \nicefrac{\theta}{2} } ) )
    }
  \right]
  \right|^5
\nonumber
\\ & \cdot
  \Bigg[
    2^\rho
    +
    \tfrac{ T^{ ( 1 - \vartheta ) } }{ ( 1 - \vartheta - \rho ) }
  \bigg(
       3 \, \SGchi{\vartheta} + 2 \, \SGchi{ \rho + \vartheta } + 3 \, | \SGchi{ \nicefrac{\vartheta}{2} } |^2
       +
       4 \, \SGchi{ \rho + \nicefrac{\vartheta}{2} } \, \SGchi{ \nicefrac{\vartheta}{2} }
\\ & +
\nonumber
  2 \, ( | \SGchi{ \nicefrac{ \vartheta }{ 2 } } |^2 + \SGchi{\vartheta} ) \, \SGchi{\rho}
  \Big[
    \tfrac{
    \SGchi{ \vartheta } \, T^{ ( 1 - \vartheta ) } \,
    }{ ( 1 - \vartheta ) }
    +
    \tfrac{
    \max\{\BDG{3}{},\BDG{4}{}\} \, \SGchi{ \nicefrac{\vartheta}{2} } \, 
    T^{ ( 1 - \vartheta )/2 }
    }{
    \sqrt{ 1 - \vartheta }
    }
  \Big]
  \bigg)
  \Bigg]
\\ & \cdot
\nonumber
  \bigg[
  \tfrac{
  |\SGchi{ \nicefrac{\rho}{2} }|^2
  }{
  T^{ \rho/2 }
  } \,
  | \varphi |_{ C^1_b( V, {\mathcal{V}} ) }
  +
  \tfrac{
    | \SGchi{0} |^3 \, 
    | \SGchi{\rho} |^2 \, 
    | \SGchi{ \theta - \vartheta } |^3 \,
    | \SGchi{ \nicefrac{ ( \theta - \vartheta ) }{ 2 } } |^6 
    \max\{ 
      1, T^{ ( 1 - \vartheta ) } 
    \}
    \, 
    \varsigma_{ F, B }
  }{ 
    ( 1 - \vartheta - \rho ) \, T^\rho 
  }  
  \, 
  \Big(
  \| \varphi \|_{ C^3_b( V, {\mathcal{V}} ) }
  +
  \sup_{ \kappa \in ( 0, T ] }
  \big[
  c^{ ( \kappa ) }_{ -\vartheta } 
\\ & +
\nonumber
  c^{ ( \kappa ) }_{ -\vartheta, 0 }
+ 
  c^{ ( \kappa ) }_{ -\vartheta, 0, 0 } + c^{ ( \kappa ) }_{ -\vartheta, 0, 0, 0 } 
  +
  c^{ ( \kappa ) }_{ -\nicefrac{ \vartheta }{ 2 }, -\nicefrac{ \vartheta }{ 2 } }
  +
  c^{ ( \kappa ) }_{ -\nicefrac{ \vartheta }{ 2 }, -\nicefrac{ \vartheta }{ 2 }, 0 }
+
  c^{ ( \kappa ) }_{ -\nicefrac{ \vartheta }{ 2 }, -\nicefrac{ \vartheta }{ 2 }, 0, 0 }
  +
  \tilde{c}^{ ( \kappa ) }_{ -\nicefrac{ \vartheta }{ 2 }, -\nicefrac{ \vartheta }{ 2 }, 0, 0 }
  \big]
  \Big)
  \bigg]
  < \infty
  .
\end{align}
Next we note that 
\begin{equation}
\label{eq:simplified_rate}
\begin{split}
&
  h^{
    { \frac{ \rho^2 }{ ( \rho + 6 ( \theta - \vartheta ) ) } }
  }
  =
  h^{ 
      \rho 
      \left[
        \frac{ 1 }{ 1 + 6 ( \theta - \vartheta ) / \rho } 
        -
        1 
        + \frac{ 6 ( \theta - \vartheta ) }{ \rho }
      \right]
    }
    \,
    h^{ 
      \rho 
      \left[
        1 - \frac{ 6 ( \theta - \vartheta ) }{ \rho }
      \right]
    }
\\ & 
  \leq
  |\! \max\{ 1, T \} |^{
    \rho 
    \left[
      \frac{ 1 }{ 1 + 6 ( \theta - \vartheta ) / \rho } 
      -
      1 
      + \frac{ 6 ( \theta - \vartheta ) }{ \rho }
    \right]
  }
    \,
    h^{ 
      \left(
        \rho - 6 ( \theta - \vartheta ) 
      \right)
    }
\leq
  |\! \max\{ 1, T \} |^\rho
    \,
    h^{ 
      \left(
        \rho - 6 ( \theta - \vartheta ) 
      \right)
    }
    .
\end{split}
\end{equation}
Plugging \eqref{eq:simplified_rate} into \eqref{eq:weak_convergence_simplified_rate_proof} 
establishes~\eqref{eq:weak_convergence_simplified_rate}. 
This completes the proof of Corollary~\ref{cor:weak_convergence_irregular}.
\end{proof}

\section{Weak convergence rates for exponential Euler approximations of SPDEs}
\label{sec:sharp.weak.rates}
\begin{corollary}
	\label{cor:sharp.weak.rates}
	Consider the notation in Section~\ref{sec:notation}, 
	let 
	$ ( V, \left\| \cdot \right\|_V ) $ 
	and 
	$ ( \mathcal{V}, \left\| \cdot \right\|_{\mathcal{V}} ) $
	be separable UMD $\R$-Banach spaces with type 2,
	let
	$ ( U, \left< \cdot, \cdot \right>_U, \left\| \cdot \right\|_U ) $
	be a separable $ \R $-Hilbert space, 
	let 
	$ T \in (0,\infty) $,
	$ \eta \in \R $,  
	$\kappa\in[0,\nicefrac{4}{7})$,
	$\xi\in V$,
	$\varphi\in \operatorname{Lip}^4(V,\mathcal{V})$, 
	let $ ( \Omega, \mathcal{F}, \P ) $
	be a probability space with a normal filtration 
	$( \mathcal{F}_t )_{ t \in [0,T] }$,
	let $ ( W_t )_{ t \in [0,T] } $ be an $ \operatorname{Id}_U $-cylindrical
	$ ( \Omega, \mathcal{F}, \P, ( \mathcal{F}_t )_{ t \in [0,T] } ) $-Wiener process,
	let
	$
	A \colon D(A)
	\subseteq
	V \rightarrow V
	$
	be a generator of a strongly continuous analytic semigroup
	with 
	$
	\operatorname{spectrum}( A )
	\subseteq
	\{
	z \in \mathbb{C}
	\colon
	\text{Re}( z ) < \eta
	\}
	$,
	let
	$  
	( 
	V_r 
	,
	\left\| \cdot \right\|_{ V_r } 
	)
	$,
	$ r \in \R $,
	be a family of interpolation spaces associated to $ \eta - A $, 
	let 
	$F\in \operatorname{Lip}^4(V,V_{-\kappa})$, 
	$B\in \operatorname{Lip}^4(V,\gamma(U,V_{-\nicefrac{\kappa}{2}}))$,
	let 
	$ 
	X \colon [0,T] \times \Omega \to V
	$
	be a continuous 
	$
	( \mathcal{F}_t )_{ t \in [0,T] }
	$-adapted stochastic process
	which satisfies that for all 
	$ t \in [0,T] $
	it holds $\P$-a.s.\ that
	\begin{equation}
	\label{eq:lifting.SEE}
	X_t 
	= 
	e^{ tA } \xi
	+ 
	\int_0^t e^{ ( t - s )A } F( X_s ) \, ds
	+ 
	\int_0^t e^{ ( t - s )A } B( X_s ) \, dW_s
	, 
	\end{equation}
	and let $ Y^N \colon \{ 0, 1, \dots, N \} \times \Omega \to V $,
	$ N \in \N $, be stochastic processes which satisfy that for all $ N \in \N $,
	$ n \in \{ 0, 1, \dots, N - 1 \} $
	it holds $ \P $-a.s.\ that
	$
	Y^N_0 = \xi
	$
	and 
\begin{equation}
Y^N_{ n + 1 }
=
e^{ \frac{T}{N} A }
\Big(
Y_n
+
F( Y_n ) \tfrac{ T }{ N }
+
{\textstyle\int_{ \frac{ n T }{ N } }^{
	\frac{ ( n + 1 ) T }{ N } 
}}
B( Y_n )
\,
dW_s
\Big)
.
\end{equation}
	Then for every $\varepsilon\in(0,\infty)$ 
	there exists a real number $C\in\R$ 
	such that for all $N\in\N$ 
	it holds that 
	$
	\ES\big[
	\|\varphi(X_T)\|_{\mathcal{V}}
	+
	\|\varphi(Y^N_N)\|_{\mathcal{V}}
	\big]
	< \infty
	$
	and 
	\begin{equation}
	\label{eq:weak.rate}
	\left\|
	\ES\big[ 
	\varphi( X_T )
	\big]
	-
	\ES\big[ 
	\varphi( Y^N_N )
	\big]
	\right\|_{\mathcal{V}}
	\leq
	C \cdot
	N^{
		- ( 1 - \kappa - 6\max\{\kappa-\nicefrac{1}{2},0\} - \varepsilon )
	}
	.
	\end{equation}
\end{corollary}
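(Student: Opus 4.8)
The plan is to obtain Corollary~\ref{cor:sharp.weak.rates} as a consequence of Corollary~\ref{cor:weak_convergence_irregular}. First I would, for a fixed $N\in\N$, set $h=\nicefrac{T}{N}$ and introduce the continuous-time exponential Euler process $Y=Y^{(N)}\colon[0,T]\times\Omega\to V$ featuring in the setting of Section~\ref{sec:setting_weak_convergence_irregular}, namely the $(\mathcal{F}_t)_{t\in[0,T]}$-predictable process with $Y_0=\xi$ which satisfies $Y_t=e^{tA}Y_0+\int_0^t e^{(t-\lfloor s\rfloor_h)A}F(Y_{\floor{s}{h}})\,ds+\int_0^t e^{(t-\lfloor s\rfloor_h)A}B(Y_{\floor{s}{h}})\,dW_s$ for $t\in(0,T]$; on each grid interval $[nh,(n+1)h)$ this process is given by an explicit formula in terms of $Y_{nh}$, which makes its predictability and the bound $\|Y_0\|_{\lpn{5}{\P}{V}}=\|\xi\|_V<\infty$ immediate. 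Evaluating the mild equation at $t=(n+1)h$ and isolating the last time-step yields $Y_{(n+1)h}=e^{hA}\big(Y_{nh}+F(Y_{nh})\,h+\int_{nh}^{(n+1)h}B(Y_{nh})\,dW_s\big)$, which is precisely the recursion defining $Y^N$; since $Y_0=\xi=Y^N_0$, an induction over $n\in\{0,1,\ldots,N\}$ gives $Y_{nh}=Y^N_n$ $\P$-a.s., so in particular $Y_T=Y^N_N$ $\P$-a.s.

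Next I would verify that, with $\theta=\kappa$ and any $\vartheta\in[0,\nicefrac{1}{2})\cap[0,\kappa]$, the full setting of Section~\ref{sec:setting_weak_convergence_irregular} is in force. The assumptions $F\in\operatorname{Lip}^4(V,V_{-\kappa})$, $B\in\operatorname{Lip}^4(V,\gamma(U,V_{-\nicefrac{\kappa}{2}}))$, $\varphi\in\operatorname{Lip}^4(V,\mathcal{V})$ match those required there with $\theta=\kappa$; the existence of the (mollified) solution processes $X^{\kappa',x}$, $\kappa'\in[0,T]$, $x\in V$, together with their uniform fifth moment bounds, follows from the Burkholder-Davis-Gundy type inequality in UMD spaces of type $2$ and standard fixed-point and a priori arguments (Brze{\'z}niak~\cite{b97b}, Van Neerven, Veraar \& Weis~\cite{vvw08}, Cox \& Van Neerven~\cite{CoxVanNeerven2013}); and, by uniqueness, the given process $X$ coincides with the constructed mild solution, so $\sup_{t\in[0,T]}\|X_t\|_{\lpn{5}{\P}{V}}<\infty$ and $X_0=\xi=Y_0$.

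Fixing now $\varepsilon\in(0,\infty)$, I would choose $\delta\in(0,\infty)$ small, set $\vartheta=\min\{\kappa,\nicefrac{1}{2}-\delta\}\in[0,\nicefrac{1}{2})\cap[0,\kappa]$, so that $\kappa-\vartheta=\max\{\kappa-\nicefrac{1}{2}+\delta,0\}$ decreases to $\max\{\kappa-\nicefrac{1}{2},0\}$ as $\delta\downarrow0$; since $6\max\{\kappa-\nicefrac{1}{2},0\}<1-\kappa$ — which is exactly the hypothesis $\kappa<\nicefrac{4}{7}$ — for $\delta$ small enough $6(\kappa-\vartheta)<1-\kappa$, and I would then pick $\rho\in(0,1-\kappa)\cap(6(\kappa-\vartheta),\infty)$ close enough to $1-\kappa$ that $\rho-6(\kappa-\vartheta)\geq1-\kappa-6\max\{\kappa-\nicefrac{1}{2},0\}-\varepsilon$. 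Corollary~\ref{cor:weak_convergence_irregular}, applied with these $\theta=\kappa$, $\vartheta$, $\rho$, then gives $\ES[\|\varphi(X_T)\|_{\mathcal{V}}+\|\varphi(Y^N_N)\|_{\mathcal{V}}]<\infty$ and a finite constant $\tilde{C}\in\R$, independent of $N$, with $\|\ES[\varphi(X_T)]-\ES[\varphi(Y_T)]\|_{\mathcal{V}}\leq\tilde{C}\,h^{(\rho-6(\kappa-\vartheta))}$; substituting $h=\nicefrac{T}{N}$, using $Y_T=Y^N_N$, and using $N\geq1$ together with $\rho-6(\kappa-\vartheta)\geq1-\kappa-6\max\{\kappa-\nicefrac{1}{2},0\}-\varepsilon$ to bound $N^{-(\rho-6(\kappa-\vartheta))}\leq N^{-(1-\kappa-6\max\{\kappa-\nicefrac{1}{2},0\}-\varepsilon)}$, the choice $C=\tilde{C}\,T^{(\rho-6(\kappa-\vartheta))}$ yields~\eqref{eq:weak.rate}.

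The analytically demanding machinery — the mild It\^{o} formula in UMD spaces, the fourth-order regularity of the associated Kolmogorov equation, and the weak temporal regularity estimates — is already packaged into Corollary~\ref{cor:weak_convergence_irregular}, so the substantive step is the parameter optimization in the third paragraph: the constraint $\kappa<\nicefrac{4}{7}$ is precisely what allows one to raise $\vartheta$ to (just below) $\nicefrac{1}{2}$ while keeping the admissible window $\rho\in(6(\kappa-\vartheta),1-\kappa)$ nonempty, which is what produces the essentially sharp exponent $1-\kappa-6\max\{\kappa-\nicefrac{1}{2},0\}$. I expect the only other points needing genuine care to be the identification in the first paragraph of the discrete scheme $Y^N$ with the grid restriction of the continuous-time process $Y$, and the verification that the merely-assumed continuous mild solution $X$ actually possesses the fifth moments demanded by the setting of Section~\ref{sec:setting_weak_convergence_irregular}.
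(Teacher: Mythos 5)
Your proposal is correct and follows the same route as the paper's own proof: interpolate the discrete scheme to the continuous-time $h=\nicefrac{T}{N}$ Euler process, identify the grid values with $Y^N$, verify the setting of Section~\ref{sec:setting_weak_convergence_irregular} (with $\theta=\kappa$), and then apply Corollary~\ref{cor:weak_convergence_irregular} with $\vartheta$ raised close to $\nicefrac{1}{2}$ and $\rho$ close to $1-\kappa$; the condition $\kappa<\nicefrac{4}{7}$ is indeed what keeps the window $(6(\kappa-\vartheta),1-\kappa)$ nonempty. The only difference is cosmetic — the paper writes $\vartheta=\min\{\kappa,\nicefrac{1}{2}\}-\nicefrac{\varepsilon}{12}\mathbbm{1}_{[\nicefrac{1}{2},1)}(\kappa)$ and $\rho=1-\kappa-(1+\mathbbm{1}_{[0,\nicefrac{1}{2})}(\kappa))\nicefrac{\varepsilon}{2}$ and then handles large $\varepsilon$ by a shrinking argument, whereas you parametrize via a small $\delta$ and pick $\rho$ near the boundary, which absorbs the large-$\varepsilon$ case automatically.
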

\begin{proof}
Throughout this proof let 
$\tilde{Y}^N\colon[0,T]\times\Omega\to V$, $N\in\N$, 
be $(\mathcal{F}_t)_{t\in[0,T]}$-predictable stochastic processes 
which satisfy for all $N\in\N$ that 
$\tilde{Y}^N_0=\xi$ and which satisfy that for all $N\in\N$, $t\in(0,T]$ 
it holds $\P$-a.s.\ that
\begin{equation}
\label{eq:interpolated.exp}
  \tilde{Y}^N_t
  =
  e^{tA} \, \tilde{Y}^N_0
  +
  \int^t_0
  e^{(t-\floor{s}{\nicefrac{T}{N}})A} \, F(\tilde{Y}^N_{\floor{s}{\nicefrac{T}{N}}})
  \, ds
  +
  \int^t_0
  e^{(t-\floor{s}{\nicefrac{T}{N}})A} \, B(\tilde{Y}^N_{\floor{s}{\nicefrac{T}{N}}})
  \, dW_s
  .
\end{equation} 
Observe that for all $N\in\N$ it holds that 
\begin{equation}
\label{eq:discrete.interpolated.scheme}
  \P\big(\tilde{Y}^N_T
  =
  Y^N_N\big)=1.
\end{equation}
Next note that for all 
$
  \varepsilon \in (0,\min\{1-\kappa,4-7\kappa\})
$
it holds that 
\begin{equation}
\label{eq:parameter.simplification}
\begin{split}
&
  1-\kappa-(1+\mathbbm{1}_{[0,\nicefrac{1}{2})}(\kappa)) \tfrac{\varepsilon}{2}
  -
  6\big(\kappa-\big[\!\min\!\big\{\kappa,\tfrac{1}{2}\big\}-\tfrac{\varepsilon}{12}\mathbbm{1}_{[\nicefrac{1}{2},1)}(\kappa)\big]\big)
\\&=
  1-\kappa-\tfrac{\varepsilon}{2}-\tfrac{\varepsilon}{2} \, \mathbbm{1}_{[0,\nicefrac{1}{2})}(\kappa)
  -
  6\kappa
  +
  6\big[\!\min\!\big\{\kappa,\tfrac{1}{2}\big\}-\tfrac{\varepsilon}{12}\mathbbm{1}_{[\nicefrac{1}{2},1)}(\kappa)\big]
\\&=
  1-7\kappa-\tfrac{\varepsilon}{2}-\tfrac{\varepsilon}{2} \, \mathbbm{1}_{[0,\nicefrac{1}{2})}(\kappa)
  +
  6\min\!\big\{\kappa,\tfrac{1}{2}\big\}
  -
  \tfrac{\varepsilon}{2} \, \mathbbm{1}_{[\nicefrac{1}{2},1)}(\kappa)
\\&=
  1-7\kappa-\varepsilon
  +
  6\min\!\big\{\kappa,\tfrac{1}{2}\big\}
\\&=
  1-\kappa-6\big[\kappa-\min\!\big\{\kappa,\tfrac{1}{2}\big\}\big]
  -\varepsilon
\\&=
  1-\kappa-6\max\!\big\{0,\kappa-\tfrac{1}{2}\big\}-\varepsilon
\\&=
  1-\kappa-6\max\!\big\{\kappa-\tfrac{1}{2},0\big\}-\varepsilon
  .
\end{split}
\end{equation}
Corollary~\ref{cor:weak_convergence_irregular} 
(with
$V=V$,
$\mathcal{V}=\mathcal{V}$, 
$U=U$,
$T=T$,
$\eta=\eta$,
$W=W$,
$A=A$,
$V_r=V_r$,
$h=\nicefrac{T}{N}$, 
$\theta=\kappa$, 
$\vartheta=\min\{\kappa,\nicefrac{1}{2}\}-\nicefrac{\varepsilon}{12} \, \mathbbm{1}_{[\nicefrac{1}{2},1)}(\kappa)$, 
$F=F$,
$B=B$,
$\varphi=\varphi$,
$X=X$,
$Y=\tilde{Y}^N$,
$\rho=1-\kappa-(1+\mathbbm{1}_{[0,\nicefrac{1}{2})}(\kappa))\,\nicefrac{\varepsilon}{2}$
for 
$\varepsilon\in(0,\min\{1-\kappa,4-7\kappa\})$,
$N\in\N$, 
$r\in\R$
in the notation of Corollary~\ref{cor:weak_convergence_irregular}), 
\eqref{eq:interpolated.exp},
\eqref{eq:discrete.interpolated.scheme}, 
items~\eqref{item:derivative.processes}--\eqref{item:kolmogorov.lip.c.delta} of Lemma~\ref{lem:Kolmogorov}, e.g., 
Kunze~\cite[Theorem~5.6]{Kunze2010arXiv}, and, e.g., Van Neerven et al.~\cite[Theorem 6.2]{vvw08}
hence ensure that for all 
$\varepsilon\in(0,\min\{1-\kappa,4-7\kappa\})$ 
there exists a real number 
$C\in[0,\infty)$
such that for all $N\in\N$ it holds that
	$
	\ES\big[
	\|\varphi(X_T)\|_{\mathcal{V}}
	+
	\|\varphi(Y^N_N)\|_{\mathcal{V}}
	\big]
	< \infty
	$
and 
\begin{equation}
\begin{split}
  \big\|
    \ES\big[
    \varphi(X_T)
    \big]
    -
    \ES\big[
    \varphi(Y^N_N)
    \big]
  \big\|_{\mathcal{V}}
  =
  \big\|
  \ES\big[
  \varphi(X_T)
  \big]
  -
  \ES\big[
  \varphi(\tilde{Y}^N_T)
  \big]
  \big\|_{\mathcal{V}}
&\leq
  C\cdot N^{-(1-\kappa-6\max\{\kappa-\nicefrac{1}{2},0\}-\varepsilon)}
  .
\end{split}
\end{equation}
This implies that for all 
$ \varepsilon \in (0,\infty) $, 
$\epsilon\in(0,\min\{1-\kappa,4-7\kappa,\varepsilon\})$
there exists a real number 
$C\in[0,\infty)$
such that for all $N\in\N$ it holds that
	$
	\ES\big[
	\|\varphi(X_T)\|_{\mathcal{V}}
	+
	\|\varphi(Y^N_N)\|_{\mathcal{V}}
	\big]
	< \infty
	$
and 
\begin{equation}
\begin{split}
  &
  \big\|
  \ES\big[
  \varphi(X_T)
  \big]
  -
  \ES\big[
  \varphi(Y^N_N)
  \big]
  \big\|_{\mathcal{V}}
  \leq
  C\cdot N^{-(1-\kappa-6\max\{\kappa-\nicefrac{1}{2},0\}-\epsilon)}
  \\&=
  C\cdot N^{-(\varepsilon-\epsilon)}\cdot N^{-(1-\kappa-6\max\{\kappa-\nicefrac{1}{2},0\}-\varepsilon)}
  =
  \frac{C}{
  	N^{(\varepsilon-\epsilon)}
  	}
  	\cdot N^{-(1-\kappa-6\max\{\kappa-\nicefrac{1}{2},0\}-\varepsilon)}
  \\&\leq
  C
  \cdot N^{-(1-\kappa-6\max\{\kappa-\nicefrac{1}{2},0\}-\varepsilon)}
  .
\end{split}
\end{equation}
The proof of Corollary~\ref{cor:sharp.weak.rates} is thus completed. 
\end{proof}

\section{Weak convergence rates for nonlinear stochastic heat equations}
\label{sec:temporal.rate.stoch.heat}
\begin{corollary}
	\label{cor:neymitskii}
	Consider the notation in Section~\ref{sec:notation}, 
	let $n,d,k\in\N$, 
	$ \alpha \in (-\infty,0) $, 
	$p\in(\max\{n,
	$
	$
	\nicefrac{d(n-1)}{(2|\alpha|)}\},\infty)$, 
	$(V,\left\|\cdot\right\|_V)=(\lpnb{p}{\lambda_{(0,1)^d}}{\R^k},\left\|\cdot\right\|_{\lpnb{p}{\lambda_{(0,1)^d}}{\R^k}})$,
	let $f\colon\R^k\to\R^k$ 
	be an $n$-times continuously differentiable function with globally bounded derivatives, 
	let $A\colon D(A)\subseteq V \to V$ be the Laplacian with Dirichlet boundary conditions on $V$, 
	and let
	$  
	( 
	V_r 
	,
	\left\| \cdot \right\|_{ V_r } 
	)
	$,
	$ r \in \R $,
	be a family of interpolation spaces associated to $ - A $ (cf., e.g., \cite[Section~3.7]{sy02}).
	Then
	\begin{enumerate}[(i)]
		\item 
		\label{item:neymitskii.exist}
		there exists a unique function $F\colon V\to V_\alpha$ 
		which satisfies for all $ v \in \lpn{p}{\lambda_{(0,1)^d}}{\R^k} $ 
		that 
		\begin{equation}
		  F\big([v]_{\lambda_{(0,1)^d},\mathcal{B}(\R^k)}\big)
		  =
		  \big[\{f(v(x))\}_{x\in(0,1)^d}\big]_{\lambda_{(0,1)^d},\mathcal{B}(\R^k)}
		  =
		  [f \circ v]_{\lambda_{(0,1)^d},\mathcal{B}(\R^k)}
		  ,
		\end{equation}
		\item
		\label{item:neymitskii.smooth}
		it holds that $F$ is $n$-times continuously Fr\'{e}chet differentiable with globally bounded derivatives, 
		\item
		\label{item:neymitskii.derivative}
		it holds for all $ m \in \{1,\ldots,n\} $, 
		$ v \in \lpn{p}{\lambda_{(0,1)^d}}{\R^k} $, 
		$ u_1, \ldots, u_m \in \lpn{pm}{\lambda_{(0,1)^d}}{\R^k} $
		that
		\begin{equation}
		\begin{split}
		&
		F^{( m )}( [ v ]_{\lambda_{(0,1)^d}, \mathcal{B}(\R^k ) } )
		( [ u_1 ]_{\lambda_{(0,1)^d}, \mathcal{B}(\R^k ) }, \ldots, 
		[ u_m ]_{\lambda_{(0,1)^d}, \mathcal{B}(\R^k ) } )
		\\
		&
		=
		[ \{ f^{(m)}(v(x)) ( u_1(x), \ldots, u_m(x) ) \}_{ x \in (0,1)^d } ]_{\lambda_{(0,1)^d}, \mathcal{B}(\R^k)}
		,
		\end{split}
		\end{equation}
		\item
		\label{item:neymitskii.embedding}
		it holds for all 
		$ q \in [\max\{1,\frac{dp}{2p|\alpha|+d}\},\frac{p}{n}) $
		that 
		\begin{equation}
		\sup_{v\in V\setminus\{0\}}
		\bigg[
		\frac{
		  \|v\|_{V_\alpha}
		}{
		  \|v\|_{\lpnb{q}{\lambda_{(0,1)^d}}{\R^k}}
		}
		\bigg]
		< \infty,
		\end{equation}
		\item
		\label{item:neymitskii.cb}
		it holds for all $m\in\{1,\ldots,n\}$, 
		$ q \in [\max\{1,\frac{dp}{2p|\alpha|+d}\},\frac{p}{n}) $, 
		$r\in[mq,\infty)$ 
		that 
		\begin{equation}
		\begin{split}
		&
		\sup_{v\in V}
		\sup_{u_1,\ldots,u_m\in\lpnb{\max\{r,p\}}{\lambda_{(0,1)^d}}{\R^k}\setminus\{0\}}
		\left[
		\frac{
			\|F^{(m)}(v)(u_1,\ldots,u_m)\|_{V_\alpha}
			}{
			\prod^m_{i=1}
			\|u_i\|_{\lpnb{r}{\lambda_{(0,1)^d}}{\R^k}}
			}
		\right]
		\\&\leq
		|f|_{C^m_b(\R^k,\R^k)} \,
		\bigg[
		\sup_{v\in V\setminus\{0\}}
		\frac{
		  \|v\|_{V_\alpha}
		}{
		  \|v\|_{\lpnb{q}{\lambda_{(0,1)^d}}{\R^k}}
		}
		\bigg]
		< \infty,
		\end{split}
		\end{equation}
		and
		\item
		\label{item:neymitskii.lip}
		it holds for all $m\in\{1,\ldots,n\}$, 
		$ q \in [\max\{1,\frac{dp}{2p|\alpha|+d}\},\frac{p}{n}) $, 
		$r\in[(m+1)q,\infty)$ 
		that 
		\begin{equation}
		\begin{split}
		&
		\sup_{\substack{v,w\in\lpnb{\max\{r,p\}}{\lambda_{(0,1)^d}}{\R^k}, \\ v \neq w}}
		\sup_{\substack{u_1,\ldots,u_m\in\\\lpnb{\max\{r,p\}}{\lambda_{(0,1)^d}}{\R^k}\setminus\{0\}}}
		\left[
		\frac{
			\|(F^{(m)}(v)-F^{(m)}(w))(u_1,\ldots,u_m)\|_{V_\alpha}
			}{
			\|v-w\|_{\lpnb{r}{\lambda_{(0,1)^d}}{\R^k}} \cdot
			\prod^m_{i=1}
			\|u_i\|_{\lpnb{r}{\lambda_{(0,1)^d}}{\R^k}}
			}
		\right]
		\\&\leq
		|f|_{\operatorname{Lip}^m(\R^k,\R^k)} \,
		\bigg[
		\sup_{v\in V\setminus\{0\}}
		\frac{
		  \|v\|_{V_\alpha}
		}{
		  \|v\|_{\lpnb{q}{\lambda_{(0,1)^d}}{\R^k}}
		}
		\bigg]
		.
		\end{split}
		\end{equation}
	\end{enumerate}
\end{corollary}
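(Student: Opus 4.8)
The plan is to verify each of the six items by reducing everything to pointwise properties of $f$ via the Nemytskii (superposition) structure, using only the Sobolev embeddings encoded by the interpolation spaces $V_r$ associated to the Dirichlet Laplacian on $(0,1)^d$. First I would establish item~\eqref{item:neymitskii.exist}: for $v\in\lpnb{p}{\lambda_{(0,1)^d}}{\R^k}$ the composition $f\circ v$ is measurable, and since $f$ has globally bounded first derivative it is globally Lipschitz, so $|f(v(x))|\le |f(0)|+|f|_{\operatorname{Lip}^0}|v(x)|$ pointwise; hence $f\circ v\in\lpnb{p}{\lambda_{(0,1)^d}}{\R^k}\subseteq\lpnb{q}{\lambda_{(0,1)^d}}{\R^k}$ for any $q\le p$, and item~\eqref{item:neymitskii.embedding} — which I would prove next using the continuous Sobolev-type embedding $V_\alpha\hookleftarrow\lpnb{q}{\lambda_{(0,1)^d}}{\R^k}$ valid precisely when $q\ge\max\{1,\frac{dp}{2p|\alpha|+d}\}$ (a standard fact about fractional Sobolev spaces over bounded domains; cf.\ the cited~\cite{sy02}, noting $\alpha<0$ means $V_\alpha$ is a negative-order space) — puts $F(v)=[f\circ v]\in V_\alpha$ and shows $F$ is well defined and bounded on bounded sets. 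Uniqueness is immediate from density of simple functions.

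For items~\eqref{item:neymitskii.smooth}--\eqref{item:neymitskii.derivative} I would proceed by induction on the order of differentiation. The candidate derivative at $v$ in directions $u_1,\dots,u_m$ is the pointwise expression $[\{f^{(m)}(v(x))(u_1(x),\dots,u_m(x))\}_x]$; H\"older's inequality with exponents summing correctly ($\frac{1}{p}=\sum\frac{1}{pm}$ for the first application, and more generally $\frac{1}{r}\ge\frac{m}{mr}$) together with global boundedness of $f^{(m)}$ shows this lies in $\lpnb{p/m}{\lambda_{(0,1)^d}}{\R^k}$, hence in $V_\alpha$ via item~\eqref{item:neymitskii.embedding} with $q=p/n$ (valid since $p>\max\{n,\nicefrac{d(n-1)}{2|\alpha|}\}$ guarantees $p/n>\max\{1,\frac{dp}{2p|\alpha|+d}\}$ — this inequality is exactly where the hypothesis on $p$ is consumed and should be checked explicitly). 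The Fr\'echet differentiability of $F^{(m-1)}$ with derivative $F^{(m)}$ then follows from Taylor's theorem applied pointwise to $f^{(m-1)}$, the integral remainder estimate, and dominated convergence / Vitali, exactly as in the classical theory of Nemytskii operators on $L^p$ spaces; the only twist is that the target norm is the weaker $\|\cdot\|_{V_\alpha}$, which only makes the estimates easier. Continuity of $F^{(m)}$ as a map $V\to L^{(m)}(V,V_\alpha)$ is handled the same way using uniform continuity of $f^{(m)}$ on compacts plus a truncation argument, and global boundedness of all derivatives of $F$ is read off from the pointwise bounds.

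Items~\eqref{item:neymitskii.cb} and~\eqref{item:neymitskii.lip} are then just quantitative restatements of the estimates already obtained: for~\eqref{item:neymitskii.cb}, given $v\in V$ and $u_i\in\lpnb{\max\{r,p\}}{\lambda_{(0,1)^d}}{\R^k}$ with $r\ge mq$, I would bound $\|F^{(m)}(v)(u_1,\dots,u_m)\|_{V_\alpha}$ by the embedding constant times $\|f^{(m)}(v(\cdot))(u_1(\cdot),\dots,u_m(\cdot))\|_{\lpnb{q}{}{}}$, then by $|f|_{C^m_b(\R^k,\R^k)}\prod_i\|u_i\|_{\lpnb{mq}{}{}}\le|f|_{C^m_b}\prod_i\|u_i\|_{\lpnb{r}{}{}}$ via H\"older (exponents $q^{-1}=\sum(mq)^{-1}$) and the inclusion $\lpnb{r}{}{}\subseteq\lpnb{mq}{}{}$ on the finite-measure domain $(0,1)^d$; for~\eqref{item:neymitskii.lip} one additionally writes $f^{(m)}(v(x))-f^{(m)}(w(x))=\int_0^1 f^{(m+1)}(w(x)+t(v(x)-w(x)))(v(x)-w(x))\,dt$, uses $|f|_{\operatorname{Lip}^m}$, and distributes $\frac{1}{q}=\frac{m+1}{(m+1)q}$ over the $m+1$ factors, which is why $r\ge(m+1)q$ is required. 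The main obstacle — really the only nontrivial point — is pinning down the precise admissible range of Sobolev embedding exponents $V_\alpha\hookleftarrow\lpnb{q}{}{}$ for the negative-order interpolation space associated to the Dirichlet Laplacian and verifying that the stated arithmetic condition on $p$ makes $q=p/n$ admissible; once that embedding is in hand, everything else is the standard $L^p$-Nemytskii calculus transported to the weaker target norm, and I would cite a reference such as~\cite{sy02} for the embedding rather than reprove it.
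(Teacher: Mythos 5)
Your proposal is correct and conceptually matches the paper's argument: both hinge on (a) the Sobolev embedding $V_\alpha\hookleftarrow\lpnb{q}{\lambda_{(0,1)^d}}{\R^k}$ for $q\ge\max\{1,\frac{dp}{2p|\alpha|+d}\}$, and (b) the $n$-fold Fr\'{e}chet differentiability of the superposition operator $v\mapsto f\circ v$ from $L^p$ into $L^q$ (valid because $q<p/n$, which is exactly what $p>\max\{n,\nicefrac{d(n-1)}{2|\alpha|}\}$ buys). The difference is purely one of packaging. The paper makes the two-step factorization explicit: it introduces the Nemytskii map $G\colon V\to\lpnb{q}{\lambda_{(0,1)^d}}{\R^k}$, extends the inclusion $\iota\colon V\to V_\alpha$ by density of $V$ in $L^q$ to a continuous linear map $\mathcal{I}\in L(\lpnb{q}{\lambda_{(0,1)^d}}{\R^k},V_\alpha)$, writes $F=\mathcal{I}\circ G$, and then reads off items~\eqref{item:neymitskii.smooth}--\eqref{item:neymitskii.lip} at one stroke by composing the smoothness of $G$ (cited from Proposition~2.6 in~\cite{CoxJentzenKurniawanPusnik2016}) with the boundedness of $\mathcal{I}$. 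You instead work directly with the $V_\alpha$-norm and propose to reprove the Nemytskii calculus by hand (pointwise Taylor remainder, H\"{o}lder, Vitali), remarking that the weaker target norm only helps. That is workable --- and your H\"{o}lder exponent bookkeeping for items~\eqref{item:neymitskii.cb} and~\eqref{item:neymitskii.lip}, including where $r\ge mq$ versus $r\ge(m+1)q$ enters, matches the paper's intent --- but stating the factorization $F=\mathcal{I}\circ G$ explicitly is cleaner because it lets the differential calculus happen entirely in the $L^p\to L^q$ setting where it is already on the books, after which linearity of $\mathcal{I}$ delivers $F^{(m)}=\mathcal{I}\circ G^{(m)}$ and every estimate for free, rather than re-running the remainder/Vitali argument against a renormed target. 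Your verification that $p>\max\{n,\nicefrac{d(n-1)}{2|\alpha|}\}$ makes $q=p/n$ (in fact the whole interval $[\max\{1,\frac{dp}{2p|\alpha|+d}\},\frac{p}{n})$) nonempty is exactly where the paper consumes the hypothesis too, so no gap there.
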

\begin{proof}
Throughout this proof let 
$ q \in [\max\{1,\frac{dp}{2p|\alpha|+d}\},\frac{p}{n}) $, 
let 
$
  G \colon V \to \lpnb{q}{\lambda_{(0,1)^d}}{\R^k}
$ 
be the function which satisfies for all 
$ v \in \lpn{p}{\lambda_{(0,1)^d}}{\R^k} $ 
that 
\begin{equation}
\label{eq:neymitskii.def}
		  G\big([v]_{\lambda_{(0,1)^d},\mathcal{B}(\R^k)}\big)
		  =
		  [f \circ v]_{\lambda_{(0,1)^d},\mathcal{B}(\R^k)}
		  ,
\end{equation}
and let 
$ \iota \colon V \to V_\alpha $ 
be the function which satisfies for all 
$ v \in V $ 
that 
$ \iota(v) = v $.
Observe that item~\eqref{item:neymitskii.exist} is an immediate consequence of the fact that 
$
  \forall \, v \in \lpn{p}{\lambda_{(0,1)^d}}{\R^k}
  \colon
  f \circ v \in \lpn{p}{\lambda_{(0,1)^d}}{\R^k}
$.
It thus remains to prove items~\eqref{item:neymitskii.smooth}--\eqref{item:neymitskii.lip}.
For this note that the Sobolev embedding theorem and the fact that 
\begin{equation}
  0-2\alpha
  =
  -2\alpha
  =
  2|\alpha|
  \geq
  d
  \max\!\big\{
  0
  ,
  \nicefrac{1}{q}
  -
  \nicefrac{1}{p}
  \big\}
\end{equation}
show that 
\begin{equation}
\label{eq:q.alpha.embed.def}
  \sup_{ v \in V \setminus \{0\} }
  \bigg[
  \frac{
    \|\iota(v)\|_{V_\alpha}
  }{
    \|v\|_{\lpnb{q}{\lambda_{(0,1)^d}}{\R^k}}
  }
  \bigg]
  =
  \sup_{ v \in V \setminus \{0\} }
  \bigg[
  \frac{
    \|v\|_{V_\alpha}
  }{
    \|v\|_{\lpnb{q}{\lambda_{(0,1)^d}}{\R^k}}
  }
  \bigg]
  < \infty
  .
\end{equation}
Combining this with the fact that 
\begin{equation}
  \overline{V}^{\lpnb{q}{\lambda_{(0,1)^d}}{\R^k}}
  =
  \lpnb{q}{\lambda_{(0,1)^d}}{\R^k}
\end{equation}
proves that there exists a unique continuous linear function 
$
  \mathcal{I}
  \colon
  \lpnb{q}{\lambda_{(0,1)^d}}{\R^k}
  \to V_\alpha
$
which satisfies for all 
$
  v \in V \subseteq \lpnb{q}{\lambda_{(0,1)^d}}{\R^k}
$
that 
\begin{equation}
\label{eq:q.alpha.embed}
  \mathcal{I}(v)
  =
  \iota(v)
  =
  v
  .
\end{equation}
Observe that~\eqref{eq:neymitskii.def} and~\eqref{eq:q.alpha.embed} ensure that 
\begin{equation}
  F=
  \mathcal{I} \circ G
  .
\end{equation}
Combining Proposition~2.6 in~\cite{CoxJentzenKurniawanPusnik2016} 
(with
$k=k$, 
$l=k$, 
$d=d$, 
$n=n$, 
$p=q$, 
$q=p$, 
$\mathcal{O}=(0,1)^d$, 
$f=f$, 
$F=G$
in the notation of Proposition~2.6 in~\cite{CoxJentzenKurniawanPusnik2016}), \eqref{eq:q.alpha.embed.def},
and the fact that 
$
  \mathcal{I}
  \in L(\lpnb{q}{\lambda_{(0,1)^d}}{\R^k},V_\alpha)
$
hence establishes items~\eqref{item:neymitskii.smooth}--\eqref{item:neymitskii.lip}.
The proof of Corollary~\ref{cor:neymitskii} is thus completed.
\end{proof}
\begin{corollary}
\label{cor:nonlinearity.F}
	Consider the notation in Section~\ref{sec:notation}, 
	let $n,d,k\in\N$, 
	$ \alpha \in (-\infty,0) $, 
	$p\in[\max\{n+1,\nicefrac{dn}{(2|\alpha|)}\},\infty)$, 
	$(V,\left\|\cdot\right\|_V)=(\lpnb{p}{\lambda_{(0,1)^d}}{\R^k},\left\|\cdot\right\|_{\lpnb{p}{\lambda_{(0,1)^d}}{\R^k}})$,
	let $f\colon\R^k\to\R^k$ 
	be an $n$-times continuously differentiable function with globally Lipschitz continuous and  globally bounded derivatives, 
	let $A\colon D(A)\subseteq V \to V$ be the Laplacian with Dirichlet boundary conditions on $V$, 
	and let
	$  
	( 
	V_r 
	,
	\left\| \cdot \right\|_{ V_r } 
	)
	$,
	$ r \in \R $,
	be a family of interpolation spaces associated to $ - A $.
	Then
	\begin{enumerate}[(i)]
		\item 
		\label{item:F.exist}
		there exists a unique function $F\colon V\to V_\alpha$ 
		which satisfies for all $ v \in \lpn{p}{\lambda_{(0,1)^d}}{\R^k} $ 
		that 
		\begin{equation}
		  F\big([v]_{\lambda_{(0,1)^d},\mathcal{B}(\R^k)}\big)
		  =
		  \big[\{f(v(x))\}_{x\in(0,1)^d}\big]_{\lambda_{(0,1)^d},\mathcal{B}(\R^k)}
		  =
		  [f \circ v]_{\lambda_{(0,1)^d},\mathcal{B}(\R^k)}
		\end{equation}
		and 
		\item
		\label{item:F.smooth}
		it holds that $F$ is $n$-times continuously Fr\'{e}chet differentiable with globally Lipschitz continuous and  globally bounded derivatives. 
	\end{enumerate}
\end{corollary}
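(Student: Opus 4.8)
The plan is to derive Corollary~\ref{cor:nonlinearity.F} directly from the more detailed Corollary~\ref{cor:neymitskii}. The key observation is that the hypotheses of Corollary~\ref{cor:nonlinearity.F} are a strengthening of those of Corollary~\ref{cor:neymitskii}: here $f$ has globally Lipschitz continuous \emph{and} globally bounded derivatives (rather than only globally bounded derivatives), and the integrability exponent is $p\in[\max\{n+1,\nicefrac{dn}{(2|\alpha|)}\},\infty)$. First I would check that with this choice of $p$ the parameter requirement $p\in(\max\{n,\nicefrac{d(n-1)}{(2|\alpha|)}\},\infty)$ of Corollary~\ref{cor:neymitskii} is satisfied: indeed $p\geq n+1>n$ and $p\geq\nicefrac{dn}{(2|\alpha|)}\geq\nicefrac{d(n-1)}{(2|\alpha|)}$, so Corollary~\ref{cor:neymitskii} applies verbatim and yields item~\eqref{item:F.exist} together with the $n$-times continuous Fr\'echet differentiability of $F$ and the global boundedness of its derivatives (items~\eqref{item:neymitskii.smooth} and~\eqref{item:neymitskii.cb} of Corollary~\ref{cor:neymitskii}).

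It then remains only to upgrade the boundedness of the derivatives to global Lipschitz continuity, i.e.\ to show that $F^{(m)}\colon V\to L^{(m)}(V,V_\alpha)$ is globally Lipschitz for each $m\in\{0,1,\dots,n\}$. For $m\in\{1,\dots,n\}$ this is exactly item~\eqref{item:neymitskii.lip} of Corollary~\ref{cor:neymitskii}: I would pick a single admissible $q\in[\max\{1,\nicefrac{dp}{(2p|\alpha|+d)}\},\nicefrac{p}{n})$ (which is nonempty precisely because $p\geq\nicefrac{dn}{(2|\alpha|)}$) and set $r=p$; the condition $r\geq(m+1)q$ needed in item~\eqref{item:neymitskii.lip} reduces to $p\geq(m+1)q$, and since $q<\nicefrac{p}{n}$ and $m+1\leq n+1$ one has $(m+1)q<\nicefrac{(n+1)p}{n}$, which is not quite enough, so I would instead exploit that $q$ may be chosen as close to its lower bound as desired and that $p\geq n+1$ forces $(m+1)q\leq(n+1)q\leq p$ once $q$ is taken small enough (here the hypothesis $p\geq n+1$, as opposed to $p\geq n$, is what provides the needed slack). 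With $r=\max\{r,p\}=p$, item~\eqref{item:neymitskii.lip} then gives the global Lipschitz bound for $F^{(m)}$ with constant $|f|_{\operatorname{Lip}^m(\R^k,\R^k)}\,\big[\sup_{v\in V\setminus\{0\}}\nicefrac{\|v\|_{V_\alpha}}{\|v\|_{\lpnb{q}{\lambda_{(0,1)^d}}{\R^k}}}\big]<\infty$, using item~\eqref{item:neymitskii.embedding}. The case $m=0$ (Lipschitz continuity of $F$ itself) follows by the same argument applied with $m=1$ and the fundamental theorem of calculus in Banach spaces, or directly from the $m=1$ case of item~\eqref{item:neymitskii.lip} in the notation where one allows $F^{(0)}=F$.

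Assembling these pieces establishes item~\eqref{item:F.smooth}, and the proof is complete. The only genuinely delicate point is the bookkeeping of the exponent constraints: one must verify that the strengthened hypothesis $p\in[\max\{n+1,\nicefrac{dn}{(2|\alpha|)}\},\infty)$ simultaneously (a) meets the weaker hypothesis of Corollary~\ref{cor:neymitskii}, and (b) admits a choice of $q\in[\max\{1,\nicefrac{dp}{(2p|\alpha|+d)}\},\nicefrac{p}{n})$ for which both $r=p\geq mq$ (needed for item~\eqref{item:neymitskii.cb}) and $r=p\geq(m+1)q$ (needed for item~\eqref{item:neymitskii.lip}) hold for all $m\in\{1,\dots,n\}$. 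I expect this to be the main obstacle, though it is purely a matter of checking inequalities among $p,q,n,d,|\alpha|$ and involves no hard analysis; everything substantive has already been done in Corollary~\ref{cor:neymitskii} and in Proposition~2.6 of~\cite{CoxJentzenKurniawanPusnik2016} which underlies it.
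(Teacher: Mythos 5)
Your proposal is correct and takes essentially the same route as the paper: reduce to Corollary~\ref{cor:neymitskii} with $r=p$ and a suitable $q$, then verify the exponent constraints. The only cosmetic difference is the choice of $q$: you take $q$ at (or near) the lower endpoint $\max\{1,\nicefrac{dp}{(2p|\alpha|+d)}\}$ of the admissible interval, whereas the paper simply takes $q=\nicefrac{p}{(n+1)}$, for which $(m+1)q\leq(n+1)q=p$ holds with no slack argument needed; both choices are admissible precisely because $p\geq n+1$ gives $\nicefrac{p}{(n+1)}\geq 1$ and $p\geq\nicefrac{dn}{(2|\alpha|)}$ gives $\nicefrac{p}{(n+1)}\geq\nicefrac{dp}{(2p|\alpha|+d)}$. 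You correctly flag the exponent bookkeeping as the only genuine content, and your verification closes in both cases of the $\max$.
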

\begin{proof}
First, we note that the assumption that 
$
  p \geq
  \max\{n+1,\frac{dn}{2|\alpha|}\}
$
ensures that 
\begin{equation}
\label{eq:cor9.1.parameter.check}
  1 \leq \tfrac{p}{(n+1)}
  \qquad\text{and}\qquad
  dn\leq 2p|\alpha|
  .
\end{equation}
This implies that 
$
  d(n+1) \leq 2p|\alpha|+d
$.
Hence, we obtain that 
\begin{equation}
  \tfrac{d}{2p|\alpha|+d}
  \leq
  \tfrac{1}{(n+1)}
  .
\end{equation}
Combining this with~\eqref{eq:cor9.1.parameter.check} assures that 
\begin{equation}
  \max\!\big\{1,
  \tfrac{dp}{2p|\alpha|+d}
  \big\}
  \leq
  \tfrac{p}{(n+1)}
  .
\end{equation}
Therefore, we obtain that 
$
  \frac{p}{(n+1)}
  \in
  [\max\{1,\frac{dp}{2p|\alpha|+d}\},\frac{p}{n})
$.
Items~\eqref{item:neymitskii.exist}, \eqref{item:neymitskii.smooth}, \& \eqref{item:neymitskii.lip} of Corollary~\ref{cor:neymitskii} 
(with
$n=n$, 
$d=d$, 
$k=k$, 
$\alpha=\alpha$, 
$p=p$, 
$f=f$, 
$A=A$, 
$m=m$, 
$q=\frac{p}{(n+1)}$, 
$r=p$
for 
$m\in\{1,\ldots,n\}$
in the notation of items~\eqref{item:neymitskii.exist}, \eqref{item:neymitskii.smooth}, \& \eqref{item:neymitskii.lip} Corollary~\ref{cor:neymitskii})
hence prove items~\eqref{item:F.exist}--\eqref{item:F.smooth}. 
The proof of Corollary~\ref{cor:nonlinearity.F} is thus completed.
\end{proof}
\begin{theorem}
\label{thm:stochastic.heat.eq}
Consider the notation in Section~\ref{sec:notation}, let 
$T,\varepsilon\in(0,\infty)$, 
$\beta\in(\nicefrac{1}{4},\nicefrac{1}{4}+\nicefrac{\min\{\varepsilon,1\}}{28})$, 
$p\in(\frac{5}{2(\beta-\nicefrac{1}{4})},\infty)$, 
$(V,\left\|\cdot\right\|_V)=(\lpnb{p}{\lambda_{(0,1)}}{\R},\left\|\cdot\right\|_{\lpnb{p}{\lambda_{(0,1)}}{\R}})$, 
$( U, \left< \cdot, \cdot \right>_U, 
\left\| \cdot \right\|_U )=( \lpnb{2}{\lambda_{(0,1)}}{\R}, 
\left< \cdot, \cdot \right>_{\lpnb{2}{\lambda_{(0,1)}}{\R}}, 
$
$
\left\| \cdot \right\|_{\lpnb{2}{\lambda_{(0,1)}}{\R}} )$, 
$\xi\in V$, 
let $f,b\colon\R\to\R$ 
and 
$\varphi\colon V\to\R$ be four times continuously differentiable functions with globally Lipschitz continuous and globally bounded derivatives, 
	 let $ ( \Omega, \mathcal{F}, \P ) $ be a probability space with a normal filtration $( \mathcal{F}_t )_{ t \in [0,T] }$, 
	 let $ ( W_t )_{ t \in [0,T] } $ be an $\operatorname{Id}_U$-cylindrical $ ( \Omega, \mathcal{F}, \P, ( \mathcal{F}_t )_{ t \in [0,T] } ) $-Wiener process, 
let $A\colon D(A)\subseteq V \to V$ be the Laplacian with Dirichlet boundary conditions on $V$, 
and let
$  
( 
V_r 
,
\left\| \cdot \right\|_{ V_r } 
)
$,
$ r \in \R $,
be a family of interpolation spaces associated to $ - A $.
Then 
\begin{enumerate}[(i)]
\item
\label{item:exist.F}
there exists a unique continuous function
$F\colon V \to V_{-2\beta}$ 
which satisfies for all 
$v\in\lpn{p}{\lambda_{(0,1)}}{\R}$  
that 
\begin{equation}
  F\big([v]_{\lambda_{(0,1)},\mathcal{B}(\R)}\big)
  =
  \big[\{f(v(x))\}_{x\in(0,1)}\big]_{\lambda_{(0,1)},\mathcal{B}(\R)}
  =
  [f\circ v]_{\lambda_{(0,1)},\mathcal{B}(\R)}
  ,
\end{equation}
\item
\label{item:exist.B}
there exists a unique continuous function
$B\colon V \to \gamma(U,V_{-\beta})$ 
which satisfies for all 
$v,u\in\lpn{2p}{\lambda_{(0,1)}}{\R}$  
that 
\begin{equation}
B\big([v]_{\lambda_{(0,1)},\mathcal{B}(\R)}\big)[u]_{\lambda_{(0,1)},\mathcal{B}(\R)}
=
\big[\{b(v(x))\cdot u(x)\}_{x\in(0,1)}\big]_{\lambda_{(0,1)},\mathcal{B}(\R)}
,
\end{equation}
\item
\label{item:exist.SEE}
there exists an up-to-indistinguishability unique continuous 
$(\mathcal{F}_t)_{t\in[0,T]}$-adapted stochastic process 
$X\colon[0,T]\times\Omega\to V$
which satisfies that for all $t\in[0,T]$ it holds $\P$-a.s.\ that 
	\begin{equation}
	X_t 
	= 
	e^{ tA } \, \xi
	+ 
	\int_0^t e^{ ( t - s )A } F( X_s ) \, ds
	+ 
	\int_0^t e^{ ( t - s )A } B( X_s ) \, dW_s
	, 
	\end{equation}
\item
\label{item:exist.exp.Euler}
there exist up-to-indistinguishability unique stochastic processes
$ Y^N \colon \{ 0, 1, \dots, N \} \times \Omega \to V $,
$ N \in \N $, 
which satisfy that for all $ N \in \N $,
$ n \in \{ 0, 1, \dots, N - 1 \} $
it holds $ \P $-a.s.\ that
$
  Y^N_0 = \xi
$ 
and 
\begin{equation}
Y^N_{ n + 1 }
=
e^{ \frac{T}{N} A }
\Big(
Y_n
+
F( Y_n ) \tfrac{ T }{ N }
+
{\textstyle\int_{ \frac{ n T }{ N } }^{
	\frac{ ( n + 1 ) T }{ N } 
}}
B( Y_n )
\,
dW_s
\Big)
,
\end{equation}
and
\item
\label{item:cor.convergence.rates}
	there exists a real number $C\in\R$ 
	such that for all $N\in\N$ 
	it holds that 
	$
	\ES\big[
	|\varphi(X_T)|
	+
	|\varphi(Y^N_N)|
	\big]
	< \infty
	$
	and 
	\begin{equation}
	\left|
	\ES\big[ 
	\varphi( X_T )
	\big]
	-
	\ES\big[ 
	\varphi( Y^N_N )
	\big]
	\right|
	\leq
	C \cdot
	N^{(\varepsilon-\nicefrac{1}{2})}
	.
	\end{equation}
\end{enumerate}
\end{theorem}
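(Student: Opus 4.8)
The plan is to derive Theorem~\ref{thm:stochastic.heat.eq} as a specialization of the abstract machinery built up in Sections~2--8, by verifying that the concrete data $(V,U,A,F,B,\varphi,\xi)$ arising from the stochastic heat equation~\eqref{eq:intro.stoch.heat.eq} satisfies the hypotheses of Corollary~\ref{cor:sharp.weak.rates} (equivalently Corollary~\ref{cor:weak_convergence_irregular}). The key point is a careful bookkeeping of the regularity exponents: the noise is space-time white noise, so the Nemytskii multiplication operator $B$ only maps into $\gamma(U,V_{-\beta})$ for $\beta$ slightly larger than $\nicefrac14$, while $F$ maps into $V_{-2\beta}$. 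Choosing $\beta$ just above $\nicefrac14$ (as in the statement, $\beta\in(\nicefrac14,\nicefrac14+\nicefrac{\min\{\varepsilon,1\}}{28})$) and $p$ large enough forces $\kappa:=2\beta<\nicefrac12$ in the notation of Corollary~\ref{cor:sharp.weak.rates}, which in turn makes the $6\max\{\kappa-\nicefrac12,0\}$ correction term vanish and yields the rate $1-\kappa-\varepsilon = 1-2\beta-\varepsilon$; since $2\beta<\nicefrac12+\nicefrac{\varepsilon}{14}$ one gets the rate $\nicefrac12-\varepsilon$ after relabelling $\varepsilon$.

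Concretely, I would proceed as follows. \textbf{Step 1: Construction of $F$ and $B$.} Items~\eqref{item:exist.F}--\eqref{item:exist.B} follow from Corollary~\ref{cor:nonlinearity.F} applied to $f$ (with $d=k=1$, $n=4$, $\alpha=-2\beta$, noting that $p>\nicefrac{5}{2(\beta-\nicefrac14)}$ forces $p\geq\max\{5,\nicefrac{4}{2\cdot 2\beta}\}=\max\{5,\nicefrac{1}{\beta}\}$, so the hypothesis $p\in[\max\{n+1,\nicefrac{dn}{2|\alpha|}\},\infty)$ holds) together with the standard multiplication-operator lemma for $B$: the map $v\mapsto b(v(\cdot))\cdot(\cdot)$ is $\gamma$-radonifying from $U=L^2((0,1))$ into $V_{-\beta}=W^{-2\beta,p}$ because $(-A)^{-\beta}$ is Hilbert--Schmidt-like on the relevant scale when $2\beta>\nicefrac12$ fails but $2\beta p>1$ holds (this is where $p>\nicefrac{1}{\beta}$ enters), and it is $C^4_b$ into $\gamma(U,V_{-\beta})$ since $b\in C^4_b$; I would cite the relevant results on Nemytskii and multiplication operators (the paper references \cite{b97b,vvw08} for existence of mild solutions, and the construction of $B$ parallels Corollary~\ref{cor:neymitskii}). \textbf{Step 2: Existence of $X$ and $Y^N$.} Item~\eqref{item:exist.SEE} is the standard existence/uniqueness of mild solutions in UMD type 2 Banach spaces (Theorem~4.3 in Brze\'{z}niak~\cite{b97b} and Theorem~6.2 in Van Neerven et al.~\cite{vvw08}), using the Lipschitz and linear-growth bounds on $F,B$ from Step~1 and $\xi\in V$; item~\eqref{item:exist.exp.Euler} is immediate since the exponential Euler recursion is an explicit, well-defined iteration. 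One also needs $\sup_{t\in[0,T]}\|X_t\|_{\lpn{5}{\P}{V}}<\infty$, which follows from Proposition~\ref{prop:numerics_Lp_bound}-type a priori bounds (or directly from the cited existence theorems with $p\geq 5$, which holds since $p>\nicefrac{5}{2(\beta-\nicefrac14)}>5$).

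\textbf{Step 3: The weak rate.} For item~\eqref{item:cor.convergence.rates} I would invoke Corollary~\ref{cor:sharp.weak.rates} with $V=\lpnb{p}{\lambda_{(0,1)}}{\R}$, $\mathcal{V}=\R$, $U=\lpnb{2}{\lambda_{(0,1)}}{\R}$, $\kappa=2\beta$, the above $F$, $B$, $\varphi$, $\xi$, and $X$, $Y^N$. One must check $\kappa=2\beta\in[0,\nicefrac47)$: indeed $\beta<\nicefrac14+\nicefrac{1}{28}$ gives $2\beta<\nicefrac12+\nicefrac{1}{14}=\nicefrac{8}{14}=\nicefrac47$. Also $F\in\operatorname{Lip}^4(V,V_{-\kappa})$ and $B\in\operatorname{Lip}^4(V,\gamma(U,V_{-\nicefrac{\kappa}{2}}))$ from Step~1, and $\varphi\in\operatorname{Lip}^4(V,\R)$ by hypothesis. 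Corollary~\ref{cor:sharp.weak.rates} then yields, for every $\varepsilon'\in(0,\infty)$, a constant $C$ with $\|\ES[\varphi(X_T)]-\ES[\varphi(Y^N_N)]\|_{\R}\leq C\,N^{-(1-\kappa-6\max\{\kappa-\nicefrac12,0\}-\varepsilon')}$. Since $\kappa=2\beta<\nicefrac12$ the correction term is zero, so the exponent is $-(1-2\beta-\varepsilon')$; and $2\beta<\nicefrac14+\nicefrac14+\nicefrac{\min\{\varepsilon,1\}}{14}=\nicefrac12+\nicefrac{\min\{\varepsilon,1\}}{14}$, whence $1-2\beta>\nicefrac12-\nicefrac{\varepsilon}{14}>\nicefrac12-\varepsilon$, and choosing $\varepsilon'$ small enough (say $\varepsilon'=\nicefrac{\varepsilon}{2}$ after possibly shrinking) gives the desired bound $C\,N^{(\varepsilon-\nicefrac12)}$. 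Finiteness of $\ES[|\varphi(X_T)|+|\varphi(Y^N_N)|]$ is part of the conclusion of Corollary~\ref{cor:sharp.weak.rates}.

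The main obstacle I anticipate is \textbf{Step 1}, specifically the precise verification that the multiplication operator $B$ has the claimed $\gamma$-radonifying regularity into $V_{-\beta}=W^{-2\beta,p}((0,1))$ with $C^4_b$ dependence on $v$, and the matching Nemytskii statement for $F$ into $V_{-2\beta}$ with the right Sobolev exponents. The delicate inequalities are the embedding bookkeeping: one needs $2\beta>\nicefrac12$ to fail (so the noise is genuinely rough, making $\kappa<\nicefrac12$) yet $2\beta p>1$ to hold (so that $(-A)^{-\beta}$ composed with pointwise multiplication by an $L^2$-kernel lands in $\gamma(L^2,L^p)$), which is exactly the constraint $p>\nicefrac{1}{\beta}$ hidden in the hypothesis $p>\nicefrac{5}{2(\beta-\nicefrac14)}$. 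Everything else is a routine matching of the abstract hypotheses to the concrete setting, plus the elementary arithmetic of the rate exponents carried out in Step~3.
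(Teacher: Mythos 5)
Your overall structure is right --- items~\eqref{item:exist.F}--\eqref{item:exist.exp.Euler} are obtained exactly as the paper does (Corollary~\ref{cor:nonlinearity.F} for $F$, a multiplication-operator lemma for $B$ --- the paper cites Corollary~4.9 in~\cite{CoxJentzenKurniawanPusnik2016} --- and existence of mild solutions from~\cite{vvw08}), and item~\eqref{item:cor.convergence.rates} is a specialization of Corollary~\ref{cor:sharp.weak.rates} with $\kappa=2\beta$. But Step~3 of your proposal contains a sign error that inverts the central difficulty of the theorem.

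You repeatedly assert that $\kappa = 2\beta < \nicefrac{1}{2}$, so that the correction $6\max\{\kappa-\nicefrac{1}{2},0\}$ in the exponent of Corollary~\ref{cor:sharp.weak.rates} vanishes. This is backwards: the hypothesis is $\beta > \nicefrac{1}{4}$, hence $\kappa = 2\beta > \nicefrac{1}{2}$, and the correction term is strictly positive. Indeed this is precisely the regime that makes the problem hard --- $B$ does \emph{not} map into $\gamma(U,V_{-\vartheta/2})$ for any $\vartheta<\nicefrac{1}{2}$, so the $c_{\delta_1,\dots,\delta_k}$ bounds of Section~6 (which require $\delta_1+\dots+\delta_k>-\nicefrac{1}{2}$) cannot be applied directly to $F$ and $B$ themselves, and the mollification argument of Section~7 (Proposition~\ref{prop:weak_convergence_irregular}) is needed. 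That argument costs $6(\theta-\vartheta)$ in the rate exponent because the smoothing parameter $\kappa$ must be traded against the step size $h$ through the 10th and 5th powers appearing in~\eqref{eq:mollified_weak_convergence}. With $\theta=\kappa=2\beta$ and $\vartheta=\min\{2\beta,\nicefrac{1}{2}\}-O(\varepsilon)$ the achievable rate is thus
$1-2\beta-6(2\beta-\nicefrac{1}{2})-O(\varepsilon)=4-14\beta-O(\varepsilon)$,
\emph{not} $1-2\beta-O(\varepsilon)$. The paper's condition $\beta<\nicefrac{1}{4}+\nicefrac{\min\{\varepsilon,1\}}{28}$, with its factor $28$, is calibrated exactly to this $14\beta$ coefficient: $14\beta < \nicefrac{7}{2}+\nicefrac{\varepsilon}{2}$ then gives $4-14\beta-\nicefrac{\varepsilon}{2}>\nicefrac{1}{2}-\varepsilon$. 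Under your (incorrect) computation the exponent $1-2\beta$ would have needed only $\beta<\nicefrac{1}{4}+\nicefrac{\varepsilon}{4}$; the appearance of $28$ in the hypothesis is itself a signal that the six-fold loss is essential. Your parenthetical remark "one needs $2\beta>\nicefrac{1}{2}$ to fail (so the noise is genuinely rough, making $\kappa<\nicefrac{1}{2}$)" is internally contradictory and should be deleted; the roughness of space-time white noise is encoded by $\kappa>\nicefrac{1}{2}$, which is what forces the mollification route and the associated loss.
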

\begin{proof}
First, note that the assumption that 
$
  p >
  \frac{5}{2(\beta-\nicefrac{1}{4})}
$
and the assumption that 
$
  \beta \in
  ( \nicefrac{1}{4}, \nicefrac{1}{4} + \nicefrac{\min\{\varepsilon,1\}}{28} )
$
ensure that 
\begin{equation}
\label{eq:cor.9.2.parameter.check}
\begin{split}
  p 
  &>
  \frac{5}{2(\beta-\nicefrac{1}{4})}
  >
  \frac{5}{2(\nicefrac{\min\{\varepsilon,1\}}{28})}
  =
  \frac{5}{(\nicefrac{\min\{\varepsilon,1\}}{14})}
  =
  \frac{70}{\min\{\varepsilon,1\}}
  \\&\geq
  70\geq 5
  =
  \max\{5,\beta\}
  =
  \max\!\big\{5,\tfrac{4}{2|2\beta|}\big\}
  .
\end{split}
\end{equation}
Corollary~\ref{cor:nonlinearity.F} 
(with 
$n=4$, 
$d=1$, 
$k=1$, 
$\alpha=-2\beta$, 
$p=p$, 
$f=f$, 
$A=A$
in the notation of Corollary~\ref{cor:nonlinearity.F})
hence establishes that item~\eqref{item:exist.F} holds and that 
\begin{equation}
\label{eq:neymitskii.lip}
  F \in \operatorname{Lip}^4(V,V_{-2\beta}).
\end{equation}
Moreover, observe that Corollary~4.9 in~\cite{CoxJentzenKurniawanPusnik2016} 
(with 
$n=4$, 
$\beta=-\beta$, 
$p=p$, 
$b=b$, 
$A=A$
in the notation of Corollary~4.9 in~\cite{CoxJentzenKurniawanPusnik2016})
together with~\eqref{eq:cor.9.2.parameter.check}
proves that item~\eqref{item:exist.B} holds and that 
\begin{equation}
\label{eq:multiplication.lip}
  B \in \operatorname{Lip}^4(V,\gamma(U,V_{-\beta})).
\end{equation}
In addition, note that~\eqref{eq:neymitskii.lip} and~\eqref{eq:multiplication.lip} assure that there exists an up-to-indistinguishability unique continuous 
$(\mathcal{F}_t)_{t\in[0,T]}$-adapted stochastic process 
$X\colon[0,T]\times\Omega\to V$
which satisfies that for all $t\in[0,T]$ it holds $\P$-a.s.\ that 
	\begin{equation}
	X_t 
	= 
	e^{ tA } \, \xi
	+ 
	\int_0^t e^{ ( t - s )A } F( X_s ) \, ds
	+ 
	\int_0^t e^{ ( t - s )A } B( X_s ) \, dW_s
	\end{equation}
(cf., e.g., Theorem 6.2 in Van Neerven et al.~\cite{vvw08}). 
This establishes item~\eqref{item:exist.SEE}.
Next note that induction shows that there exist up-to-indistinguishability unique stochastic processes
$ Y^N \colon \{ 0, 1, \dots, N \} \times \Omega \to V $,
$ N \in \N $, 
which satisfy that for all $ N \in \N $,
$ n \in \{ 0, 1, \dots, N - 1 \} $
it holds $ \P $-a.s.\ that
$
  Y^N_0 = \xi
$ 
and 
\begin{equation}
Y^N_{ n + 1 }
=
e^{ \frac{T}{N} A }
\Big(
Y_n
+
F( Y_n ) \tfrac{ T }{ N }
+
{\textstyle\int_{ \frac{ n T }{ N } }^{
	\frac{ ( n + 1 ) T }{ N } 
}}
B( Y_n )
\,
dW_s
\Big)
.
\end{equation}
This demonstrates item~\eqref{item:exist.exp.Euler}.
It thus remains to prove item~\eqref{item:cor.convergence.rates}. For this observe that~\eqref{eq:neymitskii.lip}, \eqref{eq:multiplication.lip},
Corollary~\ref{cor:sharp.weak.rates} 
(with 
$V=V$, 
$\mathcal{V}=\R$, 
$U=U$, 
$T=T$, 
$\eta=0$, 
$\kappa=2\beta$, 
$\xi=\xi$, 
$\varphi=\varphi$, 
$W=W$, 
$A=A$, 
$F=F$, 
$B=B$, 
$X=X$, 
$Y^N=Y^N$, 
$\varepsilon=\nicefrac{\varepsilon}{2}$
for 
$N\in\N$
in the notation of Corollary~\ref{cor:sharp.weak.rates}),
and the fact that 
$
  \beta \in (\nicefrac{1}{4},\nicefrac{1}{4}+\nicefrac{\varepsilon}{28})
$
show that there exists a real number $C\in[0,\infty)$ 
such that for all 
$N\in\N$ 
it holds that 
	$
	\ES\big[
	|\varphi(X_T)|
	+
	|\varphi(Y^N_N)|
	\big]
	< \infty
	$
	and 
\begin{equation}
\begin{split}
	\left|
	\ES\big[ 
	\varphi( X_T )
	\big]
	-
	\ES\big[ 
	\varphi( Y^N_N )
	\big]
	\right|
	&\leq
	C \cdot
	N^{-(1-2\beta-6\max\{2\beta-\nicefrac{1}{2},0\}-\nicefrac{\varepsilon}{2})}
	=
	C \cdot
	N^{-(1-2\beta-6(2\beta-\nicefrac{1}{2})-\nicefrac{\varepsilon}{2})}
	\\&=
	C \cdot N^{-(1-2\beta-12\beta+3-\nicefrac{\varepsilon}{2})}
	=
	C \cdot
	N^{(14\beta+\nicefrac{\varepsilon}{2}-4)}	
	\\&\leq
	C \cdot
	N^{(14(\nicefrac{1}{4}+\nicefrac{\varepsilon}{28})+\nicefrac{\varepsilon}{2}-4)}	
	=
	C \cdot
	N^{(\varepsilon-\nicefrac{1}{2})}	
	.
\end{split}
\end{equation}
This establishes item~\eqref{item:cor.convergence.rates}.
The proof of Theorem~\ref{thm:stochastic.heat.eq} is thus completed.
\end{proof}

\section*{Acknowledgements}

Special thanks are due to Sonja Cox for a series of fruitful discussions on this work.
This project has been supported through the SNSF-Research project 200021\_156603 
``Numerical approximations of nonlinear stochastic ordinary and partial differential equations".

\bibliographystyle{acm}
\bibliography{Bib/bibfile}
\end{document}